\theoremstyle{plain}
\newtheorem*{lemma*}{Lemma}
\newtheorem{lemma}{Lemma}[section]
\newtheorem*{theorem*}{Theorem}
\newtheorem{theorem}{Theorem}[section]
\newtheorem*{proposition*}{Proposition}
\newtheorem{proposition}{Proposition}[section]
\newtheorem{corollary}{Corollary}[section]
\theoremstyle{remark}
\theoremstyle{definition}
\newtheorem{definition}{Definition}[section]
\DeclareSymbolFont{symbolsC}{U}{txsyc}{m}{n}
\DeclareSymbolFont{AMSa}{U}{txsya}{m}{n}
\DeclareMathSymbol{\multimap}{\mathrel}{AMSa}{40}
\DeclareMathSymbol{\multimapinv}{\mathrel}{symbolsC}{18}
\newcommand{\vect}{\mathsf{vect}_{\mathbbm{k}}}
\newcommand{\vectc}{\mathsf{vect}_{\mathbbm{k}}}
\newcommand{\Vect}{\mathsf{Vect}_{\mathbbm{k}}}
\newcommand{\chu}{\mathsf{chu}}
\newcommand{\TVS}{\mathsf{TVS}}
\newcommand{\TVSw}{\mathsf{TVS}_w}
\newcommand{\TVSm}{\mathsf{TVS}_m}
\newcommand{\Vcat}{\mathcal{V}\mathsf{-Cat}}
\newcommand{\Vmod}{\mathcal{V}\mathsf{-Mod}}
\newcommand{\Prof}{\mathsf{Prof}}
\newcommand{\Set}{\mathsf{Set}}
\newcommand{\Cat}{\mathsf{Cat}}
\newcommand{\Gvect}{G\mathsf{-vect}_{\mathbbm{k}}}
\newcommand{\GVect}{G\mathsf{-Vect}_{\mathbbm{k}}}
\newcommand{\Gvectc}{G\mathsf{-vect}_{\mathbbm{k}}}
\newcommand{\GVectc}{G\mathsf{-Vect}_{\mathbbm{k}}}
\newcommand{\Rep}{\mathsf{-Rep}}
\newcommand{\Repfd}{\mathsf{-Rep}_{\mathsf{f.d.}}}
\newcommand{\Fun}{\mathsf{Fun}}
\newcommand{\End}{\mathsf{End}}
\newcommand{\unit}{\mathbbm{1}}
\newcommand{\Cob}{\mathsf{Cob}}
\newcommand{\ihoml}{\underline{\textnormal{Hom}}^l}
\newcommand{\ihomr}{\underline{\textnormal{Hom}}^r}
\newcommand{\Lex}{\mathcal{L}ex}
\newcommand{\Rex}{\mathcal{R}ex}
\newcommand{\modl}{\mathsf{-mod}}
\newcommand{\modr}{\mathsf{mod-}}
\newcommand{\sumf}{\textnormal{sum}_{<}}
\newcommand{\WQF}{\textnormal{WQF}(G, \mathbbm{k}^{\times})}
\newcommand{\WSQF}{\textnormal{WSQF}(G, \mathbbm{k}^{\times})}
\newcommand{\WRQF}{\textnormal{WRQF}(G, \mathbbm{k}^{\times})}
\newcommand{\QF}{\textnormal{QF}(G, \mathbbm{k}^{\times})}
\newcommand{\RMS}{\textnormal{RMS}(G, \mathbbm{k})}
\newcommand{\WRMS}{\textnormal{WRMS}(G, \mathbbm{k})}
\newcommand{\charac}{\textnormal{Hom}(G, \mathbbm{k}^{\times})}
\definecolor{bluec}{HTML}{0033FF}
\definecolor{redc}{HTML}{FF1717}
\begin{document}

\newgeometry{centering}
\begin{titlepage}
\begin{center}
\vfill
\Huge
\setstretch{1.5}
\textbf{Generalised Duality Theory\\ for Monoidal Categories\\and\\ Applications\\}
\vfill
\includegraphics[scale=0.075]{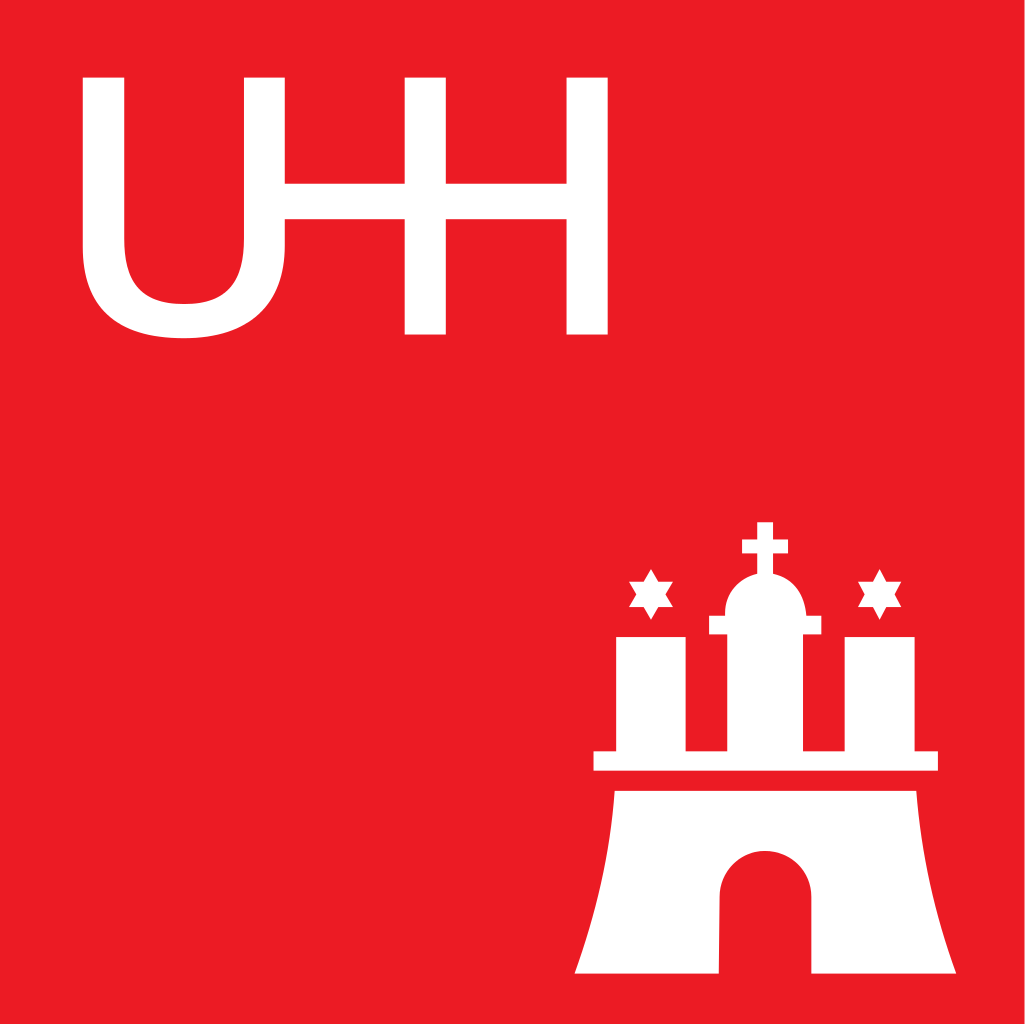}
\vfill
\LARGE
Stefan Jens Zetzsche\\
University of Hamburg

\vfill
\LARGE 
\setstretch{1.5}
A thesis submitted for the degree of \\
\textit{M.Sc. Mathematics}\\
September 2018




\end{center}
\end{titlepage}
\restoregeometry

\tableofcontents

\chapter{Introduction}

Since its introduction by Mac Lane and Eilenberg in the 1940s, category theory has proved to be universally important not despite but due to its simplicity. In particular, it is used in mathematical aspects of field theory (e.g. \cite{atiyah1988topological}, \cite{fuchs2002tft}), quantum computation (e.g. \cite{vicary2012higher}, \cite{abramsky2004categorical}), knot theory, logic and proof theory (e.g. \cite{dunn2016coherence}, \cite{mellies2009categorical}, \cite{andregeometry}).
The main subject of this thesis is the notion of \textit{duality} in monoidal categories and its applications. 

  One of the most prominent examples of a duality is arguably found in the \textit{linear dual} 
\begin{equation*}
	V^{\star} := \textnormal{Hom}_{\mathbbm{k}}(V, \mathbbm{k}) 
\end{equation*}
 of some $\mathbbm{k}$-vector space $V$. To this end, let us consider the category with $\mathbbm{k}$-vector spaces of arbitrary dimension as objects and $\mathbbm{k}$-linear maps as morphisms. This category enjoys a variety of prototypical properties. Foremost, it is \textit{monoidal} with the usual tensor product $\otimes$ of vector spaces. The unit of the tensor product is given by the field $\mathbbm{k}$, since for any $\mathbbm{k}$-vector space $V$, the underlying $\mathbbm{k}$-action witnesses $V \otimes \mathbbm{k} \cong \mathbbm{k} \otimes V \cong V$. Moreover, the monoidal structure is \textit{symmetric}, since for any pair of vector spaces $V,W$ there exist a natural isomorphism $V \otimes W \cong W \otimes V$. It is \textit{closed} in the sense that for any two spaces $V,W$ there exists a vector space $\underline{\textnormal{Hom}}(V,W)$, in this case the morphism set $\textnormal{Hom}_{\mathbbm{k}}(V,W)$ with its natural vector space structure, and a natural isomorphism 
\begin{equation}
\label{internalhomintro}
	\textnormal{Hom}_{\mathbbm{k}}(U \otimes V, W) \cong \textnormal{Hom}_{\mathbbm{k}}(U,\underline{\textnormal{Hom}}(V,W)).
\end{equation}
A vector space and its linear dual are related in the following ways. For every morphism $f: V \rightarrow W$ one can define a dual morphism $f^{\star}: W^{\star} \rightarrow V^{\star}$ as $f^{\star}(w')(v) = w'(f(v))$. Moreover, for a  vector space $V$ there exists an \textit{evaluation} map $\varepsilon: V \otimes V^{\star} \rightarrow \mathbbm{k}$ defined by $\varepsilon(v \otimes v') := v'(v)$, and for a finite-dimensional vector space $V$ a \textit{coevaluation} map $\eta: \mathbbm{k} \rightarrow V^{\star} \otimes V$ defined by $\eta(1) := \sum_i v'_i \otimes v_i$ for a finite basis $(v_i)_{i \in I}$ of $V$ and corresponding dual basis $(v'_i)_{i \in I}$ of $V^{\star}$ with $v'_i(v_j) := \delta_{ij}$. One can show that the definition of $\eta$ does not depend on the chosen basis $(v_i)_{i \in I}$.
Both maps satisfy the relations
\begin{align}
\label{snakeintro}
\begin{split}
	&( V \cong V \otimes \mathbbm{k} \overset{\textnormal{id} \otimes \eta}{\rightarrow} V \otimes (V^{\star} \otimes V) \cong (V \otimes V^{\star}) \otimes V \overset{\varepsilon \otimes \textnormal{id}}{\rightarrow} \mathbbm{k} \otimes V \cong V ) \quad  = \quad \textnormal{id}_V, \\
&( V^{\star} \cong \mathbbm{k} \otimes V^{\star} \overset{\eta \otimes \textnormal{id}}{\rightarrow} (V^{\star} \otimes V) \otimes V^{\star} \cong V^{\star} \otimes (V \otimes V^{\star}) \overset{\textnormal{id} \otimes \varepsilon}{\rightarrow} V^{\star} \otimes \mathbbm{k} \cong V^{\star} ) \quad =  \quad \textnormal{id}_{V^{\star}}
\end{split}
\end{align}
and a symmetric variant of it. As a consequence, if $V$ is finite-dimensional, the map $V \rightarrow V^{\star \star}$ given by $v \mapsto \textnormal{ev}_v$ with $\textnormal{ev}_v(v') := v'(v)$ for $v' \in V^{\star}$ witnesses an isomorphism between $V$ and its double dual, 
\begin{equation}
\label{doubledualvect}
	V \cong V^{\star \star} = \underline{\textnormal{Hom}}(\underline{\textnormal{Hom}}(V, \mathbbm{k}), \mathbbm{k}).
\end{equation}
Another observation shows that for finite-dimensional vector spaces $V,W$ there exists a natural isomorphism
\begin{equation}
\label{curryingintro}
		\textnormal{Hom}_{\mathbbm{k}}(V \otimes W, \mathbbm{k}) \cong \textnormal{Hom}_{\mathbbm{k}}(V, W^{\star}) \cong \textnormal{Hom}_{\mathbbm{k}}(W, V^{\star})
\end{equation}
witnessed by currying, i.e. by mapping a form $\kappa: V \otimes W \rightarrow \mathbbm{k}$ to a linear map $V \rightarrow W^{\star}$ given as $v \mapsto (w \mapsto \kappa(v \otimes w))$.

In category theory the notion of duality is classically captured intrinsically. Indeed, a symmetric monoidal category $(\mathcal{C}, \otimes, \unit)$ with tensor product $\otimes$ and tensor unit $\mathbbm{1}$ is \textit{rigid}, if for every $x \in \mathcal{C}$ there exists an object $x^{\star} \in \mathcal{C}$ and morphisms $\varepsilon: x\otimes x^{\star} \rightarrow \mathbbm{1}$, $\eta: \mathbbm{1} \rightarrow x^{\star} \otimes x$, satisfying \eqref{snakeintro}. This definition can be reformulated in a straightforward way for the non-symmetric case. Over the time rigidity has proved to be robust and appropriate for many situations. However, there exist situations in which it appears to be too restrictive. We will now discuss one of them.

For example, let $X \not= \emptyset$ be a non-empty set. The powerset $\mathcal{P}(X)$ induces a category with subsets $A \subseteq X$ as objects and a morphism $A \rightarrow B$ between objects $A,B \subseteq X$, if $A \subseteq B$. This category can be equipped with a symmetric monoidal structure $(\mathcal{P}(X), \cap, X)$ with tensor product given by the intersection of sets $\cap$ and tensor unit $X$. For every subset $A \subseteq X$ there exists a candidate for a dual given by the complement $A^{c} = X \setminus A$. Taking the complement is contravariant as $A \subseteq B$ if and only if $B^c \subseteq A^c$. The monoidal category $(\mathcal{P}(X), \cap, X)$ is closed with internal hom $\underline{\textnormal{Hom}}_{\cap}(A,B) = A^c  \cup B$ for $A,B \subseteq X$. Taking the complement can be expressed in terms of the internal hom as $A^c = \underline{\textnormal{Hom}}_{\cap}(A, \emptyset)$.  Thus, as every subset equals its double dual, 
\begin{equation}
\label{doubledualsubset}
 A = (A^c)^c = \underline{\textnormal{Hom}}_{\cap}(\underline{\textnormal{Hom}}_{\cap}(A,\emptyset),\emptyset).
\end{equation} But $(\mathcal{P}(X), \cap, X)$ can not be rigid, since there exists no coevaluation $X \rightarrow A^c \cap A$. Indeed, $X \subseteq A^c \cap A = \emptyset$ is false for every subset $A \subseteq X$. However, there do exist morphisms $V \otimes V^{\star} \rightarrow \emptyset$ and $\emptyset \rightarrow V^{\star} \otimes V$, related to evaluation and coevaluation by substituting the monoidal unit $X$ with its dual $X^c = \emptyset$. Similarly equation \eqref{doubledualsubset} is related to equation \eqref{doubledualvect}. In fact, this foreshadows that we could have equally chosen the category $(\mathcal{P}(X), \cup, \emptyset)$ induced by $\mathcal{P}(X)$ with tensor product given by union $\cup$ and tensor unit $\emptyset$.

A notion capturing the duality of the previous example is given in the definition of $\star$\textit{-autonomous categories}. The original definition in \cite{barrautonomcat} was given for symmetric monoidal categories, and in \cite{barr1995nonsymmetric} generalised to possibly non-symmetric monoidal categories. Before we state the definition, note that in any closed symmetric category $(\mathcal{C}, \otimes, \mathbbm{1})$ condition \eqref{internalhomintro} assures for any $x,k \in \mathcal{C}$ the existence of natural isomorphisms $\textnormal{Hom}(\underline{\textnormal{Hom}}(x,k), \underline{\textnormal{Hom}}(x,k)) \cong 
\textnormal{Hom}(\underline{\textnormal{Hom}}(x,k) \otimes x, k)$ and $\textnormal{Hom}(x \otimes \underline{\textnormal{Hom}}(x,k), k) \cong \textnormal{Hom}(x,  \underline{\textnormal{Hom}}(\underline{\textnormal{Hom}}(x,k), k))$. Using the symmetry one can combine both and identify the identity $\textnormal{id}_{\underline{\textnormal{Hom}}(x,k)}$ with an unique morphism 
\begin{equation}
\label{inducedmorphismintro}
	x \rightarrow \underline{\textnormal{Hom}}(\underline{\textnormal{Hom}}(x,k), k).
\end{equation}

\begin{definition}
\label{starautonomdef1}
A symmetric $\star$\textit{-autonomous category} is given by a closed symmetric monoidal category $(\mathcal{C}, \otimes, \mathbbm{1})$ with internal hom $\underline{\textnormal{Hom}}(-,-)$, and an object $k \in \mathcal{C}$, called \textit{dualizing object}, such that the induced morphisms \eqref{inducedmorphismintro} are isomorphisms
\[ x \cong \underline{\textnormal{Hom}}(\underline{\textnormal{Hom}}(x,k), k)  \] for all $x \in k$.	
\end{definition}
 Every symmetric rigid category with duality $(-)^{\star}$ and monoidal unit $\mathbbm{1}$ is closed with internal hom $	\underline{\textnormal{Hom}}(x,y) := y \otimes x^{\star}$, and $\star$-autonomous with dualizing object $k := \mathbbm{1}$. In particular, the rigid category of finite-dimensional $\mathbbm{k}$-vector spaces is $\star$-autonomous with dualizing object $k := \mathbbm{k}$. In that case the morphism \eqref{inducedmorphismintro} maps an element $v \in V$ to $\textnormal{ev}_v \in V^{\star \star}$ with $\textnormal{ev}_v(v') = v'(v)$ for $v' \in V^{\star}$. The monoidal category $(\mathcal{P}(X), \cap, X)$ with internal hom $\underline{\textnormal{Hom}}_{\cap}(A,B) = B^c \setminus A$ is not rigid, but $\star$-autonomous with dualizing object $k := \emptyset = X^c$. Since $(A^c)^c = A$ for any subset $A \subseteq X$, the morphism \eqref{inducedmorphismintro} is given by the identity.
 
 The concept of $\star$-autonomous categories finds application in a broad spectrum of fields.  For example, in \cite{seely1987linear} it was shown that such categories provide models of Girard's linear logic, similar to the relation between cartesian closed categories and $\lambda$-calculus \cite{lambek1980lambda}. Linear logic can be used to model the logic of resource use. In \cite{girard1987linear} Girard states that “\textit{a completely new approach to the whole area between constructive logics and computer science is initiated}”. In \cite{boyarchenko2011duality} $\star$-autonomous categories are studied under the name \textit{Grothendieck-Verdier categories}. The main goal of this thesis is to summarise the theory of $\star$-autonomous categories and give a variety of examples.
\vspace{1em}\\
The outline of the remaining part of the thesis is as follows.
In the preliminary Chapter 2 we recapitulate pertinent definitions and basic facts. Most of the material is covered in the literature, for example in \cite{etingof2015tensor}. We will introduce the string diagram notation for monoidal categories, give a definition of rigid categories and recall related notions motivated by knot theory, such as braidings and twists.

In Chapter 3 we will introduce possibly non-symmetric $\star$-autonomous categories.
In particular, in \autoref{starautonomequivalent} we will show the equivalence of multiple different definitions of $\star$-autonomous categories. For example, one would like to generalise \eqref{curryingintro} instead of \eqref{doubledualvect} by defining a $\star$-autonomous category as a symmetric monoidal category $(\mathcal{C}, \otimes, \mathbbm{1})$ together with an object $k \in \mathcal{C}$, an equivalence $(-)^{\star}: \mathcal{C} \rightarrow \mathcal{C}^{\textnormal{opp}(1)}$, and natural isomorphisms 
\[ \textnormal{Hom}_{\mathcal{C}}(x \otimes y, k) \cong\ \textnormal{Hom}_{\mathcal{C}}(x , y^{\star}). \]
As a matter of fact, both definitions are equivalent, as we will see in \autoref{starautonomequivalent}. 

Apart from that, we will emphasise the subtle differences between $\star$-autonomous categories and rigid categories. For example, for every rigid category $(\mathcal{C}, \otimes, \mathbbm{1})$, the duality $(-)^{\star}:(\mathcal{C}, \otimes, \mathbbm{1}) \rightarrow (\mathcal{C}, \otimes, \mathbbm{1})^{\textnormal{opp}(0,1)}$ is monoidal, where the latter denotes the category induced from $(\mathcal{C}, \otimes, \mathbbm{1})$ by reversing the morphisms and the tensor product. 
This is not necessarily the case for every $\star$-autonomous category (however, by a non-trivial proof the double dual is again necessary monoidal \cite[Prop. 4.2.]{boyarchenko2011duality}) and motivates the definition of a second tensor product $\otimes'$ on $\mathcal{C}$ by
 \begin{equation}
 \label{secondtensorproductintro}
 	x \otimes' y :=\ ^{\star}(y^{\star} \otimes x^{\star})
 \end{equation}
 for $x,y \in \mathcal{C}$ and $^{\star}(-): \mathcal{C}^{\textnormal{opp}(1)} \rightarrow \mathcal{C}$ the quasi-inverse of the duality functor $(-)^{\star}: \mathcal{C} \rightarrow \mathcal{C}^{\textnormal{opp}(1)}$. In the case of the $\star$-autonomous category $(\mathcal{P}(X), \cap, X)$ with tensor product given by the intersection of subsets, the second tensor product corresponds, as one might expect, to the union of subsets, $\cup$. Indeed, using the definition \eqref{secondtensorproductintro} and De Morgan's law one finds
 \[
 (B^c \cap A^c)^c = A \cup B,
 \]
 for $A,B \subseteq X$.
 This indicates that $\star$-autonomous categories come in symmetric pairs of monoidal categories related by a duality and natural isomorphisms; in this particular example $(\mathcal{P}(X), \cap, X)$ and $(\mathcal{P}(X), \cup, \emptyset)$ with $(-)^c$. Emphasising this perspective, it was shown in \cite{seely1992weakly} that $\star$-autonomous categories correspond to \textit{linearly distributive categories with duality}. We introduce this notion in \autoref{linearlydistributivecategory} and discuss it in detail in Section 3.2.
 
Aside from their broad range of applications, another interesting aspect of $\star$-auton-\\omous categories is their relation to \textit{Frobenius algebras}. A Frobenius algebra is a finite-dimensional $\mathbbm{k}$-algebra $A$ equipped with a non-degenerate form $\sigma: A \otimes A \rightarrow \mathbbm{k}$ such that $\sigma(ab \otimes c) = \sigma(a \otimes bc)$ for $a,b,c \in A$. They have been studied already in the early 20th century \cite{nakayama1939frobeniusean}, \cite{nakayama1941frobeniusean}, \cite{nakayama1939frobeniusean}, \cite{brauer1937regular}, and in recent times the interest has been renewed due to their connection to $2$-dimensional quantum field theories \cite{abrams1996two}. \textit{Frobenius pseudomonoids} generalise Frobenius algebras to bicategories. A special class of Frobenius pseudomonoids are  those of which multiplication and unit morphisms have right adjoints, usually called $\dag$\textit{-Frobenius pseudmonoids}. A \textit{profunctor} $\mathcal{C} \nrightarrow \mathcal{D}$ is defined to be a functor $\mathcal{D}^{\textnormal{opp}} \times \mathcal{C} \rightarrow \Set$. Due to a short remark of \cite{street2004frobenius}, $\star$-autonomous categories correspond to $\dag$-Frobenius pseudomonoids in the bicategory of profunctors. In \autoref{frobeniusinducesstarautonom} and \autoref{autonominducesfrobenius} we work out a complete proof of this statement.

Chapter 4 of this thesis is dedicated to a variety of examples. 
Based on an observation in \cite{fuchs2016eilenberg}, we discuss a $\star$-autonomous structure on the category of (left or right) exact endofunctors of a finite linear category in  Section 4.1. To this end we repeat the notion of a generalised Nakayama functor and present a range of results related to end, respectively coend, calculus. Our own main results of this section are \autoref{gammamonoidal} and \autoref{rigidleftadjoint}. In particular, we show that the second tensor product is again given by the composition of endofunctors.

In Section 4.2 we generalise the classic duality theory known for graded vector spaces. A finite-dimensional (f.d.) $G$-graded vector space $V$ over some field $\mathbbm{k}$ for a finite group $G$ is given as a direct sum $V = \bigoplus_{g \in G} V_g$ of f.d. $\mathbbm{k}$-vector spaces indexed by $G$. The direct sum of two f.d. $G$-graded vector spaces is defined component wise. Every f.d. $G$-graded vector space is semisimple in the sense that it can be expressed as direct sum of simple f.d. $G$-graded vector spaces $\mathbbm{k}_g$ defined by $(\mathbbm{k}_g)_h := \delta_{gh} \mathbbm{k}$. It is natural to introduce the graded dual space $V^{\star}$ of a f.d. $G$-graded vector space $V$ as 
\begin{equation}
\label{dualityintro}
	(V^{\star})_g := (V_{g^{-1}})^{\star}.
\end{equation}
Together with the tensor product $V \otimes W := \bigoplus_{g \in G, hk=g} V_h \otimes W_k$ this defines the rigid monoidal category $\Gvect$ of f.d. $G$-graded vector spaces. By an result of Eilenberg and MacLane (cf. \cite{eilenberg1950cohomology}), if $G$ is abelian, the braided monoidal structures 
(i.e. associator, braiding) of $\Gvect$ correspond to the set $\textnormal{QF}(G, \mathbbm{k}^{\times})$ of quadratic forms on $G$ with values in $\mathbbm{k}^{\times}$. Our own main result \autoref{theoremweakribbon} of this section gives a similar statement for ribbon structures with respect to an arbitrary $\star$-autonomous structure on $\Gvect$. To this end we define the notion of weak quadratic forms in \autoref{weakquadraticformdef} and show that they can be uniquely decomposed into a product of a quadratic form and a character on $G$ in \autoref{weakquadraticformrepr}. 

Finally, Section 4.3 concludes with an investigation of the duality structure of topological vector spaces based on work of Barr \cite{barr2000autonomous}. The category $\chu$ consists of objects $(U,V,\langle -, - \rangle)$, with $U,V$ vector spaces over some field $\mathbbm{k}$ and $\langle -,- \rangle: U \otimes V \rightarrow \mathbbm{k}$ a linear pairing, together with appropriate morphisms. Barr showed that $\chu$ is $\star$-autonomous with trivial duality 
\[ (U,V,\langle -, - \rangle)^{\star} := (V,U,\langle -, - \rangle \circ \tau_{V,U}), \]
where $\tau_{V,U}: V \otimes U \rightarrow U \otimes V$ denotes the canonical braiding swapping arguments.
Using an equivalence between $\chu$ and the categories of weak, respectively Mackey topologized vector spaces, $\TVSw$ and $\TVSm$, one can induce a $\star$-autonomous structure on the latter categories. In \autoref{tvstensoruniversal} we show that one of the tensor products possesses a similar, but weaker universal property as the so called projective tensor product. This motivates some generalisation of the original duality structure proposed by Barr.
\vspace{1em}\\
To sum up, our own contribution to this thesis includes \autoref{dualsuniquebicat}, \autoref{lemmafrobpseudo}
, the proofs of \autoref{frobeniusinducesstarautonom} and \autoref{autonominducesfrobenius}, \autoref{gammamonoidal}, \autoref{symmetricfrobcor}, \autoref{rigidleftadjoint}, \autoref{internalhomgradedlemma}, \autoref{gradeddualeq}, \autoref{generaltensorlemma}, 
\autoref{weakeq}, \autoref{weakeq2}, \autoref{weakquadraticformrepr}, \autoref{wqfcharaclemma}, \autoref{symmetrycorollary}, \autoref{weakhochk}, \autoref{samebeta}, \autoref{wrqfwsqf}, \autoref{theoremweakribbon}, \autoref{internalhomtvs}, \autoref{dualitytvs}, \autoref{tensor1tvs}, \autoref{tensor2tvs}, \autoref{tvstensoruniversal}.


\vspace{2em}
\noindent \LARGE{\textbf{Acknowledgements}}
\vspace{1em}\\
\normalsize First of all, I would like to thank my supervisor Prof. Dr. Christoph Schweigert, who suggested the topic of this thesis to me. I am very grateful for his encouragement, patient supervising and support of my work. 
I also want to thank my co-supervisor Jun.-Prof. Dr. Simon Lentner, in particular for the helpful discussions about Section 4.2.
I have gained greatly from conversations with Vincent Koppen. 
Moreover, I'm thankful to Jamie Vicary, in particular for \textit{Globular}, the proof assistant I used to produce the string diagrams in this thesis. 
Lastly, I want to thank my parents for their continuous love, support and faith in me. \nocite{bar2016globular}

\chapter{Classic duality theory}

\section{Monoidal categories}

 A \textit{monoid} is a set $M$ together with an associative product $M \times M \rightarrow M$ and an unit element $\textnormal{1}_M$. Category theory is deeply related to the notion of a monoid.
For instance, in every category $\mathcal{C}$ the endomorphism set $\textnormal{Hom}_{\mathcal{C}}(x,x)$ of an arbitrary object $x \in \mathcal{C}$ forms a monoid with the composition and identity morphism $\textnormal{id}_x: x \rightarrow x$. Conversely, every monoid induces a category with one object. Apart from that, most of the categories we will encounter in this thesis are \textit{themself} monoidal -- they come with an additional structure, turning them into a categorified monoid. The definition of a monoidal category goes back to the work of Mac Lane, cf. \cite{maclane1963}, \cite{maclane1998}.

\begin{definition}[Monoidal category]
	A \textit{monoidal category} $(\mathcal{C}, \otimes, \mathbbm{1}) := (\mathcal{C}, \otimes, \mathbbm{1}, \alpha, \lambda, \varrho)$ is given by a category $\mathcal{C}$, a bifunctor $\otimes: \mathcal{C} \times \mathcal{C} \rightarrow \mathcal{C}$ called the \textit{tensor product}, an \textit{unit object} $\mathbbm{1} \in \mathcal{C}$ and three natural isomorphisms subject to a pentagon and a triangle axiom (cf. \cite{etingof2015tensor}): the \textit{associator} $\alpha: \otimes \circ (\otimes \times \textnormal{id}_{\mathcal{C}}) \Rightarrow \otimes \circ (\textnormal{id}_{\mathcal{C}} \times \otimes)$, the \textit{left unitor} $ \lambda: \otimes \circ (\mathbbm{1} \times \textnormal{id}_{\mathcal{C}}) \Rightarrow \textnormal{id}_{\mathcal{C}}$, and the \textit{right unitor} $\varrho: \otimes \circ (\textnormal{id}_{\mathcal{C}} \times \mathbbm{1}) \Rightarrow \textnormal{id}_{\mathcal{C}}$. A monoidal category $(\mathcal{C}, \otimes, \mathbbm{1}, \alpha, \lambda, \varrho)$ is \textit{strict}, if $\alpha, \lambda$ and $\varrho$ are identities.
\end{definition}

The associator guarantees that any two tensor products of finitely many objects are isomorphic. The pentagon axiom guarantees that there exists precisely \textit{one} such isomorphism. The latter result is known as \textit{Mac Lane's coherence theorem}.

\begin{definition}[Monoidal functor]
Let $(\mathcal{C}, \otimes_{\mathcal{C}}, \mathbbm{1}_{\mathcal{C}})$ and  $(\mathcal{D}, \otimes_{\mathcal{D}}, \mathbbm{1}_{\mathcal{D}})$ be monoidal categories.
	A (\textit{lax}) \textit{monoidal functor} $(F, \Phi, \phi): (\mathcal{C}, \otimes_{\mathcal{C}}, \mathbbm{1}_{\mathcal{C}}) \rightarrow (\mathcal{D}, \otimes_{\mathcal{D}}, \mathbbm{1}_{\mathcal{D}})$ is given by a functor $F: \mathcal{C} \rightarrow \mathcal{D}$, a natural transformation $\Phi:\otimes_{\mathcal{D}} \circ (F \otimes_{\mathcal{C}} F) \Rightarrow F \circ \otimes_{\mathcal{C}}$ and a morphism $\phi: \mathbbm{1}_{\mathcal{D}} \rightarrow F(\mathbbm{1}_{\mathcal{C}})$ subject to three commutativity constraints (cf. \cite{etingof2015tensor}). A monoidal functor $(F, \Phi, \phi)$ is \textit{strong}, if $\Phi$ and $\phi$ are invertible; it is a \textit{strict}, if $\Phi$ and $\phi$ are identities.
\end{definition}

Examples of monoidal categories include the category $\Set$ with the cartesian product of sets, the category $\Vect$ of arbitrary dimensional $\mathbbm{k}$-vector spaces with the usual tensor product, the category $\End(\mathcal{C}) := \Fun(\mathcal{C}, \mathcal{C})$ of endofunctors on some category $\mathcal{C}$ with the composition of functors, and the representation category $A\Rep$ of some arbitrary dimensional $\mathbbm{k}$-bialgebra $A$. 

For any category $\mathcal{C}$ there exists a \textit{dual category} $\mathcal{C}^{\textnormal{opp}(1)}$ which equals $\mathcal{C}$ but has reversed morphisms. For every functor $\mathcal{C} \rightarrow \mathcal{D}$ the \textit{dual functor} $F^{\textnormal{opp}(1)}: \mathcal{C}^{\textnormal{opp}(1)} \rightarrow \mathcal{D}^{\textnormal{opp}(1)}$ is defined as $F^{\textnormal{opp}(1)}(\overline{x}) := \overline{F(x)}$ and $F^{\textnormal{opp}(1)}(\overline{f}: \overline{x} \rightarrow \overline{y}) := (\overline{F(f)}: \overline{F(x)} \rightarrow \overline{F(y)})$. Not to be confused with the dual category is the next definition. 

\begin{definition}[Opposite category]
\label{oppositecategory}
	Let $\mathcal{C} = (\mathcal{C}, \otimes, \mathbbm{1}, \alpha, \lambda, \varrho)$ be a monoidal category and $\tau: \mathcal{C} \times \mathcal{C} \rightarrow \mathcal{C} \times \mathcal{C}$ the functor swapping the two arguments. 
	Then the \textit{opposite category}  $\mathcal{C}^{\textnormal{opp}(0)}$ is the monoidal category
	$\mathcal{C}^{\textnormal{opp}(0)} := (\mathcal{C}, \otimes \circ \tau, \mathbbm{1}, (\alpha^{-1}_{Z,Y,X})_{X,Y,Z \in \mathcal{C}}, \varrho, \lambda).$
\end{definition} 
	In particular, we will abbreviate $\mathcal{C}^{\textnormal{opp}(0,1)}$ for the monoidal category $(\mathcal{C}^{\textnormal{opp}(0)})^{\textnormal{opp}(1)} = (\mathcal{C}^{\textnormal{opp}(1)})^{\textnormal{opp}(0)}$.

\textit{Mac Lane's strictness theorem} \cite{mac2013categories} guarantees that every monoidal category is monoidally equivalent to a strict monoidal category. 
If not otherwise stated we will thus assume that we are working in a strict monoidal category. 

In particular, we will use the graphical calculus of string diagrams for strict monoidal categories as introduced by Street, Joyal et al. (e.g. \cite{joyal1991geometry}, \cite{andregeometry}). Diagrams are read from bottom to top. Objects correspond to strings, and morphisms correspond to points. The composition of morphism is given by vertically joining strings. Multiple strings or multiple objects in horizontal juxtaposition with each other should be understood as the tensor product of each other.  The tensor unit is usually omitted. Possible graphical representations of morphisms in some strict monoidal category $(\mathcal{C}, \otimes, \mathbbm{1})$ are for example \[
\includegraphics[height=0.5cm]{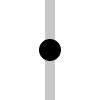} : x \rightarrow y, \quad
\includegraphics[height=0.5cm]{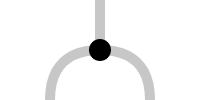}: x \otimes y \rightarrow z, \quad
\includegraphics[height=0.5cm]{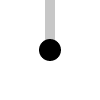}: \mathbbm{1} \rightarrow x.
\]

For the purpose of graphical calculus a particular identity proves to be very useful. The \textit{interchanger law} for morphisms $f: a \rightarrow b,\ f': b\rightarrow c$ and $g: x \rightarrow y,\ g': y \rightarrow z$ states that
$
(f' \circ f) \otimes (g' \circ g) = (f' \otimes g') \circ (f \otimes g).
$
In particular, after substituting with appropriate identities, one obtains
\[
(\textnormal{id}_b \otimes g) \circ (f \otimes \textnormal{id}_c) =
f \otimes g
=  (f \otimes \textnormal{id}_d) \circ (\textnormal{id}_a \otimes g),
\]
which corresponds in the graphical representation to the identity
\begin{equation}
\label{intertwiner}
\begin{tabular}{m{2em} m{0em} m{1.5em}}
	\includegraphics[height=1cm]{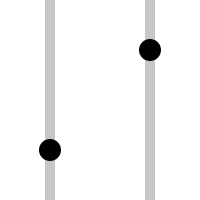} &= &\includegraphics[height=1cm]{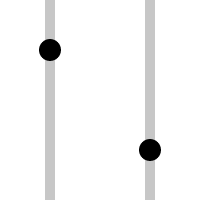}.
\end{tabular}
\end{equation}

\section{Rigid monoidal categories}

A $\mathbbm{k}$-vector space $V$ and its linear dual $V^{\star} := \textnormal{Hom}(V, \mathbbm{k})$ enjoy some remarkable relations. For this section the following observations are central. First, there exists a map $\varepsilon: V \otimes V^{\star} \rightarrow \mathbbm{k}$ defined by $\varepsilon(v \otimes v') := v'(v)$. Secondly, if $V$ is finite-dimensional, there exists a map $\eta: \mathbbm{k} \rightarrow V^{\star} \otimes V$ defined by $\eta(1) := \sum_i v'_i \otimes v_i$ for a finite basis $(v_i)_{i \in I}$ of $V$ and corresponding dual basis $(v'_i)_{i \in I}$ of $V^{\star}$ with $v'_i(v_j) := \delta_{ij}$. One can show that the definition of $\eta$ does not depend on the chosen basis $(v_i)_{i \in I}$.
Thirdly, if $V$ is finite-dimensional, both maps satisfy the relations
\begin{align}
\label{snake}
\begin{split}
	&( V \cong V \otimes \mathbbm{k} \overset{\textnormal{id} \otimes \eta}{\rightarrow} V \otimes (V^{\star} \otimes V) \cong (V \otimes V^{\star}) \otimes V \overset{\varepsilon \otimes \textnormal{id}}{\rightarrow} \mathbbm{k} \otimes V \cong V ) \quad  = \quad \textnormal{id}_V, \\
&( V^{\star} \cong \mathbbm{k} \otimes V^{\star} \overset{\eta \otimes \textnormal{id}}{\rightarrow} (V^{\star} \otimes V) \otimes V^{\star} \cong V^{\star} \otimes (V \otimes V^{\star}) \overset{\textnormal{id} \otimes \varepsilon}{\rightarrow} V^{\star} \otimes \mathbbm{k} \cong V^{\star} ) \quad =  \quad \textnormal{id}_{V^{\star}}
\end{split}
\end{align}
and a symmetric variant of it. The next definition captures the previous observations in categorical terms. 
\begin{definition}[Rigid category]
\label{rigid}
	Let $(\mathcal{C}, \otimes, \mathbbm{1}, \alpha, \lambda, \varrho)$ be a monoidal category. A \textit{left dual} to $x \in \mathcal{C}$ is an object $^{\star}x \in \mathcal{C}$ with morphisms $\varepsilon_x:\ ^{\star}x \otimes x \rightarrow \unit$ and $\eta_x: \unit \rightarrow x \otimes\ ^{\star}x$, called (left) \textit{evaluation} and (left) \textit{coevaluation} respectively, satisfying so called \textit{zig-zag} (or \textit{snake}) identities
\begin{equation}
	\label{rigidequation1}
	\varrho_x	\circ (\textnormal{id}_x \otimes \varepsilon_x) \circ \alpha_{x, ^{\star}x, x} \circ (\eta_x \otimes \textnormal{id}_x) \circ \lambda_x^{-1} \quad = \quad \textnormal{id}_x,
	\end{equation}
	\begin{equation}
		\label{rigidequation2}
			\lambda_{^{\star} x} \circ (\varepsilon_x \otimes \textnormal{id}_{^{\star}x}) \circ \alpha^{-1}_{^{\star}x, x, ^{\star}x} \circ (\textnormal{id}_{^{\star}x} \otimes \eta_{x}) \circ \varrho^{-1}_{^{\star}x} \quad = \quad \textnormal{id}_{^{\star}x}.
	\end{equation}
	A \textit{right dual} to $x \in \mathcal{C}$ is an object $x^{\star} \in \mathcal{C}$ with a (right) evaluation $\widetilde{\varepsilon}_x: x \otimes x^{\star} \rightarrow \unit$ and a (right) coevaluation $\widetilde{\eta}_x: \unit \rightarrow\  x^{\star} \otimes x$, satisfying analogous zig-zag identities. If for every object $x \in \mathcal{C}$ there exist left and right duals $^{\star}x,\ x^{\star}$, we call $(\mathcal{C}, \otimes, \mathbbm{1})$ \textit{rigid} with dualities $(-)^{\star}$ and $^{\star}(-)$.
\end{definition}
In the graphical calculus for strict monoidal categories left evaluation and left coevaluation are usually denoted as follows:
\[ \varepsilon_x \equiv \includegraphics[height=0.5cm]{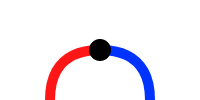}:\ ^{\star} x \otimes x \rightarrow \unit,\qquad \eta_x \equiv \includegraphics[height=0.5cm]{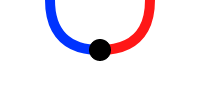}: \unit \rightarrow x \otimes\ ^{\star}x. \] 
To distinguish, we represent the object $x \in \mathcal{C}$ in blue, and its left dual $^{\star}x \in \mathcal{C}$ in red.
The snake identities \eqref{rigidequation1}, \eqref{rigidequation2} then can be formulated as
	 	\begin{equation}
	 	\label{zigzagidentities}
	 		\begin{tabular}{m{3.5em} m{0em} m{2em} m{2em} m{3.5em} m{0em} m{2em}}
	\includegraphics[height=1cm]{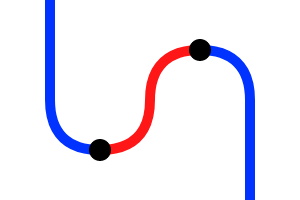} 
	&= 
	&\includegraphics[height=1cm, width=0.5cm]{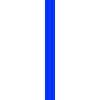} 
	& \textnormal{and}  
	&\includegraphics[height=1cm]{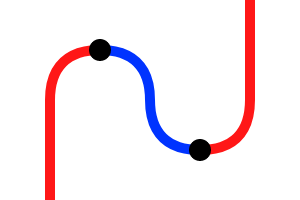} 
	&=  
	&\includegraphics[height=1cm, width=0.5cm]{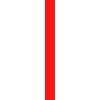}.
	\end{tabular}	
	 	\end{equation}
	 	 	
Analogously one can represent the right evaluation, the right coevaluation and the corresponding snake identities.

By construction the category $\vect$ of finite-dimensional vector spaces is rigid. Another, non-trivial example is given by the representation category $H\Repfd$ of finite-dimensional modules over some finite-dimensional $\mathbbm{k}$-Hopf algebra $H$ with antipode $S: H \rightarrow H$ and skew-antipode $T: H \rightarrow H$. In that case the left and right dual of some $H$-module $V$ is given by the vector space $V^{\star}$ with respective actions 
\[ (h.v')(v) := v'(S(h).v) \quad \textnormal{and} \quad (h.v')(v) := v'(T(h).v)) \] 
for $v' \in V^{\star}, v \in V, h \in H$. In particular, the representation category $\mathbbm{k}\lbrack G \rbrack\Repfd$ of the group algebra $\mathbbm{k}\lbrack G \rbrack$ of some finite group $G$ is rigid. Other examples include the cobordism categories $\Cob(n)$, the category of profunctors $\Prof$, and the category $\Gvect$ of $G$-graded finite-dimensional vectorspaces, if $G$ is a group.

One can extend the notion of duality from objects to morphisms in the following way. For a morphism $f: x \rightarrow y$ in a rigid category $(\mathcal{C}, \otimes, \unit, \alpha, \lambda, \varrho)$ with left evaluation $\varepsilon_y:\ ^{\star}y \otimes y \rightarrow \unit$ and left coevaluation $\eta_x: \unit \rightarrow x \otimes\ ^{\star} x$, define a left dual morphism $^{\star} f:\ ^{\star}y \rightarrow\ ^{\star}x$ as
\[
^{\star} f := \lambda_{^{\star}x} \circ (\varepsilon_y \otimes \textnormal{id}_{^{\star} x}) \circ  \alpha^{-1}_{^{\star}y, y, ^{\star}x} \circ (\textnormal{id}_{^{\star}y} \otimes (f \otimes \textnormal{id}_{^{\star} x})) \circ (\textnormal{id}_{^{\star}y} \otimes \eta_x) \circ \varrho^{-1}_{^{\star}y},
\]
and analogously a right dual morphism $f^{\star}:\ y^{\star} \rightarrow\ x^{\star}$. This construction is functorial, contravariant and invertible up to isomorphism, as the following result (cf. \cite[Sec. 2.10.]{etingof2015tensor}) shows.

\begin{lemma}
\label{rigidmonoidalequivalence}
Let $(\mathcal{C}, \otimes, \unit)$ be a rigid category. The duality $(-)^{\star}$ can be equipped with the structure of a monoidal equivalence
\[
	(-)^{\star}:\quad (\mathcal{C}, \otimes, \unit) \rightarrow (\mathcal{C}, \otimes, \unit)^{\textnormal{opp}(0,1)}
\]
with quasi-inverse $^{\star}(-)$.
\end{lemma}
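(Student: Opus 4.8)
The plan is to verify separately the three ingredients packaged in the statement: that $(-)^{\star}$ is a contravariant functor compatible with the reversal of morphisms encoded by $\textnormal{opp}(1)$, that it carries a strong monoidal structure compatible with the reversal of the tensor product encoded by $\textnormal{opp}(0)$, and that $^{\star}(-)$ provides a quasi-inverse. Throughout I would work in a strict monoidal category (permissible by Mac Lane's strictness theorem) and argue graphically with string diagrams, so that the unitors and associators disappear and the snake identities \eqref{zigzagidentities} become the only structural input. Rigidity guarantees that $^{\star}(-)$ is defined on all of $\mathcal{C}$, so it is a legitimate candidate for the quasi-inverse.

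First I would establish functoriality. Using the diagrammatic definition of $f^{\star}$ one checks directly that $(\textnormal{id}_x)^{\star} = \textnormal{id}_{x^{\star}}$ and that $(g \circ f)^{\star} = f^{\star} \circ g^{\star}$ for composable $f,g$; each identity reduces, after sliding morphisms along the cups and caps, to a single application of a snake identity. Contravariance of $(-)^{\star}$ on $\mathcal{C}$ is precisely covariance of the functor $(-)^{\star} \colon \mathcal{C} \to \mathcal{C}^{\textnormal{opp}(1)}$, so this step produces the underlying functor.

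Next I would produce the monoidal structure. Since the target has tensor product $\otimes \circ \tau$, the required coherence datum is a natural isomorphism $(x \otimes y)^{\star} \cong y^{\star} \otimes x^{\star}$ together with $\unit^{\star} \cong \unit$. The forward and backward maps are assembled from the evaluations and coevaluations of $x$, $y$ and $x \otimes y$ in the familiar way, and that they are mutually inverse follows from repeated use of the snake identities. I would then check the two monoidal-functor coherence axioms; here the bookkeeping is the most delicate, because the associativity constraint on $\mathcal{C}^{\textnormal{opp}(0)}$ is $\alpha^{-1}$ with the arguments permuted, and one must confirm that this reversed associator matches the reassociation implicit in grouping the cups and caps for $(x \otimes y \otimes z)^{\star}$. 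This verification, together with naturality of the isomorphism in $x$ and $y$ (which amounts to dragging a dual morphism $f^{\star}$ past the structural maps), is the main obstacle and the only genuinely laborious part of the argument.

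Finally I would show that $^{\star}(-)$ is a quasi-inverse. The crux is the uniqueness of duals up to canonical isomorphism: if $x^{\star}$ is a right dual of $x$, then $x$ is a left dual of $x^{\star}$, so $^{\star}(x^{\star})$ and $x$ are both left duals of $x^{\star}$ and are therefore connected by a canonical isomorphism, natural in $x$; symmetrically $(^{\star}x)^{\star} \cong x$. These two families of isomorphisms furnish the unit and counit exhibiting $^{\star}(-)$ as quasi-inverse to $(-)^{\star}$, and one checks that they are themselves monoidal, so the equivalence upgrades to a monoidal equivalence. Assembling the functor of the first step, the monoidal structure of the second, and the quasi-inverse of the third yields the claim.
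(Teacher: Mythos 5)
Your proposal is correct, but note that the paper itself offers no proof of this lemma: it is stated as a known result with a citation to \cite[Sec. 2.10]{etingof2015tensor}, so there is no in-paper argument to compare against. Your sketch is precisely the standard argument from that reference --- contravariant functoriality of $(-)^{\star}$ via the snake identities, the strong monoidal datum $(x \otimes y)^{\star} \cong y^{\star} \otimes x^{\star}$ (equivalently, exhibiting $y^{\star} \otimes x^{\star}$ as a right dual of $x \otimes y$ and invoking uniqueness of duals), and the quasi-inverse $^{\star}(-)$ obtained from the canonical natural isomorphisms $^{\star}(x^{\star}) \cong x \cong (^{\star}x)^{\star}$ --- and each step you leave as a verification (coherence of the monoidal structure, naturality, monoidality of the unit and counit of the equivalence) is genuinely routine, so the outline stands as a complete proof strategy.
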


\section{Internal hom}

\begin{definition}[Module category]
	Let $(\mathcal{C}, \otimes, \unit)$ be a monoidal category and $\mathcal{M}$ an arbitrary category. An \textit{action} of $\mathcal{C}$ on $\mathcal{M}$ is a strong monoidal functor $F: (\mathcal{C}, \otimes, \unit) \rightarrow (\End(\mathcal{M}), \circ , \textnormal{id}_{\mathcal{M}})$. If such an action exists, we call $_{\mathcal{C}} \mathcal{M} := (\mathcal{M}, F)$ a (left) \textit{module category} over $\mathcal{C}$.
\end{definition}

Giving an action of $(\mathcal{C}, \otimes, \unit)$ on $\mathcal{M}$ is equivalent to giving a functor $\rhd: \mathcal{C} \times \mathcal{M} \rightarrow \mathcal{M}$ and natural isomorphisms 
$(\alpha_{x,y,m}: (x \otimes y) \rhd m \cong x \rhd (y \rhd m))_{x,y \in \mathcal{C}, m \in \mathcal{M}}$ and $(\lambda_m: \unit \rhd m \cong m)_{m \in \mathcal{M}}$, such that one pentagon and one triangle diagram commute, cf. \cite[Prop. 7.1.3]{etingof2015tensor}.

\begin{definition}[Internal hom]
\label{internalhomdef}
	Let $(\mathcal{C}, \otimes, \unit)$ be monoidal category and let $_{\mathcal{C}} \mathcal{M}$ be a left $\mathcal{C}$-module category. An \textit{internal hom} of $\mathcal{M}$ in $\mathcal{C}$ is a functor
	\begin{equation*}
		\underline{\textnormal{Hom}}(-,-): \mathcal{M}^{\textnormal{opp}} \times \mathcal{M} \rightarrow \mathcal{C}
	\end{equation*}
	such that for every $m \in \mathcal{M}$ the functor $\underline{\textnormal{Hom}}(m,-) : \mathcal{M} \rightarrow \mathcal{C}$ is right adjoint to the functor $- \rhd m: \mathcal{C} \rightarrow \mathcal{M}$, i.e. there are natural isomorphisms
	\begin{equation*}
		\textnormal{Hom}_{\mathcal{M}}(x \rhd m,n) \cong  \textnormal{Hom}_{\mathcal{C}}(x,\underline{\textnormal{Hom}}(m,n))
	\end{equation*}
	for all $x \in \mathcal{C}$ and $m,n \in \mathcal{M}$.
\end{definition}

\begin{definition}[Closed category]
	Let $(\mathcal{C}, \otimes, \unit)$ be a monoidal category. We call $(\mathcal{C}, \otimes, \unit)$ \textit{left closed}, if there exists an internal hom $\underline{\textnormal{Hom}}^l(-,-)$ for $\mathcal{C}$ as left $\mathcal{C}$-module category by tensoring from the left. We call $(\mathcal{C}, \otimes, \unit)$ \textit{right closed}, if there exists an internal hom $\underline{\textnormal{Hom}}^r(-,-)$ for $\mathcal{C}$ as left $\mathcal{C}^{\textnormal{opp}(0)}$-module category by tensoring from the right. A monoidal category that is both left and right closed is called \textit{biclosed}.
\end{definition}
In other words, a monoidal category $(\mathcal{C}, \otimes, \unit)$ is left closed with internal hom $\underline{\textnormal{Hom}}^l(-,-)$, if there exist natural isomorphisms
\begin{equation}
\label{ihomldefeq}
	\textnormal{Hom}_{\mathcal{C}}(x \otimes y, z) \cong \textnormal{Hom}_{\mathcal{C}}(x, \underline{\textnormal{Hom}}^l(y,z)),
\end{equation}
for all $x,y,z \in \mathcal{C}$, and it is right closed with internal hom $\underline{\textnormal{Hom}}^r(-,-)$, if there exist natural isomorphisms
\begin{equation}
\label{ihomrdefeq}
	\textnormal{Hom}_{\mathcal{C}}(x \otimes y, z) \cong \textnormal{Hom}_{\mathcal{C}}(y, \underline{\textnormal{Hom}}^r(x,z))
\end{equation}
for all $x,y,z \in \mathcal{C}$.

The Yoneda Lemma implies that left, respectively right, internal homs are unique up to unique natural isomorphism. In particular, if a biclosed category $(\mathcal{C}, \otimes, \unit, \underline{\textnormal{Hom}}^l, \underline{\textnormal{Hom}}^r)$ is symmetric, the Yoneda Lemma implies that left and right internal hom are isomorphic, $\underline{\textnormal{Hom}}^l \cong \underline{\textnormal{Hom}}^r$. For this reason a symmetric biclosed category is sometimes simply called \textit{closed}.

The following result (cf. \cite[Prop. 2.10.8]{etingof2015tensor} shows that every rigid category is biclosed and vice versa.

\begin{lemma}
\label{rigidimpliesclosed}
	Let $(\mathcal{C}, \otimes, \unit)$ be a rigid category with duality $(-)^{\star}$ and quasi-inverse $^{\star}(-)$. Then $(\mathcal{C}, \otimes, \unit)$ is biclosed by defining
	\begin{equation}
	\label{internalhomrigid1}
		\underline{\textnormal{Hom}}^l(x,y) := y \otimes\  ^{\star}x \quad \textnormal{and} \quad \underline{\textnormal{Hom}}^r(x,y) :=  x^{\star} \otimes y
	\end{equation}
	on $x,y \in \mathcal{C}$. Conversely, every biclosed category $(\mathcal{C}, \otimes, \unit)$ with internal homs $\underline{\textnormal{Hom}}^l(-,-)$ and $\underline{\textnormal{Hom}}^r(-,-)$ is rigid by defining
	\begin{equation}
		\label{internalhomrigid2}
			^{\star}x := \underline{\textnormal{Hom}}^l(x,\mathbbm{1}) \quad \textnormal{and} \quad  x^{\star} := \underline{\textnormal{Hom}}^r(x,\mathbbm{1}).
	\end{equation}
	In fact, by uniqueness, in every rigid biclosed category duality and internal hom are related by \eqref{internalhomrigid1}, \eqref{internalhomrigid2} up to natural isomorphism.
\end{lemma}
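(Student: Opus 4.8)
The plan is to prove the two implications in turn and then read off the concluding uniqueness clause from the Yoneda lemma. The forward direction (rigid $\Rightarrow$ biclosed) is a direct computation: I would exhibit the adjunction bijections \eqref{ihomldefeq} and \eqref{ihomrdefeq} explicitly and check that the two assignments are mutually inverse using the zig-zag identities. For the converse I would recover the evaluation as the counit of the tensor--hom adjunction, and the real content of the statement is that the two descriptions then agree, which is where the Yoneda lemma does the work.

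First I would show that a rigid category is biclosed. Fixing $y,z$, I define $\Phi\colon \textnormal{Hom}_{\mathcal{C}}(x \otimes y, z) \to \textnormal{Hom}_{\mathcal{C}}(x, z \otimes {}^{\star}y)$ by $\Phi(f) = (f \otimes \textnormal{id}_{{}^{\star}y}) \circ (\textnormal{id}_x \otimes \eta_y)$ and a map $\Psi$ in the other direction by $\Psi(g) = (\textnormal{id}_z \otimes \varepsilon_y) \circ (g \otimes \textnormal{id}_y)$, in each case suppressing the unitors and associators. I would then verify $\Psi \circ \Phi = \textnormal{id}$ and $\Phi \circ \Psi = \textnormal{id}$: after inserting $\eta_y,\varepsilon_y$ and sliding morphisms past one another with the interchanger \eqref{intertwiner}, both composites collapse by the zig-zag identities \eqref{rigidequation1} and \eqref{rigidequation2} applied to the left dual ${}^{\star}y$. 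Naturality in $x$ is immediate from bifunctoriality of $\otimes$, so this gives the adjunction $-\otimes y \dashv \underline{\textnormal{Hom}}^l(y,-)$ with $\underline{\textnormal{Hom}}^l(y,z) = z \otimes {}^{\star}y$. The identical argument with the right (co)evaluations $\widetilde{\varepsilon}_x,\widetilde{\eta}_x$ yields \eqref{ihomrdefeq} with $\underline{\textnormal{Hom}}^r(x,z) = x^{\star} \otimes z$. Since the string-diagram calculus introduced earlier renders these zig-zag cancellations transparent, I would present this step diagrammatically.

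For the converse, given a biclosed category I would set ${}^{\star}x := \underline{\textnormal{Hom}}^l(x,\unit)$ and $x^{\star} := \underline{\textnormal{Hom}}^r(x,\unit)$, and obtain the evaluation $\varepsilon_x\colon {}^{\star}x \otimes x \to \unit$ as the image of $\textnormal{id}_{{}^{\star}x}$ under \eqref{ihomldefeq} with $z = \unit$, i.e. as the counit of $-\otimes x \dashv \underline{\textnormal{Hom}}^l(x,-)$. The delicate ingredient is the coevaluation $\eta_x\colon \unit \to x \otimes {}^{\star}x$: the adjunction unit only provides a canonical map $\unit \to \underline{\textnormal{Hom}}^l(x,x)$, and promoting this to a coevaluation into $x \otimes {}^{\star}x$ satisfying the snake relations \eqref{rigidequation1}, \eqref{rigidequation2} is exactly the point at which genuine dualizability, rather than mere closedness, is invoked.

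This coevaluation step is the main obstacle, and it is also what makes the final clause clean. Once both directions are in place, uniqueness is pure Yoneda: internal homs are determined up to unique natural isomorphism by \eqref{ihomldefeq}, \eqref{ihomrdefeq} (as noted after those equations), and left and right duals are determined up to canonical isomorphism by their zig-zag data. Hence the forward direction forces $\underline{\textnormal{Hom}}^l(x,\unit) \cong \unit \otimes {}^{\star}x \cong {}^{\star}x$ and dually $\underline{\textnormal{Hom}}^r(x,\unit) \cong x^{\star}$, so in any rigid biclosed category the two prescriptions \eqref{internalhomrigid1} and \eqref{internalhomrigid2} necessarily agree up to natural isomorphism, which is the asserted compatibility.
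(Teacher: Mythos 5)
Your forward direction is correct: the maps $\Phi(f) = (f \otimes \textnormal{id}_{{}^{\star}y}) \circ (\textnormal{id}_x \otimes \eta_y)$ and $\Psi(g) = (\textnormal{id}_z \otimes \varepsilon_y) \circ (g \otimes \textnormal{id}_y)$ are the standard adjunction bijections, the interchange law \eqref{intertwiner} together with the zig-zag identities \eqref{rigidequation1}, \eqref{rigidequation2} shows they are mutually inverse, and naturality is routine; the mirrored argument gives \eqref{ihomrdefeq}. For comparison: the paper supplies no proof of \autoref{rigidimpliesclosed} at all, only the citation \cite[Prop. 2.10.8]{etingof2015tensor}, and that cited result covers precisely this implication. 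Your treatment of the final clause is also fine: in a category that is both rigid and biclosed, uniqueness of adjoints forces \eqref{internalhomrigid1} and \eqref{internalhomrigid2} to agree up to natural isomorphism, exactly as you argue via Yoneda.

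The converse, however, is a genuine gap in your proposal, and an unfillable one. You correctly observe that closedness only yields the evaluation ${}^{\star}x \otimes x \rightarrow \unit$ (the counit of $- \otimes x \dashv \underline{\textnormal{Hom}}^l(x,-)$) and a unit map $\unit \rightarrow \underline{\textnormal{Hom}}^l(x,x)$, and that the coevaluation $\unit \rightarrow x \otimes {}^{\star}x$ with the snake identities is the missing piece; but you then write that this is ``exactly the point at which genuine dualizability, rather than mere closedness, is invoked'' and proceed as if both directions were done. That is circular: dualizability is what the converse is supposed to produce, so your text never proves it. Nor can any argument: the converse is false as stated. The thesis's own running example $(\mathcal{P}(X), \cap, X)$ is symmetric and closed (hence biclosed) with $\underline{\textnormal{Hom}}_{\cap}(A,B) = A^{c} \cup B$, yet the introduction shows it is not rigid, since no coevaluation $X \rightarrow A \cap A^{c} = \emptyset$ exists; likewise $\Set$ with the cartesian product, or $\Vect$ with spaces of arbitrary dimension, are biclosed but not rigid. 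The correct move was to flag that the lemma over-claims and that the converse needs an additional hypothesis -- for instance that the canonical comparison maps $y \otimes \underline{\textnormal{Hom}}^l(x,\unit) \rightarrow \underline{\textnormal{Hom}}^l(x,y)$ (and their right-handed analogues) are isomorphisms -- rather than to defer the coevaluation as ``the main obstacle'' and then read off the uniqueness clause as though the equivalence were established.
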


In the rigid biclosed symmetric monoidal category $(\vect, \otimes, \mathbbm{k})$ there exist isomorphisms $ V \cong V^{\star \star} = \underline{\textnormal{Hom}}(\underline{\textnormal{Hom}}(V, \mathbbm{k}), \mathbbm{k})$ and $\textnormal{Hom}(V \otimes W, \mathbbm{k}) \cong \textnormal{Hom}(V, W^{\star}) \cong \textnormal{Hom}(W, V^{\star})$. \autoref{rigidimpliesclosed} yields, in particular, non-symmetric generalisations of those isomorphisms for arbitrary rigid categories. In fact, in any rigid biclosed category $(\mathcal{C}, \otimes, \unit, (-)^{\star},\ ^{\star}(-))$ with internal homs $ \underline{\textnormal{Hom}}^l(-,-)$ and $\underline{\textnormal{Hom}}^r(-,-)$ we find isomorphisms
\begin{equation*}
	\underline{\textnormal{Hom}}^r(\underline{\textnormal{Hom}}^l(x,\mathbbm{1}),\mathbbm{1}) = (^{\star}x)^{\star} \cong x \cong\ ^{\star}(x^{\star}) = \underline{\textnormal{Hom}}^l(\underline{\textnormal{Hom}}^r(x,\mathbbm{1}),\mathbbm{1}),
\end{equation*}
\begin{equation*}
	\textnormal{Hom}_{\mathcal{C}}(x,\ ^{\star}y) \cong	\textnormal{Hom}_{\mathcal{C}}(x \otimes y, \mathbbm{1}) \cong \textnormal{Hom}_{\mathcal{C}}(y , x^{\star}).
\end{equation*}

\section{Ribbon monoidal categories}

Similarly to how a monoidal category categorifies the notion of a monoid, the following definition categorifies the notion of a \textit{commutative} monoid. Following the philosophy of category theory, the \textit{property} $x \otimes y = y \otimes x$ is replaced by a \textit{structure}, namely an isomorphism $x \otimes y \cong y \otimes x$.

\begin{definition}[Braided category]
\label{braidingdef}
	A \textit{braiding} on a monoidal category $(\mathcal{C}, \otimes, \mathbbm{1})$ is a natural isomorphism 
	\[ \gamma_{x,y}: x \otimes y \rightarrow y \otimes x \]
	 such that two 	hexagonal diagrams commute (cf. \cite[Sec. 8.1]{etingof2015tensor}). A \textit{braided monoidal category} is a pair consisting of a monoidal category and a braiding.
\end{definition}

For any braiding $\gamma$ the \textit{reverse braiding} $\gamma'$ is defined by $\gamma'_{x,y} := \gamma_{y,x}^{-1}$. One can check that $\gamma'$ is indeed a braiding, i.e. satisfies the two hexagonal diagrams. Braiding and reverse braiding are commonly pictured as
\begin{equation*}
	\gamma_{x,y} \equiv \includegraphics[height=0.5cm]{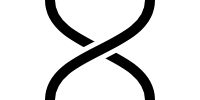}, \qquad
	\gamma'_{x,y} \equiv \includegraphics[height=0.5cm]{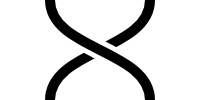}.
\end{equation*}
The equality $\gamma'_{y,x} \circ \gamma_{x,y} = \textnormal{id}_{x\otimes y}$ then corresponds to the topological reasonable statement
\begin{equation}
\label{braidingreverse}
\begin{tabular}{m{2em} m{0em} m{1.5em}}
	\includegraphics[height=1cm]{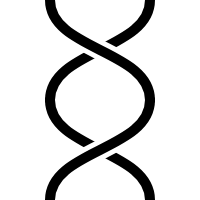} 
	&= 
	&\includegraphics[height=1cm, width=1cm]{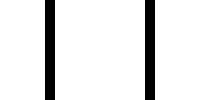}.
\end{tabular}
\end{equation}

\begin{definition}[Symmetric category]
	A braided monoidal category $(\mathcal{C}, \otimes, \mathbbm{1}, \gamma)$ is called \textit{symmetric}, if 
	\begin{equation}
	\label{symmetriceq}
		\gamma_{y,x} \circ \gamma_{x,y} = \textnormal{id}_{x \otimes y}
	\end{equation}
	for all $x,y \in \mathcal{C}$.
\end{definition}

Examples of symmetric braided monoidal categories include $\Set$, $\Vect$, $A\Rep$ for $A$ some arbitrary-dimensional $\mathbbm{k}$-bialgebra $A$, and the category $\GVect$ of $G$-graded arbitrary-dimensional vector spaces   for an abelian group $G$. In all cases the braiding is given by the transposition of factors.

Note that \eqref{symmetriceq} corresponds to the non-trivial (in contrast to \eqref{braidingreverse}) topological condition
\begin{equation*}
\begin{tabular}{m{2em} m{0em} m{1.5em}}
	\includegraphics[height=1cm]{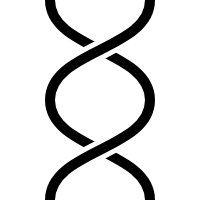} 
	&= 
	&\includegraphics[height=1cm, width=1cm]{pics/braiding/id}.
\end{tabular}
\end{equation*}

\begin{definition}[Ribbon category]
\label{ribbondef}
	A \textit{twist} on a braided rigid monoidal category $(\mathcal{C}, \otimes, \mathbbm{1}, \gamma, (-)^{\star})$ is a natural automorphism $\theta: \textnormal{id}_{\mathcal{C}} \Rightarrow \textnormal{id}_{\mathcal{C}}$ such that
	\begin{equation*}
		\theta_{x \otimes y} = (\theta_x \otimes \theta_y) \circ \gamma_{y,x} \circ \gamma_{x,y}
	\end{equation*}
	for all $x,y \in \mathcal{C}$. A twist is called \textit{ribbon structure}, if 
	\begin{equation*}
		(\theta_x)^{\star} = \theta_{x^{\star}}
	\end{equation*}
	for all $x \in \mathcal{C}$. A \textit{ribbon category} is a braided rigid monoidal category equipped with a ribbon structure.
\end{definition}

\chapter{Generalised duality theory}

In this Chapter we will introduce the main subject of this thesis, $\star$-\textit{autonomous categories}.

\section{$\star$-autonomous categories}
Let $(\mathcal{C}, \otimes, \unit)$ be a biclosed monoidal category with internal homs $\ihoml$ and $\ihomr$. Abbreviating $_{\mathcal{C}}\langle -,- \rangle := \textnormal{Hom}_{\mathcal{C}}(-,-)$, for any object $x,k \in \mathcal{C}$ there exist natural isomorphisms
	\[
	_{\mathcal{C}}\langle \ihoml(x,k), \ihoml(x,k) \rangle \cong\ _{\mathcal{C}} \langle \ihoml(x,k) \otimes x, k \rangle \cong\ _{\mathcal{C}} \langle x, \ihomr(\ihoml(x,k),k) \rangle,
	\]
		\[
	_{\mathcal{C}}\langle \ihomr(x,k), \ihomr(x,k) \rangle \cong\ _{\mathcal{C}} \langle x \otimes \ihomr(x,k) , k \rangle \cong\ _{\mathcal{C}} \langle x, \ihoml(\ihomr(x,k),k) \rangle.
	\]
This implies, in particular, that we can identify $\textnormal{id}_{\ihoml(x,k)}$ and $\textnormal{id}_{\ihomr(x,k)}$ with unique morphisms
\begin{equation}
\label{induceddualizingobject}
	x \rightarrow \ihomr(\ihoml(x,k),k) \quad \textnormal{and} \quad x \rightarrow \ihoml(\ihomr(x,k),k).
\end{equation} 

\begin{definition}[Dualizing object]
	Let $(\mathcal{C}, \otimes, \unit)$ be a biclosed monoidal category with internal homs $\ihoml$ and $\ihomr$. We call an object $k \in \mathcal{C}$ \textit{dualizing object}, if the induced morphisms \eqref{induceddualizingobject} are isomorphisms for all $x \in \mathcal{C}$.
\end{definition}

In the following Proposition, and its proof, we summarise results found in  \cite{barrautonomcat}, \cite{barr1995nonsymmetric} and \cite{boyarchenko2011duality}. For clarity we will use the letter $D$ instead of the symbol $^{\star}$ for the duality functor.

\begin{proposition}
\label{starautonomequivalent}
Let $(\mathcal{C}, \otimes, \unit)$ be a monoidal category. The following are equivalent:
\begin{enumerate}
	\item There exist left and right internal homs on $\mathcal{C}$ and a dualizing object $k \in \mathcal{C}$.
	\item There exists an equivalence $D: \mathcal{C} \rightarrow \mathcal{C}^{\textnormal{opp}(1)}$ and natural isomorphisms
	\begin{equation}
	\label{starautonomfrobeniuseq}
		\textnormal{Hom}_{\mathcal{C}}(x \otimes y, D^{\textnormal{opp}(1)}(\overline{z})) \cong \textnormal{Hom}_{\mathcal{C}}(x, D^{\textnormal{opp}(1)}(\overline{y \otimes z})).
	\end{equation}
	\item There exists an equivalence $D: \mathcal{C} \rightarrow \mathcal{C}^{\textnormal{opp}(1)}$, an object $k \in \mathcal{C}$, and natural isomorphisms
	\begin{equation}
		\label{grothendieck}
		\textnormal{Hom}_{\mathcal{C}}(x \otimes y, k) \cong \textnormal{Hom}_{\mathcal{C}}(x, D^{\textnormal{opp}(1)}(\overline{y})).
	\end{equation}
	\item There exists an equivalence $D: \mathcal{C} \rightarrow \mathcal{C}^{\textnormal{opp}(1)}$, an object $k \in \mathcal{C}$, and natural isomorphisms
	\begin{equation}
		\label{grothendieck2}
	\textnormal{Hom}_{\mathcal{C}}(x \otimes y, k) \cong \textnormal{Hom}_{\mathcal{C}}(y, D^{-1}(\overline{x})).
	\end{equation}
\end{enumerate}
\end{proposition}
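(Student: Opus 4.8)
The plan is to treat condition~(3) as a hub and establish the three biconditionals (1)$\Leftrightarrow$(3), (3)$\Leftrightarrow$(4) and (2)$\Leftrightarrow$(3); because the equivalence $D$ lets one pass freely between $D$, its quasi-inverse $D^{-1}$ and the double dual, each link reduces to a short manipulation of a single defining adjunction together with the associator and unitors, followed by one appeal to the Yoneda lemma for uniqueness.

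For (1)$\Rightarrow$(3) I would put $Dy:=\ihoml(y,k)$ and $D^{-1}y:=\ihomr(y,k)$, regarded as functors $\mathcal{C}\to\mathcal{C}^{\textnormal{opp}(1)}$ (contravariant in the underlying category). The two dualizing isomorphisms \eqref{induceddualizingobject} state exactly that $\ihomr(\ihoml(x,k),k)\cong x$ and $\ihoml(\ihomr(x,k),k)\cong x$ naturally, i.e.\ that these functors are mutually quasi-inverse, so $D$ is an equivalence; the isomorphism \eqref{grothendieck} is then the defining adjunction \eqref{ihomldefeq} of $\ihoml$ specialised to the target $k$. For the converse (3)$\Rightarrow$(1) the real content is to recover the \emph{full} internal homs from $D$ and $k$. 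Writing an arbitrary object as $w\cong D(D^{-1}w)$, I would define
\[ \ihoml(y,w):=D(y\otimes D^{-1}w),\qquad \ihomr(x,w):=D^{-1}(Dw\otimes x), \]
and verify \eqref{ihomldefeq} and \eqref{ihomrdefeq} by inserting these into \eqref{grothendieck} (and into the mirror identity \eqref{grothendieck2}, see below) and cancelling $DD^{-1}\cong\textnormal{id}$. Granting naturality, the adjunctions force $\ihoml(x,k)\cong Dx$ and $\ihomr(x,k)\cong D^{-1}x$, whence $\ihomr(\ihoml(x,k),k)\cong D^{-1}Dx\cong x$ and symmetrically, exhibiting $k$ as a dualizing object.

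The two remaining links are quick. For (3)$\Leftrightarrow$(4) I would use only that $D$ is an equivalence: applying the contravariant fully faithful quasi-inverse $D^{-1}$ to hom-sets gives $\textnormal{Hom}_{\mathcal{C}}(x,D^{\textnormal{opp}(1)}(\overline{y}))\cong\textnormal{Hom}_{\mathcal{C}}(y,D^{-1}(\overline{x}))$ via $D^{-1}D\cong\textnormal{id}$, which carries \eqref{grothendieck} to \eqref{grothendieck2}; applying $D$ instead gives the reverse implication. For (3)$\Rightarrow$(2) I would invoke \eqref{grothendieck} twice, once parsing the source as $x\otimes(y\otimes z)$ and once as $(x\otimes y)\otimes z$, and splice the two with the associator, obtaining \eqref{starautonomfrobeniuseq} with no extra data; the identical argument run on \eqref{grothendieck2} yields the mirror Frobenius identity used above. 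Finally (2)$\Rightarrow$(3) follows by setting $k:=D^{\textnormal{opp}(1)}(\overline{\unit})$ and specialising \eqref{starautonomfrobeniuseq} at $z=\unit$ via the unitor.

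The main obstacle I expect is not any individual isomorphism but the coherence bookkeeping around (3)$\Rightarrow$(1): checking that the ad hoc formulas for $\ihoml$ and $\ihomr$ are genuinely functorial and that the reconstructed adjunction isomorphisms are natural in all three arguments, and, most delicately, confirming that the \emph{canonical} comparison morphisms \eqref{induceddualizingobject} — rather than merely some abstract isomorphism — are the maps realised by $D^{-1}D\cong\textnormal{id}$. A secondary but persistent hazard is the variance bookkeeping concealed in the $\mathcal{C}^{\textnormal{opp}(1)}$ and overline notation, which must be tracked carefully so that the contravariant equivalence is applied on the correct side at each step.
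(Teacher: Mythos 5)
Your proposal is correct, and the ingredients are the same as the paper's, but you organize the equivalences differently: you use (3) as the hub, proving $(1)\Leftrightarrow(3)$, $(2)\Leftrightarrow(3)$, $(3)\Leftrightarrow(4)$, whereas the paper chains $(1)\Leftrightarrow(2)$, $(2)\Leftrightarrow(3)$, $(3)\Leftrightarrow(4)$, so that the Frobenius-type condition (2) is what gets matched against the closed structure. The key data coincide in both arguments: in one direction $D:=\ihoml(-,k)^{\textnormal{opp}(1)}$, $D^{-1}:=\ihomr(-,k)$; in the other, exactly your formulas $\ihoml(y,w):=D^{\textnormal{opp}(1)}(\overline{y\otimes D^{-1}(\overline{w})})$, $\ihomr(x,w):=D^{-1}(\overline{D^{\textnormal{opp}(1)}(\overline{w})\otimes x})$ and $k:=D^{\textnormal{opp}(1)}(\overline{\unit})$. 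The difference is only where the associator work falls: your $(1)\Rightarrow(3)$ is essentially definitional (restrict the internal-hom adjunction \eqref{ihomldefeq} to target $k$, and read the dualizing condition as saying $\ihoml(-,k)$ and $\ihomr(-,k)$ are quasi-inverse), at the cost that your $(3)\Rightarrow(1)$ needs both \eqref{grothendieck} and its mirror \eqref{grothendieck2} — so the $(3)\Leftrightarrow(4)$ link must be established first — and two applications of each together with the associator; the paper's $(2)\Rightarrow(1)$ runs off the single identity \eqref{starautonomfrobeniuseq}, while its $(1)\Rightarrow(2)$ carries the associator burden instead. On the one delicate point you flag — that the dualizing-object condition concerns the \emph{canonical} morphisms \eqref{induceddualizingobject}, not merely the existence of some natural isomorphisms — the paper is equally terse (it just asserts ``it follows that $k$ is dualizing''); the gap closes by the standard fact that in an adjunction whose two functors are both fully faithful the unit and counit are invertible: your natural isomorphisms $\ihomr(\ihoml(x,k),k)\cong x\cong\ihoml(\ihomr(x,k),k)$ certify that $\ihoml(-,k)$ and $\ihomr(-,k)$ are equivalences, hence fully faithful, and the unit and counit of the adjunction between them are precisely the maps \eqref{induceddualizingobject}.
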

\begin{proof}
	$(1) \Rightarrow (2)$: Define $D := \ihoml(-,k)^{\textnormal{opp}(1)}: \mathcal{C} \rightarrow \mathcal{C}^{\textnormal{opp}(1)}$ and $D^{-1} := \ihomr(-,k): \mathcal{C}^{\textnormal{opp}(1)} \rightarrow \mathcal{C}$. The isomorphisms \eqref{induceddualizingobject} witness that $D$ is indeed an equivalence with quasi-inverse $D^{-1}$. Moreover, using the associator, we find
	\begin{align*}
		&\textnormal{Hom}_{\mathcal{C}}(x \otimes y, D^{\textnormal{opp}(1)}(\overline{z})) 
		= \textnormal{Hom}_{\mathcal{C}}(x \otimes y,  \ihoml(z,k)) 
		\cong \textnormal{Hom}_{\mathcal{C}}(x \otimes y \otimes z, k) \\
		&\cong \textnormal{Hom}_{\mathcal{C}}(x,  \ihoml(y \otimes z,k)) 
		= \textnormal{Hom}_{\mathcal{C}}(x, D^{\textnormal{opp}(1)}(\overline{y \otimes z})).
	\end{align*}
	
\noindent	$(2) \Rightarrow (1)$: 
	The calculations
\begin{align*}
_{\mathcal{C}} \langle x \otimes y, z \rangle 
\cong\ _{\mathcal{C}} \langle x \otimes y, D^{\textnormal{opp}(1)}(\overline{D^{-1}(\overline{z})}) \rangle	
\overset{\eqref{starautonomfrobeniuseq}}{\cong}\ _{\mathcal{C}} \langle x, D^{\textnormal{opp}(1)}(\overline{y \otimes D^{-1}(\overline{z}) }) \rangle,
\end{align*}
\begin{align*}
	&_{\mathcal{C}} \langle x \otimes y, z \rangle 
	\overset{D^{\textnormal{opp}(1)}}{\cong}\ _{\mathcal{C}} \langle D^{\textnormal{opp}(1)}(\overline{z}),  D^{\textnormal{opp}(1)}(\overline{x \otimes y}) \rangle
	\overset{\eqref{starautonomfrobeniuseq}}{\cong}\  _{\mathcal{C}} \langle \unit, D^{\textnormal{opp}(1)}(\overline{D^{\textnormal{opp}(1)}(\overline{z}) \otimes x \otimes y}) \rangle \\
	&\overset{\eqref{starautonomfrobeniuseq}}{\cong}\ _{\mathcal{C}} \langle D^{\textnormal{opp}(1)}(\overline{z}) \otimes x, D^{\textnormal{opp}(1)}(\overline{y}) \rangle
	\overset{D^{-1}}{\cong}\ _{\mathcal{C}} \langle y, D^{-1}(\overline{D^{\textnormal{opp}(1)}(\overline{z} \otimes x}) \rangle
\end{align*}
	show that $\ihoml(\overline{x}, y) := D^{\textnormal{opp}(1)}(\overline{x \otimes D^{-1}(\overline{y})})$ and $\ihomr(\overline{x}, y) :=$ $D^{-1}(\overline{D^{\textnormal{opp}(1)}(\overline{y}) \otimes x})$ define left and right inner homs. Moreover, the Yoneda Lemma and 
	\begin{align}
	\label{ddinversek}
	_{\mathcal{C}} \langle x, 	D^{\textnormal{opp}(1)}(\overline{\unit}) \rangle 
	\overset{\eqref{starautonomfrobeniuseq}}{\cong}\ _{\mathcal{C}} \langle \unit, 	D^{\textnormal{opp}(1)}(\overline{x \otimes \unit}) \rangle 
	\overset{D^{-1}}{\cong}\ _{\mathcal{C}} \langle x, D^{-1}(\overline{\unit}) \rangle
	\end{align}
		show that $D^{\textnormal{opp}(1)}(\overline{\unit}) \cong D^{-1}(\overline{\unit}) =: k$. Since $D$ and $D^{-1}$ are quasi-inverse, and  
		\[ \ihoml(\overline{x}, k) =  D^{\textnormal{opp}(1)}(\overline{x \otimes D^{-1}(\overline{k})}) \cong D^{\textnormal{opp}(1)}(\overline{x \otimes D^{-1}(\overline{D^{\textnormal{opp}(1)}(\overline{\unit})})}) \cong D^{\textnormal{opp}(1)}(\overline{x}),
		\]
		\[
		\ihomr(\overline{x}, k) = D^{-1}(\overline{D^{\textnormal{opp}(1)}(\overline{D^{-1}(\overline{\unit})}) \otimes x}) \cong D^{-1}(\overline{x}),
		\]		
		it follows that $k$ is dualizing.	

\noindent $(2) \Rightarrow (3)$: Define $k :=  D^{\textnormal{opp}(1)}(\overline{\unit}) \overset{\eqref{ddinversek}}{\cong} D^{-1}(\overline{\unit})$. Then
\begin{align*}
_{\mathcal{C}} \langle x \otimes y, k \rangle
\cong\	_{\mathcal{C}} \langle x \otimes y, D^{\textnormal{opp}(1)}(\overline{\unit}) \rangle
\overset{\eqref{starautonomfrobeniuseq}}{\cong}\ _{\mathcal{C}} \langle x, D^{\textnormal{opp}(1)}(\overline{y \otimes \unit}) \rangle
\cong\ _{\mathcal{C}} \langle x, D^{\textnormal{opp}(1)}(\overline{y}) \rangle.
\end{align*}

\noindent $(3) \Rightarrow (2)$: Using the associator, one shows that
\begin{align*}
	_{\mathcal{C}} \langle x \otimes y, D^{\textnormal{opp}(1)}(\overline{z}) \rangle
	\overset{\eqref{grothendieck}}{\cong}\ _{\mathcal{C}} \langle (x \otimes y) \otimes z, k \rangle
	\cong\ \langle x \otimes (y \otimes z), k \rangle
	\overset{\eqref{grothendieck}}{\cong}\ _{\mathcal{C}} \langle x, D^{\textnormal{opp}(1)}(\overline{y \otimes z}) \rangle.
\end{align*}
		
\noindent $(3) \Leftrightarrow (4)$: This follows immediately from applying $D$ and $D^{-1}$, respectively.			
	\end{proof}

\begin{definition}[$\star$-autonomous category]
	A monoidal category with one of the structures of \autoref{starautonomequivalent} is called $\star$\textit{-autonomous}. In particular, a monoidal category with a structure as in $(3)$ or $(4)$, is also called \textit{Grothendieck-Verdier} category (cf. \cite{boyarchenko2011duality}).
	\end{definition}
	
Every rigid monoidal category is biclosed by \autoref{rigidimpliesclosed} and $\star$-autonomous with dualizing object given by the monoidal unit. For this reason, $\star$-autonomous categories with dualizing object given by the monoidal unit are called \textit{r-categories} in \cite{boyarchenko2011duality}. It follows that every rigid category is an r-category. However, the converse is false (cf. \cite[Example 0.9]{boyarchenko2011duality}).

In \cite[Prop. 1.3]{boyarchenko2011duality} it is shown that a dualizing object is unique only up to tensoring with invertible objects, i.e. if $k \in \mathcal{C}$ is a dualizing object and $x \in \mathcal{C}$ is an invertible object, then $x \otimes k$ and $k \otimes x$ are also dualizing objects.

As far as we understand, at the present the literature does not include any commonly accepted notion of morphisms of $\star$-autonomous categories. In fact, it would be appropriate to introduce several notions of morphisms, namely for all structures introduced in \autoref{starautonomequivalent}, and then extend \autoref{starautonomequivalent} to an equivalence of categories.

For example, a candidate for a morphism between two $\star$-autonomous categories $(\mathcal{C}, \otimes_{\mathcal{C}}, \unit_{\mathcal{C}}, k_{\mathcal{C}}, D_{\mathcal{C}}, D^{-1}_{\mathcal{C}})$ and  $(\mathcal{D}, \otimes_{\mathcal{D}}, \unit_{\mathcal{D}}, k_{\mathcal{D}}, D_{\mathcal{D}}, D^{-1}_{\mathcal{D}})$ in the sense of $3.$ of \\\autoref{starautonomequivalent}, could be a lax monoidal functor 
\begin{equation}
		\label{monoidalfuncstar}
		(F, \Phi, \phi): (\mathcal{C}, \otimes_{\mathcal{C}}, \unit_{\mathcal{C}}) \rightarrow (\mathcal{D}, \otimes_{\mathcal{D}}, \unit_{\mathcal{D}})
	\end{equation}
	together with a morphism 
	\begin{equation}
	\label{kf}
		k_F: F(k_{\mathcal{C}}) \rightarrow k_{\mathcal{D}}.
	\end{equation}
	and a lax natural transformation
	\begin{equation}
	\label{laxnattransf}
		F(D^{\textnormal{opp}(1)}_{\mathcal{C}}(\overline{x})) \rightarrow D^{\textnormal{opp}(1)}_{\mathcal{D}}(\overline{F(x)}),
	\end{equation}
	such that the following coherence diagram commutes
	\begin{equation}
	\label{laxnatcomm}
	\begin{tikzcd}[row sep=large, column sep = large]
	_{\mathcal{C}} \langle x \otimes y, k_{\mathcal{C}} \rangle \arrow{d}[left]{F} \arrow{r}{\eqref{grothendieck}} & _{\mathcal{C}} \langle x, D^{\textnormal{opp}(1)}_{\mathcal{C}}(\overline{y}) \rangle \arrow{d}{F}\\
	_{\mathcal{D}} \langle F(x \otimes y), F(k_{\mathcal{C}}) \rangle \arrow{d}[left]{\Phi, k_F} & _{\mathcal{D}} \langle F(x), F(D^{\textnormal{opp}(1)}_{\mathcal{C}}(\overline{y})) \rangle \arrow{d}{\eqref{laxnattransf}} \\
	_{\mathcal{D}} \langle F(x) \otimes F(y), k_{\mathcal{D}} \rangle \arrow{r}{\eqref{grothendieck}}&
	_{\mathcal{D}} \langle F(x), D^{\textnormal{opp}(1)}_{\mathcal{D}}(\overline{F(y)}) \rangle.
	\end{tikzcd}
\end{equation}
In fact, as a special case of \cite[Remark, p.27]{mellies2016dialogue} shows, a lax natural transformation  \eqref{laxnattransf} can be deduced from the data \eqref{monoidalfuncstar}, \eqref{kf} in the following way. First, for any element $x \in \mathcal{C}$ we can identify the identity morphism $\textnormal{id}_{D^{\textnormal{opp}(1)}(\overline{x})}$ via the natural isomorphism $\eqref{grothendieck}$ of $\mathcal{C}$ with an unique morphism 
\begin{equation}
\label{evaluationmorphism}
D^{\textnormal{opp}(1)}_{\mathcal{C}}(\overline{x}) \otimes_{\mathcal{C}} x \rightarrow k_{\mathcal{C}}.
\end{equation}
Thus we can construct a morphism
\[ F(D^{\textnormal{opp}(1)}_{\mathcal{C}}(\overline{x})) \otimes_{\mathcal{D}} F(x) \overset{\Phi}{\rightarrow} F(D^{\textnormal{opp}(1)}_{\mathcal{C}}(\overline{x}) \otimes_{\mathcal{C}} x) \overset{\eqref{evaluationmorphism}}{\rightarrow} F(k_{\mathcal{C}}) \overset{\eqref{kf}}{\rightarrow} k_{\mathcal{D}},
\]
and identify it via the natural isomorphism $\eqref{grothendieck}$ of $\mathcal{D}$ with an unique morphism 
\[ F(D^{\textnormal{opp}(1)}_{\mathcal{C}}(\overline{x})) \rightarrow D^{\textnormal{opp}(1)}_{\mathcal{D}}(\overline{F(x)}).
\]
One checks that this construction provides a natural transformation which satisfies \eqref{laxnatcomm}.

In view of the previous remarks we give the following definition.

\begin{definition}
	A (\textit{lax}) $\star$\textit{-autonomous functor} between $\star$-autonomous categories \\$(\mathcal{C}, \otimes_{\mathcal{C}}, \unit_{\mathcal{C}}, k_{\mathcal{C}})$ and $(\mathcal{D}, \otimes_{\mathcal{D}}, \unit_{\mathcal{D}}, k_{\mathcal{D}})$ is given by a lax monoidal functor
	\begin{equation}
		F: (\mathcal{C}, \otimes_{\mathcal{C}}, \unit_{\mathcal{C}}) \rightarrow (\mathcal{D}, \otimes_{\mathcal{D}}, \unit_{\mathcal{D}})
	\end{equation}
	together with a morphism 
	\begin{equation}
		k_F: F(k_{\mathcal{C}}) \rightarrow k_{\mathcal{D}}.
	\end{equation}
\end{definition}

We now turn to a different aspect. As a generalisation of \autoref{braidingdef} and \autoref{ribbondef} we adopt the following definitions from \cite{boyarchenko2011duality}.

\begin{definition}
\label{weakribbondef}
	A $\star$-autonomous category is \textit{braided}, if its underlying monoidal structure is equipped with a braiding. Similarly, a \textit{twist} on a braided $\star$-autonomous category is a twist on the underlying braided monoidal structure. A \textit{(weak) ribbon structure} of a braided $\star$-autonomous category $\mathcal{C}$ with duality $D$ is a twist $\theta$, such that 
	\[
	D(\theta_x) = \theta_{D(x)}
	\]
	for all $x \in \mathcal{C}$.
	\end{definition}

We will encounter weak ribbon structures of $\star$-autonomous categories again in \autoref{weakribbonmonoidal}.

\section{Linearly distributive categories}

One of the important differences between $\star$-autonomous categories and rigid categories lies in the monoidal structure of the duality functor $(-)^{\star}$. In \autoref{rigidmonoidalequivalence} we have seen that for every rigid category $(\mathcal{C}, \otimes, \unit)$ the induced duality $(-)^{\star}: \mathcal{C} \rightarrow \mathcal{C}^{\textnormal{opp}(1)}$ can be equipped with the structure of a	monoidal equivalence
\begin{equation}
\label{rigiddualityeq}
	(-)^{\star}: (\mathcal{C}, \otimes, \unit) \rightarrow (\mathcal{C}, \otimes, \unit)^{\textnormal{opp}(0,1)}.
\end{equation}
 This is not necessarily the case for every $\star$-autonomous category $(\mathcal{C}, \otimes, \unit, (-)^{\star})$ (however, by a non-trivial proof the double dual is again necessary monoidal in the above sense \cite[Prop. 4.2.]{boyarchenko2011duality}) and motivates the definition of a second tensor product $\otimes'$ on $\mathcal{C}$ by
 \begin{equation}
 \label{secondtensorproductintro}
 	x \otimes' y :=\ ^{\star}(y^{\star} \otimes x^{\star})
 \end{equation}
 for $x,y \in \mathcal{C}$, and $^{\star}(-): \mathcal{C}^{\textnormal{opp}(1)} \rightarrow \mathcal{C}$ the quasi-inverse of the duality $(-)^{\star}: \mathcal{C} \rightarrow \mathcal{C}^{\textnormal{opp}(1)}$.

For example, let $X$ be a set and let $(\mathcal{P}(X), \cap, X, (-)^c)$ be the $\star$-autonomous category we defined in the introduction -- the category with tensor product given by the intersection of sets $\cap$, tensor unit $X$, and self-inverse duality $(-)^c$ given by the set theoretic complement. The second tensor product on $\mathcal{P}(X)$ induced by $(-)^c$ is given on $A,B \subseteq X$ as $(A^c \cap B^c)^c = A \cup B$, using De Morgan's laws. In other words, in contrast to $\eqref{rigiddualityeq}$, the duality $(-)^c$ induces a monoidal equivalence 
\[ (-)^c: (\mathcal{P}(X), \cap, X) \rightarrow (\mathcal{P}(X), \cup, \emptyset)^{\textnormal{opp}(0,1)}. \]

Surprisingly, to characterise $\star$-autonomous categories it is sufficient to focus on the relations between the tensor products $\otimes, \otimes'$ and the duality $(-)^{\star}$. To this end, we introduce the following definitions. However, note that historically the notion of \textit{linearly distributive categories}, originally called \textit{weakly distributive categories},  was defined by Cockett and Seely in \cite{cockettseely1997}, and only later related to $\star$-autonomous categories.

\begin{definition}[Linearly distributive category]
\label{linearlydistributivecategory}
	A \textit{linearly distributive category} is a category $\mathcal{C}$ with two monoidal structures $(\otimes_1, \unit_1, \alpha_1, \lambda_1, \varrho_1)$, $(\otimes_2,\unit_2, \alpha_2, \lambda_2, \varrho_2)$ and, not necessarily invertible, natural transformations
\begin{align*}
	(\delta^L)_{x,y,z}&: x \otimes_1 ( y \otimes_2 z ) \rightarrow (x \otimes_1 y) \otimes_2 z, \\
	(\delta^R)_{x,y,z}&: (x \otimes_2 y) \otimes_1 z \rightarrow x \otimes_2 (y \otimes_1 z),
	\end{align*}
called \textit{distributors}, subject to the six pentagon and four triangle constraints, describing how the distributors interact with the associators (cf. e.g. \cite{mellies2009categorical}, \cite{cockettseely1997}). A linearly distributive category is \textit{symmetric}, if both monoidal structures are symmetric.
\end{definition}

If the distributors are invertible it is possible to define a second linearly distributive structure by reversing the order of the monoidal structures and defining $\delta_L' := (\delta^R)^{-1}$, respectively $\delta_R' := (\delta^L)^{-1}$.

Every monoidal category $(\mathcal{C}, \otimes, \mathbbm{1}, \alpha, \lambda, \varrho)$ induces a linearly distributive category by choosing both monoidal structures equally as $(\otimes, \mathbbm{1}, \alpha, \lambda, \varrho)$ and the distributors as $\delta^L =  \alpha^{-1}$, respectively $\delta^R = \alpha$. Note that in that case the above construction does not yield a new linearly distributive category.

\begin{definition}[Linearly distributive category with duality]
	Let $\mathcal{C}$ be a linearly distributive category $\mathcal{C}$ with monoidal structures $(\otimes_1, \unit_1, \alpha_1, \lambda_1, \varrho_1),\ (\otimes_2, \unit_2, \alpha_2, \lambda_2, \varrho_2)$. A \textit{linear left dual} to $x \in \mathcal{C}$ is an object $^{\star}x \in \mathcal{C}$ with morphisms $\varepsilon_x:\ ^{\star}x \otimes_1 x \rightarrow \unit_2$ and $\eta_x: \unit_1 \rightarrow x \otimes_2\ ^{\star}x$, called (left) \textit{evaluation} and (left) \textit{coevaluation} respectively, satisfying the so called zig-zag (or snake) identities
\begin{equation}
	\varrho_2	\circ (\textnormal{id}_x \otimes_2 \varepsilon_x) \circ \delta^R \circ (\eta_x \otimes_1 \textnormal{id}_x) \circ \lambda^{-1}_1 \quad = \quad \textnormal{id}_x,
	\end{equation}
	\begin{equation}
			\lambda_2 \circ (\varepsilon_x \otimes_2 \textnormal{id}_{^{\star}x}) \circ \delta^L \circ (\textnormal{id}_{^{\star}x} \otimes_1 \eta_{x}) \circ \varrho^{-1}_1 \quad = \quad \textnormal{id}_{^{\star}x}.
	\end{equation}
	A \textit{right linear dual} to $x \in \mathcal{C}$ is an object $x^{\star} \in \mathcal{C}$ with a (right) evaluation $\widetilde{\varepsilon}_x: x \otimes_1 x^{\star} \rightarrow \unit_2$ and a (right) coevaluation $\widetilde{\eta}_x: \unit_1 \rightarrow\  x^{\star} \otimes_2 x$, satisfying analogous zig-zag identities. If for every object $x \in \mathcal{C}$ there exist left and right linear duals $^{\star}x,\ x^{\star}$, we say $\mathcal{C}$ comes \textit{with duality}.
\end{definition}

Note that  due to examples in logic, Cockett and Seely originally used the term \textit{negation} instead of \textit{duality}. Moreover, sometimes one speaks of a \textit{linear adjunction} and writes $x \dashv y$, if $y$ is a right dual of $x$, and $x$ a left dual of $y$, respectively.

One can extend the notion of duality in a linearly distributive category from objects to morphisms in the following way. For a morphism $f: x \rightarrow y$, left evaluation $\varepsilon_y:\ ^{\star}y \otimes_1 y \rightarrow \unit_2$ and left coevaluation $\eta_x: \unit_1 \rightarrow x \otimes_2\ ^{\star} x$, define a left dual morphism $^{\star} f:\ ^{\star}y \rightarrow\ ^{\star}x$ as
\begin{align*}
	^{\star} f &:= \lambda_2 \circ (\varepsilon_y \otimes_2 \textnormal{id}_{^{\star} x}) \circ \delta^L \circ (\textnormal{id}_{^{\star}y} \otimes_1 (f \otimes_2 \textnormal{id}_{^{\star} x})) \circ (\textnormal{id}_{^{\star}y} \otimes_1 \eta_x) \circ \varrho^{-1}_1 \\
	&= \lambda_2 \circ (\varepsilon_y \otimes_2 \textnormal{id}_{^{\star} x}) \circ ((\textnormal{id}_{^{\star}y} \otimes_1 f) \otimes_2 \textnormal{id}_{^{\star} x}) \circ \delta^L \circ (\textnormal{id}_{^{\star}y} \otimes_1 \eta_x) \circ \varrho^{-1}_1
	,
\end{align*}
using the naturality of $\delta^L$. Analogously one can define a right dual morphism $f^{\star}:\ y^{\star} \rightarrow\ x^{\star}$. This construction is functorial, contravariant and invertible up to natural isomorphism, as the following result (cf. \cite[Prop. 9]{mellies2017micrological}) shows.

\begin{lemma}
\label{linearlydistributivemonoidalfunctor}
Let $\mathcal{C}$ be a linearly distributive category with duality $(-)^{\star}$ and monoidal structures $(\otimes_1, \unit_1, \alpha_1, \lambda_1, \varrho_1)$,$ (\otimes_2, \unit_2, \alpha_2, \lambda_2, \varrho_2)$. The duality $(-)^{\star}$ can be equipped with the structure of a monoidal equivalence
\[
	(-)^{\star}:\quad (\mathcal{C}, \otimes_1, \unit_1) \rightarrow (\mathcal{C}, \otimes_2, \unit_2)^{\textnormal{opp}(0,1)}
\]
with quasi-inverse $^{\star}(-)$.
\end{lemma}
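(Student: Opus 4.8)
The plan is to follow the strategy of the rigid case, \autoref{rigidmonoidalequivalence}, replacing associators by the distributors $\delta^L, \delta^R$ throughout and keeping careful track of which of the two monoidal structures and which unit appears at each stage; this is essentially \cite[Prop. 9]{mellies2017micrological}. First I would check that $(-)^{\star}$ is a well-defined contravariant functor $\mathcal{C} \rightarrow \mathcal{C}^{\textnormal{opp}(1)}$. Using the explicit formula for the dual morphism $f^{\star}$ recorded above, the identity $(\textnormal{id}_x)^{\star} = \textnormal{id}_{x^{\star}}$ is immediate from the zig-zag identities, and $(g \circ f)^{\star} = f^{\star} \circ g^{\star}$ follows from a ``sliding'' computation combining the naturality of $\delta^L$ (already used above to rewrite $^{\star}f$) with the zig-zag identities. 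Both are most transparent in the two-coloured planar string calculus for linearly distributive categories, where they become isotopies of diagrams.

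Next I would prove that right linear duals are unique up to a canonical isomorphism. Given two right duals of a fixed object, their evaluations and coevaluations produce a pair of comparison maps whose mutual inversion is forced by the zig-zag identities, exactly as in the rigid situation. Applying this to the pair of functors $(-)^{\star}$ and $^{\star}(-)$ shows that the composites $^{\star}(-) \circ (-)^{\star}$ and $(-)^{\star} \circ\ ^{\star}(-)$ are canonically naturally isomorphic to $\textnormal{id}_{\mathcal{C}}$, so that $(-)^{\star}$ is an equivalence with quasi-inverse $^{\star}(-)$.

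It remains to supply the (anti-)monoidal structure and match it to the target $(\mathcal{C}, \otimes_2, \unit_2)^{\textnormal{opp}(0,1)}$, in which the order of $\otimes_2$ and the morphisms are both reversed. For the unit constraint I would verify that $\unit_2$ is a right dual of $\unit_1$, taking the evaluation $\unit_1 \otimes_1 \unit_2 \rightarrow \unit_2$ to be $\lambda_1$ and the coevaluation $\unit_1 \rightarrow \unit_2 \otimes_2 \unit_1$ to be $\varrho_2^{-1}$; uniqueness of duals then yields $\phi \colon \unit_1^{\star} \cong \unit_2$. For the tensor constraint I would show that $y^{\star} \otimes_2 x^{\star}$ is a right dual of $x \otimes_1 y$, assembling the evaluation from $\alpha_1$, the distributor $\delta^L$, the evaluations $\widetilde{\varepsilon}_x, \widetilde{\varepsilon}_y$ and the unitor $\lambda_2$ (and, symmetrically, the coevaluation from $\delta^R$, $\widetilde{\eta}_x, \widetilde{\eta}_y$); this is precisely where the distributors take over the role played by the associator in the rigid proof. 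Uniqueness of duals then gives a natural isomorphism $\Phi \colon (x \otimes_1 y)^{\star} \cong y^{\star} \otimes_2 x^{\star}$, which under the reversals defining $^{\textnormal{opp}(0,1)}$ is exactly the structure morphism of a strong monoidal functor $(\mathcal{C}, \otimes_1, \unit_1) \rightarrow (\mathcal{C}, \otimes_2, \unit_2)^{\textnormal{opp}(0,1)}$. Together with the previous step this makes $(-)^{\star}$ a monoidal equivalence.

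The main obstacle is the tensor constraint: verifying that the evaluation and coevaluation assembled for $x \otimes_1 y$ genuinely satisfy the zig-zag identities. In the rigid setting this reduces to a routine rearrangement of associators, but here the associators are replaced by the \emph{non-invertible} distributors, so the check must invoke the full list of pentagon and triangle coherence constraints relating $\delta^L, \delta^R$ to $\alpha_1, \alpha_2$ (cf. \cite{cockettseely1997}). Carefully bookkeeping this interplay — ideally in the two-coloured string calculus, where the required moves become planar isotopies — is the delicate part; the final coherence axioms for $\Phi$ and $\phi$ then reduce, again by uniqueness of duals, to these same distributor--associator identities.
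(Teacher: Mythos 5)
You have it right: the paper offers no proof of this lemma but defers to \cite[Prop. 9]{mellies2017micrological}, and your sketch reconstructs exactly that standard argument --- contravariant functoriality via the zig-zag identities and naturality of $\delta^L$, uniqueness of linear duals to obtain the equivalence with quasi-inverse $^{\star}(-)$, and the strong monoidal structure by exhibiting $\unit_2$ as a right dual of $\unit_1$ and $y^{\star} \otimes_2 x^{\star}$ as a right dual of $x \otimes_1 y$, with the coherence checks discharged by the distributor pentagon and triangle axioms of \cite{cockettseely1997}. One small fix: the unit coevaluation must have codomain $\unit_1^{\star} \otimes_2 \unit_1 = \unit_2 \otimes_2 \unit_1$, so it is $(\lambda_2)^{-1}_{\unit_1}$ rather than $\varrho_2^{-1}$, whose codomain $\unit_1 \otimes_2 \unit_2$ has the tensor factors in the wrong order.
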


In the same way as every monoidal category induces a linearly distributive category, a \textit{rigid} monoidal category induces a linearly distributive category \textit{with duality}, and \autoref{linearlydistributivemonoidalfunctor} can be seen as the appropriate generalisation of \autoref{rigidmonoidalequivalence}. The converse is not true, as, for example, the category $(\mathcal{P}(X), \cap, X, (-)^c)$ witnesses, once again.

In \autoref{rigidimpliesclosed} we've seen that every rigid category can be equipped with the structure of a biclosed category. The following result (cf. \cite[Prop 4.10.4, Prop. 4.10.5]{mellies2009categorical}) gives a similar construction for linearly distributive categories with duality.
\begin{lemma}
\label{starautonomimpliesclosed}
	Let $\mathcal{C}$ be a linearly distributive category with duality $(-)^{\star}$, quasi-inverse $^{\star}(-)$, and monoidal structures $(\otimes_1, \unit_1, \alpha_1, \lambda_1, \varrho_1)$,$ (\otimes_2, \unit_2, \alpha_2, \lambda_2, \varrho_2)$. Then $(\mathcal{C}, \otimes_1, \unit_1)$ is biclosed by defining
	\begin{equation*}
		\underline{\textnormal{Hom}}^l(x,y) := y \otimes_2\ ^{\star}x \quad \textnormal{and} \quad \underline{\textnormal{Hom}}^r(x,y) :=\  x^{\star} \otimes_2 y
	\end{equation*}
\end{lemma}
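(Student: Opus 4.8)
The plan is to produce the two natural hom-set isomorphisms \eqref{ihomldefeq} and \eqref{ihomrdefeq} that define a biclosed structure, by upgrading each linear adjunction in $\mathcal{C}$ to an ordinary adjunction of endofunctors. Concretely, I would show that $(-) \otimes_1 y$ is left adjoint to $(-) \otimes_2\ {}^{\star}y$, so that $\ihoml(y,z) = z \otimes_2\ {}^{\star}y$ represents $\textnormal{Hom}_{\mathcal{C}}(-\otimes_1 y, z)$, and dually that $x \otimes_1 (-)$ is left adjoint to $x^{\star} \otimes_2 (-)$, so that $\ihomr(x,z) = x^{\star} \otimes_2 z$. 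This is the precise analogue of \autoref{rigidimpliesclosed}, with the distributors $\delta^L, \delta^R$ taking over the structural role played there by the associator, and it sits alongside \autoref{linearlydistributivemonoidalfunctor} as the closed-structure counterpart of the fact that the duality is monoidal across the two tensor products. I would treat the left internal hom in detail and obtain the right one by a symmetric argument.

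For the left hom I would read off the unit and counit of the candidate adjunction directly from the linear left-dual data $\varepsilon_y:\ {}^{\star}y \otimes_1 y \rightarrow \unit_2$ and $\eta_y: \unit_1 \rightarrow y \otimes_2\ {}^{\star}y$. The unit $u_x : x \rightarrow (x \otimes_1 y) \otimes_2\ {}^{\star}y$ is taken to be
\[
x \xrightarrow{\varrho_1^{-1}} x \otimes_1 \unit_1 \xrightarrow{\textnormal{id}_x \otimes_1 \eta_y} x \otimes_1 (y \otimes_2\ {}^{\star}y) \xrightarrow{\delta^L} (x \otimes_1 y) \otimes_2\ {}^{\star}y,
\]
and the counit $c_x : (x \otimes_2\ {}^{\star}y) \otimes_1 y \rightarrow x$ to be
\[
(x \otimes_2\ {}^{\star}y) \otimes_1 y \xrightarrow{\delta^R} x \otimes_2 ({}^{\star}y \otimes_1 y) \xrightarrow{\textnormal{id}_x \otimes_2 \varepsilon_y} x \otimes_2 \unit_2 \xrightarrow{\varrho_2} x .
\]
Naturality of $u$ and $c$ in $x$ is immediate from the naturality of $\delta^L$, $\delta^R$ and of the unitors. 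The resulting candidate bijection sends $f \colon x \otimes_1 y \to z$ to $(f \otimes_2 \textnormal{id}_{{}^{\star}y}) \circ u_x$, with proposed inverse $g \mapsto c_z \circ (g \otimes_1 \textnormal{id}_y)$.

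The principal task, and the step I expect to be the main obstacle, is to verify the two triangle identities
\[
c_{x \otimes_1 y} \circ (u_x \otimes_1 \textnormal{id}_y) = \textnormal{id}_{x \otimes_1 y}, \qquad (c_x \otimes_2 \textnormal{id}_{{}^{\star}y}) \circ u_{x \otimes_2 {}^{\star}y} = \textnormal{id}_{x \otimes_2 {}^{\star}y},
\]
which are equivalent to the candidate bijection being mutually inverse. Unwinding the definitions of $u$ and $c$, each triangle is a composite of distributors, unitors, $\eta_y$ and $\varepsilon_y$; after reassociating and cancelling the structural isomorphisms using the pentagon and triangle constraints of \autoref{linearlydistributivecategory} (those relating $\delta^L, \delta^R$ to the associators and unitors), each collapses exactly to one of the two zig-zag identities for the linear left dual ${}^{\star}y$. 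I would carry out this bookkeeping in the string-diagram calculus, or else invoke coherence for linearly distributive categories to suppress all structural isomorphisms, so that the triangles reduce literally to the snake equations.

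Finally, the right internal hom is obtained by the mirror-image construction from the right-dual data $\widetilde{\varepsilon}_x : x \otimes_1 x^{\star} \rightarrow \unit_2$ and $\widetilde{\eta}_x : \unit_1 \rightarrow x^{\star} \otimes_2 x$: the unit $w \rightarrow x^{\star} \otimes_2 (x \otimes_1 w)$ uses $\lambda_1^{-1}$, $\widetilde{\eta}_x$ and $\delta^R$, while the counit $x \otimes_1 (x^{\star} \otimes_2 w) \rightarrow w$ uses $\delta^L$, $\widetilde{\varepsilon}_x$ and $\lambda_2$. Their triangle identities reduce to the zig-zag identities for the right dual in the same way, giving the natural isomorphism \eqref{ihomrdefeq} with $\ihomr(x,z) = x^{\star} \otimes_2 z$. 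Since $(\mathcal{C}, \otimes_1, \unit_1)$ then carries both a left and a right internal hom, it is biclosed, as claimed.
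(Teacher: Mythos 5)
Your proposal is correct, and it is essentially the argument behind the paper's own treatment: the paper does not prove this lemma but defers to the cited reference (Melli\`es, Prop.\ 4.10.4--4.10.5), whose proof is exactly your construction of a genuine adjunction $(-)\otimes_1 y \dashv (-)\otimes_2\,{}^{\star}y$ (and $x\otimes_1(-) \dashv x^{\star}\otimes_2(-)$) with unit and counit assembled from $\eta_y,\varepsilon_y$ and the distributors. Your anticipated "main obstacle" does resolve as you expect: the first triangle identity collapses to the first zig-zag identity using the pentagon relating $\delta^R\circ(\delta^L\otimes_1\mathrm{id})$ to $\delta^L\circ(\mathrm{id}\otimes_1\delta^R)\circ\alpha_1$ together with the triangle constraint $\varrho_2\circ\delta^L_{x,y,\unit_2}=\mathrm{id}_x\otimes_1\varrho_2$, and the second collapses to the second zig-zag using the $\otimes_2$-mirror pentagon and the constraint involving $\delta^R_{x,z,\unit_1}$, so no appeal to a general coherence theorem is actually needed.
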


\begin{corollary}
\label{linearlydistributiveinducedmorphisms}
	Let $\mathcal{C}$ be a linearly distributive category with monoidal structures $(\otimes_1, \unit_1, \alpha_1, \lambda_1, \varrho_1)$,$ (\otimes_2, \unit_2, \alpha_2, \lambda_2, \varrho_2)$. Let $x,y \in \mathcal{C}$ be objects with duals. Then there exist natural isomorphisms
		\[
	_{\mathcal{C}} \langle x \otimes_1 y, \unit_2 \rangle \cong\ _{\mathcal{C}}\langle y, x^{\star} \rangle \cong\ _{\mathcal{C}}\langle x,\ ^{\star} y \rangle,
	\] 
	\[
	_{\mathcal{C}} \langle \unit_1, y \otimes_2 x \rangle \cong\ _{\mathcal{C}} \langle x^{\star}, y \rangle \cong\  _{\mathcal{C}} \langle ^{\star}y, x \rangle.
	\]
	Moreover, any pair of forms $x \otimes_1 y \rightarrow \unit_2$ and $\unit_1 \rightarrow y \otimes_2 x$ provides a linear adjunction $x \dashv y$ if and only if the associated morphisms $y \rightarrow x^{\star}$ and $x^{\star} \rightarrow y$ are mutually inverse.
\end{corollary}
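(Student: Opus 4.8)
The plan is to reduce every assertion to the biclosed structure of \autoref{starautonomimpliesclosed} and the monoidal duality equivalence of \autoref{linearlydistributivemonoidalfunctor}, postponing all distributor manipulations to the very last part. Throughout I write $\widetilde{\varepsilon}_x\colon x\otimes_1 x^{\star}\to\unit_2$ and $\widetilde{\eta}_x\colon\unit_1\to x^{\star}\otimes_2 x$ for the canonical evaluation and coevaluation witnessing the chosen right dual $x^{\star}$, and recall from \autoref{linearlydistributivemonoidalfunctor} the natural isomorphisms ${}^{\star}(x^{\star})\cong x$ and $({}^{\star}y)^{\star}\cong y$, as well as the full faithfulness of $(-)^{\star}$ and of ${}^{\star}(-)$.

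For the first chain I would simply feed $\unit_2$ into the two biclosed adjunctions \eqref{ihomrdefeq} and \eqref{ihomldefeq} (with $\otimes=\otimes_1$): since $\ihomr(x,\unit_2)=x^{\star}\otimes_2\unit_2$, the right unitor $\varrho_2$ gives ${}_{\mathcal{C}}\langle x\otimes_1 y,\unit_2\rangle\cong{}_{\mathcal{C}}\langle y,x^{\star}\rangle$, while $\ihoml(y,\unit_2)=\unit_2\otimes_2{}^{\star}y$ and the left unitor $\lambda_2$ give ${}_{\mathcal{C}}\langle x\otimes_1 y,\unit_2\rangle\cong{}_{\mathcal{C}}\langle x,{}^{\star}y\rangle$. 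For the second chain the trick is to recognise $y\otimes_2 x$ itself as an internal hom: as ${}^{\star}(x^{\star})\cong x$, \autoref{starautonomimpliesclosed} gives $\ihoml(x^{\star},y)=y\otimes_2{}^{\star}(x^{\star})\cong y\otimes_2 x$, so \eqref{ihomldefeq} and the unitor $\lambda_1$ yield ${}_{\mathcal{C}}\langle\unit_1,y\otimes_2 x\rangle\cong{}_{\mathcal{C}}\langle\unit_1\otimes_1 x^{\star},y\rangle\cong{}_{\mathcal{C}}\langle x^{\star},y\rangle$, and the remaining isomorphism ${}_{\mathcal{C}}\langle x^{\star},y\rangle\cong{}_{\mathcal{C}}\langle{}^{\star}y,x\rangle$ follows by applying the equivalence ${}^{\star}(-)$ and $({}^{\star}y)^{\star}\cong y$.

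For the \emph{moreover} I would transport the two snake identities across these isomorphisms. Write $\phi\colon y\to x^{\star}$ for the image of a form $\varepsilon\colon x\otimes_1 y\to\unit_2$ under the first chain and $\psi\colon x^{\star}\to y$ for the image of a coform $\eta\colon\unit_1\to y\otimes_2 x$ under the second; unwinding the adjunctions characterises them by $\varepsilon=\widetilde{\varepsilon}_x\circ(\mathrm{id}_x\otimes_1\phi)$ and $\eta=(\psi\otimes_2\mathrm{id}_x)\circ\widetilde{\eta}_x$, up to the unitors already used. Substituting these into the first snake identity and repeatedly invoking the naturality of $\delta^R$ (to slide $\psi$ out to the left and $\phi$ out to the right of the distributor) together with naturality of $\lambda_1,\varrho_2$, the middle of the composite collapses to the canonical snake identity for $x^{\star}$, i.e. to $\mathrm{id}_{x^{\star}}$, and what remains is exactly $\psi\circ\phi$; hence the first snake holds iff $\psi\circ\phi=\mathrm{id}_y$. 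The analogous substitution in the second snake, now using naturality of $\delta^L$, leaves $\phi\circ\psi$ sitting \emph{inside} the canonical snake for $x$, so the second snake transports to the statement that running $\phi\circ\psi$ through that snake equals $\mathrm{id}_x$. If $\phi,\psi$ are mutually inverse, feeding $\psi\circ\phi=\mathrm{id}_y$ and $\phi\circ\psi=\mathrm{id}_{x^{\star}}$ back into the two transported expressions reproduces both snakes, so $(\varepsilon,\eta)$ is a linear adjunction; conversely, the first snake gives $\psi\circ\phi=\mathrm{id}_y$ at once, and for $\phi\circ\psi=\mathrm{id}_{x^{\star}}$ one uses that running an endomorphism of $x^{\star}$ through the canonical snake of $x$ is precisely the bijection ${}_{\mathcal{C}}\langle x^{\star},x^{\star}\rangle\cong{}_{\mathcal{C}}\langle x,x\rangle$ induced by the dual pair (equivalently the full faithfulness of $(-)^{\star}$), which sends $\mathrm{id}_{x^{\star}}$ to $\mathrm{id}_x$, so injectivity forces $\phi\circ\psi=\mathrm{id}_{x^{\star}}$.

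I expect the main obstacle to be the bookkeeping in this last part: pinning down the exact factorised forms of $\varepsilon$ and $\eta$ through the chains, and checking that the naturality squares for $\delta^L,\delta^R$ and the unitors genuinely straighten each composite onto the canonical snake of $x$, respectively $x^{\star}$. The only non-formal point is the asymmetry between the two snakes — the first exposes $\psi\circ\phi$ as an \emph{outer} composite and cancels directly, whereas the second only exposes $\phi\circ\psi$ \emph{internally}, so the converse for the second equation rests on the bijectivity of the dualisation map on endomorphisms (from \autoref{linearlydistributivemonoidalfunctor}) rather than on a direct cancellation.
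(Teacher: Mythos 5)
Your proof is correct, and there is essentially nothing in the paper to compare it against line by line: \autoref{linearlydistributiveinducedmorphisms} is stated without proof, as an immediate consequence of \autoref{starautonomimpliesclosed}, so your argument supplies precisely the derivation the paper leaves implicit. The two hom-chains are indeed just the biclosed adjunctions specialised at $\unit_2$ (first chain) and, after recognising $y\otimes_2 x\cong\ihoml(x^{\star},y)$, at $\unit_1$ (second chain). Your treatment of the moreover-clause is also sound, including the one genuinely non-formal point you flag: substituting $\varepsilon=\widetilde{\varepsilon}_x\circ(\mathrm{id}_x\otimes_1\phi)$ and $\eta=(\psi\otimes_2\mathrm{id}_x)\circ\widetilde{\eta}_x$ (these factorisations hold on the nose, by the triangle axioms relating $\delta^L,\delta^R$ to the unitors), the $\delta^R$-snake ending at $y$ collapses outright to $\psi\circ\phi$ via naturality of $\delta^R$, the unitors, and the snake identity of the chosen pair $(x,x^{\star})$, whereas the $\delta^L$-snake ending at $x$ only exhibits $\phi\circ\psi$ sitting inside the canonical snake; the converse there does require the injectivity of the transport map $_{\mathcal{C}}\langle x^{\star},x^{\star}\rangle\rightarrow\ _{\mathcal{C}}\langle x,x\rangle$, and you identify that map correctly as the composite of the two adjunction bijections, hence a bijection sending $\mathrm{id}_{x^{\star}}$ to $\mathrm{id}_x$.

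One citation-level caveat. \autoref{starautonomimpliesclosed} and \autoref{linearlydistributivemonoidalfunctor} are stated for linearly distributive categories \emph{with duality}, i.e.\ with duals for all objects, while the corollary assumes only that $x$ and $y$ have duals. Your argument survives this weakening because everything you use is local: each adjunction $_{\mathcal{C}}\langle a\otimes_1 y,z\rangle\cong\ _{\mathcal{C}}\langle a,z\otimes_2\ ^{\star}y\rangle$ needs only the dual pair of the single object involved, and the isomorphisms $^{\star}(x^{\star})\cong x$ and $(^{\star}y)^{\star}\cong y$ need no global quasi-inverse functors, since the very same evaluation and coevaluation exhibiting $x^{\star}$ as a right dual of $x$ exhibit $x$ as a left dual of $x^{\star}$ (and dually for $y$). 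You should state this explicitly rather than invoking the full faithfulness of the global functors $(-)^{\star}$ and $^{\star}(-)$, which is not literally available under the corollary's hypotheses.
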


Apart of that, \autoref{starautonomimpliesclosed} implies, in particular, that $\underline{\textnormal{Hom}}^l(x,\unit_2) \cong\ ^{\star}x$ and  $\underline{\textnormal{Hom}}^r(x,\unit_2) \cong x^{\star}$ for all $x \in \mathcal{C}$. Thus, since \autoref{linearlydistributivemonoidalfunctor} shows that $(-)^{\star}$ and $^{\star}(-)$ are quasi-inverse to each other, we find
\[
\underline{\textnormal{Hom}}^l(\underline{\textnormal{Hom}}^r(x,\unit_2),\unit_2) \cong x \cong \underline{\textnormal{Hom}}^r(\underline{\textnormal{Hom}}^l(x,\unit_2),\unit_2).
\] 
One can show that both isomorphism equal those morphisms obtained via \eqref{induceddualizingobject}.
Hence, it follows that $\unit_2$ is a dualizing object for the biclosed monoidal category $(\mathcal{C}, \otimes_1 ,\unit_1)$, i.e. $\mathcal{C}$ is $\star$-autonomous.

A result of Seely and Cockett (cf. \cite[Theorem 4.5]{cockettseely1997}) shows that in the symmetric case the contrary is also true: a symmetric $\star$-autonomous category induces a symmetric linearly distributive category with duality. The original statement states that “\textit{the notions of symmetric weakly distributive categories with negation and (symmetric)} $\star$-\textit{autonomous categories coincide}”. However, as far as we understand, the proof does merely give an existence statement, and does not use any notion of isomorphism. Thus, we give the following variant.
\begin{proposition}
Every symmetric linearly distributive category with duality induces a symmetric $\star$-autonomous category and vice versa.
\end{proposition}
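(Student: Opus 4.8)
The plan is to prove both directions of the equivalence by exhibiting explicit constructions and then checking they are mutually inverse up to the appropriate notion of isomorphism. The excerpt has already done the bulk of the work for one direction: a symmetric linearly distributive category with duality is shown to be $\star$-autonomous via \autoref{starautonomimpliesclosed}, which produces left and right internal homs $\ihoml(x,y) := y \otimes_2\ ^{\star}x$ and $\ihomr(x,y) := x^{\star} \otimes_2 y$, together with the observation that $\unit_2$ is then a dualizing object. So for the forward direction I would simply invoke this material and note that symmetry of both monoidal structures passes to the $\star$-autonomous structure in the sense of \autoref{braidingdef}, since the braiding on $(\otimes_1, \unit_1)$ is exactly what is asked for.

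\medskip
\noindent For the converse, I would start from a symmetric $\star$-autonomous category $(\mathcal{C}, \otimes, \unit, k, D, D^{-1})$ in the sense of \autoref{starautonomequivalent}$(3)$. The idea is to set $\otimes_1 := \otimes$ with unit $\unit_1 := \unit$, and to define the second monoidal structure by $x \otimes_2 y := {}^{\star}(y^{\star} \otimes x^{\star})$ following \eqref{secondtensorproductintro}, with unit $\unit_2 := k$. First I would verify that $(\otimes_2, k)$ is indeed a monoidal structure: associativity and unitality follow by transporting the corresponding constraints of $(\otimes, \unit)$ across the equivalence $D$ and its quasi-inverse, using that $D$ is an (anti)monoidal equivalence (the double-dual-monoidality result \cite[Prop. 4.2.]{boyarchenko2011duality} referenced in the excerpt guarantees the coherence works out). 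Next I would construct the distributors $\delta^L, \delta^R$; the cleanest route is to define them on hom-sets via the natural isomorphisms \eqref{grothendieck} and the associator of $\otimes$, then invoke the Yoneda Lemma to obtain the actual morphisms, checking the pentagon and triangle constraints by reducing them to the coherence already available for $\otimes$. Finally, the duality: I would define ${}^{\star}x$ and $x^{\star}$ via $D^{-1}$ and $D$ and use \autoref{linearlydistributiveinducedmorphisms} in reverse — the linear adjunction $x \dashv y$ corresponds exactly to the isomorphisms $y \cong x^{\star}$ coming from the dualizing property, which are precisely the maps \eqref{induceddualizingobject} being invertible.

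\medskip
\noindent The last and most delicate task is to make precise the phrase "and vice versa" as a genuine bijective correspondence rather than a mere existence claim, which is exactly the gap the excerpt criticises in Cockett--Seely. I would therefore show that the two constructions are mutually inverse: starting from a symmetric $\star$-autonomous category, passing to a linearly distributive category with duality, and back, recovers the original tensor product and dualizing object up to natural isomorphism; and symmetrically in the other order one recovers $\otimes_1, \otimes_2$ and the distributors up to isomorphism. For the round trip one needs that $\underline{\textnormal{Hom}}^l(x, \unit_2) \cong\ ^{\star}x$ and $\underline{\textnormal{Hom}}^r(x,\unit_2) \cong x^{\star}$, which is stated just before the proposition, together with the fact that the second tensor product $\otimes'$ defined from the $\star$-autonomous structure coincides with $\otimes_2$; this matching is essentially forced by the Yoneda Lemma once the internal homs are fixed.

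\medskip
\noindent I expect the main obstacle to be the verification of the distributor coherence diagrams (the six pentagons and four triangles of \autoref{linearlydistributivecategory}) and, relatedly, checking that the distributors so constructed interact correctly with the duality so that the zig-zag identities hold. These are the genuinely computational heart of the equivalence, and symmetry is what makes them tractable: it collapses the left/right internal homs into a single one (as noted after \eqref{ihomrdefeq}) and lets one transport a single associator constraint across $D$ rather than juggling two independent dualities. I would organise the verification so that each coherence condition is pulled back along \eqref{grothendieck} to a statement purely about $\otimes$, where Mac Lane coherence applies, keeping explicit diagram-chasing to a minimum.
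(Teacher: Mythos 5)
Your constructions for the two directions are the paper's own: the forward direction is exactly \autoref{starautonomimpliesclosed} combined with the observation (made in the text just before the proposition) that $\unit_2$ is a dualizing object for $(\mathcal{C},\otimes_1,\unit_1)$, and the converse is the paper's construction of $x \otimes_2 y := {}^{\star}(y^{\star}\otimes x^{\star})$ with unit $k$, with the distributors obtained from the hom-isomorphisms and the Yoneda Lemma, exactly as in the paper's derivation tree for the left distributor $\delta^L$ via \eqref{secondtensorrepres}. Two points of divergence are worth flagging. First, you have inverted the point of the paper's remark on Cockett--Seely: the paper criticises their claim that the two notions \emph{coincide} on the grounds that the available proof yields only existence, and for precisely that reason the proposition is deliberately phrased as an existence statement in both directions --- ``vice versa'' asks only that every symmetric $\star$-autonomous category induces a symmetric linearly distributive category with duality, not that the two constructions are mutually inverse. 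The round-trip verification you describe as ``the last and most delicate task'' is therefore not part of the statement; it is extra work the paper intentionally does not undertake, and it is also the thinnest part of your proposal (``essentially forced by the Yoneda Lemma'' is not yet an argument). Second, to see that $\otimes_2$ is monoidal you should not appeal to \cite[Prop. 4.2.]{boyarchenko2011duality}, which concerns monoidality of the double dual $x \mapsto x^{\star\star}$ with respect to $\otimes$ itself and is a genuinely different statement; what your step actually uses is transport of the monoidal structure of $(\mathcal{C},\otimes,\unit)^{\textnormal{opp}(0,1)}$ along the equivalence $D$, which is the content of \autoref{generalinduced}.
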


In fact, the statement can be generalised to a non-symmetric version. If $(\mathcal{C}, \otimes_1, \unit)$ is a non-symmetric $\star$-autonomous category with dualities $(-)^{\star},\ ^{\star}(-)$, dualizing object $k$ and internal homs $\ihoml, \ihomr$, define a second tensor product $\otimes_2$ on $\mathcal{C}$ as in \eqref{secondtensorproductintro}, i.e. on $x,y \in \mathcal{C}$ let
\[
x \otimes_2 y :=\ ^{\star}(y^{\star} \otimes x^{\star}).
\]
 Since internal homs internalise themself, we can identify 
 \begin{align}
\label{secondtensorrepres}
\begin{split}
	x \otimes_2 y 
	&=\ ^{\star}(y^{\star} \otimes x^{\star}) 
	\cong \ihomr(\ihoml(y,k) \otimes \ihoml(x,k),k) \\
	&\cong \ihomr(\ihoml(x,k), \ihomr(\ihoml(y,k),k))
	\cong \ihomr(\ihoml(x,k),y) \\
	&\cong \ihomr(x^{\star},y).
\end{split}	
\end{align}
Then one can deduce, for example, a left distributor $\delta^L_{x,y,z}: x \otimes_1 (y \otimes_2 z) \rightarrow (x \otimes_1 y) \otimes_2 z$ for a linearly distributive category $\mathcal{C}$ with tensor products $\otimes_1, \otimes_2$ via

\begin{prooftree}
\AxiomC{$(x \otimes_1 y)^{\star} \rightarrow (x \otimes_1 y)^{\star}$}
\LeftLabel{$\eqref{starautonomfrobeniuseq}$}
\UnaryInfC{$(x \otimes_1 y)^{\star} \otimes_1 x \rightarrow y^{\star}$}
\AxiomC{$\ihomr(y^{\star}, z) \rightarrow \ihomr(y^{\star}, z)$}
\RightLabel{$\eqref{ihomrdefeq}$}
\UnaryInfC{$y^{\star} \otimes_1 \ihomr(y^{\star}, z) \rightarrow z$}
\RightLabel{$\eqref{ihomldefeq}$}
\UnaryInfC{$y^{\star} \rightarrow \ihoml(\ihomr(y^{\star}, z), z)$}
\LeftLabel{$\circ$}
\BinaryInfC{$(x \otimes_1 y)^{\star} \otimes_1 x \rightarrow \ihoml(\ihomr(y^{\star}, z), z)$}
\LeftLabel{$\eqref{ihomldefeq}$}
\UnaryInfC{$(x \otimes_1 y)^{\star} \otimes_1 x \otimes_1 \ihomr(y^{\star}, z) \rightarrow z$}
\LeftLabel{$\eqref{ihomrdefeq}$}
\UnaryInfC{$x \otimes_1 \ihomr(y^{\star}, z) \rightarrow \ihomr( (x \otimes_1 y)^{\star}, z)$}
\LeftLabel{$\eqref{secondtensorrepres}$}
\UnaryInfC{$x \otimes_1 (y \otimes_2 z) \rightarrow (x \otimes_1 y) \otimes_2 z$}.
\end{prooftree}

\section{$\dag$-Frobenius pseudomonoids}

The main goal of this section is to relate $\star$-autonomous categories to $\dag$-Frobenius pseudomonoids in the bicategory of profunctors.

A \textit{Frobenius algebra} is a finite-dimensional $\mathbbm{k}$-algebra $A$ equipped with a non-degener-\\ate \textit{Frobenius form} $\sigma: A \otimes A \rightarrow \mathbbm{k}$ such that $\sigma(ab \otimes c) = \sigma(a \otimes bc)$ for $a,b,c \in A$. They have been studied already in the early 20th century (cf. \cite{nakayama1939frobeniusean}, \cite{nakayama1941frobeniusean},\cite{brauer1937regular}) and in recent times the interest has been revived due to their connection to $2$-dimensional quantum field theories (cf. \cite{abrams1996two}).

Examples of Frobenius algebras include the $\mathbbm{k}$-algebra $\textnormal{Mat}_n(\mathbbm{k})$ of $n \times n$-matrices with entries in some field $\mathbbm{k}$ and the trace; the group algebra $\mathbbm{k}\lbrack G \rbrack$ of a finite group $G$ with Frobenius form given by the coefficient of the unit element $1_G$ in the product of two arguments; and, by a result of Larson and Sweedler, every finite-dimensional Hopf algebra (cf. \cite{larson1969associative}).

Our next goal is to adapt existing definitions of Frobenius algebras in monoidal categories (cf. \cite{fuchs2009frobenius}) to monoidal bicategories and prove the equivalence of them.
The main approach is to replace \textit{properties}, i.e. relations between $1$-morphisms, by \textit{structures}, i.e. $2$-morphisms. We begin with an adaption of of rigidity (cf. \autoref{rigid}) to monoidal bicategories.
\begin{definition}[Biexact pairing]
	 	Let $(\mathcal{C}, \otimes, \unit)$ be a strict monoidal bicategory. A \textit{biexact pairing} $x \dashv y$, between objects $x,y \in \mathcal{C}$ is given by a $1$-morphism $\varepsilon_1 \equiv \includegraphics[height=0.5cm]{pics/proof1/vareps1}: x \otimes y \rightarrow \unit$, a $1$-morphism $\eta_1 \equiv \includegraphics[height=0.5cm]{pics/proof1/eta1}: \unit \rightarrow y \otimes x$, as well as invertible $2$-morphisms
	 	\begin{equation}
	 	\label{zigzagidentities2}
	 		\varepsilon_2: \includegraphics[height=1cm]{pics/proof1/vareps2} \cong \includegraphics[height=1cm, width=0.5cm]{pics/proof1/idB} 
	 		\quad \textnormal{and} \quad
	 		 \eta_2: \quad \includegraphics[height=1cm, width=0.5cm]{pics/proof1/idA} \cong \includegraphics[height=1cm]{pics/proof1/eta2}.
	 	\end{equation}
If there exists a biexact pairing $x \dashv y$, we call $x$ \textit{left dual} of $y$, and $y$ \textit{right dual} of $x$.
	 	 \end{definition} 	
Note that in \cite[Def. 8, Def. 13]{mellies2013dialogue}, in addition to our definition, the $2$-morphisms are also  subject to some constraints.	 
	 	 
In the following Lemma we show the uniqueness of right duals in monoidal bicategories. We postpone the proof to the Appendix. The statement can be restated for the uniqueness of left duals. 

\begin{lemma}
\label{dualsuniquebicat}
	Let $(\mathcal{C}, \otimes, \unit)$ be a strict monoidal bicategory and $A,B,B' \in \mathcal{C}$. Assume $A \dashv B$ and $A \dashv B'$ witnessed by $\varepsilon_1, \eta_1$ and $\varepsilon_1', \eta_1'$. Then there exist $1$-morphisms $\alpha: B \rightarrow B'$ and $\beta: B' \rightarrow B$ as well as invertible $2$-morphisms $\beta \circ \alpha \cong \textnormal{id}_{B}$ and $\alpha \circ \beta \cong \textnormal{id}_{B'}$. The $1$-morphism $\alpha$ preserves evaluation and coevaluation morphisms in the sense that there exist invertible $2$-morphisms $\varepsilon_1' \circ (\textnormal{id}_{A} \otimes \alpha) \cong \varepsilon_1$ and $(\alpha \otimes \textnormal{id}_A) \circ \eta_1 \cong \eta_1'$. Furthermore, $\alpha$ is unique in the sense that for every other $1$-morphism $\tilde{\alpha}: B \rightarrow B'$ that preserves evaluation and coevaluation morphisms in this way, there exists an invertible $2$-morphism $\alpha \cong \tilde{\alpha}$. 
\end{lemma}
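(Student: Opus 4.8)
The plan is to run the classical ``uniqueness of adjoints'' argument, categorified so that every equation of the $1$-categorical proof becomes one of the invertible zigzag $2$-morphisms supplied by the biexact pairings. I work throughout in the strict setting, so that the associativity and unit constraints for both $\circ$ and $\otimes$ are identities and the interchanger \eqref{intertwiner} lets me slide disjoint boxes past one another freely; the only genuinely nontrivial $2$-cells I invoke are the four snake isomorphisms coming from $A\dashv B$ (via $\varepsilon_1,\eta_1$) and from $A\dashv B'$ (via $\varepsilon_1',\eta_1'$). First I would define the candidate equivalences as mates,
\[
\alpha := (\textnormal{id}_{B'}\otimes\varepsilon_1)\circ(\eta_1'\otimes\textnormal{id}_B)\colon B\to B',\qquad
\beta := (\textnormal{id}_{B}\otimes\varepsilon_1')\circ(\eta_1\otimes\textnormal{id}_{B'})\colon B'\to B,
\]
i.e. the evident zigzag $1$-morphisms pairing the coevaluation of one biexact pairing with the evaluation of the other.

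Next I would establish the two preservation statements. Expanding $\varepsilon_1'\circ(\textnormal{id}_A\otimes\alpha)$ and regrouping by interchange exhibits inside the composite precisely the snake of the pairing $A\dashv B'$ formed by feeding the cup $\eta_1'$ into the cap $\varepsilon_1'$ along its $A$-leg; applying the corresponding invertible $2$-cell straightens it and leaves exactly $\varepsilon_1$, giving $\varepsilon_1'\circ(\textnormal{id}_A\otimes\alpha)\cong\varepsilon_1$. Symmetrically, rewriting $(\alpha\otimes\textnormal{id}_A)\circ\eta_1$ and straightening the analogous snake of $A\dashv B$ yields $(\alpha\otimes\textnormal{id}_A)\circ\eta_1\cong\eta_1'$.

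The invertibility isomorphisms then fall straight out of these. Using interchange to pull $\alpha$ through $\eta_1\otimes\textnormal{id}_{B'}$ gives $\beta\circ\alpha=(\textnormal{id}_B\otimes(\varepsilon_1'\circ(\textnormal{id}_A\otimes\alpha)))\circ(\eta_1\otimes\textnormal{id}_B)$; whiskering in the $\varepsilon$-preservation isomorphism turns this into $(\textnormal{id}_B\otimes\varepsilon_1)\circ(\eta_1\otimes\textnormal{id}_B)$, which is a zigzag of $A\dashv B$ and hence $\cong\textnormal{id}_B$ by its snake $2$-cell. The mirror computation, using the $\eta$-preservation isomorphism together with a snake of $A\dashv B'$, yields $\alpha\circ\beta\cong\textnormal{id}_{B'}$. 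For uniqueness I would take any $\tilde\alpha\colon B\to B'$ preserving evaluation and coevaluation, apply the snake of $A\dashv B'$ to its target, slide $\tilde\alpha$ inward by interchange, and substitute $\varepsilon_1'\circ(\textnormal{id}_A\otimes\tilde\alpha)\cong\varepsilon_1$; the result is literally the defining formula for $\alpha$, so $\tilde\alpha\cong\alpha$.

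The main obstacle is bookkeeping rather than a conceptual difficulty: at each straightening step one must verify that the sub-diagram singled out really is a single snake of one fixed pairing, and that only the bare existence of the zigzag $2$-isomorphisms is used, never any swallowtail/coherence condition, which the present definition of biexact pairing deliberately omits. If one prefers to work in a Gray monoid rather than a fully strict monoidal $2$-category, each use of interchange introduces an invertible interchanger $2$-cell; since every assertion above is an isomorphism and never an equality, the argument is unaffected.
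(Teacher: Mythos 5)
Your proposal is correct and follows essentially the same route as the paper's proof: the same zigzag $1$-morphisms $\alpha = (\textnormal{id}_{B'}\otimes\varepsilon_1)\circ(\eta_1'\otimes\textnormal{id}_B)$ and $\beta = (\textnormal{id}_{B}\otimes\varepsilon_1')\circ(\eta_1\otimes\textnormal{id}_{B'})$, with every isomorphism assembled from the interchanger and the four snake $2$-cells only, and the same uniqueness computation. The sole organizational difference is that you prove the preservation isomorphisms first and deduce $\beta\circ\alpha\cong\textnormal{id}_{B}$ and $\alpha\circ\beta\cong\textnormal{id}_{B'}$ from them, whereas the paper straightens the two snakes inside $\beta\circ\alpha$ directly and proves preservation afterwards; the underlying diagram manipulations are identical.
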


The following \autoref{biexactpairinginvertible} is a generalisation of 
\autoref{linearlydistributiveinducedmorphisms} to bicategories. It can  be obtained in a straightforward way by using the the zig-zag $2$-morphisms \eqref{zigzagidentities2}.

\begin{lemma}
\label{biexactpairinginvertible}
	Let $(\mathcal{C}, \otimes, \unit)$ be a strict monoidal bicategory with $x,y, x^{\star},\ ^{\star}y \in \mathcal{C}$ and biexact pairings $x \dashv x^{\star}$,$^{\star}y \dashv y$. Then there exist equivalences of categories		
	\[
	_{\mathcal{C}} \langle x \otimes y, \unit_2 \rangle \cong\ _{\mathcal{C}}\langle y, x^{\star} \rangle \cong\ _{\mathcal{C}}\langle x,\ ^{\star} y \rangle,
	\] 
	\[
	_{\mathcal{C}} \langle \unit, y \otimes_2 x \rangle \cong\ _{\mathcal{C}} \langle x^{\star}, y \rangle \cong\  _{\mathcal{C}} \langle ^{\star}y, x \rangle.
	\]
	Moreover, any pair of $1$-morphisms $x \otimes y \rightarrow \unit$ and $\unit \rightarrow y \otimes x$ provides a biexact pairing $x \dashv y$ if and only if the associated $1$-morphisms $y \rightarrow x^{\star}$ and $x^{\star} \rightarrow y$ are mutually inverse up to $2$-morphisms.
\end{lemma}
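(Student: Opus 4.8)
The plan is to lift the one-categorical argument behind \autoref{linearlydistributiveinducedmorphisms} up one dimension, replacing natural isomorphisms of hom-sets by equivalences of hom-categories and the on-the-nose snake equations by the invertible zig-zag $2$-morphisms \eqref{zigzagidentities2}. Write $e \equiv \varepsilon_1$, $c \equiv \eta_1$ for the evaluation and coevaluation $1$-morphisms of $x \dashv x^{\star}$, and $e', c'$ for those of ${}^{\star}y \dashv y$. First I would define the comparison functors. For the first chain, send a $1$-morphism $f : x \otimes y \to \unit$ to $(\textnormal{id}_{x^{\star}} \otimes f) \circ (c \otimes \textnormal{id}_y) : y \to x^{\star}$, and in the other direction send $g : y \to x^{\star}$ to $e \circ (\textnormal{id}_x \otimes g) : x \otimes y \to \unit$; on $2$-morphisms both act by whiskering with $c$ and $e$, so they are genuine functors ${}_{\mathcal{C}}\langle x \otimes y, \unit\rangle \to {}_{\mathcal{C}}\langle y, x^{\star}\rangle$ and back. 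The analogous assignments built from $e', c'$ give functors ${}_{\mathcal{C}}\langle x \otimes y, \unit\rangle \leftrightarrows {}_{\mathcal{C}}\langle x, {}^{\star}y\rangle$, and dualising (replacing caps by cups) gives the corresponding functors for the second chain through ${}_{\mathcal{C}}\langle \unit, y \otimes x\rangle$.

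Next I would check that each pair is quasi-inverse. Composing the two functors of a pair and substituting the definitions, the resulting endo-assignment is precisely a zig-zag composite of $x \dashv x^{\star}$ (resp. of ${}^{\star}y \dashv y$) applied to the free strand; whiskering the invertible $2$-morphisms $\varepsilon_2, \eta_2$ of \eqref{zigzagidentities2} into that strand produces invertible $2$-morphisms exhibiting each round trip as naturally isomorphic to the identity functor. Naturality in the $2$-morphism variable is automatic, since everything is defined by whiskering and governed by the interchange law \eqref{intertwiner}. This establishes all the claimed equivalences of categories.

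For the \emph{moreover} part, let $\varepsilon : x \otimes y \to \unit$ and $\eta : \unit \to y \otimes x$ be given, and let $g : y \to x^{\star}$ and $v : x^{\star} \to y$ be the $1$-morphisms associated to them under the equivalences, so that the round-trip isomorphisms yield $\varepsilon \cong e \circ (\textnormal{id}_x \otimes g)$ and $\eta \cong (v \otimes \textnormal{id}_x) \circ c$. Substituting these into the two candidate zig-zag composites
\[ Z_y := (\textnormal{id}_y \otimes \varepsilon) \circ (\eta \otimes \textnormal{id}_y), \qquad Z_x := (\varepsilon \otimes \textnormal{id}_x) \circ (\textnormal{id}_x \otimes \eta), \]
and repeatedly applying the interchange law together with the zig-zag $2$-morphisms of $x \dashv x^{\star}$, I expect to obtain invertible $2$-morphisms $Z_y \cong v \circ g$ and $Z_x \cong \phi(g \circ v)$, where $\phi : {}_{\mathcal{C}}\langle x^{\star}, x^{\star}\rangle \to {}_{\mathcal{C}}\langle x, x\rangle$ is the mate operation $h \mapsto (e \otimes \textnormal{id}_x) \circ (\textnormal{id}_x \otimes h \otimes \textnormal{id}_x) \circ (\textnormal{id}_x \otimes c)$. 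This $\phi$ is itself an equivalence, built by whiskering with $x \dashv x^{\star}$ exactly as above, and it sends $\textnormal{id}_{x^{\star}}$ to a $1$-morphism isomorphic to $\textnormal{id}_x$. Since a biexact pairing $x \dashv y$ is by definition the data of invertible $2$-morphisms $Z_y \cong \textnormal{id}_y$ and $Z_x \cong \textnormal{id}_x$, and since an equivalence reflects isomorphisms, this shows that $x \dashv y$ is biexact exactly when $v \circ g \cong \textnormal{id}_y$ and $g \circ v \cong \textnormal{id}_{x^{\star}}$, i.e.\ when $g$ and $v$ are mutually inverse up to $2$-morphism.

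The hard part is this last step: the bookkeeping that rewrites $Z_x$ and $Z_y$ as the mate of $g \circ v$ and as $v \circ g$. In contrast to the $1$-categorical setting of \autoref{linearlydistributiveinducedmorphisms}, where the snake equations hold strictly, here every rewriting holds only up to a chosen invertible $2$-morphism, so one must verify that the whiskered copies of $\varepsilon_2, \eta_2$ and of the round-trip isomorphisms compose coherently; the interchange law \eqref{intertwiner} is precisely what lets the cap $\varepsilon$ and the cup $\eta$ slide past one another so that a single zig-zag of $x \dashv x^{\star}$ can be straightened. As an alternative for the direction ``$x \dashv y$ biexact $\Rightarrow g, v$ mutually inverse'', one may instead invoke the uniqueness of right duals, \autoref{dualsuniquebicat}, applied to the two pairings $x \dashv x^{\star}$ and $x \dashv y$.
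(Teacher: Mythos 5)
Your proposal is correct and is precisely the argument the paper has in mind: the paper's own ``proof'' is just the remark that the lemma ``can be obtained in a straightforward way by using the zig-zag $2$-morphisms \eqref{zigzagidentities2}'', and your mate functors, the whiskered round-trip isomorphisms, and the rewritings $Z_y \cong v \circ g$ and $Z_x \cong \phi(g \circ v)$ are exactly that argument carried out in detail. Your alternative route for the forward direction of the ``moreover'' part also checks out, since your $g$ and $v$ coincide with the $1$-morphisms $\beta$ and $\alpha$ constructed in the proof of \autoref{dualsuniquebicat}.
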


Central to the understanding of algebras, and in particular Frobenius algebras, is the notion of monoids. We've already encountered the notion of monoidal categories. Surprisingly, inside of monoidal categories it is possible to define another categorification of a monoid -- a \textit{monoid object} (cf. \cite{mac2013categories}). This can be seen as an example of the \textit{micro-macrocosmos principle}. Let us give some examples of monoidal objects. A monoid object in $\Set$ is a classical monoid. A monoid object in the endofunctor category $\End(\mathcal{C}, \mathcal{C})$ of some category $\mathcal{C}$ is a monad on $\mathcal{C}$.
But most importantly, a monoid object in the category $\Vect$ is a $\mathbbm{k}$-algebra.

The notion of monoid objects is generalised to monoidal \textit{bi}categories by the following definition.
\begin{definition}[Pseudomonoid]
	A \textit{pseudomonoid} $(A, \mu, \eta) := (A, \mu, \eta, \alpha, \lambda, \varrho)$ in a monoidal bicategory $(\mathcal{C}, \otimes, \unit)$ is an object $A \in \mathcal{C}$ endowed with $1$-morphisms $\mu \equiv  \includegraphics[height=0.5cm]{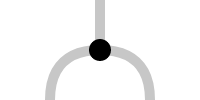}: A \otimes A \rightarrow A$ and $\eta \equiv \includegraphics[height=0.5cm]{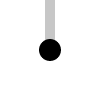}: \unit \rightarrow A$ called \textit{multiplication} and \textit{unit} of $A$, respectively, and invertible $2$-cells
		\begin{equation}
		\label{pseudomonoassoc}
			\includegraphics[height=1cm]{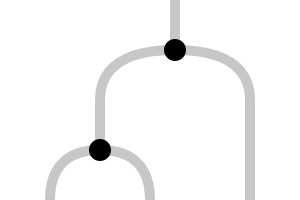} \overset{\alpha}{\cong} \includegraphics[height=1cm]{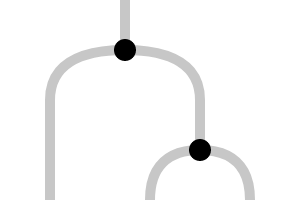},
		\end{equation}
		called \textit{associator}, and
		\begin{equation}
		\label{pseudomonounit}
 	\includegraphics[height=1cm]{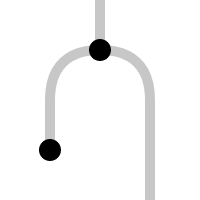} \overset{\lambda}{\cong}\includegraphics[height=1cm]{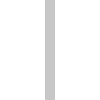} \overset{\varrho}{\cong} \includegraphics[height=1cm]{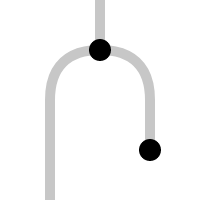},
 	\end{equation}
 	called \textit{left unitor} and \textit{right unitor},
	subject to the following commutativity constraints
	\begin{equation}
	\begin{tikzcd}[row sep=large, column sep = large]
		\includegraphics[height=1.5cm]{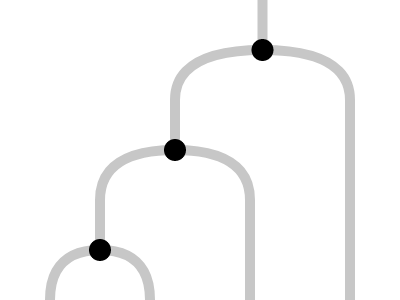} \arrow{r}{\alpha} \arrow{d}[left]{\alpha}  & \includegraphics[height=1.5cm]{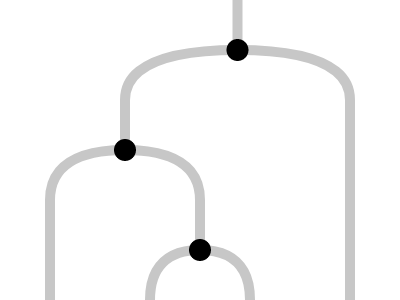} \arrow{r}{\alpha} & \includegraphics[height=1.5cm]{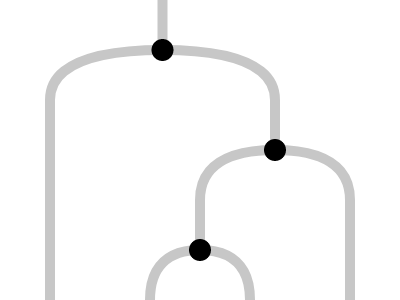} \arrow{d}{\alpha}	\\
		\includegraphics[height=1.5cm]{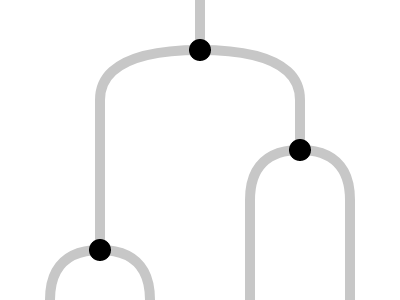} \arrow{r}[below]{\overset{\eqref{intertwiner}}{\cong}}
		& \includegraphics[height=1.5cm]{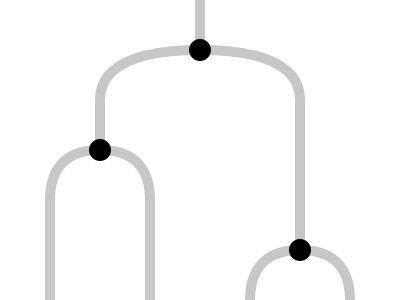} \arrow{r}[below]{\alpha}
		& \includegraphics[height=1.5cm]{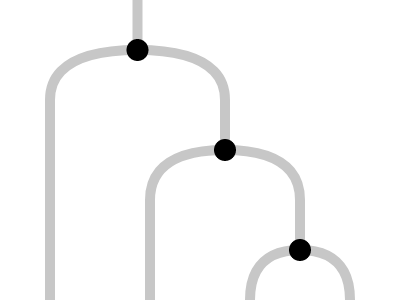},
	\end{tikzcd}
\end{equation}
\begin{equation}
	\begin{tikzcd}[row sep=large, column sep = large]
		&\includegraphics[height=0.5cm]{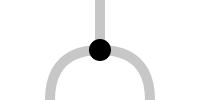} \\
		\includegraphics[height=1.5cm]{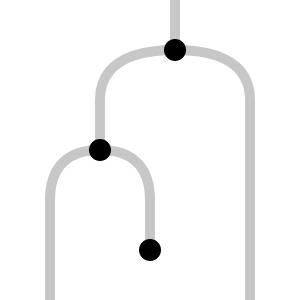} \arrow{ur}{\varrho} \arrow{rr}[below]{\alpha} && \includegraphics[height=1.5cm]{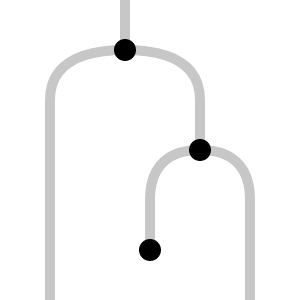} \arrow{ul}[right]{\lambda}.
	\end{tikzcd}
\end{equation}
		The invertible $2$-cells $\alpha, \lambda, \varrho,$ are part of the data of a pseudomonoid, however, in the following it will be convenient to supress them in the notation, if not explicitly needed.
		
	A \textit{pseudocomonoid} $(A, \delta, \varepsilon) := (A, \delta, \varepsilon, \alpha, \lambda, \varrho)$ in $(\mathcal{C}, \otimes, \unit)$ is a pseudomonoid in the bicategory $(\mathcal{C}, \otimes, \unit)^{\textnormal{opp}(1)}$.
	
	\end{definition}	
For example, a small monoidal category is a pseudomonoid in the monoidal bicategory $\Cat$ of small categories.

 The following result shows -- analogous to classical Frobenius algebras and Frobenius algebras in monoidal categories -- that there exist multiple equivalent definitions of what will be called Frobenius pseudomonoids in monoidal bicategories. For the sake of clarity we will postpone the proof, which extensively uses the graphical calculus, to the Appendix. 

\begin{lemma}
\label{lemmafrobpseudo}
	Let $(A,\ \mu \equiv \includegraphics[height=0.5cm]{pics/proof1/mu}: A \otimes A \rightarrow A,\ \eta \equiv \includegraphics[height=0.5cm]{pics/proof1/eta}: \unit \rightarrow A)$ be a pseudomonoid in a monoidal bicategory $(\mathcal{C}, \otimes, \unit)$. The following are equivalent:
	\begin{enumerate}
		\item There exists $\varepsilon \equiv \includegraphics[height=0.5cm]{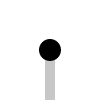}:A \rightarrow \unit$, such that$ \includegraphics[height=1cm]{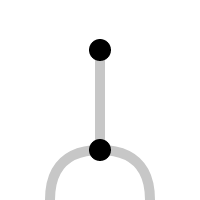}: A \otimes A \rightarrow \unit$ is an evaluation for a biexact pairing $A \dashv A$.
		\item There exists $\includegraphics[height=0.5cm]{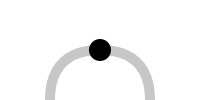}: A \otimes A \rightarrow \unit$, that is an evaluation for a biexact pairing $A \dashv A$ and an invertible $2$-cell
		\begin{equation}
		\label{form}
			\includegraphics[height=1cm]{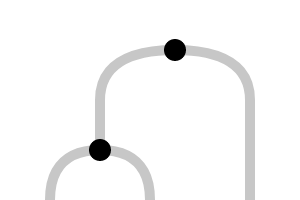} \cong \includegraphics[height=1cm]{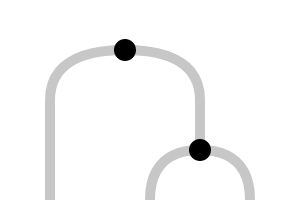}
		\end{equation}
		\item There exists a pseudocomonoid structure $(A,\ \delta \equiv\includegraphics[height=0.5cm]{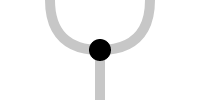}: A \rightarrow A \otimes A,\ \varepsilon \equiv\includegraphics[height=0.5cm]{pics/proof1/vareps}: A \rightarrow \unit)$ on $A$ and two invertible $2$-cells
			\begin{equation}
			\label{pseudocomonoid2cell}
				\includegraphics[height=1cm]{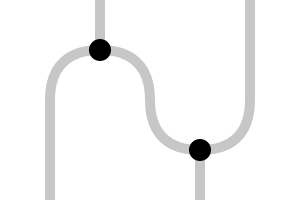} \cong \includegraphics[height=1cm]{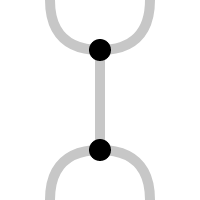} \cong \includegraphics[height=1cm]{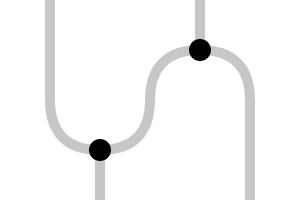}
			\end{equation}
	\end{enumerate}
\end{lemma}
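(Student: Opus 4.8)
The plan is to prove the three equivalences by establishing the cycle $(1) \Rightarrow (3) \Rightarrow (2) \Rightarrow (1)$, working entirely in the graphical calculus for the monoidal bicategory $(\mathcal{C}, \otimes, \unit)$ and treating the associated $2$-cells with care, since everything holds only up to invertible $2$-morphism rather than on the nose. Throughout I would use that $A$ carries a fixed pseudomonoid structure $(A, \mu, \eta)$, and I would freely suppress the coherence $2$-cells $\alpha, \lambda, \varrho$ as the statement permits.

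\textbf{The implication $(1) \Rightarrow (3)$.} Starting from an $\varepsilon: A \to \unit$ such that $\sigma := \varepsilon \circ \mu$ is an evaluation for a biexact pairing $A \dashv A$, I would first invoke \autoref{biexactpairinginvertible}: the biexact pairing supplies a coevaluation, i.e. a $1$-morphism $\eta' \equiv \coev: \unit \to A \otimes A$ together with invertible zig-zag $2$-cells. The candidate comultiplication is then built graphically as $\delta := (\mu \otimes \id_A) \circ (\id_A \otimes \coev)$ (or the mirror-image composite), and $\varepsilon$ is reused as the counit. The work here is to produce the invertible $2$-cells witnessing coassociativity and counitality of $(A, \delta, \varepsilon)$, and then the two Frobenius $2$-cells \eqref{pseudocomonoid2cell} relating $\delta \circ \mu$ to $(\id_A \otimes \mu) \circ (\delta \otimes \id_A)$ and to $(\mu \otimes \id_A) \circ (\id_A \otimes \delta)$; each of these is obtained by sliding the coevaluation cap past a multiplication vertex and invoking the pseudomonoid associator $\alpha$ together with the zig-zag identity, bookkeeping the composite of invertible $2$-cells at every stage.

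\textbf{The implications $(3) \Rightarrow (2)$ and $(2) \Rightarrow (1)$.} Given the pseudocomonoid structure and the Frobenius $2$-cells \eqref{pseudocomonoid2cell}, I would define $\sigma := \varepsilon \circ \mu$ and construct a coevaluation $\eta \circ \ldots$ using $\delta$, namely $\coev := (\id_A \otimes \id_A) \circ \delta \circ \eta$ read as $\unit \xrightarrow{\eta} A \xrightarrow{\delta} A \otimes A$; the two zig-zag $2$-cells required for the biexact pairing $A \dashv A$ then follow by composing the Frobenius $2$-cells with the comonoid counitor and the monoid unitor, which is exactly the standard ``Frobenius implies self-duality'' string-diagram argument lifted one categorical level. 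For the invertible $2$-cell \eqref{form} I would use naturality of the associator to compare the two ways of bracketing $\mu$ against $\sigma$. Finally, $(2) \Rightarrow (1)$ is the cheapest step: the $2$-cell \eqref{form} exhibits $\sigma$ as isomorphic to $\varepsilon \circ \mu$ for a suitable $\varepsilon$ (namely $\varepsilon := \sigma \circ (\eta \otimes \id_A)$ up to unitors), so an evaluation of the stated special form exists, and the biexact pairing transports along the invertible $2$-cell.

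\textbf{Main obstacle.} The hard part will not be any single diagram but the coherence bookkeeping: because we are in a bicategory, every ``equality'' of string diagrams is really a specified invertible $2$-cell, and to close the cycle I must check that the $2$-cells produced going around $(1)\Rightarrow(3)\Rightarrow(2)\Rightarrow(1)$ compose to something coherent with the pseudomonoid's own $\alpha, \lambda, \varrho$. Keeping these composites under control — in particular verifying that the constructed coevaluation genuinely satisfies both zig-zag identities and not merely one — is the delicate point, which is precisely why the detailed graphical verification is deferred to the Appendix.
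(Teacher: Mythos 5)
Your proposal is correct and follows essentially the same route as the paper's Appendix proof: the key constructions coincide ($\sigma := \varepsilon \circ \mu$ with the $2$-cell \eqref{form} obtained by whiskering the pseudomonoid associator with $\textnormal{id}_{\varepsilon}$, the coevaluation $\delta \circ \eta$, the counit $\sigma$ precomposed with $\eta$ up to unitors, and the comultiplication built from the pairing's coevaluation and $\mu$), with the zig-zag, interchanger and unitor bookkeeping you describe. The only difference is organizational: you close a three-step cycle $(1) \Rightarrow (3) \Rightarrow (2) \Rightarrow (1)$, whereas the paper proves the four implications $(1) \Leftrightarrow (2)$ and $(3) \Leftrightarrow (2)$ with $(2)$ as the hub, so your $(1) \Rightarrow (3)$ is just the paper's $(1) \Rightarrow (2)$ and $(2) \Rightarrow (3)$ concatenated.
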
	

Note that we merely give an existence statement and do not attempt to prove an equivalence of categories spanned by structures on pseudomonoids, which would be beyond the scope of this thesis.

\begin{definition}[Frobenius pseudomonoid]
	A  pseudomonoid with one of the structures of \autoref{lemmafrobpseudo} is called \textit{Frobenius}.
\end{definition}
	Be aware that in \cite{mellies2013dialogue} it is in addition required that a variant of Mac Lane's pentagon diagram commutes. 
	
	Next we will define the bicategory in which to consider Frobenius pseudomonoids: the monoidal bicategory of profunctors. 
	
	For an introduction to enriched categories see e.g. \cite{borceux2000short}.
	
	\begin{definition}[$\Vcat$]
	\label{vcat}
	Let $(\mathcal{V}, \otimes, \unit)$ be a symmetric closed monoidal category. Let $\Vcat$ be the symmetric closed bicategory of small $\mathcal{V}$-enriched categories, $\mathcal{V}$-functors and $\mathcal{V}$-natural transformations, defined in the following way.
	The tensor product $\mathcal{C} \otimes \mathcal{D}$ of $\mathcal{V}$-enriched categories $\mathcal{C}, \mathcal{D}$ is the $\mathcal{V}$-enriched category with objects
$
\textnormal{ob}(\mathcal{C} \otimes \mathcal{D}) = \textnormal{ob}(\mathcal{C}) \times \textnormal{ob}(\mathcal{D})
$ and morphisms
	\[ \textnormal{Hom}_{\mathcal{C} \otimes \mathcal{D}}((x,y),(x',y')) = \textnormal{Hom}_{\mathcal{C}}(x,x') \otimes \textnormal{Hom}_{\mathcal{D}}(y,y') \in \mathcal{V}.
	\]
	 for $x,x' \in \mathcal{C},\ y,y' \in \mathcal{D}$. The monoidal unit with respect to this tensor product is given by the category with one object $\star$, such that $\textnormal{Hom}(\star, \star) = \unit \in \mathcal{V}$. A symmetric braiding of $\Vcat$ is induced by the symmetric braiding of $\mathcal{V}$. The internal hom $\underline{\textnormal{Hom}}(\mathcal{C}, \mathcal{D})$ of two $\mathcal{V}$-enriched categories is given by the $\mathcal{V}$-enriched category that has as objects $\mathcal{V}$-enriched functors $F: \mathcal{C} \rightarrow \mathcal{D}$, and as morphisms the end (cf. \autoref{defend})
	 \begin{equation}
	 \label{endmorphisms}
	 	\textnormal{Hom}_{\underline{\textnormal{Hom}}(\mathcal{C}, \mathcal{D})}(F, G) = \int_{x \in \mathcal{C}} \textnormal{Hom}_{\mathcal{D}}(F(\overline{x}),G(x)) \in \mathcal{V}.
	 \end{equation}
\end{definition}

This can be seen as a generalisation of the $\Set$-enriched category $\Cat$ of small categories (cf. \eqref{endmorphisms} with \eqref{natcoend}).

\begin{definition}[$\Vmod$] 
\label{vmod}
Assume that $\mathcal{V}$ is cocomplete. The closed symmetric monoidal bicategory $\Vmod$ has the same objects and  monoidal structure as $\Vcat$, but its morphisms are given as
	 \[
	 \textnormal{Hom}_{\mathcal{V}\textnormal{-Mod}}(\mathcal{C}, \mathcal{D}) :=  \textnormal{Hom}_{\mathcal{V}\textnormal{-Cat}}(\mathcal{D}^{\textnormal{opp}(1)} \otimes \mathcal{C}, \mathcal{V}).
	 \]
	A morphism $F \in  \textnormal{Hom}_{\mathcal{V}\textnormal{-Mod}}(\mathcal{C}, \mathcal{D})$ is called \textit{profunctor} (or \textit{module}, \textit{distributor}) from $\mathcal{C}$ \textit{to} $\mathcal{D}$  and is commonly denoted by $F: \mathcal{C} \nrightarrow \mathcal{D}$. The composition of profunctors $F: \mathcal{A} \nrightarrow \mathcal{B}$ and $G: \mathcal{B} \nrightarrow \mathcal{C}$ is given as the coend (cf. \autoref{defend}),
	 \begin{equation}
	 \label{profunctorscompo}
	 	(G \circ F)(\overline{c},a) = \int^{b \in \mathcal{B}} F(\overline{b},a) \otimes G(\overline{c},b), \quad c \in \mathcal{C}, a \in \mathcal{A},
	 \end{equation}
	 and is associative up to isomorphism. The coend exists since the objects of $\Vmod$ are small categories, and $\mathcal{V}$ is cocomplete (cf. \cite[Cor. 5.2]{mac2013categories}). In the case of $\mathcal{V} = (\Set, \times, \lbrace \star \rbrace)$, it is common to write $\Prof := \Vmod$ and speak of the category of profunctors.
\end{definition}

The term \textit{modules} can be motivated by the following observation. Consider $\mathcal{V}= \Vect$, then the morphism spaces of categories in $\Vmod$ are $\mathbbm{k}$-vector spaces, such that the composition is bilinear. In particular, categories in $\Vmod$ with one object $\star$ are in bijection to $\mathbbm{k}$-algebras with composition as multiplication. A module $F: \mathcal{A} \nrightarrow \mathcal{B}$ between such categories $\mathcal{A}$ and $\mathcal{B}$ then can be seen as a $(A,B)$-bimodule $M := F(\star, \star)$ with actions $\rho_A, \rho_B$ of algebras $A := \textnormal{Hom}_{\mathcal{A}}(\star, \star)$ and  $B := \textnormal{Hom}_{\mathcal{B}}(\star, \star)$ given by $\rho_A(a) := F(\textnormal{id}_{\star}, a)$ and $\rho_B(b) := F(b, \textnormal{id}_{\star})$.

	We will now define inclusions of $\Vcat$ into $\Vmod$. To this end note that every functor $F: \mathcal{C} \rightarrow \mathcal{D}$ yields modules $F_{\star}: \mathcal{C} \nrightarrow \mathcal{D}$ and $F^{\star}: \mathcal{D} \nrightarrow \mathcal{C}$ defined by 
	\begin{equation}
	\label{defstar}
		 F_{\star}(\overline{d},c) = \textnormal{Hom}_{\mathcal{D}}(d, F(c)) \quad \textnormal{and} \quad F^{\star}(\overline{c},d) = \textnormal{Hom}_{\mathcal{D}}(F(c), d)
	\end{equation} 
	  for  $c \in \mathcal{C}, d \in \mathcal{D}$.
	 Moreover, every $\mathcal{V}$-natural transformation $\eta: F \Rightarrow G$ of $\mathcal{V}$-functors $F,G: \mathcal{C} \rightarrow \mathcal{D}$ yields $\mathcal{V}$-natural transformations $\eta_{\star}: F_{\star} \Rightarrow G_{\star}$ and $\eta^{\star}: G^{\star} \Rightarrow F^{\star}$ defined by
	 $
	 (\eta_{\star})_{(\overline{d},c)} =  \textnormal{Hom}_{\mathcal{D}}(\textnormal{id}_d, \eta_c)$ and $(\eta^{\star})_{(\overline{d},c)} = \textnormal{Hom}_{\mathcal{D}}(\eta_c, \textnormal{id}_d)$.
\begin{lemma}
	Mapping $\mathcal{V}$-categories to itself, $\mathcal{V}$-functors $F$ to $F_{\star}$ and $F^{\star}$, respectively, and $\mathcal{V}$-natural transformations $\eta$ to $\eta_{\star}$ and $\eta^{\star}$, respectively, yields monoidal 2-functors
	 \begin{equation}
	 	\label{starmonoidalfunctor}
(-)_{\star}: \Vcat \rightarrow \Vmod \quad \textnormal{and} \quad (-)^{\star}: \Vcat \rightarrow (\Vmod)^{\textnormal{opp}(1,2)}.
	 \end{equation}
	 Here $(-)^{\textnormal{opp}(1,2)}$ means reversing the $1$- and $2$-cells.
\end{lemma}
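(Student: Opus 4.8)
The plan is to exhibit $(-)_{\star}$ and $(-)^{\star}$ as \emph{monoidal homomorphisms of bicategories} (monoidal pseudofunctors) rather than strict $2$-functors, since composition of profunctors in $\Vmod$ is associative only up to the coherent isomorphism coming from the coend \eqref{profunctorscompo}. I would organise the verification into four stages: well-definedness on $1$- and $2$-cells, the compositor and unitor isomorphisms, monoidality, and finally the coherence axioms. Throughout, the single engine driving every comparison isomorphism is the co-Yoneda (density) lemma of coend calculus, which collapses a coend taken against a representable $\mathcal{V}$-functor.

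First I would check well-definedness. For a $\mathcal{V}$-functor $F:\mathcal{C}\to\mathcal{D}$ the object $F_{\star}(\overline{d},c)=\textnormal{Hom}_{\mathcal{D}}(d,F(c))$ is $\mathcal{V}$-functorial in $(\overline{d},c)\in\mathcal{D}^{\textnormal{opp}(1)}\otimes\mathcal{C}$, being the composite of the hom-bifunctor of $\mathcal{D}$ with $\textnormal{id}\otimes F$; hence $F_{\star}$ is a genuine profunctor $\mathcal{C}\nrightarrow\mathcal{D}$, and dually $F^{\star}$ one $\mathcal{D}\nrightarrow\mathcal{C}$. For a $\mathcal{V}$-natural $\eta:F\Rightarrow G$ the components $(\eta_{\star})_{(\overline{d},c)}=\textnormal{Hom}_{\mathcal{D}}(\textnormal{id}_d,\eta_c)$ assemble into a $\mathcal{V}$-natural transformation $F_{\star}\Rightarrow G_{\star}$ (post-composition with the naturally varying $\eta_c$), while $(\eta^{\star})_{(\overline{d},c)}=\textnormal{Hom}_{\mathcal{D}}(\eta_c,\textnormal{id}_d)$ reverses the direction, giving $G^{\star}\Rightarrow F^{\star}$. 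Since $\textnormal{Hom}_{\mathcal{D}}(\textnormal{id},-)$ is covariant and $\textnormal{Hom}_{\mathcal{D}}(-,\textnormal{id})$ contravariant, vertical composition and identity $2$-cells are preserved by $(-)_{\star}$ and reversed by $(-)^{\star}$, which is exactly the $\textnormal{opp}(2)$ appearing on the target of the second functor.

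Next come the structure isomorphisms. On identities one computes $(\textnormal{id}_{\mathcal{C}})_{\star}(\overline{c},c')=\textnormal{Hom}_{\mathcal{C}}(c,c')$, which is precisely the identity profunctor (the unit for coend composition), and likewise for $(\textnormal{id}_{\mathcal{C}})^{\star}$. For composition, given $F:\mathcal{C}\to\mathcal{D}$ and $G:\mathcal{D}\to\mathcal{E}$, the coend formula \eqref{profunctorscompo} gives
\[
(G_{\star}\circ F_{\star})(\overline{e},c)=\int^{d\in\mathcal{D}}\textnormal{Hom}_{\mathcal{D}}(d,F(c))\otimes\textnormal{Hom}_{\mathcal{E}}(e,G(d)),
\]
and the density lemma, collapsing the coend against the representable $\textnormal{Hom}_{\mathcal{D}}(-,F(c))$, yields $\textnormal{Hom}_{\mathcal{E}}(e,G(F(c)))=(GF)_{\star}(\overline{e},c)$; this is the compositor $\phi_{G,F}\colon G_{\star}\circ F_{\star}\cong(GF)_{\star}$. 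The same argument with the variance of the representable flipped produces $(GF)^{\star}\cong F^{\star}\circ G^{\star}$, so $(-)^{\star}$ reverses the order of composition and hence lands in $(\Vmod)^{\textnormal{opp}(1)}$, consistently with the $\textnormal{opp}(1)$ factor in the statement.

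For monoidality I would exploit that both bicategories share the same objects and the same tensor product and that the assignments are the identity on objects. Unwinding the definition of the hom-objects in a tensor product of $\mathcal{V}$-categories gives, for $F:\mathcal{C}\to\mathcal{C}'$ and $F':\mathcal{D}\to\mathcal{D}'$,
\[
(F\otimes F')_{\star}(\overline{(c',d')},(c,d))=\textnormal{Hom}_{\mathcal{C}'}(c',F(c))\otimes\textnormal{Hom}_{\mathcal{D}'}(d',F'(d))=(F_{\star}\otimes F'_{\star})(\overline{(c',d')},(c,d)),
\]
so the monoidal comparison cells are essentially identities and the unit (the one-object category) is preserved on the nose. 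The remaining and most laborious task, which I expect to be the main obstacle, is verifying the coherence axioms: the pseudofunctor associativity pentagon and the unit triangles for $\phi_{G,F}$, which reduce to a Fubini (interchange-of-coends) computation showing that the two ways of collapsing an iterated coend agree, together with the naturality of $\phi_{G,F}$ in the $2$-cells $\eta$ and the coherence hexagons relating the compositor to the monoidal comparison. These are routine but bookkeeping-heavy diagram chases, and I would streamline them by systematically reducing every coend against a representable via the density lemma, so that each coherence diagram becomes a plain equality of hom-objects in $\mathcal{V}$.
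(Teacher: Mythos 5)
The paper states this lemma without proof (it is treated as a known fact about $\Vmod$), so there is no internal argument to compare yours against; judged on its own merits, your proposal is correct and supplies exactly the content the paper leaves implicit. Your three key points are the right ones: (i) the compositor $G_{\star}\circ F_{\star}\cong (G\circ F)_{\star}$ obtained by collapsing the coend \eqref{profunctorscompo} against a representable is precisely the co-Yoneda mechanism \eqref{coendrechnung} that the paper itself relies on wherever it uses this lemma, e.g.\ the step $(D^{\textnormal{opp}(1)})_{\star}\circ((D^{-1})^{\textnormal{opp}(1)})_{\star}\cong((D\circ D^{-1})^{\textnormal{opp}(1)})_{\star}$ in the proof of \autoref{frobeniusinducesstarautonom}; (ii) both assignments are the identity on objects and commute on the nose with the tensor product of enriched categories and with the monoidal unit, so the monoidal comparison cells are essentially identities; (iii) the variance bookkeeping $(\mu\circ\eta)^{\star}=\eta^{\star}\circ\mu^{\star}$ and $(G\circ F)^{\star}\cong F^{\star}\circ G^{\star}$ is exactly what the $\textnormal{opp}(1,2)$ in the target records. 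Your opening observation that these can only be pseudofunctors, not strict $2$-functors, because composition in $\Vmod$ is associative merely up to coherent isomorphism, is also correct and is the right reading of the paper's loose use of ``$2$-functor''. The one place where your write-up remains a sketch is the verification of the pseudofunctor and monoidal coherence diagrams; these do reduce, as you say, to Fubini for iterated coends together with naturality of the co-Yoneda isomorphism, but in a complete proof you should at least display the coherence square relating $\phi_{H,G\circ F}\circ(\textnormal{id}\ast\phi_{G,F})$ to $\phi_{H\circ G,F}\circ(\phi_{H,G}\ast\textnormal{id})$ and the associator of $\Vmod$, since that is the only point where the ``up to isomorphism'' associativity of \eqref{profunctorscompo} genuinely enters the argument rather than cancelling out.
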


We say $G:  \mathcal{C} \nrightarrow \mathcal{D}$ is \textit{representable}, if there exists a functor $F:\mathcal{C} \rightarrow \mathcal{D}$, such that $G = F_{\star}$. The following result (cf. \cite[Prop. 7.9.1]{borceux50handbook}) shows, in particular, that in the case of profunctors, having a right adjoint implies being representable.

\begin{lemma}
\label{rightadjointrep}
For any functor $F: \mathcal{C} \rightarrow \mathcal{D}$ between locally small categories $\mathcal{C}, \mathcal{D}$, the profunctor $F^{\star}$ is right adjoint to $F_{\star}$ in $\Vmod$, 
\begin{equation}
	F_{\star} \dashv F^{\star}.
\end{equation}
Conversely, every profunctor $G: \mathcal{C} \nrightarrow \mathcal{D}$ with a right adjoint is representable,  $G = F_{\star}$ for some $F: \mathcal{C} \rightarrow \mathcal{D}$.  
\end{lemma}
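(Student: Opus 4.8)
The plan is to treat the two assertions separately: for the adjunction $F_\star \dashv F^\star$ I would exhibit the unit and counit $2$-cells explicitly and check the triangle identities, while for the converse I would pass to presheaf categories and recognise the left adjoints among profunctors as the representable ones. First I would compute the two relevant composites using the coend formula \eqref{profunctorscompo} together with the co-Yoneda (density) lemma. Composing $F_\star:\mathcal{C}\nrightarrow\mathcal{D}$ with $F^\star:\mathcal{D}\nrightarrow\mathcal{C}$ gives
\[
(F^\star\circ F_\star)(\overline{c'},c)=\int^{d\in\mathcal{D}}\textnormal{Hom}_{\mathcal{D}}(d,F(c))\otimes\textnormal{Hom}_{\mathcal{D}}(F(c'),d)\cong\textnormal{Hom}_{\mathcal{D}}(F(c'),F(c)),
\]
the last step by co-Yoneda, and under this identification the unit $\eta:\textnormal{id}_{\mathcal{C}}\Rightarrow F^\star\circ F_\star$ is exactly the action of $F$ on hom-objects. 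Dually $(F_\star\circ F^\star)(\overline{d'},d)=\int^{c}\textnormal{Hom}_{\mathcal{D}}(F(c),d)\otimes\textnormal{Hom}_{\mathcal{D}}(d',F(c))$, and I would define the counit $\epsilon:F_\star\circ F^\star\Rightarrow\textnormal{id}_{\mathcal{D}}$ by composition in $\mathcal{D}$, $(g,h)\mapsto g\circ h$, which descends to the coend precisely because composition is associative.

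It then remains to verify the two triangle identities. Tracing an element through the whiskered composites $F_\star\Rightarrow F_\star\circ F^\star\circ F_\star\Rightarrow F_\star$ and $F^\star\Rightarrow F^\star\circ F_\star\circ F^\star\Rightarrow F^\star$, each reduces — after inserting $F(\textnormal{id})$ via $\eta$ and composing via $\epsilon$ — to the left and right unit laws for composition in $\mathcal{D}$, so both hold strictly. This establishes $F_\star\dashv F^\star$. Equivalently one could exhibit the natural isomorphism of hom-categories $\Vmod(\mathcal{X},\mathcal{D})(F_\star\circ P,Q)\cong\Vmod(\mathcal{X},\mathcal{C})(P,F^\star\circ Q)$ and appeal to Yoneda, but the unit/counit route is the most transparent.

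For the converse, suppose $G:\mathcal{C}\nrightarrow\mathcal{D}$ has a right adjoint $H:\mathcal{D}\nrightarrow\mathcal{C}$. Since profunctors $\mathbf{1}\nrightarrow\mathcal{C}$ are exactly presheaves on $\mathcal{C}$, post-composition with $G$ yields a functor $\widetilde{G}:\widehat{\mathcal{C}}\to\widehat{\mathcal{D}}$ between presheaf categories with $\widetilde{G}(P)(\overline{d})=\int^{c}P(c)\otimes G(\overline{d},c)$; by co-Yoneda $\widetilde{G}$ sends the representable $\textnormal{Hom}_{\mathcal{C}}(-,c)$ to $G(\overline{-},c)$. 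Whiskering the adjunction $G\dashv H$ by presheaves gives $\widetilde{G}\dashv\widetilde{H}$, and because $H$ is itself a profunctor, $\widetilde{H}$ is cocontinuous (its values are coends, computed pointwise). A left adjoint whose right adjoint preserves colimits carries tiny (atomic) objects to tiny objects, and representables are tiny in a presheaf category, so each $G(\overline{-},c)=\widetilde{G}(\textnormal{Hom}_{\mathcal{C}}(-,c))$ is tiny, hence representable, say $G(\overline{-},c)\cong\textnormal{Hom}_{\mathcal{D}}(-,F(c))$. Naturality in $c$ promotes $c\mapsto F(c)$ to a functor $F:\mathcal{C}\to\mathcal{D}$, and comparison with \eqref{defstar} gives $G\cong F_\star$.

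The forward direction and the triangle-identity bookkeeping are routine coend manipulations; the genuine content, and the main obstacle, lies in the converse. The delicate point is the passage \emph{tiny $\Rightarrow$ representable}: in an arbitrary presheaf category a tiny object is only a \emph{retract} of a representable, so a left-adjoint profunctor is representable on the nose only after Cauchy (idempotent) completing the target $\mathcal{D}$. For locally small categories as in the statement one therefore either works up to Cauchy completion or uses that the relevant idempotents split, and I would make this hypothesis explicit where it is invoked.
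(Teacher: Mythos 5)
Your proof of the adjunction $F_\star\dashv F^\star$ is correct and coincides with what the paper itself offers: the paper records only the unit (the action of $F$ on hom-spaces) and defers everything else to \cite{borceux50handbook}, so your coend computation of $F^\star\circ F_\star$ via \eqref{profunctorscompo} and \eqref{coendrechnung}, the counit given by composition in $\mathcal{D}$, and the reduction of both triangle identities to the unit laws supply exactly the omitted details.

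The converse is where the real content lies, and your closing caveat is not a loose end of your write-up but a genuine defect of the statement you were asked to prove. Your argument (whiskering gives $\widetilde G\dashv\widetilde H$ with $\widetilde H$ cocontinuous, left adjoints with cocontinuous right adjoints preserve tiny objects, representables are tiny) proves precisely that each $G(-,c)$ is a retract of a representable, and no more can be proved: let $\mathcal{D}$ be the one-object category on the monoid $\{1,e\}$ with $e^2=e$. The idempotent $e$ does not split in $\mathcal{D}$; the one-element right $\{1,e\}$-set is a retract of the representable, hence tiny, but it is not representable, so the corresponding profunctor $\mathbf{1}\nrightarrow\mathcal{D}$ has a right adjoint in $\Prof$ yet is not of the form $F_\star$. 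The converse therefore requires $\mathcal{D}$ to be Cauchy complete. This matches the source the paper cites: Borceux's Prop.~7.9.1 is only the adjunction $F_\star\dashv F^\star$, while representability of profunctors admitting right adjoints is his Theorem~7.9.3, where it is \emph{equivalent} to Cauchy completeness of the target; the remark the paper makes immediately after the lemma conflates these two statements. Note also that the gap propagates to \autoref{frobeniusinducesstarautonom}, where the converse is applied to $\mu$ and $\eta$: that argument implicitly assumes the underlying category is Cauchy complete. With the Cauchy-completeness (idempotent-splitting) hypothesis made explicit, as you indicate it should be, your proof is complete and correct.
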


We will not state the full proof, but note that the coevaluation $\textnormal{id}_{\mathcal{C}} \Rightarrow F^{\star} \circ F_{\star}$ is given by the action of $F$ on the homspace, 
\[
\textnormal{id}_{\mathcal{C}}(\overline{c}, c') \overset{F}{\rightarrow} (F^{\star} \circ F_{\star})(\overline{c}, c') \cong \int^{d \in \mathcal{D}}\ _{\mathcal{D}} \langle F(c), d \rangle \otimes\  _{\mathcal{D}} \langle d, F(c') \rangle \cong\ _{\mathcal{D}} \langle F(c), F(c') \rangle.
\]

All profunctors are representable, if one imposes the mild condition of Cauchy completeness (cf. \cite[Theor. 7.9.3]{borceux50handbook}). However, here we will instead concentrate on the condition of having a right adjoint.

\begin{definition}[$\dag$-Frobenius pseudomonoid]
	Let $(\mathcal{C}, \otimes, \unit)$ be monoidal bicategory. A Frobenius pseudomonoid $(A, \mu: A \otimes A \rightarrow A, \eta: \unit \rightarrow A, \sigma: A \otimes A \rightarrow \unit)$ in $\mathcal{C}$, such that $\mu$ and $\eta$ have right adjoints, is called $\dag$\textit{-Frobenius pseudomonoid}. 
\end{definition}

The main result of this section is given as follows.

\begin{theorem}
	A $\dag$-Frobenius pseudomonoid in $\Prof$ induces a $\star$-autonomous category and vice versa.
\end{theorem}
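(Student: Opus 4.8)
The plan is to build an explicit dictionary between the two structures and verify it is compatible in both directions, the two main tools being representability (to pass between profunctors and honest functors) and uniqueness of duals (to extract the duality functor). Throughout I identify a small monoidal category with the promonoidal data it induces in $\Prof$. I begin with the passage between monoidal and pseudomonoidal data. Given a $\dag$-Frobenius pseudomonoid $(\mathcal{C}, \mu, \eta, \sigma)$ in $\Prof$, the defining requirement that $\mu: \mathcal{C} \otimes \mathcal{C} \nrightarrow \mathcal{C}$ and $\eta: \unit \nrightarrow \mathcal{C}$ admit right adjoints forces, by \autoref{rightadjointrep}, that both are representable; write $\mu = (\otimes)_\star$ and $\eta = (\unit_{\mathcal{C}})_\star$ for a bifunctor $\otimes: \mathcal{C} \times \mathcal{C} \to \mathcal{C}$ and an object $\unit_{\mathcal{C}} \in \mathcal{C}$, so that $\mu(\overline{c}, a, b) \cong \textnormal{Hom}_{\mathcal{C}}(c, a \otimes b)$ and $\eta(\overline{c}) \cong \textnormal{Hom}_{\mathcal{C}}(c, \unit_{\mathcal{C}})$. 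I then transport the invertible associator and unitor $2$-cells of the pseudomonoid back along the monoidal $2$-functor $(-)_\star$ of \eqref{starmonoidalfunctor}: since $2$-cells between representable profunctors correspond, by the (co)Yoneda lemma, exactly to natural transformations between the representing functors, the coherences \eqref{pseudomonoassoc} and \eqref{pseudomonounit} become the associator and unitors of a genuine monoidal structure on $\mathcal{C}$, and the pentagon and triangle of the pseudomonoid become Mac Lane's. Conversely, from a monoidal category I set $\mu := (\otimes)_\star$ and $\eta := (\unit_{\mathcal{C}})_\star$; these are representable, hence by \autoref{rightadjointrep} possess right adjoints, so the $\dag$-condition is free, and $(-)_\star$ carries the monoidal coherences to pseudomonoid coherences.

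Next I would extract the duality from the Frobenius form. By \autoref{lemmafrobpseudo} the Frobenius datum is an evaluation $\sigma: \mathcal{C} \otimes \mathcal{C} \nrightarrow \unit$ exhibiting a biexact pairing $\mathcal{C} \dashv \mathcal{C}$. But $\mathcal{C}$ always carries the canonical biexact pairing $\mathcal{C} \dashv \mathcal{C}^{\textnormal{opp}(1)}$ in $\Prof$, whose evaluation and coevaluation are built from the hom-profunctor $\textnormal{Hom}_{\mathcal{C}}(-,-)$. By uniqueness of right duals, \autoref{dualsuniquebicat}, the two right duals of $\mathcal{C}$ are equivalent, giving an equivalence in $\Prof$ between $\mathcal{C}$ and $\mathcal{C}^{\textnormal{opp}(1)}$; being an equivalence it has an inverse, hence a right adjoint, so by \autoref{rightadjointrep} it is representable by a genuine equivalence of categories $D: \mathcal{C} \to \mathcal{C}^{\textnormal{opp}(1)}$ — the duality functor. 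Feeding $\sigma$ through \autoref{biexactpairinginvertible} then yields the currying isomorphisms relating the hom-set defined by $\sigma$ to $\textnormal{Hom}_{\mathcal{C}}(x, D^{\textnormal{opp}(1)}(\overline{y}))$, so that $\sigma$ is nondegenerate precisely because $D$ is an equivalence, which is exactly the mutual-inverse criterion of \autoref{biexactpairinginvertible}. In the reverse direction I define $\sigma$ from the $\star$-autonomous data as the profunctor associated, through the iso \eqref{grothendieck}, with the $k$-valued pairing $\textnormal{Hom}_{\mathcal{C}}(a \otimes b, k)$, and invoke \autoref{biexactpairinginvertible} to see it is biexact because $D$ is an equivalence.

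Finally I would match the remaining coherence. The Frobenius compatibility $2$-cell \eqref{form} relating $\sigma$ to $\mu$, after expanding the composite of representable profunctors via the coend formula \eqref{profunctorscompo} and the co-Yoneda lemma, becomes precisely a natural isomorphism
\[
\textnormal{Hom}_{\mathcal{C}}(x \otimes y, D^{\textnormal{opp}(1)}(\overline{z})) \cong \textnormal{Hom}_{\mathcal{C}}(x, D^{\textnormal{opp}(1)}(\overline{y \otimes z})),
\]
which is exactly structure $(2)$ of \autoref{starautonomequivalent}; conversely this isomorphism, pushed forward along $(-)_\star$, supplies the invertible $2$-cell \eqref{form}. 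Hence a $\dag$-Frobenius pseudomonoid yields $\mathcal{C}$ together with the equivalence $D$ and the isomorphism \eqref{starautonomfrobeniuseq}, i.e.\ a $\star$-autonomous category, and the construction reverses verbatim. The hard part will be this last matching step: carefully translating the graphical Frobenius $2$-cell into the associativity isomorphism \eqref{starautonomfrobeniuseq}, keeping track of the coend computations and of the variances introduced by $(-)^{\textnormal{opp}(1)}$, and checking that the coherence constraints accompanying \autoref{lemmafrobpseudo} correspond to the naturality and coherence of \eqref{starautonomfrobeniuseq} rather than merely producing the isomorphism objectwise.
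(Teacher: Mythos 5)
Your proposal is correct and takes essentially the same route as the paper's proof in \autoref{frobeniusinducesstarautonom} and \autoref{autonominducesfrobenius}: representability of $\mu$ and $\eta$ (and, after refining to an adjoint equivalence, of the duality) via \autoref{rightadjointrep}, Yoneda/coend transport of the coherence $2$-cells to honest monoidal data, and the coend computation identifying the Frobenius $2$-cell \eqref{form} with the isomorphism \eqref{starautonomfrobeniuseq}. The only minor divergence is that you extract the equivalence $\mathcal{C} \simeq \mathcal{C}^{\textnormal{opp}(1)}$ from $\sigma$ by comparing it with the canonical hom-pairing via \autoref{dualsuniquebicat}, whereas the paper reads it off directly from \autoref{biexactpairinginvertible}; this is an equivalent bookkeeping choice rather than a different argument.
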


The statement is based on \cite{street2004frobenius} and well-established. An extension to an equivalence of categories would be valuable, but needs some more thought. Here we work out a complete proof, split in two parts. 

We will extensively use (cf. \cite[Cor. 5]{fuchs2016coends}) that for any category $\mathcal{C}$, and any pair of objects $u,v \in \mathcal{C}$,  the coend (cf. \autoref{defend}) of the functor $_{\mathcal{C}} \langle u,- \rangle \times\ _{\mathcal{C}} \langle -, v \rangle: \mathcal{C}^{\textnormal{opp}(1)} \times \mathcal{C} \rightarrow \Set$ exists, and is given as
\begin{equation}
\label{coendrechnung}
	\int^{x \in \mathcal{C}}\ _{\mathcal{C}} \langle u,x \rangle \times\ _{\mathcal{C}} \langle x, v \rangle =\ _{\mathcal{C}} \langle u,v \rangle.
\end{equation}

\begin{theorem}
\label{frobeniusinducesstarautonom}
	A $\dag$-Frobenius pseudomonoid in $\Prof$ induces a $\star$-autonomous category.
\end{theorem}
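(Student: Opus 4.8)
The plan is to produce all of the $\star$-autonomous data from the representable part of the pseudomonoid together with the biexact form, and then to verify characterisation $(2)$ of \autoref{starautonomequivalent}.

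\textbf{Extracting the monoidal structure.} First I would exploit the $\dag$-hypothesis. Since $\mu$ and $\eta$ admit right adjoints in $\Prof$, \autoref{rightadjointrep} makes them representable, say $\mu = \otimes_{\star}$ for a functor $\otimes\colon\mathcal{C}\times\mathcal{C}\to\mathcal{C}$ and $\eta = \unit_{\star}$ for a distinguished object $\unit\in\mathcal{C}$; by \eqref{defstar} this means $\mu(\overline{z},x,y)\cong{}_{\mathcal{C}}\langle z,x\otimes y\rangle$ and $\eta(\overline z)\cong{}_{\mathcal{C}}\langle z,\unit\rangle$. The invertible associator and unitor $2$-cells of the pseudomonoid then transport along representability — using that $(-)_{\star}$ in \eqref{starmonoidalfunctor} is a locally fully faithful monoidal $2$-functor — to ordinary coherence isomorphisms, so that $(\mathcal{C},\otimes,\unit)$ is a genuine monoidal category.

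\textbf{Extracting the duality.} Next I would analyse the Frobenius form $\sigma\colon\mathcal{C}\otimes\mathcal{C}\nrightarrow\unit$. Unwound, $\sigma$ is a functor $\mathcal{C}\times\mathcal{C}\to\Set$, and by characterisation $(1)$ of \autoref{lemmafrobpseudo} it is the composite $\varepsilon\circ\mu$ for the counit $\varepsilon\colon\mathcal{C}\nrightarrow\unit$; feeding the representable factor $\mu$ into a coend and collapsing by density (of which \eqref{coendrechnung} is the representable instance) gives $\sigma(x,y)\cong\varepsilon(x\otimes y)$. The conceptual core is that a biexact pairing is an internal equivalence: by \autoref{biexactpairinginvertible} the currying of $\sigma$ is a profunctor on $\mathcal{C}$ that, by biexactness, is an equivalence in $\Prof$, hence has a right adjoint (its pseudo-inverse) and so, by \autoref{rightadjointrep} again, is represented by an honest functor. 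Tracking variances, this representable equivalence is an equivalence of ordinary categories $D\colon\mathcal{C}\to\mathcal{C}^{\textnormal{opp}(1)}$ with $\sigma(x,y)\cong{}_{\mathcal{C}}\langle D^{\textnormal{opp}(1)}(\overline x),y\rangle$; the two zig-zag $2$-cells furnish both this isomorphism and its companion, and identify $D^{\textnormal{opp}(1)}$ as quasi-inverse to $D$ (triviality of the double dual), exactly as demanded by \autoref{starautonomequivalent}.

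\textbf{Verifying the isomorphism.} Finally I would assemble \eqref{starautonomfrobeniuseq}. The Frobenius $2$-cell \eqref{form} asserts precisely that $\sigma$ is associative, $\sigma(x\otimes y,z)\cong\sigma(x,y\otimes z)$, which under $\sigma(x,y)\cong\varepsilon(x\otimes y)$ is just the associator of $\otimes$ absorbed by $\varepsilon$. Substituting the representability isomorphism of the previous step on either side and transposing the dual between slots by the equivalence $D$ turns this associativity into
\[ {}_{\mathcal{C}}\langle x\otimes y, D^{\textnormal{opp}(1)}(\overline z)\rangle\cong{}_{\mathcal{C}}\langle x, D^{\textnormal{opp}(1)}(\overline{y\otimes z})\rangle, \]
which is \eqref{starautonomfrobeniuseq}; naturality is inherited from that of each coend isomorphism. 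Thus $(\mathcal{C},\otimes,\unit,D)$ realises characterisation $(2)$, and setting $k:=D^{\textnormal{opp}(1)}(\overline{\unit})$ recovers the dualising object of characterisation $(3)$, so by \autoref{starautonomequivalent} the category $\mathcal{C}$ is $\star$-autonomous.

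\textbf{Main obstacle.} The delicate point is the descent in the second step: a priori the biexact pairing lives only among the $1$- and $2$-cells of $\Prof$, and one must show it descends to an equivalence of the underlying ordinary categories with the correct contravariant variance, rather than a mere equivalence of profunctors. The leverage is that an equivalence automatically has a right adjoint, so \autoref{rightadjointrep} forces representability; the residual work is honest coend bookkeeping — matching each zig-zag $2$-cell to a density reduction via \eqref{coendrechnung} and checking that $D$ and $^{\star}(-)$ are mutually quasi-inverse, for which the essential uniqueness of duals in \autoref{dualsuniquebicat} is exactly the tool.
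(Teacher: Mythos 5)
Your overall strategy coincides with the paper's: use the $\dag$-hypothesis and \autoref{rightadjointrep} to represent $\mu$, $\eta$ and the curried pairing by honest functors, transport the pseudomonoid coherence along $(-)_{\star}$ (the paper does this by explicit coend/Yoneda computations, you by local full faithfulness of $(-)_{\star}$, which is the same argument), and then convert the Frobenius $2$-cell into the Grothendieck--Verdier isomorphism by coend bookkeeping. However, there is a genuine variance gap in your second and third steps. You place the pseudomonoid structure on $\mathcal{C}$ itself, so that $\mu(\overline{z},x,y)\cong{}_{\mathcal{C}}\langle z,x\otimes y\rangle$ and $\sigma$ is covariant in both arguments. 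Any representation of the curried pairing must then put the duality in the contravariant slot, exactly as you write: $\sigma(x,y)\cong{}_{\mathcal{C}}\langle D^{\textnormal{opp}(1)}(\overline{x}),y\rangle$. Feeding this and your formula for $\mu$ into $\sigma\circ(\mu\otimes\textnormal{id})\cong\sigma\circ(\textnormal{id}\otimes\mu)$ and collapsing coends by \eqref{coendrechnung} yields
\begin{equation*}
{}_{\mathcal{C}}\langle D^{\textnormal{opp}(1)}(\overline{x\otimes y}),z\rangle\ \cong\ {}_{\mathcal{C}}\langle D^{\textnormal{opp}(1)}(\overline{x}),y\otimes z\rangle ,
\end{equation*}
in which the dual sits in the first (contravariant) slot --- not \eqref{starautonomfrobeniuseq}, where it sits in the second. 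Your proposed fix, ``transposing the dual between slots by the equivalence $D$'', does not work in general: full faithfulness of $D$ converts ${}_{\mathcal{C}}\langle D^{\textnormal{opp}(1)}(\overline{w}),z\rangle$ into ${}_{\mathcal{C}}\bigl\langle D^{\textnormal{opp}(1)}(\overline{z}),\,D^{\textnormal{opp}(1)}(\overline{D^{\textnormal{opp}(1)}(\overline{w})})\bigr\rangle$, so the transposition costs a double dual, and for a $\star$-autonomous category the double dual is a nontrivial (monoidal) autoequivalence rather than the identity; it cannot be absorbed while keeping the same tensor product on $\mathcal{C}$.

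What your computation actually proves is characterisation $(2)$/$(4)$ of \autoref{starautonomequivalent} for the morphism-reversed category $\mathcal{C}^{\textnormal{opp}(1)}$ with the inherited tensor product, equivalently the ``maps out of $k$'' statement ${}_{\mathcal{C}}\langle k, y\otimes z\rangle\cong{}_{\mathcal{C}}\langle D^{\textnormal{opp}(1)}(\overline{y}),z\rangle$ for $\mathcal{C}$. This still establishes the theorem as stated, since a $\star$-autonomous category is induced --- but the category carrying the structure is $\mathcal{C}^{\textnormal{opp}(1)}$, not $(\mathcal{C},\otimes,D)$ as you assert in the final step. The clean repair is the paper's convention: name the pseudomonoid object $\mathcal{C}^{\textnormal{opp}(1)}$ from the start, so that representability gives $\mu(x,\overline{y},\overline{z})\cong{}_{\mathcal{C}}\langle y\otimes_{\mathcal{C}}z,x\rangle$ and $\sigma$ is contravariant in both slots; then its representation places $D$ in the covariant slot, $\sigma(\overline{\star},\overline{x},\overline{y})\cong{}_{\mathcal{C}}\langle x,D^{\textnormal{opp}(1)}(\overline{y})\rangle$, and the very same coend computation produces \eqref{starautonomfrobeniuseq} on the nose, with no transposition step needed.
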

\begin{proof}
\begin{enumerate}
	\item Let $(\mathcal{C}^{\textnormal{opp}(1)}, \mu, \eta, \alpha, \lambda, \varrho, \sigma)$ be a $\dag$-Frobenius pseudomonoid in the monoidal autonomous bicategory $\Prof$. Since by definition $\mu$ and $\eta$ have right adjoints, they are representable by \autoref{rightadjointrep}, i.e. there exist functors $\otimes_{\mathcal{C}}: \mathcal{C} \times \mathcal{C} \rightarrow \mathcal{C}$ and $\eta_{\mathcal{C}}: \unit_{\Cat} \rightarrow \mathcal{C}$ such that 
	\begin{equation}
	\label{muetaconstruction}
		\mu = (\otimes_{\mathcal{C}}^{\textnormal{opp}(1)})_{\star} \quad \textnormal{and} \quad \eta = (\eta_{\mathcal{C}}^{\textnormal{opp}(1)})_{\star}.
	\end{equation}
We will now construct an monoidal structure on $\mathcal{C}$ with tensor product $\otimes_{\mathcal{C}}$ and unit $\eta_{\mathcal{C}}(\star)$.
	First of all, by the very definition of $(-)_{\star}$ and the previous representation of $\mu$, it holds for $x,y,z \in \mathcal{C}$
	\begin{equation}
	\label{murepr}
		\mu(x,\overline{y},\overline{z}) \cong\  _{\mathcal{C}} \langle y \otimes_{\mathcal{C}} z, x \rangle.
	\end{equation}
	The associator $\alpha: \mu \circ (\mu \times \textnormal{id}_{\mathcal{C}^{\textnormal{opp}(1)}}) \overset{\simeq}{\rightarrow} \mu \circ (\textnormal{id}_{\mathcal{C}^{\textnormal{opp}(1)}} \times \mu)$ in $\Prof$ yields an associator $\alpha_{\mathcal{C}}: \otimes_{\mathcal{C}} \circ (\otimes_{\mathcal{C}} \times \textnormal{id}_{\mathcal{C}}) \overset{\simeq}{\rightarrow} \otimes_{\mathcal{C}} \circ (\textnormal{id}_{\mathcal{C}} \times \otimes_{\mathcal{C}})$ in $\Cat$ in the following way. For $a,b,c,d \in \mathcal{C}$ we find
	\begin{align*}
		&\mu \circ (\mu \times \textnormal{id}_{\mathcal{C}^{\textnormal{opp}(1)}})(a,\overline{b}, \overline{c}, \overline{d})
		= \int^{(\overline{x}, \overline{y}) \in (\mathcal{C}^{\textnormal{opp}(1)})^2} (\mu \times \textnormal{id}_{\mathcal{C}^{\textnormal{opp}(1)}})(x,y,\overline{b}, \overline{c}, \overline{d}) \times \mu(a,\overline{x}, \overline{y}) \\
		&= \int^{(\overline{x}, \overline{y}) \in (\mathcal{C}^{\textnormal{opp}(1)})^2} \mu(x, \overline{b}, \overline{c}) \times \textnormal{id}_{\mathcal{C}^{\textnormal{opp}(1)}}(y, \overline{d}) \times \mu(a,\overline{x}, \overline{y}) \\
		&= \int^{(\overline{x}, \overline{y}) \in (\mathcal{C}^{\textnormal{opp}(1)})^2}\ _{\mathcal{C}} \langle b \otimes_{\mathcal{C}} c, x \rangle \times\ _{\mathcal{C}} \langle d, y \rangle \times\ _{\mathcal{C}} \langle x \otimes_{\mathcal{C}} y, a \rangle \\
		&= \int^{(\overline{x}, \overline{y}) \in (\mathcal{C}^{\textnormal{opp}(1)})^2}\ _{\mathcal{C}} \langle (b \otimes_{\mathcal{C}} c) \otimes_{\mathcal{C}} d, x \otimes_{\mathcal{C}} y \rangle \times\ _{\mathcal{C}} \langle x \otimes_{\mathcal{C}} y, a \rangle
		\cong\ _{\mathcal{C}} \langle (b \otimes_{\mathcal{C}} c) \otimes_{\mathcal{C}} d, a \rangle.
	\end{align*}
	Similarly one shows that 
	\[
	\mu \circ (\textnormal{id}_{\mathcal{C}^{\textnormal{opp}(1)}} \times \mu)(a,\overline{b}, \overline{c}, \overline{d}) \cong\ _{\mathcal{C}} \langle b \otimes_{\mathcal{C}} (c \otimes_{\mathcal{C}} d), a \rangle.
	\]
	Thus, the associator $\alpha$ yields for fixed $b,c,d \in \mathcal{C}$ and any $a \in \mathcal{C}$ a natural isomorphism $_{\mathcal{C}} \langle (b \otimes_{\mathcal{C}} c) \otimes_{\mathcal{C}} d, a \rangle \cong\ _{\mathcal{C}} \langle b \otimes_{\mathcal{C}} (c \otimes_{\mathcal{C}} d), a \rangle$. Finally, the Yoneda Lemma implies the existence of an isomorphism 
	\[
	(\alpha_{\mathcal{C}})_{b,c,d}: (b \otimes_{\mathcal{C}} c) \otimes_{\mathcal{C}} d \overset{\simeq}{\rightarrow} b \otimes_{\mathcal{C}} (c \otimes_{\mathcal{C}} d).
	\]
	By the very construction we find for any $a,b,c,d \in \mathcal{C}$ a natural isomorphism of profunctors
	\begin{equation}
	\label{alphacstar2}
		\alpha_{a,\overline{b},\overline{c},\overline{d}} \cong ((\alpha_{\mathcal{C}}^{\textnormal{opp}(1)})_{\star})_{a,\overline{b},\overline{c},\overline{d}}.
	\end{equation}
	Analogously, left unitor $\lambda: \mu \circ (\textnormal{id}_{\mathcal{C}^{\textnormal{opp}(1)}} \times \eta) \overset{\simeq}{\rightarrow} \textnormal{id}_{\mathcal{C}^{\textnormal{opp}(1)}}$ and right unitor $\varrho: \mu \circ (\eta \times \textnormal{id}_{\mathcal{C}^{\textnormal{opp}(1)}}) \overset{\simeq}{\rightarrow} \textnormal{id}_{\mathcal{C}^{\textnormal{opp}(1)}}$ in $\Prof$ induce left and right unitors $\lambda_{\mathcal{C}}, \varrho_{\mathcal{C}}$ in $\Cat$ in the following way. For $a,b \in \mathcal{C}$ we find
	\begin{align*}
		&\mu \circ  (\textnormal{id}_{\mathcal{C}^{\textnormal{opp}(1)}} \times \eta)(a,\overline{b}, \overline{\star})
		= \int^{(\overline{x}, \overline{y}) \in (\mathcal{C}^{\textnormal{opp}(1)})^2}  (\textnormal{id}_{\mathcal{C}^{\textnormal{opp}(1)}} \times \eta)(x,y, \overline{b}, \overline{\star}) \times \mu(a,b, \overline{x}, \overline{y}) \\
		&= \int^{(\overline{x}, \overline{y}) \in (\mathcal{C}^{\textnormal{opp}(1)})^2} \textnormal{id}_{\mathcal{C}^{\textnormal{opp}(1)}}(x, \overline{b}) \times \eta(y, \overline{\star}) \times \mu(a, \overline{x}, \overline{y}) \\
		&=  \int^{(\overline{x}, \overline{y}) \in (\mathcal{C}^{\textnormal{opp}(1)})^2}\ _{\mathcal{C}} \langle b,x \rangle \times\ _{\mathcal{C}}\langle \eta_{\mathcal{C}}(\star), y \rangle \times\ _{\mathcal{C}} \langle x \otimes_{\mathcal{C}} y, a \rangle \\
		&= \int^{(\overline{x}, \overline{y}) \in (\mathcal{C}^{\textnormal{opp}(1)})^2}\ _{\mathcal{C}} \langle b \otimes_{\mathcal{C}} \eta_{\mathcal{C}}(\star), x \otimes_{\mathcal{C}} y \rangle \times\ _{\mathcal{C}} \langle x \otimes_{\mathcal{C}} y, a \rangle \\
		&=\ _{\mathcal{C}} \langle b \otimes_{\mathcal{C}} \eta_{\mathcal{C}}(\star), a \rangle.
	\end{align*}
	Since moreover $\textnormal{id}_{\mathcal{C}^{\textnormal{opp}(1)}}(a, \overline{b}) =\ _{\mathcal{C}} \langle b, a \rangle$, it follows that the left unitor $\lambda$ provides an isomorphism $_{\mathcal{C}} \langle b \otimes_{\mathcal{C}} \eta_{\mathcal{C}}(\star), a \rangle \cong\ _{\mathcal{C}} \langle b, a \rangle$. Thus the Yoneda Lemma yields a natural isomorphism $(\lambda_{\mathcal{C}})_{b}: b \otimes_{\mathcal{C}} \eta_{\mathcal{C}}(\star) \overset{\simeq}{\rightarrow} b$. Analogously one constructs a right unitor $\varrho_{\mathcal{C}}$. Again, by the very construction we find for any $a,b \in \mathcal{C}$ isomorphisms of profunctors
		\begin{equation}
		\label{unitorstar2}
		\lambda_{a,\overline{b},\overline{\star}} \cong ((\lambda_{\mathcal{C}}^{\textnormal{opp}(1)})_{\star})_{a,\overline{b},\overline{\star}} \quad \textnormal{and} \quad \varrho_{a,\overline{b},\overline{\star}} \cong ((\varrho_{\mathcal{C}}^{\textnormal{opp}(1)})_{\star})_{a,\overline{b},\overline{\star}}.
	\end{equation}
	
	Combined, this provides a monoidal category $(\mathcal{C}, \otimes_{\mathcal{C}}, \eta_{\mathcal{C}}(\star), \alpha_{\mathcal{C}}, \lambda_{\mathcal{C}}, \varrho_{\mathcal{C}})$.
	\item By assumption the form  $\sigma: \mathcal{C}^{\textnormal{opp}(1)} \times  \mathcal{C}^{\textnormal{opp}(1)} \nrightarrow \unit_{\Prof}$ provides a biexact pairing $\mathcal{C}^{\textnormal{opp}(1)} \dashv \mathcal{C}^{\textnormal{opp}(1)}$ with some coevaluation $\widetilde{\sigma}: \unit_{\Prof} \nrightarrow \mathcal{C}^{\textnormal{opp}(1)} \times  \mathcal{C}^{\textnormal{opp}(1)}$. Thus \autoref{biexactpairinginvertible} yields mutually quasi-inverse profunctors $\tilde{D}: \mathcal{C}^{\textnormal{opp}(1)} \nrightarrow \mathcal{C}$ and $\tilde{D}^{-1}: \mathcal{C} \nrightarrow \mathcal{C}^{\textnormal{opp}(1)}$ related to the form $\sigma$ by
	\begin{equation}
	\label{sigmaeq}
		\sigma(\overline{\star},\overline{x}, \overline{y}) = \tilde{D}(\overline{x}, \overline{y}).
	\end{equation}
	Every equivalence can be refined to an adjoint equivalence. Thus we can assume the mutually quasi-inverse profunctors $\tilde{D}$ and $\tilde{D}^{-1}$ to be in an adjoint relation. Then it follows from \autoref{rightadjointrep} that $\tilde{D}$ and $\tilde{D}^{-1}$ are representable, i.e. there exist functors $D: \mathcal{C} \rightarrow  \mathcal{C}^{\textnormal{opp}(1)}$ and $D^{-1}: \mathcal{C}^{\textnormal{opp}(1)} \rightarrow  \mathcal{C}$, such that 
	\begin{equation}
	\label{Drepr}
		\tilde{D} \cong (D^{\textnormal{opp}(1)})_{\star} \quad \textnormal{and} \quad \tilde{D}^{-1} \cong ((D^{-1})^{\textnormal{opp}(1)})_{\star}.
	\end{equation}  In fact, the functors $D$ and $D^{-1}$ are mutually quasi-inverse. Indeed, for any $a,b \in \mathcal{C}$
	\begin{align*}
		_{\mathcal{C}} \langle b,a \rangle 
		&\cong \textnormal{id}_{\mathcal{C}^{\textnormal{opp}(1)}}(a,\overline{b}) 
		\cong \widetilde{D} \circ \widetilde{D}^{-1}(a,\overline{b}) 
		\overset{\eqref{Drepr}}{\cong} (D^{\textnormal{opp}(1)})_{\star} \circ ((D^{-1})^{\textnormal{opp}(1)})_{\star}(a,\overline{b}) \\
		 &\overset{\eqref{starmonoidalfunctor}}{\cong} ((D \circ D^{-1})^{\textnormal{opp}(1)})_{\star}(a,\overline{b})
		 \overset{\eqref{defstar}}{\cong}\ _{\mathcal{C}} \langle D \circ D^{-1}(b), a \rangle,
	\end{align*}
	which by the Yoneda Lemma induces a natural isomorphism $b \cong D \circ D^{-1}(b)$. Analogously one shows $b \cong D^{-1} \circ D(b)$.
	The previous results yield
	\begin{equation}
		\label{sigmarepr}
		\sigma(\star, \overline{x}, \overline{y}) 
		\overset{\eqref{sigmaeq}}{=}   \tilde{D}(\overline{x}, \overline{y}) 
		\overset{\eqref{Drepr}}{\cong} (D^{\textnormal{opp}(1)})_{\star}(\overline{x}, \overline{y}) 
		\overset{\eqref{defstar}}{=}\ _{\mathcal{C}} \langle x, D^{\textnormal{opp}(1)}(\overline{y}) \rangle.
	\end{equation}
	\item
	 By assumption there exists an invertible $2$-cell
	\begin{equation}
	\label{invariance}
	\sigma \circ (\mu \otimes \textnormal{id}_{\mathcal{C}^{\textnormal{opp}(1)}}) \equiv \includegraphics[height=1cm]{pics/proof1/frobenius2} \cong \includegraphics[height=1cm]{pics/proof1/frobenius1} \equiv  \sigma \circ (\textnormal{id}_{\mathcal{C}^{\textnormal{opp}(1)}} \otimes \mu). 
	\end{equation}
	For the left hand side we compute
	 \begin{align*}
	 	&\sigma \circ (\mu \otimes \textnormal{id}_{\mathcal{C}^{\textnormal{opp}(1)}})(\overline{\star}, \overline{x}, \overline{y}, \overline{z})
	 	\overset{\eqref{profunctorscompo}}{=} \int^{(\overline{a},\overline{b}) \in (\mathcal{C}^{\textnormal{opp}(1)})^2} (\mu \otimes \textnormal{id}_{\mathcal{C}^{\textnormal{opp}(1)}})(a,b, \overline{x}, \overline{y}, \overline{z}) \times \sigma(\overline{\star}, \overline{a},\overline{b}) \\
	 	&= \int^{(\overline{a},\overline{b}) \in (\mathcal{C}^{\textnormal{opp}(1)})^2} \mu(a, \overline{x}, \overline{y}) \times  \textnormal{id}_{\mathcal{C}^{\textnormal{opp}(1)}}(b, \overline{z}) \times \sigma(\overline{\star}, \overline{a}, \overline{b}) \\
	 	&\overset{\eqref{murepr},\eqref{sigmarepr}}{=}  \int^{(\overline{a},\overline{b}) \in (\mathcal{C}^{\textnormal{opp}(1)})^2}\ _{\mathcal{C}} \langle x \otimes_{\mathcal{C}} y, a \rangle \times\ _{\mathcal{C}} \langle z,b \rangle \times\ _{\mathcal{C}} \langle a, D^{\textnormal{opp}(1)}(\overline{b}) \rangle \\
	 	&\overset{\eqref{coendrechnung}}{\cong} \int^{\overline{b} \in \mathcal{C}^{\textnormal{opp}(1)}}\ _{\mathcal{C}} \langle x \otimes_{\mathcal{C}} y, D^{\textnormal{opp}(1)}(\overline{b}) \rangle \times\ _{\mathcal{C}} \langle z,b \rangle \\
	 	&\overset{D^{-1}}{\cong} \int^{\overline{b} \in \mathcal{C}^{\textnormal{opp}(1)}}\ _{\mathcal{C}} \langle b, D^{-1}(\overline{x \otimes_{\mathcal{C}} y}) \rangle \times\ _{\mathcal{C}} \langle z,b \rangle
	 	\overset{\eqref{coendrechnung}}{\cong}\ _{\mathcal{C}} \langle z, D^{-1}(\overline{x \otimes_{\mathcal{C}} y}) \rangle \\
	 	&\overset{D}{\cong}\ _{\mathcal{C}} \langle  x \otimes_{\mathcal{C}} y, D^{\textnormal{opp}(1)}(\overline{z}) \rangle.
	 \end{align*}
	 Similarly, one finds that for the right hand side it holds
	 \[
	 \sigma \circ (\textnormal{id}_{\mathcal{C}^{\textnormal{opp}(1)}} \otimes \mu)(\overline{\star}, \overline{x}, \overline{y}, \overline{z}) \cong\ _{\mathcal{C}} \langle x, D^{\textnormal{opp}(1)}(\overline{y \otimes_{\mathcal{C}} z}) \rangle.
	 \] 
Thus \eqref{invariance} provides a natural isomorphism
	 \begin{equation*}
	 	_{\mathcal{C}} \langle x \otimes_{\mathcal{C}} y, D^{\textnormal{opp}(1)}(\overline{z}) \rangle \cong\  _{\mathcal{C}} \langle x, D^{\textnormal{opp}(1)}(\overline{y \otimes_{\mathcal{C}} z}) \rangle.
	 	 	 	 \end{equation*}
This shows that the monoidal category $(\mathcal{C}, \otimes_{\mathcal{C}}, \eta_{\mathcal{C}}(\star), \alpha_{\mathcal{C}}, \lambda_{\mathcal{C}}, \varrho_{\mathcal{C}})$ is $\star$-autonomous with duality $D: \mathcal{C} \rightarrow \mathcal{C}^{\textnormal{opp}(1)}$.
\end{enumerate}
\end{proof}

\begin{theorem}
\label{autonominducesfrobenius}
	A $\star$-autonomous category induces a $\dag$-Frobenius pseudomonoid  in $\Prof$.
\end{theorem}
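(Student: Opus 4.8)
The plan is to reverse the construction of \autoref{frobeniusinducesstarautonom}. Let $(\mathcal{C}, \otimes_{\mathcal{C}}, \unit_{\mathcal{C}}, k, D)$ be a $\star$-autonomous category, presented as in $(2)$ of \autoref{starautonomequivalent} with duality equivalence $D: \mathcal{C} \rightarrow \mathcal{C}^{\textnormal{opp}(1)}$, quasi-inverse $D^{-1}$, and defining natural isomorphism \eqref{starautonomfrobeniuseq}. I take the underlying object of the pseudomonoid to be $A := \mathcal{C}^{\textnormal{opp}(1)}$, and I set $\mu := (\otimes_{\mathcal{C}}^{\textnormal{opp}(1)})_{\star}$ and $\eta := (\eta_{\mathcal{C}}^{\textnormal{opp}(1)})_{\star}$, where $\eta_{\mathcal{C}}: \unit_{\Cat} \rightarrow \mathcal{C}$ is the functor picking out $\unit_{\mathcal{C}}$; these are exactly the representations \eqref{muetaconstruction} read backwards. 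The Frobenius form is the profunctor $\sigma: \mathcal{C}^{\textnormal{opp}(1)} \times \mathcal{C}^{\textnormal{opp}(1)} \nrightarrow \unit_{\Prof}$ determined by $\sigma(\overline{\star}, \overline{x}, \overline{y}) :=\ _{\mathcal{C}} \langle x, D^{\textnormal{opp}(1)}(\overline{y}) \rangle$, i.e. the profunctor induced from the representable $(D^{\textnormal{opp}(1)})_{\star}$ as in \eqref{sigmaeq}, \eqref{sigmarepr}.

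First I would exhibit the pseudomonoid structure on $(A, \mu, \eta)$. Since a small monoidal category is a pseudomonoid in $\Cat$, the triple $(\mathcal{C}^{\textnormal{opp}(1)}, \otimes_{\mathcal{C}}^{\textnormal{opp}(1)}, \eta_{\mathcal{C}}^{\textnormal{opp}(1)})$ is a pseudomonoid there (reversing morphisms via $\textnormal{opp}(1)$ does not disturb the tensor order). Applying the monoidal $2$-functor $(-)_{\star}: \Vcat \rightarrow \Vmod$ of \eqref{starmonoidalfunctor} for $\mathcal{V} = \Set$, which preserves pseudomonoids, then yields the pseudomonoid $(A, \mu, \eta)$ in $\Prof$, its associator and unitor $2$-cells being the images of $\alpha_{\mathcal{C}}, \lambda_{\mathcal{C}}, \varrho_{\mathcal{C}}$. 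The $\dag$-condition is immediate: $\mu$ and $\eta$ are representable by construction, so \autoref{rightadjointrep} supplies right adjoints $\mu^{\star}$, $\eta^{\star}$ in $\Prof$.

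Next I would verify that $\sigma$ is an evaluation for a biexact pairing $A \dashv A$, so as to land in characterisation $(2)$ of \autoref{lemmafrobpseudo}. By \autoref{biexactpairinginvertible} this reduces to checking that the profunctor $\tilde{D} := (D^{\textnormal{opp}(1)})_{\star}: \mathcal{C}^{\textnormal{opp}(1)} \nrightarrow \mathcal{C}$ associated to $\sigma$ is an equivalence in $\Prof$; but this holds because $D$ is an equivalence of categories and $(-)_{\star}$ is a $2$-functor, so $((D^{-1})^{\textnormal{opp}(1)})_{\star}$ is a quasi-inverse, and the associated $1$-morphisms are mutually inverse up to $2$-morphisms. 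Finally, the invariance $2$-cell \eqref{form} is obtained by running the coend computation of \autoref{frobeniusinducesstarautonom} in reverse: using \eqref{profunctorscompo}, \eqref{murepr}, \eqref{sigmarepr} and the coend identity \eqref{coendrechnung} one finds that $\sigma \circ (\mu \otimes \textnormal{id}_{\mathcal{C}^{\textnormal{opp}(1)}})(\overline{\star}, \overline{x}, \overline{y}, \overline{z}) \cong\ _{\mathcal{C}} \langle x \otimes_{\mathcal{C}} y, D^{\textnormal{opp}(1)}(\overline{z}) \rangle$ and $\sigma \circ (\textnormal{id}_{\mathcal{C}^{\textnormal{opp}(1)}} \otimes \mu)(\overline{\star}, \overline{x}, \overline{y}, \overline{z}) \cong\ _{\mathcal{C}} \langle x, D^{\textnormal{opp}(1)}(\overline{y \otimes_{\mathcal{C}} z}) \rangle$, and the defining isomorphism \eqref{starautonomfrobeniuseq} identifies the two right-hand sides. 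Together with the right adjoints this makes $(A, \mu, \eta, \sigma)$ a $\dag$-Frobenius pseudomonoid.

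The main obstacle I expect is not the coend bookkeeping, which is a routine reversal of \autoref{frobeniusinducesstarautonom}, but promoting the pointwise isomorphisms of hom-sets into honest invertible $2$-cells in $\Prof$: one must construct the coevaluation partner $\tilde{\sigma}$ for the pairing, check the zig-zag $2$-morphisms \eqref{zigzagidentities2}, and confirm that the invariance datum \eqref{form} is an invertible $2$-cell compatible with the pseudomonoid coherence, rather than a mere collection of bijections. This is precisely the point flagged after \autoref{frobeniusinducesstarautonom}, where only isomorphisms of profunctors such as \eqref{alphacstar2} were recorded and no $2$-categorical coherence in the pseudomonoid direction was verified.
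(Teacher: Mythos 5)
Your proposal is correct and follows essentially the same route as the paper's own proof: the same presentation $(2)$ of \autoref{starautonomequivalent}, the same data $\mu = (\otimes_{\mathcal{C}}^{\textnormal{opp}(1)})_{\star}$, $\eta = (\eta_{\mathcal{C}}^{\textnormal{opp}(1)})_{\star}$, $\sigma = (D^{\textnormal{opp}(1)})_{\star}$, the same appeal to \autoref{rightadjointrep} for the $\dag$-condition and to \autoref{biexactpairinginvertible} for the pairing, and the same coend computations for the invariance $2$-cell (the paper merely states the two sides as $_{\mathcal{C}}\langle z, D^{-1}(\overline{x \otimes_{\mathcal{C}} y})\rangle$ and $_{\mathcal{C}}\langle y \otimes_{\mathcal{C}} z, D^{-1}(\overline{x})\rangle$ and then invokes \eqref{starautonom} together with adjointness of $D$, which is equivalent to your formulation). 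Your closing caveat about promoting pointwise bijections to coherent $2$-cells matches the paper's own disclaimer following \autoref{frobeniusinducesstarautonom}.
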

\begin{proof}
\begin{enumerate}
	\item In our case it is convenient to think of a $\star$-autonomous category as in $2.$ of \autoref{starautonomequivalent}, i.e. as  a monoidal category $(\mathcal{C}, \otimes_{\mathcal{C}}, \unit_{\mathcal{C}}, \alpha, \lambda, \varrho)$ together with an equivalence $D: \mathcal{C} \rightarrow \mathcal{C}^{\textnormal{opp}(1)}$ and a natural isomorphism
	\begin{equation}
	\label{starautonom}
		_{\mathcal{C}} \langle x \otimes_{\mathcal{C}} y, D^{\textnormal{opp}(1)}(\overline{z}) \rangle \cong\ _{\mathcal{C}} \langle x, D^{\textnormal{opp}(1)}(\overline{y \otimes_{\mathcal{C}} z}) \rangle, \quad x,y, z \in \mathcal{C}.
	\end{equation}
	Since every equivalence can be refined to an adjoint equivalence, we can assume that $D$ is an adjoint equivalence, and we will use this fact frequently in our calculations. 
	\item The monoidal unit $\unit_{\mathcal{C}}$ yields an unique functor $\eta_{\mathcal{C}}: \unit_{\Cat} \rightarrow \mathcal{C}$ from the category with one object to $\mathcal{C}$, such that $\eta_{\mathcal{C}}(\star) = \unit_{\mathcal{C}}$. From \autoref{rightadjointrep} it follows that the profunctors 
	\begin{equation}
	\label{mustar22}
		\mu := (\otimes_{\mathcal{C}}^{\textnormal{opp}(1)})_{\star}: \mathcal{C}^{\textnormal{opp}(1)} \times  \mathcal{C}^{\textnormal{opp}(1)} \nrightarrow  \mathcal{C}^{\textnormal{opp}(1)}, \quad (x, \overline{y}, \overline{z}) \mapsto\ _{\mathcal{C}} \langle y  \otimes_{\mathcal{C}} z, x \rangle
	\end{equation}
	and 
	\begin{equation}
	\label{etastar22}
		\eta := (\eta^{\textnormal{opp}(1)}_{\mathcal{C}})_{\star}:  \unit_{\Prof} \nrightarrow \mathcal{C}^{\textnormal{opp}(1)}, \quad (x, \star) \mapsto\ _{\mathcal{C}} \langle \unit_{\mathcal{C}}, x \rangle.
	\end{equation}
	together with 
	\begin{equation}
	\label{alphastar22}
		(\alpha^{\textnormal{opp}(1)})_{\star}, (\lambda^{\textnormal{opp}(1)})_{\star}, (\varrho^{\textnormal{opp}(1)})_{\star}
	\end{equation}
yield a pseudomonoid $(\mathcal{C}^{\textnormal{opp}(1)}, \mu, \eta)$ in $\Prof$ such that $\mu$ and $\eta$ have right adjoints. 
	\item Equation \eqref{starautonom} imposes the definition of a form $\sigma: \mathcal{C}^{\textnormal{opp}(1)} \times \mathcal{C}^{\textnormal{opp}(1)} \nrightarrow \unit_{\Prof}$ by
	\begin{equation}
	\label{sigma22}
		\sigma(\overline{\star}, \overline{x}, \overline{y}) :=\ _{\mathcal{C}} \langle x, D^{\textnormal{opp}(1)}(\overline{y}) \rangle= (D^{\textnormal{opp}(1)})_{\star}(\overline{x}, \overline{y}), \quad x,y \in \mathcal{C} .
	\end{equation} 
	Similarly one defines a profunctor $\tilde{\sigma}: \unit_{\Prof} \nrightarrow \mathcal{C}^{\textnormal{opp}(1)} \otimes \mathcal{C}^{\textnormal{opp}(1)}$ by
	\[
	\tilde{\sigma}(x, y, \star) :=\ _{\mathcal{C}} \langle D^{\textnormal{opp}(1)}(\overline{x}), y  \rangle = ((D^{-1})^{\textnormal{opp}(1)})_{\star}(x, y), \quad x,y \in \mathcal{C}.
	\]
	A short calculation shows that we can identify $\sigma$ with $(D^{\textnormal{opp}(1)})_{\star}$, and $\tilde{\sigma}$ with $((D^{-1})^{\textnormal{opp}(1)})_{\star}$ in the sense of \autoref{biexactpairinginvertible}. Moreover, since $(D^{\textnormal{opp}(1)})_{\star}$ and $((D^{-1})^{\textnormal{opp}(1)})_{\star}$ are mutually inverse, $\sigma$ and $\tilde{\sigma}$ witness the existence of an biexact pairing $\mathcal{C}^{\textnormal{opp}(1)} \dashv \mathcal{C}^{\textnormal{opp}(1)}$.
	 Indeed, explicitly, for $a,b \in \mathcal{C}$ we find
	\begin{align*}
		&(\sigma \otimes \textnormal{id}_{\mathcal{C}^{\textnormal{opp}(1)}}) \circ (\textnormal{id}_{\mathcal{C}^{\textnormal{opp}(1)}} \otimes \tilde{\sigma})(a,\overline{b}) \equiv (\sigma \otimes \textnormal{id}_{\mathcal{C}^{\textnormal{opp}(1)}}) \circ (\textnormal{id}_{\mathcal{C}^{\textnormal{opp}(1)}} \otimes \tilde{\sigma})(\overline{\star}, a,\overline{b}, \star) \\
		&\overset{\eqref{profunctorscompo}}{=} \int^{(\overline{x}, \overline{y}, \overline{z}) \in (\mathcal{C}^{\textnormal{opp}(1)})^3}\ (\textnormal{id}_{\mathcal{C}^{\textnormal{opp}(1)}} \otimes \tilde{\sigma})(x,y,z,\overline{b}, \star) \times (\sigma \otimes \textnormal{id}_{\mathcal{C}^{\textnormal{opp}(1)}})(\overline{\star}, a, \overline{x}, \overline{y}, \overline{z}) \\
		&= \int^{(\overline{x}, \overline{y}, \overline{z}) \in (\mathcal{C}^{\textnormal{opp}(1)})^3}\ \textnormal{id}_{\mathcal{C}^{\textnormal{opp}(1)}}(x,\overline{b}) \otimes \tilde{\sigma}(y,z, \star) \times \sigma(\overline{\star}, \overline{x}, \overline{y}) \otimes \textnormal{id}_{\mathcal{C}^{\textnormal{opp}(1)}}(a, \overline{z}) \\
		&= \int^{(\overline{x}, \overline{y}, \overline{z}) \in (\mathcal{C}^{\textnormal{opp}(1)})^3}\ _{\mathcal{C}} \langle b, x \rangle \times\ _{\mathcal{C}} \langle D^{\textnormal{opp}(1)}(\overline{y}), z \rangle \times\ _{\mathcal{C}} \langle x, D^{\textnormal{opp}(1)}(\overline{y}) \rangle \times\  _{\mathcal{C}} \langle z, a \rangle\\
		&\overset{\eqref{coendrechnung}}{\cong}\ _{\mathcal{C}}\langle b, a \rangle = \textnormal{id}_{\mathcal{C}^{\textnormal{opp}(1)}}(a, \overline{b}).
	\end{align*}
	The other zig-zag identity \eqref{zigzagidentities2} follows similarly. 
	\item 
	It remains to show the existence of an invertible $2$-cell
	\begin{equation}
	\label{missingiso}
		\sigma \circ (\mu \otimes \textnormal{id}_{\mathcal{C}^{\textnormal{opp}(1)}}) \cong \sigma \circ (\textnormal{id}_{\mathcal{C}^{\textnormal{opp}(1)}} \otimes \mu)
	\end{equation}
	as in \eqref{form}. For $x,y,z \in \mathcal{C}$ we compute
	\begin{align*}
		&\sigma \circ (\mu \otimes \textnormal{id}_{\mathcal{C}^{\textnormal{opp}(1)}})(\overline{\star}, \overline{x}, \overline{y}, \overline{z})
	\overset{\eqref{profunctorscompo}}{=} \int^{(\overline{a}, \overline{b}) \in (\mathcal{C}^{\textnormal{opp}(1)})^2} (\mu \otimes \textnormal{id}_{\mathcal{C}^{\textnormal{opp}(1)}})(a,b, \overline{x}, \overline{y}, \overline{z}) \times \sigma(\overline{\star}, \overline{a}, \overline{b}) \\
	&= \int^{(\overline{a}, \overline{b}) \in (\mathcal{C}^{\textnormal{opp}(1)})^2} \mu(a,\overline{x}, \overline{y}) \times \textnormal{id}_{\mathcal{C}^{\textnormal{opp}(1)}}(b, \overline{z}) \times \sigma(\overline{\star}, \overline{a}, \overline{b})\\
	&= \int^{(\overline{a}, \overline{b}) \in (\mathcal{C}^{\textnormal{opp}(1)})^2}\ _{\mathcal{C}^{\textnormal{opp}(1)}} \langle \overline{a}, \overline{x \otimes_{\mathcal{C}} y} \rangle \times\ _{\mathcal{C}^{\textnormal{opp}(1)}} \langle \overline{b}, \overline{z} \rangle \times\ _{\mathcal{C}} \langle a, D^{\textnormal{opp}(1)}(\overline{b}) \rangle \\
	&= \int^{(\overline{a}, \overline{b}) \in (\mathcal{C}^{\textnormal{opp}(1)})^2}\ _{\mathcal{C}} \langle x \otimes_{\mathcal{C}} y, a \rangle \times\ _{\mathcal{C}} \langle z, b \rangle \times\ _{\mathcal{C}} \langle a, D^{\textnormal{opp}(1)}(\overline{b}) \rangle \\
	&\overset{\eqref{coendrechnung}}{\cong} \int^{\overline{b} \in \mathcal{C}^{\textnormal{opp}(1)}}\ _{\mathcal{C}} \langle x \otimes_{\mathcal{C}} y, D^{\textnormal{opp}(1)}(\overline{b}) \rangle \times\ _{\mathcal{C}} \langle z, b \rangle \\
	&\overset{D^{-1}}{\cong} \int^{\overline{b} \in \mathcal{C}^{\textnormal{opp}(1)}}\ _{\mathcal{C}} \langle b, D^{-1}(\overline{x \otimes_{\mathcal{C}} y})\rangle \times\ _{\mathcal{C}} \langle z, b \rangle \\
	&\overset{\eqref{coendrechnung}}{\cong}\ _{\mathcal{C}} \langle z, D^{-1}(\overline{x \otimes_{\mathcal{C}} y}) \rangle.
	\end{align*} 
	Analogously one shows that
	\[
	\sigma \circ (\textnormal{id}_{\mathcal{C}^{\textnormal{opp}(1)}} \otimes \mu)(\overline{\star}, \overline{x}, \overline{y}, \overline{z}) \cong\ _{\mathcal{C}} \langle y \otimes_{\mathcal{C}} z, D^{-1}(\overline{x}) \rangle , \quad x,y, z \in \mathcal{C}.
	\]
	From \eqref{starautonom} and the adjointness of $D$ one obtains \begin{equation*}
		_{\mathcal{C}}\langle z, D^{-1}(\overline{x \otimes_{\mathcal{C}} y}) \rangle \cong\ _{\mathcal{C}} \langle y \otimes_{\mathcal{C}} z, D^{-1}(\overline{x}) \rangle, \quad x,y, z \in \mathcal{C},
	\end{equation*}
	which then can be used to compose the $2$-cell \eqref{missingiso}. 
	
	To sum up, we showed that $(\mathcal{C}^{\textnormal{opp}(1)}, \mu, \eta, \sigma)$ is a $\dag$-Frobenius pseudomonoid in $\Prof$.
\end{enumerate}	
	\end{proof}

Comparing \eqref{muetaconstruction}, \eqref{alphacstar2}, \eqref{unitorstar2}, \eqref{sigmarepr} with \eqref{mustar22}, \eqref{etastar22}, \eqref{alphastar22}, \eqref{sigma22}, respectively, shows that the constructions of \autoref{frobeniusinducesstarautonom} and \autoref{autonominducesfrobenius} are inverse to each other up to natural isomorphisms of functors.

\chapter{Applications}
\section{Exact endofunctors}

In this section we follow up on \cite[Remark 3.17]{fuchs2016eilenberg}, in which the existence of a $\star$-autonomous structure on the category of exact endofunctors of some finite linear category was proven. Precisely, we show that the $\star$-autonomous structure, under certain conditions, is induced by rigidity, and thus, the second tensor product (cf. \eqref{secondtensorproductintro}) is isomorphic to the first tensor product, which is given as the composition of endofunctors.

\subsection{Finite categories}
In this subsection let $\mathbbm{k}$ be a fixed algebraically closed field. A category is \textit{additive}, if it admits finitary products $\prod$ (or equivalently finitary coproducts $\coprod$), and every morphism space has the structure of an abelian group, such that the composition is bilinear. 
A functor $F: \mathcal{C} \rightarrow \mathcal{D}$ between additive categories $\mathcal{C}, \mathcal{D}$ is additive, if the induced maps
\begin{equation}
\label{additivefunctorinducedmaps}
\textnormal{Hom}_{\mathcal{C}}(x,y) \rightarrow \textnormal{Hom}_{\mathcal{D}}(F(x), F(y))
\end{equation}
are homomorphism of abelian groups. This yields, in particular, that the induced morphisms $F(x) \coprod F(y) \rightarrow F(x \prod y)$ are isomorphisms for all $x,y \in \mathcal{C}$.

Moreover, an additive category $\mathcal{C}$ is said to be $\mathbbm{k}$\textit{-linear}, if all morphism spaces have the structure of $\mathbbm{k}$-vector spaces, such that the composition is $\mathbbm{k}$-linear. A functor $F$ between $\mathbbm{k}$-linear categories $\mathcal{C}, \mathcal{D}$ is $\mathbbm{k}$-linear, if the induced maps \eqref{additivefunctorinducedmaps} are $\mathbbm{k}$-linear for all $x,y \in \mathcal{C}$. 
The main objects of this section are special $\mathbbm{k}$-linear categories: \textit{finite} categories.

\begin{definition}[Finite linear category]
	A $\mathbbm{k}$-linear abelian category $\mathcal{C}$ is \textit{finite} (\textit{linear}), if
	\begin{itemize}
		\item All morphism spaces of $\mathcal{C}$ are finite-dimensional;
		\item Every object of $\mathcal{C}$ has finite length;
		\item Every simple object of $\mathcal{C}$ has a projective cover;
		\item There are finitely many isomorphism classes of simple objects.
	\end{itemize}
\end{definition}

For an explanation of the occurring notions we refer to \cite{etingof2015tensor}. Examples of finite categories are given by the categories $A \modl$ of finite-dimensional representations of finite-dimensional $\mathbbm{k}$-algebras $A$.
In fact, the following result shows that these categories form \textit{generic} examples of finite categories. 

\begin{lemma}
\label{finitelinear}
A $\mathbbm{k}$-linear abelian category is finite if and only if it is equivalent as a linear category to the category $A\modl$ of finite-dimensional (left or right) modules over some finite-dimensional $\mathbbm{k}$-algebra $A$.
\end{lemma}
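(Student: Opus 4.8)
The plan is to prove the two implications separately. The easy direction is to verify that $A\modl$ satisfies the four defining conditions of a finite linear category whenever $A$ is a finite-dimensional $\mathbbm{k}$-algebra; the substantial direction is to reconstruct such an algebra from an abstract finite category by means of a projective generator. For the backward implication, fix a finite-dimensional $\mathbbm{k}$-algebra $A$. Every object of $A\modl$ is in particular a finite-dimensional vector space, so $\textnormal{Hom}_A(M,N)$ embeds into the finite-dimensional space $\textnormal{Hom}_{\mathbbm{k}}(M,N)$, and every module has finite length by a dimension count. Since $A$ is Artinian it is semiperfect, so every simple module admits a projective cover; and the isomorphism classes of simple modules are in bijection with those of the semisimple quotient $A/\textnormal{rad}(A)$, of which there are only finitely many by Artin--Wedderburn. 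This checks all four axioms.

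For the forward implication, let $\mathcal{C}$ be a finite $\mathbbm{k}$-linear abelian category, let $S_1,\dots,S_n$ be representatives of its finitely many simple objects, and let $P_i \twoheadrightarrow S_i$ be their projective covers. I would set
\[
P := \bigoplus_{i=1}^n P_i, \qquad A := \End_{\mathcal{C}}(P)^{\textnormal{opp}},
\]
so that $A$ is finite-dimensional because the morphism spaces of $\mathcal{C}$ are. The goal is then to show that the functor
\[
F := \textnormal{Hom}_{\mathcal{C}}(P,-): \mathcal{C} \rightarrow A\modl
\]
is a $\mathbbm{k}$-linear equivalence, where $\textnormal{Hom}_{\mathcal{C}}(P,X)$ carries its natural left $A$-module structure by precomposition.

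I would proceed in three steps. First, one shows that $P$ is a \emph{projective generator}: every nonzero object receives a nonzero map from some $S_i$ and hence, lifting through the projective cover, from $P$, so $F$ is faithful and conservative, and every object is a quotient of a finite direct sum of copies of $P$. Second, $F$ is exact because $P$ is projective, it preserves finite coproducts, and it sends $P^{\oplus m}$ to the free module $A^{\oplus m}$; a Yoneda-type computation gives $\textnormal{Hom}_{\mathcal{C}}(P^{\oplus m}, P^{\oplus l}) \cong \textnormal{Hom}_A(A^{\oplus m}, A^{\oplus l})$, so $F$ restricts to an equivalence between the projective objects of $\mathcal{C}$ (the summands of sums of copies of $P$) and the projective $A$-modules. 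Third, I would extend this to all of $\mathcal{C}$: since $\mathcal{C}$ has enough projectives, every object $X$ has a projective presentation $P_1 \rightarrow P_0 \rightarrow X \rightarrow 0$, and applying the exact functor $F$ together with the matching presentations in $A\modl$ and a five-lemma comparison yields full faithfulness and essential surjectivity on arbitrary objects.

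The main obstacle is this third step, extending the equivalence from projective objects to the whole category. It relies essentially on the finite-length hypothesis, which guarantees that $\mathcal{C}$ has enough projectives and that $F$ actually lands in \emph{finite-dimensional} (rather than arbitrary) $A$-modules, so that projective presentations on both sides correspond. The technical heart is the diagram chase showing that a right-exact, coproduct-preserving functor which is an equivalence on projectives is an equivalence everywhere; I would either invoke the Morita / Eilenberg--Watts framework or spell out the comparison of presentations explicitly. The remaining verifications --- that $A$ is genuinely finite-dimensional and that $F$ is $\mathbbm{k}$-linear --- are routine given the defining finiteness conditions.
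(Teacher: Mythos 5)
The first thing to say is that the paper contains no proof of \autoref{finitelinear} to compare against: the lemma is stated as a known fact (the surrounding notions are delegated to \cite{etingof2015tensor}), and it does not appear in the appendix of proofs either. So your proposal must be judged on its own merits, and on those merits it is correct: it is the classical Morita-type reconstruction. The backward direction (checking the four axioms for $A\modl$ via Artinian/semiperfect structure and Artin--Wedderburn) is exactly right, and the forward direction via $P=\bigoplus_{i=1}^n P_i$, $A=\End_{\mathcal{C}}(P)^{\textnormal{opp}}$, $F=\textnormal{Hom}_{\mathcal{C}}(P,-)$, with the three-step scheme (projective generator, equivalence on projectives, extension by projective presentations) is the standard argument and goes through; this is the route taken in the literature this lemma is usually cited from.

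One place where your sketch should be tightened: the generation step conflates the subobject and quotient pictures. A nonzero $X$ of finite length has both a simple subobject and a simple quotient, but "$X$ receives a nonzero map from some $S_i$, hence from $P$ by lifting" only yields $\textnormal{Hom}_{\mathcal{C}}(P,X)\neq 0$ (and there no lifting is needed, just composition with the epimorphism $P_i \twoheadrightarrow S_i$). What generation actually requires is the quotient picture: if the sum $Y \subseteq X$ of the images of all morphisms $P \rightarrow X$ were a proper subobject, then $X/Y$ would admit a simple quotient $S_i$, and lifting the cover $P_i \twoheadrightarrow S_i$ through the epimorphism $X \twoheadrightarrow X/Y \twoheadrightarrow S_i$ (here projectivity of $P_i$ is used) gives a map $P_i \rightarrow X$ whose image is not contained in $Y$, a contradiction; finite length then bounds the number of copies of $P$ needed. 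Once generation is in place, faithfulness of $F$ is automatic (a morphism $f$ with $F(f)=0$ vanishes after precomposition with an epimorphism $P^{\oplus m}\twoheadrightarrow X$, hence $f=0$), so it need not be argued separately. With that repair, your three steps assemble into a complete proof.
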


 Two finite-dimensional $\mathbbm{k}$-algebras $A$ and $B$ are \textit{Morita equivalent}, if their representation categories $A\modl$ and $B\modl$ are equivalent as $\mathbbm{k}$-linear categories. For every finite category \autoref{finitelinear} yields an algebra $A$ up to Morita equivalence. 
 
In this chapter we will extensively use that every finite linear category $\mathcal{C}$ is a module category over $\vect$ (cf. \cite[p.8]{fuchs2016eilenberg}). Explicitly, for $c \in \mathcal{C}$ and $v \in \vect$, the element $c \otimes v \cong v \otimes c \in \mathcal{C}$ is defined such that
\begin{equation}
\label{finitemodulecat}
	_{\mathcal{C}} \langle c', v \otimes c \rangle \cong v \otimes_{\mathbbm{k}}\ _{\mathcal{C}} \langle c', c \rangle \quad \textnormal{and} \quad _{\mathcal{C}} \langle v \otimes c, c' \rangle \cong v^{\star} \otimes_{\mathbbm{k}}\  _{\mathcal{C}} \langle c, c' \rangle
\end{equation}
for all $c' \in \mathcal{C}$.
 
Before giving the next Theorem, we need to introduce some more definitions. 

First, note that the underlying vector space of some $\mathbbm{k}$-algebra $A$ is a finite-dimensional bimodule over itself by multiplication. Moreover, in general, for every $(A,B)$-bimodule $V$, the linear dual $V^{\star}$ is a $(B,A)$-bimodule. Thus, in particular, the linear dual $A^{\star}$ is again an $A$-bimodule. 

Secondly, a $\mathbbm{k}$-linear functor is \textit{left exact}, if it preserves short left exact sequences, or equivalently, preserves finite limits (cf. \cite{mac2013categories}). We denote the collection of left exact functors $F: \mathcal{C} \rightarrow \mathcal{D}$ between finite linear categories $\mathcal{C}, \mathcal{D}$ as $\Lex(\mathcal{C}, \mathcal{D})$. Analogous one defines \textit{right exact} functors and denotes the collection of them as $\Rex(\mathcal{C}, \mathcal{D})$. If $F: A\modl \rightarrow B\modl$ is a $\mathbbm{k}$-linear functor and $V \in A\modl$ some left $A$-module, one can show that $F(V)$ is not only a left $B$-module, but in fact can be equipped with the structure of a $(B,A)$-bimodule (cf. \cite{fuchs2016eilenberg}).

The next theorem is known as a variant of the \textit{Eilenberg-Watts theorem} (cf. \cite{eilenberg1960abstract}, \cite{watts1960intrinsic}). 
 
\begin{theorem}
\label{eilenbergwattssummarize}
	Let $F,G: \mathcal{C} \rightarrow \mathcal{D}$ be functors between finite linear functors. The following statements are equivalent:
	\begin{itemize}
		\item $F \in \Lex(\mathcal{C}, \mathcal{D})$. 
		\item $F$ admits a left adjoint.
	\end{itemize}
	Likewise, the following are equivalent:
	\begin{itemize}
		\item $G \in \Rex(\mathcal{C}, \mathcal{D})$.
		\item $G$ admits a right adjoint
	\end{itemize}
\end{theorem}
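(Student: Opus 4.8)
The plan is to prove each equivalence by separating the easy implication (existence of an adjoint) from the substantial one (exactness). For the directions where an adjoint is assumed, I would argue abstractly: if $F$ admits a left adjoint then $F$ is itself a right adjoint, and right adjoints preserve all limits that exist; in an abelian category this includes kernels and finite products, so $F$ preserves finite limits and is left exact. Dually, if $G$ admits a right adjoint then $G$ is a left adjoint, hence preserves cokernels and finite coproducts, and is therefore right exact. These two implications are immediate and use nothing beyond abelianness.

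For the converse directions I would reduce to module categories and invoke the classical Eilenberg--Watts representation, rather than a general adjoint functor theorem: since finite categories are \emph{not} cocomplete (infinite coproducts of finite-dimensional modules leave the category), the special adjoint functor theorem does not apply directly, so a concrete, representation-theoretic route is needed. Using \autoref{finitelinear} I may assume $\mathcal{C} = A\modl$ and $\mathcal{D} = B\modl$ for finite-dimensional $\mathbbm{k}$-algebras $A, B$. Consider first a right exact $G \in \Rex(A\modl, B\modl)$ and set $M := G(A)$, which carries a $(B,A)$-bimodule structure as recalled above, the right $A$-action arising from the functoriality of $G$ applied to the left multiplications $A \to A$. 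I would define a natural transformation $\tau_V : M \otimes_A V \to G(V)$ by $m \otimes v \mapsto G(\hat v)(m)$, where $\hat v : A \to V$ sends $1 \mapsto v$, check that $\tau_A$ is an isomorphism, and then use that every object of $A\modl$ is finitely presented, fitting into an exact sequence $A^m \to A^n \to V \to 0$. Because $M \otimes_A -$ and $G$ are both right exact and additive, a five-lemma diagram chase upgrades $\tau$ to a natural isomorphism $G \cong M \otimes_A -$. The tensor--hom adjunction then exhibits $\textnormal{Hom}_B(M, -) : B\modl \to A\modl$ as a right adjoint of $G$, and finite-dimensionality of $M$ guarantees that $\textnormal{Hom}_B(M, W)$ is again finite-dimensional, so the adjoint stays inside the finite categories.

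The left exact case for $F$ I would deduce from the right exact case by the finite-dimensional duality $(-)^{\star} = \textnormal{Hom}_{\mathbbm{k}}(-, \mathbbm{k})$, a contravariant exact equivalence between $A\modl$ and $A^{\textnormal{opp}}\modl$ that interchanges left and right exactness. Transporting $F$ across this duality on source and target produces a right exact functor; the right adjoint constructed above, transported back, becomes a \emph{left} adjoint of $F$, since a contravariant equivalence converts right adjoints into left adjoints. Concretely this represents $F$ as $\textnormal{Hom}_A(P, -)$ for a suitable $(A,B)$-bimodule $P$, whose left adjoint is $P \otimes_B -$.

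The main obstacle I anticipate is the Eilenberg--Watts step itself: constructing the comparison transformation $\tau$, verifying its naturality and that it is an isomorphism on all of $A\modl$ via the finite presentation, and keeping track of the bimodule structures (in particular the correct right $A$-action making $\tau$ $A$-balanced) as well as the variance in the duality argument. Everything else — the preservation-of-limits implications and the tensor--hom adjunctions — is formal. A secondary point requiring care is to confirm at each stage that the representing bimodules $M$ and $P$ are finite-dimensional, so that the constructed adjoints genuinely land in the finite linear categories $\mathcal{C}$ and $\mathcal{D}$ and not merely in the ambient categories of all modules.
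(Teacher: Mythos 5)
The paper never proves \autoref{eilenbergwattssummarize}: it is quoted as a known variant of the Eilenberg--Watts theorem, with citations to Eilenberg and Watts, so your proposal can only be measured against the classical argument — and it is a correct reconstruction of it. The formal direction (a functor with a left adjoint is a right adjoint, hence preserves finite limits and is left exact; dually for right exactness), the reduction to $\mathcal{C} \simeq A\modl$, $\mathcal{D} \simeq B\modl$ via \autoref{finitelinear}, the comparison map $\tau_V: G(A) \otimes_A V \to G(V)$, $m \otimes v \mapsto G(\hat{v})(m)$, upgraded to a natural isomorphism by additivity, right exactness and finite presentation $A^m \to A^n \to V \to 0$, the tensor--hom adjunction with the finite-dimensionality check on $\textnormal{Hom}_B(M,-)$, and the transport of the left exact case through the contravariant duality $(-)^{\star}$ (which does convert right adjoints into left adjoints: conjugating the adjunction isomorphism $\textnormal{Hom}(F^{\vee}(X),Y) \cong \textnormal{Hom}(X, H(Y))$ by $(-)^{\star}$ and using $(-)^{\star\star} \cong \textnormal{id}$ yields $H^{\vee} \dashv F$) are all sound. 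Your observation that finite categories are not cocomplete, so that adjoint functor theorems are unavailable and the representation-theoretic route is forced, is exactly the right justification for the shape of the proof.

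Two small points to repair. First, the right $A$-action on $M = G(A)$ arises by applying $G$ to the \emph{right} multiplications $\rho_a: x \mapsto xa$, which are precisely the endomorphisms of $A$ as a left module over itself, $\textnormal{End}_A({}_AA) \cong A^{\textnormal{opp}}$; the left multiplications you invoke are not morphisms in $A\modl$ unless $A$ is commutative. With $\rho_a$ in place the balancedness computation $\tau(ma \otimes v) = G(\hat{v} \circ \rho_a)(m) = \tau(m \otimes av)$ goes through as you intend. Secondly, for the implication ``adjoint $\Rightarrow$ exact'' to land in $\Lex$ and $\Rex$ as the paper defines them (which require $\mathbbm{k}$-linearity), linearity of $F$ and $G$ must be a standing hypothesis: an additive functor with an adjoint need not be $\mathbbm{k}$-linear (complex conjugation on complex vector spaces is an additive self-equivalence that is not $\mathbb{C}$-linear). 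This is implicit in the theorem's statement and costs nothing, but it should be said where you conclude left or right exactness.
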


\subsection{End and coend}

\begin{definition}[End and Coend]
\label{defend}
	The \textit{end} of a functor $F: \mathcal{C}^{\textnormal{opp}(1)} \times \mathcal{C} \rightarrow \mathcal{D}$ is given by an object $\int_{c \in \mathcal{C}} F(\bar{c}, c) \in \mathcal{D}$ and morphisms $( \omega_c: \int_{c \in \mathcal{C}} F(\bar{c}, c) \rightarrow F(\bar{c},c) )_{c \in \mathcal{C}}$ in $\mathcal{D}$, satisfying $F(\overline{f}, d) \circ \omega_d = F(\overline{c},f) \circ \omega_c$
	 for all $f: c \rightarrow d$, and universal in the following way:
\begin{quote}
	Let $x \in \mathcal{D}$ and $\lbrace \beta_c: x \rightarrow F(\bar{c},c) \rbrace_{c \in \mathcal{C}}$ satisfy $F(\overline{f}, d) \circ \beta_d = F(\overline{c},f) \circ \beta_c$ for all $f: c \rightarrow d$. Then there exists an unique morphism $h: x \rightarrow \int_{c \in \mathcal{C}} F(\bar{c}, c) \in \mathcal{D}$ in $\mathcal{D}$, such that $\beta_c = \omega_c \circ h$ for all $c \in \mathcal{C}$,
\begin{equation*}
	 \begin{tikzcd}[row sep=large, column sep = large]
	 	x \arrow[bend left]{rdr}{\beta_d} \arrow[dashed]{dr}{\exists ! h} \arrow[bend right]{ddr}[left]{\beta_c} \\
	 	& \int_{c \in \mathcal{C}} F(\bar{c}, c) \arrow{d}[left]{\omega_c} \arrow{r}{\omega_d} & F(\overline{d}, d) \arrow{d}{F(\overline{f}, d)} \\
	 	& F(\overline{c},c) \arrow{r}[below]{F(\overline{c},f)} & F(\overline{c}, d).
	 \end{tikzcd}
\end{equation*}
\end{quote}
	The \textit{coend} of $F: \mathcal{C}^{\textnormal{opp}(1)} \times \mathcal{C} \rightarrow \mathcal{D}$ is given by an object $\int^{c \in \mathcal{C}} F(\bar{c}, c) \in \mathcal{D}$ and morphisms $(\omega_c: F(\bar{c},c) \rightarrow \int^{c \in \mathcal{C}} F(\bar{c}, c))_{c \in \mathcal{C}}$ in $\mathcal{D}$, universal in a dual way.
\end{definition}

The universal property implies that end and coend, respectively, if existent, are unique up to unique isomorphism.

Ends and coends enjoy a variety of interesting properties. In particular the next two results (cf. \cite{mac2013categories}) turn out to be helpful for our purposes.

\begin{lemma}
Let $\mathcal{C}$ be an arbitrary category and $\mathcal{D}$ be a linear category.
	\begin{itemize}
		\item  Let $F: \mathcal{C}^{opp} \times \mathcal{C} \rightarrow \mathcal{D}$ be a functor whose end exists. Then there is a natural isomorphism 
			\begin{equation}
			\label{endcontineq}
				_{\mathcal{D}}\langle -, \int_{c \in \mathcal{C}} F(\bar{c},c) \rangle \cong \int_{c \in \mathcal{C}}\ _{\mathcal{D}}\langle -, F(\bar{c},c) \rangle.
			\end{equation}
				\item Let $F: \mathcal{C}^{opp} \times \mathcal{C} \rightarrow \mathcal{D}$ be a functor whose coend exists. Then there is a natural isomorphism 
			\begin{equation}
						\label{coendcontineq}
				_{\mathcal{D}}\langle \int^{c \in \mathcal{C}} F(\bar{c},c), - \rangle \cong \int_{c \in \mathcal{C}}\ _{\mathcal{D}}\langle F(\bar{c},c), - \rangle.
			\end{equation}
	\end{itemize}
\end{lemma}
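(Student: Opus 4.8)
The plan is to prove the first isomorphism directly from the universal property of the end, and then to obtain the coend statement by dualisation. Throughout I work with the second Hom-argument as a placeholder, so the claim is a natural isomorphism of functors $\mathcal{D}^{\textnormal{opp}(1)} \to \Vect$; note that since $\mathcal{D}$ is linear the right-hand end is taken in $\Vect$, but because the forgetful functor $\Vect \to \Set$ preserves (and creates) limits, this end is still computed wedge-wise, as a subspace of a product.

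First I would fix an object $d \in \mathcal{D}$ and unwind the right-hand side. By the construction of the end in $\Vect$, the space $\int_{c \in \mathcal{C}} \textnormal{Hom}_{\mathcal{D}}(d, F(\bar{c},c))$ is exactly the space of wedges, i.e. families $(\beta_c : d \to F(\bar{c},c))_{c \in \mathcal{C}}$ satisfying $F(\bar{c},f) \circ \beta_c = F(\bar{f},c') \circ \beta_{c'}$ for every $f : c \to c'$. The point is that this wedge condition is literally the dinaturality condition imposed on the end projections $\omega_c : \int_{c \in \mathcal{C}} F(\bar{c},c) \to F(\bar{c},c)$ in \autoref{defend}, with the variances matching once one writes $\bar{f} : \bar{c'} \to \bar{c}$ for $f : c \to c'$.

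Consequently, post-composition with the $\omega_c$ sends any $h : d \to \int_{c} F(\bar{c},c)$ to the family $(\omega_c \circ h)_c$, which is a wedge because the $\omega_c$ form one; since composition in $\mathcal{D}$ is $\mathbbm{k}$-bilinear this defines a linear map $\textnormal{Hom}_{\mathcal{D}}(d, \int_{c} F(\bar{c},c)) \to \int_{c} \textnormal{Hom}_{\mathcal{D}}(d, F(\bar{c},c))$. The central step is to check that it is a bijection, and this is precisely the universal property restated: given a wedge $(\beta_c)_c$ with apex $d$, \autoref{defend} furnishes a unique $h$ with $\omega_c \circ h = \beta_c$ for all $c$, which is the required two-sided inverse. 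Naturality in the placeholder argument is then immediate, since for $g : d' \to d$ both composites send $h$ to $(\omega_c \circ h \circ g)_c$; this yields the first isomorphism.

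For the coend statement I would dualise rather than repeat the argument: a coend $\int^{c} F(\bar{c},c)$ in $\mathcal{D}$ is an end in $\mathcal{D}^{\textnormal{opp}(1)}$, and $\textnormal{Hom}_{\mathcal{D}}(x,y) = \textnormal{Hom}_{\mathcal{D}^{\textnormal{opp}(1)}}(y,x)$, so applying the first isomorphism inside $\mathcal{D}^{\textnormal{opp}(1)}$ gives $\textnormal{Hom}_{\mathcal{D}}(\int^{c} F(\bar{c},c), -) \cong \int_{c} \textnormal{Hom}_{\mathcal{D}}(F(\bar{c},c), -)$, with the co-universal property of the coend now playing the role that the universal property of the end played above. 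The only real obstacle here is bookkeeping: keeping the two variances aligned so that the dinaturality of the projections coincides exactly with the wedge condition defining the right-hand end, and confirming the wedge-wise computation of that end in $\Vect$. Beyond the universal property itself, no further input is needed.
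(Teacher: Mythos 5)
The paper itself offers no proof of this lemma — it is quoted from Mac Lane (cf. \cite{mac2013categories}) — so there is no internal proof to compare against; judged on its own, your argument is correct and is precisely the standard one: post-composition with the end projections $\omega_c$ lands in wedges, the universal property in \autoref{defend} makes this assignment bijective, bilinearity of composition in the linear category $\mathcal{D}$ makes it linear, and naturality in the placeholder is immediate. Your derivation of \eqref{coendcontineq} from \eqref{endcontineq} by passing to $\mathcal{D}^{\textnormal{opp}(1)}$ (which is again linear) and swapping the variables of $F$ is also sound. One small refinement: describing the right-hand end as \emph{the} space of wedges presupposes its construction inside a product over all objects of $\mathcal{C}$, which is delicate when $\mathcal{C}$ is not small; your central step in fact shows that $\textnormal{Hom}_{\mathcal{D}}(d, \int_{c \in \mathcal{C}} F(\bar{c},c))$ together with the maps $h \mapsto (\omega_c \circ h)_c$ satisfies the universal property of the end, and phrasing it that way establishes both the existence of the right-hand side and the isomorphism at once, with no size caveat.
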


\begin{lemma}
	Let $\mathcal{C}$ and $\mathcal{D}$ be arbitrary categories. For any pair of functors $F,G: \mathcal{C} \rightarrow \mathcal{D}$, there is a bijection
\begin{equation}
\label{natcoend}
	\textnormal{Nat}(F,G) \cong \int_{c \in \mathcal{C}}\ \textnormal{Hom}_{\mathcal{D}}(F(c),G(c)),
\end{equation}
where $\textnormal{Nat}(F,G)$ denotes the collection of natural transformations from $F$ to $G$.
\end{lemma}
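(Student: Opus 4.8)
The plan is to recognise the right-hand side as the end in $\Set$ of the ``hom'' bifunctor
\[
H : \mathcal{C}^{\textnormal{opp}(1)} \times \mathcal{C} \rightarrow \Set, \qquad H(\overline{c}, c') := \textnormal{Hom}_{\mathcal{D}}(F(c), G(c')),
\]
which is contravariant in the first argument through $F$ and covariant in the second through $G$, and which restricts on the diagonal to $H(\overline{c}, c) = \textnormal{Hom}_{\mathcal{D}}(F(c), G(c))$. Everything then follows by unwinding the wedge condition in \autoref{defend} and comparing it with the definition of a natural transformation; there is no deep obstacle here, only a careful bookkeeping of variances.

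First I would compute the two structure maps appearing in the wedge condition for a morphism $f : c \rightarrow d$. On the one hand, $H(\overline{f}, d) : \textnormal{Hom}_{\mathcal{D}}(F(d), G(d)) \rightarrow \textnormal{Hom}_{\mathcal{D}}(F(c), G(d))$ is precomposition with $F(f) : F(c) \rightarrow F(d)$, so $H(\overline{f}, d)(\phi) = \phi \circ F(f)$. On the other hand, $H(\overline{c}, f) : \textnormal{Hom}_{\mathcal{D}}(F(c), G(c)) \rightarrow \textnormal{Hom}_{\mathcal{D}}(F(c), G(d))$ is postcomposition with $G(f) : G(c) \rightarrow G(d)$, so $H(\overline{c}, f)(\psi) = G(f) \circ \psi$. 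Consequently, a family $\beta = (\beta_c)_{c \in \mathcal{C}}$ with $\beta_c \in \textnormal{Hom}_{\mathcal{D}}(F(c), G(c))$ satisfies the wedge condition $H(\overline{f}, d)(\beta_d) = H(\overline{c}, f)(\beta_c)$ precisely when $\beta_d \circ F(f) = G(f) \circ \beta_c$ for all $f : c \rightarrow d$. This is exactly the naturality square for $\beta : F \Rightarrow G$.

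It remains to exhibit $\textnormal{Nat}(F,G)$ as the universal wedge. I would equip it with the evaluation projections $\omega_c : \textnormal{Nat}(F,G) \rightarrow \textnormal{Hom}_{\mathcal{D}}(F(c), G(c))$, $\beta \mapsto \beta_c$; the computation above shows these satisfy the wedge condition, so $(\textnormal{Nat}(F,G), (\omega_c)_c)$ is a wedge to $H$. For universality, let $x \in \Set$ together with maps $\{\beta_c : x \rightarrow H(\overline{c}, c)\}_{c \in \mathcal{C}}$ form a wedge. Evaluating at a point $p \in x$ gives a family $(\beta_c(p))_{c}$ which, by the wedge condition, is natural, hence an element of $\textnormal{Nat}(F,G)$. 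Defining $h : x \rightarrow \textnormal{Nat}(F,G)$ by $h(p) := (\beta_c(p))_c$ yields the unique map with $\omega_c \circ h = \beta_c$ for all $c$, since any such $h$ is forced to send $p$ to the family $(\beta_c(p))_c$. Thus $\textnormal{Nat}(F,G)$ satisfies the defining universal property of $\int_{c \in \mathcal{C}} \textnormal{Hom}_{\mathcal{D}}(F(c), G(c))$, and by uniqueness of ends up to unique isomorphism the stated bijection follows. The only genuinely delicate point is one of size: for a large category $\mathcal{C}$ both sides may be proper classes, so the statement should be read as an identification of the collection of natural transformations with the (possibly large) end, the universal property above being the precise meaning of the displayed isomorphism.
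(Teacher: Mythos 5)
Your proof is correct: the identification of the wedge condition for the bifunctor $(\overline{c},c') \mapsto \textnormal{Hom}_{\mathcal{D}}(F(c),G(c'))$ with the naturality squares, followed by verification of the universal property for $\textnormal{Nat}(F,G)$ equipped with the evaluation projections, is exactly the standard argument. The paper itself gives no proof but cites Mac Lane, and your argument is precisely the one found there, so there is nothing further to compare; your closing remark on size issues is also the right caveat for arbitrary (possibly large) $\mathcal{C}$.
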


The following is an analogue of the classical tensor product of vector spaces for the case of finite categories.

\begin{definition}[Deligne product]
	Let $\mathcal{C}$ and $\mathcal{D}$ be finite linear categories. The $\textit{Deligne product}$, $\mathcal{C} \boxtimes \mathcal{D}$, is a finite linear category together with a bifunctor $\boxtimes: \mathcal{C} \times \mathcal{D} \rightarrow \mathcal{C} \boxtimes \mathcal{D}$, right exact in both variables, such that the following universal property holds: 
\begin{quote}
	Let $\mathcal{A}$ be a finite linear category together with a bifunctor $F: \mathcal{C} \times \mathcal{D} \rightarrow \mathcal{A}$, right exact in both variables, then there exists a unique right exact functor $\tilde{F}: \mathcal{C} \boxtimes \mathcal{D} \rightarrow \mathcal{A}$ such that $\tilde{F} \circ \boxtimes = F$.
\end{quote}
\end{definition}

\begin{definition}[Eilenberg-Watts functors]
	Let $\mathcal{A}$ and $\mathcal{B}$ be finite linear categories. The (abstract) \textit{Eilenberg-Watts functors} for $\mathcal{A}$ and $\mathcal{B}$ are the following four linear functors: 
\begin{alignat*}{2}
		&\Phi^{\textnormal{l}} \equiv \Phi^{\textnormal{l}}_{\mathcal{A}, \mathcal{B}}:& \qquad \mathcal{A}^{\textnormal{opp}} \boxtimes \mathcal{B} 	&\rightarrow  \mathcal{L}ex(\mathcal{A}, \mathcal{B})  \\
 & & \quad \bar{a} \boxtimes b &\mapsto b \otimes_{\mathbbm{k}}\ _{\mathcal{A}}\langle a, - \rangle,\\
	&\Psi^{\textnormal{l}} \equiv \Psi^{\textnormal{l}}_{\mathcal{A}, \mathcal{B}}:&  \mathcal{L}ex(\mathcal{A}, \mathcal{B}) &\rightarrow \mathcal{A}^{\textnormal{opp}} \boxtimes \mathcal{B} \\
& & \quad F &\mapsto \int^{a \in \mathcal{A}} \bar{a} \boxtimes F(a),\\
	&\Phi^{\textnormal{r}} \equiv \Phi^{\textnormal{r}}_{\mathcal{A}, \mathcal{B}}:& \mathcal{A}^{\textnormal{opp}} \boxtimes \mathcal{B} &\rightarrow \mathcal{R}ex(\mathcal{A}, \mathcal{B})  \\
& &\bar{a} \boxtimes b &\mapsto b \otimes_{\mathbbm{k}}\ _{\mathcal{A}} \langle -, a  \rangle^{\star}, \\
	& \Psi^{\textnormal{r}} \equiv \Psi^{\textnormal{r}}_{\mathcal{A}, \mathcal{B}}:& \mathcal{R}ex(\mathcal{A}, \mathcal{B}) &\rightarrow \mathcal{A}^{\textnormal{opp}} \boxtimes \mathcal{B}\\
& &G &\mapsto \int_{a \in \mathcal{A}} \bar{a} \boxtimes G(a).
	\end{alignat*}
	\end{definition}

The following result (cf. \cite[Theor. 3.2]{fuchs2016eilenberg}) is known as \textit{Categorical Eilenberg-Watts theorem}.

\begin{theorem}
\label{categoricaleilenberg}
For any pair of finite linear categories $\mathcal{A}$ and $\mathcal{B}$ the functors in the triangle
	\begin{equation*}
		\begin{tikzcd}[row sep=large, column sep = large]
			& \mathcal{A}^{\textnormal{opp}(1)} \boxtimes \mathcal{B} \arrow{dl}{\Phi^{\textnormal{l}}} \arrow{dr}{\Phi^{\textnormal{r}}} \\
			\Lex(\mathcal{A}, \mathcal{B}) \arrow{rr}{\Gamma^{\textnormal{rl}} := \Phi^{\textnormal{r}} \circ \Psi^{\textnormal{l}}} \arrow{ur}{\Psi^{\textnormal{l}}}  && \Rex(\mathcal{A}, \mathcal{B}) \arrow{ll}{\Gamma^{\textnormal{lr}} := \Phi^{\textnormal{l}} \circ \Psi^{\textnormal{r}}} \arrow{ul}{\Psi^{\textnormal{r}}}  
		\end{tikzcd}
	\end{equation*}
	constitute quasi-inverse pairs of adjoint equivalences
	\[
	\Lex(\mathcal{A}, \mathcal{B}) \cong \mathcal{A}^{\textnormal{opp}(1)} \boxtimes \mathcal{B} \cong \Rex(\mathcal{A}, \mathcal{B}).
	\]
\end{theorem}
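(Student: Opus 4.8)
The plan is to prove the two equivalences $\Lex(\mathcal{A},\mathcal{B})\simeq\mathcal{A}^{\textnormal{opp}(1)}\boxtimes\mathcal{B}$ and $\mathcal{A}^{\textnormal{opp}(1)}\boxtimes\mathcal{B}\simeq\Rex(\mathcal{A},\mathcal{B})$ separately, by showing that $\Phi^{\textnormal{l}}$ is quasi-inverse to $\Psi^{\textnormal{l}}$ and $\Phi^{\textnormal{r}}$ is quasi-inverse to $\Psi^{\textnormal{r}}$; the functors $\Gamma^{\textnormal{rl}}=\Phi^{\textnormal{r}}\circ\Psi^{\textnormal{l}}$ and $\Gamma^{\textnormal{lr}}=\Phi^{\textnormal{l}}\circ\Psi^{\textnormal{r}}$ are then automatically equivalences and quasi-inverse to one another, and every equivalence may be refined to an adjoint equivalence. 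The three essential ingredients are: the enriched co-Yoneda lemma in its two guises — the density (coend) form $F(x)\cong\int^{a}{}_{\mathcal{A}}\langle a,x\rangle\otimes_{\mathbbm{k}}F(a)$ and the ninja (end) form $G(x)\cong\int_{a}{}_{\mathcal{A}}\langle x,a\rangle^{\star}\otimes_{\mathbbm{k}}G(a)$, both valid for arbitrary linear functors by \eqref{finitemodulecat}, \eqref{endcontineq}, \eqref{coendcontineq}, \eqref{natcoend}; the right exactness of $\boxtimes$ in each variable built into the universal property of the Deligne product; and the exactness of $(-)\otimes_{\mathbbm{k}}(-)$.

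First I would verify that the four functors are well defined: $\Phi^{\textnormal{l}}(\bar a\boxtimes b)=b\otimes_{\mathbbm{k}}{}_{\mathcal{A}}\langle a,-\rangle$ is left exact since ${}_{\mathcal{A}}\langle a,-\rangle$ preserves limits and $b\otimes_{\mathbbm{k}}(-)$ is exact, and dually $\Phi^{\textnormal{r}}(\bar a\boxtimes b)=b\otimes_{\mathbbm{k}}{}_{\mathcal{A}}\langle -,a\rangle^{\star}$ is right exact; the required right exactness in each variable lets both factor through $\boxtimes$, while $\Psi^{\textnormal{l}},\Psi^{\textnormal{r}}$ land in $\mathcal{A}^{\textnormal{opp}(1)}\boxtimes\mathcal{B}$ because the relevant (co)ends exist by finiteness.

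Next I would treat the left equivalence. Since $\Phi^{\textnormal{l}}$ is right exact it commutes with the coend defining $\Psi^{\textnormal{l}}$, so for $F\in\Lex(\mathcal{A},\mathcal{B})$ one gets $\Phi^{\textnormal{l}}\Psi^{\textnormal{l}}(F)\cong\int^{a}F(a)\otimes_{\mathbbm{k}}{}_{\mathcal{A}}\langle a,-\rangle\cong F$ by the density form of co-Yoneda; conversely, using that $\boxtimes$ is right exact one may pull the coend out to obtain $\Psi^{\textnormal{l}}\Phi^{\textnormal{l}}(\bar a\boxtimes b)\cong\bigl(\int^{a'}{}_{\mathcal{A}}\langle a,a'\rangle\otimes_{\mathbbm{k}}\bar{a'}\bigr)\boxtimes b\cong\bar a\boxtimes b$, the inner coend being co-Yoneda in $\mathcal{A}^{\textnormal{opp}(1)}$. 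This establishes $\Lex(\mathcal{A},\mathcal{B})\simeq\mathcal{A}^{\textnormal{opp}(1)}\boxtimes\mathcal{B}$, and in particular the essential surjectivity of $\Phi^{\textnormal{l}}$.

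Finally the right equivalence, where the main obstacle lies. The composite $\Psi^{\textnormal{r}}\Phi^{\textnormal{r}}\cong\mathrm{id}$ goes through exactly as on the left, now using that $\boxtimes$ is also left exact in each variable (so it commutes with the end $\int_{a'}$) together with the ninja form of co-Yoneda in $\mathcal{A}^{\textnormal{opp}(1)}$. The difficulty is the opposite composite $\Phi^{\textnormal{r}}\Psi^{\textnormal{r}}\cong\mathrm{id}$, equivalently the essential surjectivity of $\Phi^{\textnormal{r}}$: here $\Psi^{\textnormal{r}}(G)=\int_{a}\bar a\boxtimes G(a)$ is an end, i.e.\ a limit, whereas $\Phi^{\textnormal{r}}$ is only right exact, so one cannot naively commute $\Phi^{\textnormal{r}}$ past the end — this reflects the genuine failure of $(-)\otimes_{A}(-)$ to be left exact. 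I would circumvent this by promoting $(\Phi^{\textnormal{r}},\Psi^{\textnormal{r}})$ to an adjunction and reducing the missing composite to the fully faithfulness of $\Psi^{\textnormal{r}}$, which I would deduce from the already-established left equivalence transported along the adjoint duality $\Rex(\mathcal{A},\mathcal{B})\simeq\Lex(\mathcal{B},\mathcal{A})^{\textnormal{opp}(1)}$ supplied by \autoref{eilenbergwattssummarize} and the Deligne duality $(\mathcal{A}^{\textnormal{opp}(1)}\boxtimes\mathcal{B})^{\textnormal{opp}(1)}\simeq\mathcal{B}^{\textnormal{opp}(1)}\boxtimes\mathcal{A}$; checking that this transported equivalence agrees on objects with the explicit formula $\bar a\boxtimes b\mapsto b\otimes_{\mathbbm{k}}{}_{\mathcal{A}}\langle -,a\rangle^{\star}$ is the final, and most delicate, bookkeeping step.
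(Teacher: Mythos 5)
Your proposal cannot be compared against a proof in the thesis itself, because the thesis does not prove \autoref{categoricaleilenberg}: the theorem is quoted from the literature (cf.\ \cite[Theor.~3.2]{fuchs2016eilenberg}). The cited proof does not proceed by coend calculus at all; it realizes $\mathcal{A}\simeq A\modl$ and $\mathcal{B}\simeq B\modl$ via \autoref{finitelinear}, identifies $\mathcal{A}^{\textnormal{opp}(1)}\boxtimes\mathcal{B}$ with the category of finite-dimensional $(B,A)$-bimodules, and then invokes the classical Eilenberg--Watts theorem, whose essential-surjectivity part is a presentation-plus-five-lemma argument using right exactness of the functor in question. Judged on its own terms, your attempt has a genuine gap, described next.

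The gap is the exactness you attribute to $\Phi^{\textnormal{l}}$. The bifunctor $(\bar a,b)\mapsto b\otimes_{\mathbbm{k}}{}_{\mathcal{A}}\langle a,-\rangle$ is \emph{left} exact in $\bar a$, not right exact: a short exact sequence $0\to a'\to a\to a''\to 0$ in $\mathcal{A}$ yields, pointwise, only the exact sequence $0\to{}_{\mathcal{A}}\langle a'',x\rangle\to{}_{\mathcal{A}}\langle a,x\rangle\to{}_{\mathcal{A}}\langle a',x\rangle$, so $\Phi^{\textnormal{l}}$ preserves kernels in its first variable but has no reason to preserve cokernels. (For $\Phi^{\textnormal{r}}$ the linear dual $(-)^{\star}$ flips the variance, and it is genuinely right exact in $\bar a$ --- the two functors are not symmetric in this respect.) This invalidates two of your steps: the factorization of $\Phi^{\textnormal{l}}$ through $\boxtimes$ cannot use the stated (right exact) universal property of the Deligne product, and, much more seriously, the assertion that ``$\Phi^{\textnormal{l}}$ is right exact, hence commutes with the coend defining $\Psi^{\textnormal{l}}$'' is unavailable. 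To the extent that $\Phi^{\textnormal{l}}$ has a priori exactness it is of Lex type, so by the philosophy of \autoref{eilenbergwattssummarize} it preserves limits/ends rather than colimits/coends, while dually $\Phi^{\textnormal{r}}$ preserves coends rather than ends. This is exactly why the \emph{mixed} composites $\Gamma^{\textnormal{rl}}=\Phi^{\textnormal{r}}\Psi^{\textnormal{l}}$ and $\Gamma^{\textnormal{lr}}=\Phi^{\textnormal{l}}\Psi^{\textnormal{r}}$ admit the closed formulas \eqref{gammaexplicitend}, whereas the composites you actually need, $\Phi^{\textnormal{l}}\Psi^{\textnormal{l}}$ and $\Phi^{\textnormal{r}}\Psi^{\textnormal{r}}$, are precisely the ones where the exactness mismatch sits. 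Consequently the obstruction you correctly diagnose on the right-hand side afflicts the left-hand side in identical, mirrored form; and since your repair of the right-hand side bootstraps from the ``already-established'' left equivalence, the argument as a whole is circular. The missing ingredient is the genuine Eilenberg--Watts content: for $G\in\Rex(\mathcal{A},\mathcal{B})$ one must show the canonical comparison between $G$ and $\int^{a}{}_{\mathcal{A}}\langle-,a\rangle^{\star}\otimes_{\mathbbm{k}}G(a)$ is an isomorphism, and this does not follow from co-Yoneda (whose two forms hold for arbitrary linear functors and only produce the wrong-variance identities); it requires using the right exactness of $G$ against projective presentations, i.e.\ the classical theorem the paper's reference reduces to. Two smaller untreated points: your verification of $\Psi^{\textnormal{l}}\Phi^{\textnormal{l}}\cong\mathrm{id}$ is only on pure tensors $\bar a\boxtimes b$, which are not all objects of the Deligne product, and coends in $\Lex(\mathcal{A},\mathcal{B})$ are not computed pointwise (cokernels there are not pointwise), so even the formula $\int^{a}F(a)\otimes_{\mathbbm{k}}{}_{\mathcal{A}}\langle a,-\rangle$ needs interpretation before co-Yoneda can be applied to it.
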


Using \eqref{endcontineq}, \eqref{coendcontineq} one can show the following identities for $\Gamma^{\textnormal{rl}}$ and $\Gamma^{\textnormal{rl}}$ (cf. \cite{fuchs2016eilenberg}). We will use them in particular in the proof of \autoref{gammamonoidal}.

\begin{lemma}
Let $\mathcal{A}$ and $\mathcal{B}$ be finite linear categories. For $F \in \Lex(\mathcal{A}, \mathcal{B})$ and $G \in \Rex(\mathcal{A}, \mathcal{B})$ it holds
	\begin{equation}
\label{gammaexplicitend}
	 \Gamma^{\textnormal{rl}}(F) \cong  \int^{a \in \mathcal{A}}\ _{\mathcal{A}} \langle -, a  \rangle^{\star} \otimes F(a)
\quad \textnormal{and} \quad 	\Gamma^{\textnormal{lr}}(G) \cong \int_{a \in \mathcal{A}}\ _{\mathcal{A}} \langle a, - \rangle \otimes G(a).
\end{equation}
\end{lemma}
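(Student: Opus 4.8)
The plan is to unwind the two composites directly and then interchange the outer functor with the relevant (co)end. Recalling from \autoref{categoricaleilenberg} that $\Gamma^{\textnormal{rl}} = \Phi^{\textnormal{r}} \circ \Psi^{\textnormal{l}}$ and inserting the defining formula for $\Psi^{\textnormal{l}}$ gives
\[
\Gamma^{\textnormal{rl}}(F) = \Phi^{\textnormal{r}}\!\left( \int^{a \in \mathcal{A}} \bar a \boxtimes F(a) \right),
\]
so that the statement reduces to the assertion that $\Phi^{\textnormal{r}}$ may be pulled inside the coend, and dually that $\Phi^{\textnormal{l}}$ may be pulled inside the end appearing in $\Psi^{\textnormal{r}}(G) = \int_{a} \bar a \boxtimes G(a)$. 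Once this interchange is justified, one only has to substitute the values of $\Phi^{\textnormal{r}}$ and $\Phi^{\textnormal{l}}$ on the simple tensors $\bar a \boxtimes b$ and rewrite $b \otimes_{\mathbbm{k}}\ _{\mathcal{A}}\langle -,a\rangle^{\star}$ as $_{\mathcal{A}}\langle -,a\rangle^{\star}\otimes b$ using the commutativity $v \otimes c \cong c \otimes v$ of the $\vect$-module action recorded in \eqref{finitemodulecat}.

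First I would justify the interchange. Since $\Phi^{\textnormal{r}}$ and $\Phi^{\textnormal{l}}$ are equivalences by \autoref{categoricaleilenberg}, they preserve all limits and colimits; coends are colimits and ends are limits, so each may be pulled through. Concretely the interchange can be exhibited through the hom-isomorphisms \eqref{endcontineq} and \eqref{coendcontineq} together with the Yoneda lemma: applying $\textnormal{Hom}_{\Rex}\big(\Phi^{\textnormal{r}}(\int^{a}\bar a \boxtimes F(a)), -\big)$, transporting along the equivalence to $\mathcal{A}^{\textnormal{opp}}\boxtimes \mathcal{B}$, and using \eqref{coendcontineq} to move the hom inside the coend identifies this functor with $\textnormal{Hom}_{\Rex}\big(\int^{a}\Phi^{\textnormal{r}}(\bar a \boxtimes F(a)), -\big)$ naturally in the second argument, whence the two objects agree by Yoneda. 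The end case is handled identically with \eqref{endcontineq}.

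Carrying this out for $\Gamma^{\textnormal{rl}}$ then yields
\[
\Gamma^{\textnormal{rl}}(F) \cong \int^{a \in \mathcal{A}} \Phi^{\textnormal{r}}(\bar a \boxtimes F(a)) = \int^{a \in \mathcal{A}} F(a) \otimes_{\mathbbm{k}}\ _{\mathcal{A}}\langle -,a\rangle^{\star} \cong \int^{a \in \mathcal{A}}\ _{\mathcal{A}}\langle -,a\rangle^{\star} \otimes F(a),
\]
and the analogous computation with $\Phi^{\textnormal{l}}$, $\Psi^{\textnormal{r}}$ and the end gives $\Gamma^{\textnormal{lr}}(G) \cong \int_{a}\ _{\mathcal{A}}\langle a,-\rangle \otimes G(a)$. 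I expect the only genuine subtlety — the main obstacle — to be the bookkeeping in the interchange step: one must make sure the coend $\int^{a}\bar a \boxtimes F(a)$ actually exists in $\mathcal{A}^{\textnormal{opp}}\boxtimes\mathcal{B}$ (which is guaranteed, since it is already the defining formula for $\Psi^{\textnormal{l}}$ in \autoref{categoricaleilenberg}) and that the variable $c \in \mathcal{A}$ at which both sides are evaluated is treated as an extra naturality parameter, not involved in the (co)end, so that the co/continuity of $\Phi^{\textnormal{r}}$ and $\Phi^{\textnormal{l}}$ applies uniformly in $c$.
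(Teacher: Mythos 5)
The paper never actually proves this lemma: it is imported from \cite{fuchs2016eilenberg} with only the remark that it follows ``using \eqref{endcontineq}, \eqref{coendcontineq}'', so there is no in-paper proof to compare against. Your skeleton --- write $\Gamma^{\textnormal{rl}} = \Phi^{\textnormal{r}} \circ \Psi^{\textnormal{l}}$, pull the equivalence $\Phi^{\textnormal{r}}$ through the coend defining $\Psi^{\textnormal{l}}$, evaluate $\Phi^{\textnormal{r}}$ on the tensors $\bar a \boxtimes F(a)$, and commute the $\vect$-action using \eqref{finitemodulecat} --- is certainly the intended argument, and your justification of the interchange (equivalences preserve all limits and colimits, or equivalently the Yoneda argument through \eqref{coendcontineq}) is correct as far as it goes.

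There is, however, a genuine gap in what the interchange delivers. Your argument identifies $\Gamma^{\textnormal{rl}}(F)$ with the coend of $a \mapsto\ _{\mathcal{A}}\langle -,a\rangle^{\star}\otimes F(a)$ computed \emph{in the category} $\Rex(\mathcal{A},\mathcal{B})$. But the lemma, in the form the paper actually uses it (see the proof of \autoref{gammamonoidal}, where $\Gamma^{\textnormal{rl}}(G)(y) = \int^{x}\ _{\mathcal{X}}\langle y,x\rangle^{\star}\otimes G(x)$ is treated as an object of $\mathcal{X}$ and then manipulated with \eqref{coendcontineq} inside $\mathcal{X}$), is the \emph{objectwise} statement: for each $c$ the coend $\int^{a}\ _{\mathcal{A}}\langle c,a\rangle^{\star}\otimes F(a)$ exists in $\mathcal{B}$ and equals $\Gamma^{\textnormal{rl}}(F)(c)$. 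These are not the same assertion: $\mathcal{B}$ is a finite linear category, hence has only finite colimits, so objectwise existence of this coend is not free, and colimits in the full subcategory $\Rex(\mathcal{A},\mathcal{B}) \subseteq \Fun(\mathcal{A},\mathcal{B})$ need not be computed objectwise when the objectwise colimits are not known to exist. Your closing remark about treating $c$ as ``an extra naturality parameter'' does not meet this point --- the issue is not naturality in $c$ but whether evaluation at $c$ preserves the coend. The standard way to close the gap (and what the cited literature does) is to show that these particular coends reduce to \emph{finite} colimits, e.g.\ by writing $\mathcal{A} \cong A\modl$ and presenting the coend as a coequalizer built from the projective generator $A$ (equivalently, as a tensor product over $A$); once the objectwise coend exists, it is automatically right exact (colimits commute with colimits), hence lies in $\Rex(\mathcal{A},\mathcal{B})$ and coincides with the coend you constructed there. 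The same caveat applies verbatim to the end formula for $\Gamma^{\textnormal{lr}}$.
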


Our own next result can be interpreted as an indication to which extend $\Gamma^{\textnormal{rl}}$ and $\Gamma^{\textnormal{lr}}$ fail to be monoidal. Note that the domain of the functors $\Gamma^{\textnormal{rl}}$ and $\Gamma^{\textnormal{lr}}$ can in fact be extended to the category of \textit{all} endofunctors (cf. \cite[p.24]{fuchs2016eilenberg}). We postpone the proof to the Appendix.

\begin{lemma}
\label{gammamonoidal}
	Let $\mathcal{X}$ be a finite linear category and $F,G \in \mathcal{L}ex(\mathcal{X}, \mathcal{X})$ be two left exact functors. Then 
	\begin{equation}
	\label{gammarlmonoidal}
		\Gamma^{\textnormal{rl}}(F) \circ \Gamma^{\textnormal{rl}}(G) \cong \Gamma^{\textnormal{rl}}(\Gamma^{\textnormal{rl}}(F) \circ G).
	\end{equation}
	On the other hand, let $F,G \in \mathcal{R}ex(\mathcal{X}, \mathcal{X})$ be two right exact functors. Then 
		\begin{equation}
		\label{gammalrmonoidal}
		\Gamma^{\textnormal{lr}}(F) \circ \Gamma^{\textnormal{lr}}(G) \cong \Gamma^{\textnormal{lr}}(\Gamma^{\textnormal{lr}}(F) \circ G).
	\end{equation}
\end{lemma}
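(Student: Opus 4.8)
The plan is to prove \eqref{gammarlmonoidal} by evaluating both sides on an arbitrary object $x \in \mathcal{X}$ and comparing the resulting coends, using the explicit description \eqref{gammaexplicitend}, the continuity isomorphism \eqref{coendcontineq}, and the module structure \eqref{finitemodulecat}; the statement \eqref{gammalrmonoidal} then follows by a formally dual argument. Throughout I would use that all occurring coends and ends exist, by finiteness of $\mathcal{X}$ together with \autoref{eilenbergwattssummarize}, and that the linear duality $(-)^{\star}$ on finite-dimensional $\mathbbm{k}$-vector spaces is an exact contravariant equivalence, hence interchanges ends and coends and is compatible with the $\mathbbm{k}$-action $v \otimes (-)$ appearing in \eqref{finitemodulecat}.

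First I would unfold the left-hand side. Writing $H := \Gamma^{\textnormal{rl}}(G)$, formula \eqref{gammaexplicitend} gives $H(x) \cong \int^{a}\ {}_{\mathcal{X}}\langle x, a \rangle^{\star} \otimes G(a)$, and applying \eqref{gammaexplicitend} once more,
\[
(\Gamma^{\textnormal{rl}}(F) \circ H)(x) \cong \int^{b}\ {}_{\mathcal{X}}\langle H(x), b \rangle^{\star} \otimes F(b).
\]
The heart of the computation is to simplify ${}_{\mathcal{X}}\langle H(x), b \rangle^{\star}$. Using \eqref{coendcontineq} to pull the hom inside the defining coend of $H(x)$, then the second isomorphism of \eqref{finitemodulecat} with $v = {}_{\mathcal{X}}\langle x, a \rangle^{\star}$ (whose dual is ${}_{\mathcal{X}}\langle x, a \rangle$ by finite-dimensionality), one obtains an end ${}_{\mathcal{X}}\langle H(x), b \rangle \cong \int_{a}\ {}_{\mathcal{X}}\langle x, a \rangle \otimes_{\mathbbm{k}} {}_{\mathcal{X}}\langle G(a), b \rangle$. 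Dualising this end turns it into the coend $\int^{a}\ {}_{\mathcal{X}}\langle x, a \rangle^{\star} \otimes_{\mathbbm{k}} {}_{\mathcal{X}}\langle G(a), b \rangle^{\star}$.

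Substituting this back and applying the Fubini theorem for coends, I would interchange the $a$- and $b$-integrations and pull the factor ${}_{\mathcal{X}}\langle x, a \rangle^{\star}$ (independent of $b$) outside, giving
\[
(\Gamma^{\textnormal{rl}}(F) \circ H)(x) \cong \int^{a}\ {}_{\mathcal{X}}\langle x, a \rangle^{\star} \otimes \left( \int^{b}\ {}_{\mathcal{X}}\langle G(a), b \rangle^{\star} \otimes F(b) \right).
\]
The inner coend is precisely $\Gamma^{\textnormal{rl}}(F)(G(a)) = (\Gamma^{\textnormal{rl}}(F) \circ G)(a)$ by \eqref{gammaexplicitend}, so the whole expression identifies with $\int^{a}\ {}_{\mathcal{X}}\langle x, a \rangle^{\star} \otimes (\Gamma^{\textnormal{rl}}(F) \circ G)(a)$, which is $\Gamma^{\textnormal{rl}}(\Gamma^{\textnormal{rl}}(F) \circ G)(x)$ — again by \eqref{gammaexplicitend}, now applied to the (not necessarily left exact) endofunctor $\Gamma^{\textnormal{rl}}(F) \circ G$, using the extension of the domain of $\Gamma^{\textnormal{rl}}$ to all endofunctors remarked before the Lemma. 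All isomorphisms being natural in $x$, this establishes \eqref{gammarlmonoidal}. For \eqref{gammalrmonoidal} I would run the mirror-image argument: expand with $\Gamma^{\textnormal{lr}}(G)(x) \cong \int_{a}\ {}_{\mathcal{X}}\langle a, x \rangle \otimes G(a)$, use the end-continuity \eqref{endcontineq} and the first isomorphism of \eqref{finitemodulecat}, and apply Fubini for ends; here no dualisation step occurs, which is the structural reason the two cases are genuinely dual rather than identical.

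The main obstacle I anticipate is bookkeeping rather than conceptual: keeping precise track of where the $(-)^{\star}$ lands and verifying that each interchange is legitimate. The two delicate points are (i) the passage from the end $\int_{a}$ to the coend $\int^{a}$ under dualisation, which must be justified by the exactness of $(-)^{\star}$ on finite-dimensional spaces, and (ii) the application of Fubini, which presupposes that the relevant iterated coends exist — this is exactly where finiteness of $\mathcal{X}$ and \autoref{eilenbergwattssummarize} are essential. I expect these to be the steps demanding the most care, and I would relegate the full verification to the Appendix.
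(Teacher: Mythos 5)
Your proposal is correct and follows essentially the same route as the paper's own proof in the Appendix: evaluate both sides on an object, unfold via \eqref{gammaexplicitend}, pull the hom into the coend with \eqref{coendcontineq}, dualise the resulting end into a coend, apply \eqref{finitemodulecat}, and finish with Fubini — the only (immaterial) difference being that you apply \eqref{finitemodulecat} before dualising while the paper does it after. Your explicit remarks on extending $\Gamma^{\textnormal{rl}}$ to all endofunctors and on the existence of the coends are points the paper handles the same way (the former in the remark preceding the Lemma), so nothing is missing.
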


\subsection{Nakayama functors}
Let $A$ be a finite-dimensional $\mathbbm{k}$-algebra.
For the sake of simplicity, in the following we abbreviate 
\[ _{A} \langle -,- \rangle := \textnormal{Hom}_{A\modl}(-,-), \quad \textnormal{and} \quad  _{A^{\textnormal{opp}}} \langle -,- \rangle := \textnormal{Hom}_{\modr A}(-,-) \] for the space of $A$-linear maps between finite dimensional left $A$-modules and right $A$-modules, respectively. The algebra $A$ can be considered as a finite-dimensional bimodule $_A A_A$ over itself by multiplication. The right exact contravariant \textit{transposition functor} $(-)^t$ is defined as
\begin{equation}
	(-)^t :=\ _A \langle -,\ _A A_A \rangle : A\modl \rightarrow \modr A
\end{equation}
with action $(f.a)(m) := f(a.m)$ for $f \in (_A M)^t, a \in A, m \in M$. The left exact contravariant transposition functor is defined as
\begin{equation}
	(-)^{t^{\textnormal{opp}}} :=\ _{A^{\textnormal{opp}}} \langle -,\ _A A_A \rangle : \modr A \rightarrow A \modl
\end{equation}
with action $(a.f)(m) := f(m.a)$ for $f \in (M_A)^{t^{\textnormal{opp}}}, a \in A, m \in M$.
Combined with the exact contravariant duality $(-)^{\star}$, we obtain the right exact covariant \textit{Nakayama functor}
\begin{equation}
\label{nakayama}
	\textnormal{N}^{\textnormal{r}}_A := (-)^{\star} \circ (-)^t =\ _A \langle -, _A A_A \rangle^{\star} : A\modl \rightarrow A \modl
\end{equation}
and the left exact covariant \textit{inverse Nakayama functor}
 \begin{equation}
 \label{nakayamainverse}
	\textnormal{N}^{\textnormal{l}}_A := (-)^{t^{\textnormal{opp}}} \circ (-)^{\star} = \ _{A^{\textnormal{opp}}} \langle (-)^{\star}, _A A_A \rangle : A \modl \rightarrow A \modl.
\end{equation}

Restricted to the subcategory of finite-dimensional projective $A$-modules and injective $A$-modules, respectively, the Nakayama functors provide a quasi-inverse equivalence $A\mathsf{-Projmod} \overset{\cong}{\longleftrightarrow} A\mathsf{-Injmod}$ (cf. \cite[Sec. 3]{ivanov2012nakayama}).

An application of the classical Eilenberg-Watts Theorem is given in the following result (cf. \cite[Proposition 3.1]{ivanov2012nakayama}, \cite[Lemma III.2.9]{assem2006elements}).

\begin{proposition}
\label{nakayamaisomorphisms}
	There exist natural isomorphisms
	\begin{equation*}
		\textnormal{N}^{\textnormal{r}}_A \cong (_A A_A)^{\star} \otimes_A - \qquad \textnormal{and} \qquad \textnormal{N}^{\textnormal{l}}_A \cong\ _A\langle (_A A_A)^{\star}, - \rangle.
	\end{equation*}
\end{proposition}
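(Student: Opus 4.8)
The plan is to recognise both Nakayama functors as instances of the classical Eilenberg--Watts theorem: a right exact, direct-sum-preserving functor $A\modl \to A\modl$ is naturally isomorphic to tensoring by its value on the regular bimodule $_A A_A$, and dually a left exact functor is a $\textnormal{Hom}$ out of a fixed bimodule. The whole proof then reduces to identifying the representing bimodules, which in both cases will turn out to be $(_A A_A)^{\star}$.

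First I would treat $\textnormal{N}^{\textnormal{r}}_A$. By definition $\textnormal{N}^{\textnormal{r}}_A = (-)^{\star}\circ (-)^t$ is the composite of the right exact contravariant transposition functor $(-)^t = {}_A\langle -, {}_A A_A\rangle$ with the exact contravariant duality $(-)^{\star}$, hence it is right exact, covariant, and preserves finite direct sums. The classical Eilenberg--Watts theorem then gives a natural isomorphism $\textnormal{N}^{\textnormal{r}}_A \cong \textnormal{N}^{\textnormal{r}}_A({}_A A_A)\otimes_A (-)$ with the canonical bimodule structure on $\textnormal{N}^{\textnormal{r}}_A(A)$, so it only remains to compute $\textnormal{N}^{\textnormal{r}}_A(A) = {}_A\langle A, {}_A A_A\rangle^{\star}$. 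Evaluation at the unit, $f \mapsto f(1)$, furnishes a bimodule isomorphism ${}_A\langle A, {}_A A_A\rangle \cong {}_A A_A$, and applying $(-)^{\star}$ yields $\textnormal{N}^{\textnormal{r}}_A(A)\cong ({}_A A_A)^{\star}$, which is the first asserted isomorphism. As an alternative self-contained route one can write down the explicit natural map $({}_A A_A)^{\star}\otimes_A M \to {}_A\langle M, {}_A A_A\rangle^{\star}$ sending $\lambda\otimes m$ to the functional $\phi \mapsto \lambda(\phi(m))$ on $\phi \in {}_A\langle M, {}_A A_A\rangle$; it is well defined over $A$ because $\phi$ is left $A$-linear, and it is an isomorphism since both sides are right exact and agree at $M = A$.

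For the second isomorphism I would exploit that $\textnormal{N}^{\textnormal{r}}_A \cong ({}_A A_A)^{\star}\otimes_A (-)$ is a tensor functor, hence a left adjoint, whose right adjoint is given by the tensor--Hom adjunction as ${}_A\langle ({}_A A_A)^{\star}, -\rangle$. Since the Nakayama and inverse Nakayama functors form the standard adjoint pair $\textnormal{N}^{\textnormal{r}}_A \dashv \textnormal{N}^{\textnormal{l}}_A$ (this is exactly the adjunction underlying the quasi-inverse equivalence between projectives and injectives recalled just above), uniqueness of adjoints forces $\textnormal{N}^{\textnormal{l}}_A \cong {}_A\langle ({}_A A_A)^{\star}, -\rangle$. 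If one prefers to avoid quoting the adjunction, the same conclusion follows by running the dual (left exact) form of Eilenberg--Watts directly on $\textnormal{N}^{\textnormal{l}}_A = (-)^{t^{\textnormal{opp}}}\circ (-)^{\star}$. In either case the main obstacle is the careful bookkeeping of the left versus right $A$-actions through the two successive contravariant functors, so as to verify that the bimodule structure produced by Eilenberg--Watts genuinely matches that of $({}_A A_A)^{\star}$ and that the identifications are natural in $M$.
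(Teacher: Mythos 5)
Your overall strategy matches the paper's: the paper offers no proof of \autoref{nakayamaisomorphisms} at all, but presents it precisely as ``an application of the classical Eilenberg--Watts theorem'' with references to Ivanov and to Assem--Simson--Skowro\'{n}ski, and your first paragraph carries out exactly that application. Your treatment of $\textnormal{N}^{\textnormal{r}}_A$ is correct and complete in outline: right exactness and preservation of direct sums give $\textnormal{N}^{\textnormal{r}}_A \cong \textnormal{N}^{\textnormal{r}}_A({}_AA_A)\otimes_A -$, evaluation at $1$ identifies $\textnormal{N}^{\textnormal{r}}_A({}_AA_A) = {}_A\langle A, {}_AA_A\rangle^{\star} \cong ({}_AA_A)^{\star}$, and your explicit natural map $\lambda\otimes m \mapsto (\phi \mapsto \lambda(\phi(m)))$, being an isomorphism on free modules, is an isomorphism everywhere by right exactness.

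The gap is in the second isomorphism. Your primary argument quotes ``the standard adjoint pair $\textnormal{N}^{\textnormal{r}}_A \dashv \textnormal{N}^{\textnormal{l}}_A$'' and attributes it to the fact recalled just before the Proposition; but what is recalled there is only a quasi-inverse equivalence between the subcategories of projective and injective modules, which yields hom-isomorphisms only for projective sources and injective targets and does not by itself produce an adjunction of endofunctors on all of $A\modl$. Worse, in the literature this adjunction is normally \emph{derived} from the two isomorphisms you are proving (tensor--hom adjunction applied to $({}_AA_A)^{\star}\otimes_A -$ and ${}_A\langle ({}_AA_A)^{\star}, -\rangle$), so quoting it here is circular unless proved independently. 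The same issue resurfaces in your fallback: the left exact Eilenberg--Watts theorem represents $\textnormal{N}^{\textnormal{l}}_A$ as ${}_A\langle B, -\rangle$ where $B$ is the value at $A$ of the \emph{left adjoint} of $\textnormal{N}^{\textnormal{l}}_A$, so identifying $B \cong ({}_AA_A)^{\star}$ again requires knowing that left adjoint is $\textnormal{N}^{\textnormal{r}}_A$ --- this is the adjunction once more, not mere bookkeeping of actions. Two short non-circular repairs: (i) prove the adjunction directly, using that $(-)^{\star}$ is a contravariant equivalence on finite-dimensional modules,
\[
{}_A\langle \textnormal{N}^{\textnormal{r}}_A M, N\rangle \;\cong\; {}_{A^{\textnormal{opp}}}\langle N^{\star},\, {}_A\langle M, A\rangle\rangle \;\cong\; {}_A\langle M,\, {}_{A^{\textnormal{opp}}}\langle N^{\star}, A\rangle\rangle \;=\; {}_A\langle M, \textnormal{N}^{\textnormal{l}}_A N\rangle,
\]
where the middle step swaps the two Hom's (both sides are the bilinear maps $N^{\star}\otimes_{\mathbbm{k}} M \rightarrow A$ that are left $A$-linear in $M$ and right $A$-linear in $N^{\star}$); or (ii) argue as in the cited Lemma of Assem--Simson--Skowro\'{n}ski, writing $A \cong \bigl(({}_AA_A)^{\star}\bigr)^{\star}$ and applying the duality once more:
\[
\textnormal{N}^{\textnormal{l}}_A(M) \;=\; {}_{A^{\textnormal{opp}}}\langle M^{\star}, A\rangle \;\cong\; {}_{A^{\textnormal{opp}}}\bigl\langle M^{\star}, \bigl(({}_AA_A)^{\star}\bigr)^{\star}\bigr\rangle \;\cong\; {}_A\langle ({}_AA_A)^{\star}, M\rangle .
\]
Either patch, together with the action bookkeeping you already flag, completes your proof.
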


Surprisingly, this representation allows us to relate the equivalences $\Gamma^{\textnormal{lr}}$ and $\Gamma^{\textnormal{rl}}$ defined in \autoref{categoricaleilenberg} to the Nakyama functors \eqref{nakayama}, \eqref{nakayamainverse}.
\begin{lemma}
	\label{abstractnakayamaid}
For $\mathcal{A} \cong A\textnormal{-mod}$ a finite linear category there are natural isomorphism 	
\begin{equation}
		\Gamma^{\textnormal{rl}}(\textnormal{id}_{\mathcal{A}}) \cong (_A A_A)^{\star} \otimes_A -
\quad \textnormal{and} \quad
		\Gamma^{\textnormal{lr}}(\textnormal{id}_{\mathcal{A}}) \cong\ _A\langle (_A A_A)^{\star}, - \rangle
	\end{equation}
\end{lemma}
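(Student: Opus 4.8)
The plan is to feed the identity functor into the explicit coend/end descriptions of $\Gamma^{\textnormal{rl}}$ and $\Gamma^{\textnormal{lr}}$ supplied by \eqref{gammaexplicitend}, to recognise the resulting expressions as the Nakayama functor \eqref{nakayama} and the inverse Nakayama functor \eqref{nakayamainverse}, and finally to rewrite these via \autoref{nakayamaisomorphisms}. Concretely, after fixing an equivalence $\mathcal{A}\cong A\modl$ and setting $F=G=\textnormal{id}_{\mathcal{A}}$ in \eqref{gammaexplicitend}, one obtains $\Gamma^{\textnormal{rl}}(\textnormal{id}_{\mathcal{A}})\cong\int^{a\in\mathcal{A}}\ _A\langle-,a\rangle^\star\otimes a$ and $\Gamma^{\textnormal{lr}}(\textnormal{id}_{\mathcal{A}})\cong\int_{a\in\mathcal{A}}\ _A\langle a,-\rangle\otimes a$, where $\otimes$ denotes the copower coming from the $\vect$-module structure \eqref{finitemodulecat} on $\mathcal{A}$.

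For the first isomorphism I would evaluate this coend on the regular module $_AA$. Yoneda gives $_A\langle A,a\rangle\cong a$, while the hom-tensor adjunction for the linear dual identifies $a^\star\cong\ _A\langle a,(_AA_A)^\star\rangle$ naturally in $a$, exhibiting $(_AA_A)^\star$ as the coinduced (injective cogenerator) module. Hence $_A\langle A,a\rangle^\star\cong\ _A\langle a,(_AA_A)^\star\rangle$, and the co-Yoneda (density) formula $\int^{a}\ _A\langle a,N\rangle\otimes a\cong N$ collapses the coend to $(_AA_A)^\star$. Since $\Gamma^{\textnormal{rl}}(\textnormal{id}_{\mathcal{A}})$ is right exact, the Eilenberg-Watts theorem (\autoref{eilenbergwattssummarize}, \autoref{categoricaleilenberg}) identifies it with $(-)\otimes_A M$, where $M$ is its value on $_AA$ equipped with the induced $(A,A)$-bimodule structure; the computation above gives $M\cong(_AA_A)^\star$ as bimodules, so $\Gamma^{\textnormal{rl}}(\textnormal{id}_{\mathcal{A}})\cong(_AA_A)^\star\otimes_A-$, which equals $\textnormal{N}^{\textnormal{r}}_A$ by \autoref{nakayamaisomorphisms}.

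The second isomorphism is entirely dual. Now $\Gamma^{\textnormal{lr}}(\textnormal{id}_{\mathcal{A}})\cong\int_{a}\ _A\langle a,-\rangle\otimes a$ is left exact, hence (again by \autoref{eilenbergwattssummarize}) determined by its behaviour on the injective cogenerator $(_AA_A)^\star$; using the same adjunction $_A\langle a,(_AA_A)^\star\rangle\cong a^\star$, the isomorphism \eqref{endcontineq}, and the dual density/Yoneda formula for ends, the end collapses to $_A\langle(_AA_A)^\star,-\rangle$, which is $\textnormal{N}^{\textnormal{l}}_A$ by \autoref{nakayamaisomorphisms}.

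The main obstacle is the (co)Yoneda identification of the (co)end with the Nakayama functor. I do not expect the pointwise isomorphism $_A\langle x,a\rangle^\star\cong\ _A\langle a,\textnormal{N}^{\textnormal{r}}_A(x)\rangle$ to hold for arbitrary $x$: it relies on the tensor-hom evaluation map $_A\langle x,{}_AA_A\rangle\otimes_A a\to\ _A\langle x,a\rangle$ being invertible, which fails for non-projective $x$. The argument must therefore be organised so that this comparison is only used where it is valid, namely on the projective generator $_AA$ (respectively on the injective cogenerator). It is precisely the right exactness of $\Gamma^{\textnormal{rl}}(\textnormal{id}_{\mathcal{A}})$ and the left exactness of $\Gamma^{\textnormal{lr}}(\textnormal{id}_{\mathcal{A}})$, together with the Eilenberg-Watts determination of such functors by their value on $_AA$, that lets us upgrade the isomorphism on the (co)generator to a natural isomorphism of functors; checking that the isomorphism on $_AA$ is one of bimodules is the one point requiring genuine care.
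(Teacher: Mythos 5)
Your proposal is correct, but note that the paper itself contains no proof of \autoref{abstractnakayamaid} at all: the lemma is stated as if it followed immediately from the coend/end formulas \eqref{gammaexplicitend} together with \autoref{nakayamaisomorphisms}, with the actual identification of the abstract (co)end $\int^{a}\ _{A}\langle -,a\rangle^{\star}\otimes a$ with the classical Nakayama functor deferred to the literature (\cite{fuchs2016eilenberg}). Your argument supplies exactly this missing step, and does so by the standard generator trick: evaluate the coend on the projective generator $_AA$, where Yoneda, the adjunction $a^{\star}\cong\ _A\langle a,(_AA_A)^{\star}\rangle$ and co-Yoneda collapse it to $(_AA_A)^{\star}$, and then use right exactness of $\Gamma^{\textnormal{rl}}(\textnormal{id}_{\mathcal{A}})$ plus Eilenberg--Watts to promote the bimodule isomorphism on $_AA$ to a natural isomorphism of functors; dually for the left exact half. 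Your closing caveat is exactly the right one: a pointwise comparison $_A\langle x,a\rangle^{\star}\cong\ _A\langle a,\textnormal{N}^{\textnormal{r}}_A(x)\rangle$ for all $x$ is false, so the argument must be anchored at the (co)generator, and the only nontrivial verification is that the isomorphism there is one of bimodules, which follows from naturality of every step with respect to the endomorphisms given by right multiplication.

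Two presentational points should be fixed if you write this up. First, \autoref{eilenbergwattssummarize} as stated in the paper only asserts existence of adjoints; the representation of a right exact endofunctor as $F({}_AA)\otimes_A -$, and dually the fact that a left exact functor is of the form $_A\langle N,-\rangle$ with $N$ recovered (up to bimodule isomorphism) from its value on the injective cogenerator, is the \emph{classical} Eilenberg--Watts theorem, which the paper invokes only informally before \autoref{nakayamaisomorphisms}; either cite it as such or derive it from the adjoint-existence statement via a free presentation of an arbitrary module. Second, in the left exact case the collapse of the end at $(_AA_A)^{\star}$ should be made explicit: rewrite $_A\langle a,(_AA_A)^{\star}\rangle\cong\ _A\langle A,a\rangle^{\star}$ and use the power (cotensor) form of the Yoneda end formula, $\int_a a^{\,_{\mathcal{A}}\langle x,a\rangle}\cong x$ (with $V^{\star}\otimes a\cong a^{V}$ for finite-dimensional $V$, cf. \eqref{finitemodulecat}), since the copower-shaped end you start from is not literally in the shape of the density formula; the bimodule check there is the same naturality argument as in the right exact half.
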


In view of \autoref{abstractnakayamaid} and \autoref{nakayamaisomorphisms} it is reasonable to give the following definition.

\begin{definition}[Nakayama functor]
\label{generalnakayamafunctor}
	The \textit{Nakayama functor} of a finite linear category $\mathcal{X}$ is the endofunctor 
	\begin{equation}
		\textnormal{N}^{\textnormal{r}}_{\mathcal{X}} := \Gamma^{\textnormal{rl}}(\textnormal{id}_{\mathcal{X}}) = \int^{x \in \mathcal{X}} x \otimes_{\mathbbm{k}}\ _{\mathcal{X}} \langle -, x  \rangle^{\star} \in \mathcal{R}ex(\mathcal{X}, \mathcal{X}).
	\end{equation}
	The \textit{left exact analogue of the Nakayama functor} of $\mathcal{X}$ is the functor
	\begin{equation}
		\textnormal{N}^{\textnormal{l}}_{\mathcal{X}} := \Gamma^{\textnormal{lr}}(\textnormal{id}_{\mathcal{X}}) = \int_{x \in \mathcal{X}} x \otimes_{\mathbbm{k}}\ _{\mathcal{X}}\langle x, - \rangle \in \mathcal{L}ex(\mathcal{X}, \mathcal{X}).
	\end{equation}
\end{definition}

It is an interesting question, in which cases the above Nakayama functors are quasi-inverses to each other. In the classical case \eqref{nakayama}, \eqref{nakayamainverse}, we've already seen that this is the case on the subcategories of finite-dimensional projective $A$-modules and injective $A$-modules. In fact, it is sufficient to require that the finite-dimensional algebra $A$ is \textit{self-injective}, i.e. injective as a left, or right, module over itself (cf. e.g. \cite[Prop. IV.3.1]{auslander1997representation}). Moreover, it turns out that if $A$ has the structure of a Frobenius algebra, it is in particular also self-injective. More precisely, if $A$ is Frobenius with Frobenius form $\sigma: A \otimes A \rightarrow \mathbbm{k}$, the \textit{Nakayama automorphism} is the automorphism $\nu: A \rightarrow A$, such that $\sigma(a,\nu(b)) = \sigma(b,a)$ for all $a,b \in A$. The Nakayama functors $\textnormal{N}^{\textnormal{l}}_A, \textnormal{N}^{\textnormal{r}}_A$ then amount to twisting the left, or right, action of $A$ by the Nakayama automorphism. In particular, if $A$ is symmetric, the Nakayama automorphism is given by the identity, and thus, the Nakayama functors are isomorphic to the identity functor. 

Since being symmetric Frobenius is invariant under Morita equivalence, it seems reasonable to give the following definition (cf. \cite{fuchs2016eilenberg}) for finite linear categories.

\begin{definition}[Symmetric Frobenius category]
	A finite linear category is called \textit{symmetric Frobenius} if and only if it is equivalent  as a linear category to the category of modules over a symmetric Frobenius $\mathbbm{k}$-algebra.
\end{definition}

The following result is due to \cite[Prop. 3.24]{fuchs2016eilenberg}.

\begin{proposition}
Let $\mathcal{X}$ be a finite linear category. The Nakayama functors $\textnormal{N}^{\textnormal{r}}_{\mathcal{X}}$ and $\textnormal{N}^{\textnormal{l}}_{\mathcal{X}}$ are isomorphic to the identity functor,
$
\Gamma^{\textnormal{lr}}(\textnormal{id}_{\mathcal{X}}) \cong \textnormal{id}_{\mathcal{X}} \cong \Gamma^{\textnormal{rl}}(\textnormal{id}_{\mathcal{X}}),
$ if and only if $\mathcal{X}$ is symmetric Frobenius.
\end{proposition}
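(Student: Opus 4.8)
The plan is to reduce everything to a statement about the regular bimodule ${}_AA_A$ and then invoke the standard characterisation of symmetric Frobenius algebras. By \autoref{finitelinear} we may fix a finite-dimensional $\mathbbm{k}$-algebra $A$ together with an equivalence of linear categories $\mathcal{X} \cong A\modl$. Both sides of the claimed equivalence are insensitive to this choice: the Nakayama functors $\textnormal{N}^{\textnormal{r}}_{\mathcal{X}} = \Gamma^{\textnormal{rl}}(\textnormal{id}_{\mathcal{X}})$ and $\textnormal{N}^{\textnormal{l}}_{\mathcal{X}} = \Gamma^{\textnormal{lr}}(\textnormal{id}_{\mathcal{X}})$ are defined purely categorically and are therefore transported along equivalences, while being symmetric Frobenius is Morita-invariant by definition. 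Moreover, since \autoref{categoricaleilenberg} exhibits $\Gamma^{\textnormal{rl}}$ and $\Gamma^{\textnormal{lr}}$ as mutually quasi-inverse equivalences, applying $\Gamma^{\textnormal{lr}}$ to an isomorphism $\Gamma^{\textnormal{rl}}(\textnormal{id}_{\mathcal{X}}) \cong \textnormal{id}_{\mathcal{X}}$ yields $\Gamma^{\textnormal{lr}}(\textnormal{id}_{\mathcal{X}}) \cong \textnormal{id}_{\mathcal{X}}$ and vice versa; hence the two conditions on $\textnormal{N}^{\textnormal{r}}_{\mathcal{X}}$ and $\textnormal{N}^{\textnormal{l}}_{\mathcal{X}}$ are equivalent and it suffices to treat $\textnormal{N}^{\textnormal{r}}_{\mathcal{X}}$. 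Finally, \autoref{abstractnakayamaid} identifies $\textnormal{N}^{\textnormal{r}}_{\mathcal{X}}$ under the equivalence with the endofunctor $({}_AA_A)^{\star} \otimes_A -$ of $A\modl$. Thus the proposition reduces to the claim that $({}_AA_A)^{\star} \otimes_A - \cong \textnormal{id}_{A\modl}$ if and only if $A$ is symmetric Frobenius.

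The key algebraic fact I would isolate is that $({}_AA_A)^{\star} \otimes_A - \cong \textnormal{id}_{A\modl}$ holds precisely when $A^{\star} \cong {}_AA_A$ as $A$-bimodules, where $A^{\star}$ carries its natural bimodule structure. The forward direction of this fact is immediate, since $A \otimes_A - \cong \textnormal{id}_{A\modl}$. For the converse --- the heart of the argument --- I would evaluate a natural isomorphism $\Theta\colon ({}_AA_A)^{\star} \otimes_A - \Rightarrow \textnormal{id}_{A\modl}$ at the regular left module ${}_AA$; using $A^{\star} \otimes_A A \cong A^{\star}$ this gives a left-module isomorphism $\psi\colon A^{\star} \to A$. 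The point is then to upgrade $\psi$ to a bimodule map: every right multiplication $r_a\colon {}_AA \to {}_AA$ is a morphism of \emph{left} $A$-modules, and naturality of $\Theta$ against $r_a$ forces $\psi(m\cdot a) = \psi(m)\cdot a$, so $\psi$ is right $A$-linear as well. Alternatively, this is exactly the statement that the assignment $M \mapsto M \otimes_A -$ reflects isomorphisms, which is part of the Categorical Eilenberg--Watts equivalence $\Phi^{\textnormal{r}}$ of \autoref{categoricaleilenberg} under the identification of $(A\modl)^{\textnormal{opp}} \boxtimes A\modl$ with the category of $A$-bimodules.

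It then remains to recall the classical dictionary: an isomorphism $A^{\star} \cong {}_AA_A$ of $A$-bimodules is equivalent to the existence of a non-degenerate, associative, \emph{symmetric} Frobenius form $\sigma\colon A \otimes A \to \mathbbm{k}$, i.e. to $A$ being symmetric Frobenius. Concretely, a bimodule isomorphism $\psi\colon A \to A^{\star}$ produces $\sigma(a,b) := \psi(a)(b)$, whose non-degeneracy reflects that $\psi$ is an isomorphism, whose invariance $\sigma(ab,c)=\sigma(a,bc)$ reflects one-sided $A$-linearity, and whose symmetry reflects compatibility with the other side; conversely a symmetric Frobenius form yields such a $\psi$. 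Chaining these equivalences with the reduction above gives both directions: if $\mathcal{X}$ is symmetric Frobenius we may take $A$ symmetric Frobenius, whence $A^{\star}\cong A$ as bimodules and $\textnormal{N}^{\textnormal{r}}_{\mathcal{X}}\cong \textnormal{id}_{\mathcal{X}}$; and conversely $\textnormal{N}^{\textnormal{r}}_{\mathcal{X}}\cong\textnormal{id}_{\mathcal{X}}$ forces $A^{\star}\cong A$ as bimodules, making $A$ --- and therefore $\mathcal{X}$ --- symmetric Frobenius.

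I expect the main obstacle to be the converse half of the key algebraic fact, namely promoting the left-module isomorphism obtained by evaluation to a genuine \emph{bimodule} isomorphism; the naturality-against-right-multiplications argument (equivalently, the faithfulness of $\Phi^{\textnormal{r}}$) is precisely what makes the \emph{symmetry} of the Frobenius form appear, rather than merely the Frobenius property. Some care is also needed in pinning down the exact bimodule structure on $A^{\star}$, so that the resulting form is genuinely the symmetric one and the two-sided linearity is matched correctly.
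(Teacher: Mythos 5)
Your proposal is correct, but note that the paper itself offers no proof of this proposition to compare against: it is quoted verbatim from the literature (cited as Prop.\ 3.24 of the Eilenberg--Watts paper of Fuchs--Schweigert--Stigner), so your argument fills in what the thesis leaves as a citation. Your route is the natural one and, as far as the key mechanism goes, matches the cited source: reduce to $\mathcal{X} \cong A\modl$ via \autoref{finitelinear}, observe that the two conditions on $\textnormal{N}^{\textnormal{r}}_{\mathcal{X}}$ and $\textnormal{N}^{\textnormal{l}}_{\mathcal{X}}$ imply one another by applying the quasi-inverse equivalences of \autoref{categoricaleilenberg}, use \autoref{abstractnakayamaid} to identify $\textnormal{N}^{\textnormal{r}}_{\mathcal{X}}$ with $({}_AA_A)^{\star}\otimes_A -$, and then prove the algebraic core: $({}_AA_A)^{\star}\otimes_A - \cong \textnormal{id}_{A\modl}$ iff $A^{\star}\cong A$ as bimodules iff $A$ is symmetric Frobenius. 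Your naturality-against-right-multiplications argument for upgrading the evaluation isomorphism $\psi\colon A^{\star}\to A$ to a bimodule map is sound (under the identification $A^{\star}\otimes_A A\cong A^{\star}$, the map $\textnormal{id}\otimes r_a$ becomes right multiplication by $a$, and the square for $\Theta$ at $r_a$ gives $\psi(f\cdot a)=\psi(f)\cdot a$), and your dictionary is the right one: right $A$-linearity of $\psi$ yields the invariance $\sigma(ba,c)=\sigma(b,ac)$, while left $A$-linearity combined with it forces $\sigma(a,c)=\sigma(1,ac)=\sigma(1,ca)=\sigma(c,a)$, i.e.\ symmetry. Two small points of precision: in your alternative phrasing you need \emph{full} faithfulness of the Eilenberg--Watts functor $M\mapsto M\otimes_A -$ (faithfulness alone does not let you transport an isomorphism of functors back to an isomorphism of bimodules), and when invoking Morita invariance in the forward direction you should, as you implicitly do, choose the algebra $A$ realizing $\mathcal{X}$ to be the symmetric Frobenius one furnished by the definition of a symmetric Frobenius category, since that definition only guarantees existence of \emph{some} such algebra.
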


The following Corollary shows that $\mathcal{X}$ being symmetric Frobenius implies not only the invariance, up to natural isomorphism, of the identity functor $\textnormal{id}_{\mathcal{X}}$ under $\Gamma^{\textnormal{lr}}$ and $\Gamma^{\textnormal{rl}}$, but in fact the invariance of \textit{all} left and right exact endofunctors of $\mathcal{X}$, respectively.

\begin{corollary}
\label{symmetricfrobcor}
	Let $\mathcal{X}$ be a symmetric Frobenius finite linear category. For $F \in \mathcal{R}ex(\mathcal{X}, \mathcal{X})$ and $G \in \mathcal{L}ex(\mathcal{X}, \mathcal{X})$ there exist natural isomorphism
$		\Gamma^{\textnormal{lr}}(F) \cong F$ and $
		\Gamma^{\textnormal{rl}}(G) \cong G.
$
\end{corollary}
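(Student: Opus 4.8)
The plan is to reduce both isomorphisms to the identity case settled by the preceding Proposition, using the relations of \autoref{gammamonoidal} to present $\Gamma^{\textnormal{rl}}$ and $\Gamma^{\textnormal{lr}}$ as composition with the respective Nakayama functor. The two assertions are mirror images of one another under interchanging $(\Gamma^{\textnormal{rl}}, \Lex, \text{coends})$ with $(\Gamma^{\textnormal{lr}}, \Rex, \text{ends})$, so I would carry out the argument for $\Gamma^{\textnormal{rl}}(G) \cong G$ with $G \in \Lex(\mathcal{X},\mathcal{X})$ in detail and obtain $\Gamma^{\textnormal{lr}}(F) \cong F$ for $F \in \Rex(\mathcal{X},\mathcal{X})$ by the dual computation, replacing \eqref{gammarlmonoidal} by \eqref{gammalrmonoidal}.

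First I would read \eqref{gammarlmonoidal} as a module-type identity. Writing $\textnormal{N}^{\textnormal{r}}_{\mathcal{X}} := \Gamma^{\textnormal{rl}}(\textnormal{id}_{\mathcal{X}})$ for the Nakayama functor of \autoref{generalnakayamafunctor}, the relation $\Gamma^{\textnormal{rl}}(F) \circ \Gamma^{\textnormal{rl}}(G) \cong \Gamma^{\textnormal{rl}}(\Gamma^{\textnormal{rl}}(F) \circ G)$ is consistent with, and the aim is to upgrade it to, the natural isomorphism
\[ \Gamma^{\textnormal{rl}}(G) \cong G \circ \textnormal{N}^{\textnormal{r}}_{\mathcal{X}}, \qquad G \in \Lex(\mathcal{X},\mathcal{X}), \]
expressing $\Gamma^{\textnormal{rl}}$ as precomposition with $\textnormal{N}^{\textnormal{r}}_{\mathcal{X}}$. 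Concretely I would establish this by combining the explicit coend formula $\Gamma^{\textnormal{rl}}(G) \cong \int^{a \in \mathcal{X}} {}_{\mathcal{X}}\langle -, a\rangle^{\star} \otimes G(a)$ from \eqref{gammaexplicitend} with the corresponding description $\textnormal{N}^{\textnormal{r}}_{\mathcal{X}} \cong \int^{a} {}_{\mathcal{X}}\langle -, a\rangle^{\star} \otimes a$, tracking carefully how $G$ interacts with the coend variable $a$.

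Once this module identity is in place the conclusion is immediate: by the preceding Proposition the hypothesis that $\mathcal{X}$ is symmetric Frobenius gives $\textnormal{N}^{\textnormal{r}}_{\mathcal{X}} = \Gamma^{\textnormal{rl}}(\textnormal{id}_{\mathcal{X}}) \cong \textnormal{id}_{\mathcal{X}}$, whence $\Gamma^{\textnormal{rl}}(G) \cong G \circ \textnormal{id}_{\mathcal{X}} \cong G$. Dually, I would record $\Gamma^{\textnormal{lr}}(F) \cong F \circ \textnormal{N}^{\textnormal{l}}_{\mathcal{X}}$ with $\textnormal{N}^{\textnormal{l}}_{\mathcal{X}} = \Gamma^{\textnormal{lr}}(\textnormal{id}_{\mathcal{X}})$, and use $\textnormal{N}^{\textnormal{l}}_{\mathcal{X}} \cong \textnormal{id}_{\mathcal{X}}$ from the same Proposition to deduce $\Gamma^{\textnormal{lr}}(F) \cong F$ for $F \in \Rex(\mathcal{X},\mathcal{X})$.

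The hard part will be exactly the upgrade from the associativity-type relation of \autoref{gammamonoidal} to the clean module isomorphism $\Gamma^{\textnormal{rl}}(G) \cong G \circ \textnormal{N}^{\textnormal{r}}_{\mathcal{X}}$. The delicate point is that $\Gamma^{\textnormal{rl}}$ carries left exact functors to right exact ones, so the coend defining $\Gamma^{\textnormal{rl}}(G)$ cannot be simplified by naively commuting $G$ through the colimit; the precise content of \autoref{gammamonoidal} is the coherence that governs this interchange, and it is the symmetric Frobenius hypothesis (through the Proposition) that ultimately lets the coend collapse onto $G$. Checking that all of these identifications are natural in $G$, rather than merely objectwise, is the step demanding the most care, and it is where the symmetric Frobenius assumption is genuinely used.
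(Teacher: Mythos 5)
Your overall instinct---reduce to $\Gamma^{\textnormal{rl}}(\textnormal{id}_{\mathcal{X}}) \cong \textnormal{id}_{\mathcal{X}}$ via \autoref{gammamonoidal}---matches the paper's, but the intermediate lemma you build the reduction on is false, and the step that actually closes the argument in the paper is missing from your plan. The claimed module identity $\Gamma^{\textnormal{rl}}(G) \cong G \circ \textnormal{N}^{\textnormal{r}}_{\mathcal{X}}$ for an arbitrary finite linear category $\mathcal{X}$ and arbitrary $G \in \Lex(\mathcal{X},\mathcal{X})$ cannot hold: the left-hand side lies in $\Rex(\mathcal{X},\mathcal{X})$ by construction (since $\Gamma^{\textnormal{rl}} = \Phi^{\textnormal{r}} \circ \Psi^{\textnormal{l}}$ lands in right exact functors), whereas the right-hand side is a right exact functor followed by a merely left exact one, which is in general neither left nor right exact. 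This exactness obstruction is exactly why \autoref{gammamonoidal} has its nested form $\Gamma^{\textnormal{rl}}(F)\circ\Gamma^{\textnormal{rl}}(G) \cong \Gamma^{\textnormal{rl}}(\Gamma^{\textnormal{rl}}(F)\circ G)$, with the outer $\Gamma^{\textnormal{rl}}$ persisting, rather than a genuine monoidality of $\Gamma^{\textnormal{rl}}$; note also that substituting $F = \textnormal{id}_{\mathcal{X}}$ or $G = \textnormal{id}_{\mathcal{X}}$ into \eqref{gammarlmonoidal} yields $\textnormal{N}^{\textnormal{r}}_{\mathcal{X}} \circ \Gamma^{\textnormal{rl}}(G) \cong \Gamma^{\textnormal{rl}}(\textnormal{N}^{\textnormal{r}}_{\mathcal{X}} \circ G)$ and $\Gamma^{\textnormal{rl}}(F)\circ \textnormal{N}^{\textnormal{r}}_{\mathcal{X}} \cong \Gamma^{\textnormal{rl}}(\Gamma^{\textnormal{rl}}(F))$, never your precomposition formula. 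You correctly observe that a left exact $G$ cannot be commuted through the coend \eqref{gammaexplicitend}, but your proposal offers no mechanism replacing that interchange; asserting that the symmetric Frobenius hypothesis ``lets the coend collapse onto $G$'' is precisely the statement to be proved.

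The paper closes the gap differently, and you need its second ingredient. Under the symmetric Frobenius hypothesis one applies \eqref{gammarlmonoidal} (resp.\ \eqref{gammalrmonoidal}) with second factor $\Gamma^{\textnormal{rl}}(\textnormal{id}_{\mathcal{X}}) \cong \textnormal{id}_{\mathcal{X}}$ to obtain only the nested isomorphism $\Gamma^{\textnormal{rl}}(G) \cong \Gamma^{\textnormal{rl}}(\Gamma^{\textnormal{rl}}(G))$, and likewise $\Gamma^{\textnormal{lr}}(F) \cong \Gamma^{\textnormal{lr}}(\Gamma^{\textnormal{lr}}(F))$; one then cancels the outer application by composing with the quasi-inverse, which is legitimate because \autoref{categoricaleilenberg} exhibits $\Gamma^{\textnormal{rl}}$ and $\Gamma^{\textnormal{lr}}$ as an adjoint equivalence. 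This cancellation via the Categorical Eilenberg--Watts theorem is the load-bearing step your proposal omits entirely. If you replace the false module identity with this two-line argument (nested isomorphism plus cancellation), the proof goes through, and naturality in $F$ and $G$ is automatic since every isomorphism invoked is an instance of a natural one.
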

\begin{proof}
	By assumption the Nakayama functors are natural isomorphic to the identity functor, $\textnormal{N}^{\textnormal{l}}_{\mathcal{X}} =\Gamma^{\textnormal{lr}}(\textnormal{id}_{\mathcal{X}}) \cong \textnormal{id}_{\mathcal{X}} \cong \Gamma^{\textnormal{rl}}(\textnormal{id}_{\mathcal{X}}) = \textnormal{N}^{\textnormal{r}}_{\mathcal{X}}$. Thus, for $F \in \mathcal{R}ex(\mathcal{X}, \mathcal{X})$ and $G \in \mathcal{L}ex(\mathcal{X}, \mathcal{X})$, we compute, using \autoref{gammamonoidal},
\[
			\Gamma^{\textnormal{lr}}(F) = \Gamma^{\textnormal{lr}}(F) \circ \textnormal{id}_{\mathcal{X}} \cong \Gamma^{\textnormal{lr}}(F) \circ \Gamma^{\textnormal{lr}}(\textnormal{id}_{\mathcal{X}}) \overset{\eqref{gammalrmonoidal}}{\cong} \Gamma^{\textnormal{lr}}(\Gamma^{\textnormal{lr}}(F) \circ \textnormal{id}_{\mathcal{X}}) = \Gamma^{\textnormal{lr}}(\Gamma^{\textnormal{lr}}(F)),
\]
\[
		\Gamma^{\textnormal{rl}}(G) = \Gamma^{\textnormal{rl}}(G) \circ \textnormal{id}_{\mathcal{X}} \cong \Gamma^{\textnormal{rl}}(G) \circ \Gamma^{\textnormal{rl}}(\textnormal{id}_{\mathcal{X}}) \overset{\eqref{gammarlmonoidal}}{\cong} \Gamma^{\textnormal{rl}}(\Gamma^{\textnormal{rl}}(G) \circ \textnormal{id}_{\mathcal{X}}) = \Gamma^{\textnormal{rl}}(\Gamma^{\textnormal{rl}}(G)).
\]
	Since by \autoref{categoricaleilenberg}, $\Gamma^{\textnormal{rl}}$ and $\Gamma^{\textnormal{lr}}$ form an (adjoint) equivalence, composition with the respective inverse yields the claim.
\end{proof}

Note that this implies, in particular, $\mathcal{R}ex(\mathcal{X}, \mathcal{X})= \mathcal{L}ex(\mathcal{X}, \mathcal{X})$ for any symmetric Frobenius finite linear category $\mathcal{X}$.
\begin{lemma}
\label{rigidleftadjoint}
	Let $\mathcal{X}$ be a symmetric Frobenius finite linear category. Then $(\mathcal{R}ex(\mathcal{X}, \mathcal{X})= \mathcal{L}ex(\mathcal{X}, \mathcal{X}), \circ, \textnormal{id}_{\mathcal{X}})$ is rigid with respect to the duality given by taking the left or right adjoint. Furthermore, left and right adjoints are natural isomorphic, $(-)^{\textnormal{l.a.}} \cong (-)^{\textnormal{r.a.}}$.
\end{lemma}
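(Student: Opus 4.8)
The plan is to read off the two duals from the adjoint functors whose existence is guaranteed by the Eilenberg-Watts theorem, and then to collapse the two duals into one using the triviality of the Nakayama functor.

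First I would record that, since $\mathcal{X}$ is symmetric Frobenius, $\Rex(\mathcal{X},\mathcal{X}) = \Lex(\mathcal{X},\mathcal{X})$ by the remark following \autoref{symmetricfrobcor}, so every object $F$ of the monoidal category $(\Rex(\mathcal{X},\mathcal{X}), \circ, \textnormal{id}_{\mathcal{X}})$ is simultaneously left and right exact. By \autoref{eilenbergwattssummarize} such an $F$ therefore admits both a left adjoint $F^{\textnormal{l.a.}}$ and a right adjoint $F^{\textnormal{r.a.}}$. Moreover these adjoints again lie in the category: a right adjoint preserves limits, hence is left exact, and a left adjoint preserves colimits, hence is right exact, and $\Lex = \Rex$. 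Thus $(-)^{\textnormal{l.a.}}$ and $(-)^{\textnormal{r.a.}}$ are endo-operations on $\Rex(\mathcal{X},\mathcal{X})$.

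Next I would exhibit the duality data. Writing the monoidal product as $\otimes := \circ$ with unit $\unit := \textnormal{id}_{\mathcal{X}}$, an adjunction $F^{\textnormal{l.a.}} \dashv F$ consists of a unit $\textnormal{id}_{\mathcal{X}} \Rightarrow F \circ F^{\textnormal{l.a.}}$ and a counit $F^{\textnormal{l.a.}} \circ F \Rightarrow \textnormal{id}_{\mathcal{X}}$ obeying the two triangle identities. These are exactly a coevaluation $\eta_F \colon \unit \to F \otimes F^{\textnormal{l.a.}}$ and an evaluation $\varepsilon_F \colon F^{\textnormal{l.a.}} \otimes F \to \unit$, and the triangle identities are precisely the zig-zag identities of \autoref{rigid}. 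Hence $F^{\textnormal{l.a.}}$ is a left dual of $F$. Dually, the adjunction $F \dashv F^{\textnormal{r.a.}}$ makes $F^{\textnormal{r.a.}}$ a right dual of $F$. Since this holds for every $F$, the category is rigid with dualities $^{\star}(-) = (-)^{\textnormal{l.a.}}$ and $(-)^{\star} = (-)^{\textnormal{r.a.}}$.

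Finally, to identify the two adjoints I would use that for symmetric Frobenius $\mathcal{X}$ the Nakayama functor is trivial, $\textnormal{N}^{\textnormal{r}}_{\mathcal{X}} = \Gamma^{\textnormal{rl}}(\textnormal{id}_{\mathcal{X}}) \cong \textnormal{id}_{\mathcal{X}}$. Because the Nakayama functor of \autoref{generalnakayamafunctor} acts as a Serre functor (as one checks from its defining coend expression), this yields a natural symmetric pairing $_{\mathcal{X}}\langle x, y\rangle^{\star} \cong\ _{\mathcal{X}}\langle y, x\rangle$. Combining it with the two defining adjunction isomorphisms gives, naturally in $x$,
\[
_{\mathcal{X}}\langle x, F^{\textnormal{l.a.}}(y)\rangle \cong\ _{\mathcal{X}}\langle F^{\textnormal{l.a.}}(y), x\rangle^{\star} \cong\ _{\mathcal{X}}\langle y, F(x)\rangle^{\star} \cong\ _{\mathcal{X}}\langle F(x), y\rangle \cong\ _{\mathcal{X}}\langle x, F^{\textnormal{r.a.}}(y)\rangle,
\]
so the Yoneda Lemma produces an isomorphism $F^{\textnormal{l.a.}}(y) \cong F^{\textnormal{r.a.}}(y)$, natural in $y$; that is, $(-)^{\textnormal{l.a.}} \cong (-)^{\textnormal{r.a.}}$.

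I expect the main obstacle to be this last step: one must justify the symmetric pairing $_{\mathcal{X}}\langle x,y\rangle^{\star} \cong\ _{\mathcal{X}}\langle y,x\rangle$ from $\textnormal{N}^{\textnormal{r}}_{\mathcal{X}} \cong \textnormal{id}_{\mathcal{X}}$, i.e. verify via the coend expression and the Yoneda Lemma that the Nakayama functor genuinely realises the Serre duality, and then check that every isomorphism in the displayed chain is natural in both $x$ and $y$, so that Yoneda delivers an isomorphism of functors rather than a merely pointwise one.
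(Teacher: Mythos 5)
Your first two paragraphs are sound and are essentially the paper's own argument: $\Rex(\mathcal{X},\mathcal{X})=\Lex(\mathcal{X},\mathcal{X})$ for symmetric Frobenius $\mathcal{X}$, both adjoints exist by \autoref{eilenbergwattssummarize}, and the unit/counit together with the triangle identities of an adjunction are literally the coevaluation/evaluation and zig-zag identities of \autoref{rigid} once $\otimes = \circ$. The paper states this comparison in one sentence; your version just spells it out.

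The last step, however, rests on a claim that is false in exactly the generality you need. A natural isomorphism $_{\mathcal{X}}\langle x,y\rangle^{\star}\cong\ _{\mathcal{X}}\langle y,x\rangle$, natural in both variables and valid for \emph{all} objects, forces $\mathcal{X}$ to be semisimple. Indeed, fix $x$ and apply both sides to a short exact sequence $0\to y_1\to y_2\to y_3\to 0$: the functor $y\mapsto\ _{\mathcal{X}}\langle x,y\rangle^{\star}$ is the composite of the left exact $_{\mathcal{X}}\langle x,-\rangle$ with the exact duality of $\vect$, so it produces a sequence that is exact at the right end, while $y\mapsto\ _{\mathcal{X}}\langle y,x\rangle$ produces one that is exact at the left end; a natural isomorphism between the two makes both sequences exact everywhere, hence $_{\mathcal{X}}\langle x,-\rangle$ is exact and every $x$ is projective. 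But a symmetric Frobenius finite linear category need not be semisimple: $\mathcal{X}=\mathbbm{k}[t]/(t^2)\modl$ is a counterexample, since $\mathbbm{k}[t]/(t^2)$ is a symmetric Frobenius algebra whose simple module is not projective. The underlying point is that the Nakayama functor is a Serre functor only \emph{relative to projectives}: even when $\textnormal{N}^{\textnormal{r}}_{\mathcal{X}}\cong\textnormal{id}_{\mathcal{X}}$, the formula $_{\mathcal{X}}\langle p,y\rangle^{\star}\cong\ _{\mathcal{X}}\langle y,\textnormal{N}^{\textnormal{r}}_{\mathcal{X}}(p)\rangle$ holds naturally only for $p$ projective, whereas your chain of isomorphisms applies it to the arbitrary objects $F^{\textnormal{l.a.}}(y)$ and $F(x)$. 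So the obstacle you flagged at the end is not a verification to be carried out but a genuine obstruction.

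The conclusion $(-)^{\textnormal{l.a.}}\cong(-)^{\textnormal{r.a.}}$ is nonetheless true, and the paper obtains it without any Serre-type pairing: by \cite[Corollary 3.9]{fuchs2016eilenberg} one has $\Gamma^{\textnormal{rl}}(F^{\textnormal{r.a.}})=(\Gamma^{\textnormal{lr}}(F))^{\textnormal{l.a.}}$ for every $F$, and \autoref{symmetricfrobcor} (i.e. $\Gamma^{\textnormal{rl}}\cong\textnormal{id}\cong\Gamma^{\textnormal{lr}}$ in the symmetric Frobenius case) applied to both sides gives $F^{\textnormal{r.a.}}\cong F^{\textnormal{l.a.}}$ at once. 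If you insist on a Serre-duality argument, it can probably be repaired, but only by keeping every duality step on projective objects: note that every $F\in\Lex=\Rex$ is exact, that projectives and injectives coincide in $\mathcal{X}$, so both adjoints preserve projectives, and then compare the two functors $F^{\textnormal{l.a.}}$ and $F^{\textnormal{r.a.}}$ on a projective presentation, using right exactness to extend the comparison to all objects. That is considerably more delicate than the paper's two-line deduction.
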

\begin{proof}
Compare the definition of adjoint functors (e.g. \cite{mac2013categories}) with the definition of rigidity (cf. \autoref{rigid}) to see that duals in the monoidal category of endofunctors with composition as tensor product, coincide with adjoint functors. The existence of left and right adjoints is provided by the classical Eilenberg-Watts-Theorem (cf. \autoref{eilenbergwattssummarize}). Moreover,
	\cite[Corollary 3.9]{fuchs2016eilenberg} states that for any $F \in \mathcal{R}ex(\mathcal{X}, \mathcal{X}) = \mathcal{L}ex(\mathcal{X}, \mathcal{X})$ it holds  
$		\Gamma^{\textnormal{rl}}(F^{\textnormal{r.a.}}) = (\Gamma^{\textnormal{lr}}(F))^{\textnormal{l.a.}}.
$ Applying \autoref{symmetricfrobcor} thus yields the claim, $F^{\textnormal{r.a.}} \cong F^{\textnormal{l.a.}}.$
\end{proof}

Let $\mathcal{X}$ be an arbitrary finite linear category, not necessarily symmetric Frobenius. In \cite[Remark 3.17]{fuchs2016eilenberg} it was proposed that one can define a monoidal category $(\mathcal{R}ex(\mathcal{X}, \mathcal{X}), \circ$$, \textnormal{id}_{\mathcal{X}})$ with $\star$-autonomous structure witnessed by the dualizing object $N^{\textnormal{r}}_{\mathcal{X}}$. The corresponding duality functor is defined by $G \mapsto\Gamma^{\textnormal{rl}}(G^{\textnormal{r.a.}}) \cong (\Gamma^{\textnormal{lr}}(G))^{\textnormal{l.a.}}$ for $G \in \mathcal{R}ex(\mathcal{X}, \mathcal{X})$ (cf. \cite[Corollary 3.9]{fuchs2016eilenberg}). An interesting question is, whether the induced second tensor product (cf. \eqref{secondtensorproductintro}) is isomorphic to the first one, or not.

In the particular case that $\mathcal{X}$ is a symmetric Frobenius finite linear category, we can give an answer to that question. Indeed,  \autoref{symmetricfrobcor} implies that the corresponding duality of $(\mathcal{R}ex(\mathcal{X}, \mathcal{X}) = \mathcal{L}ex(\mathcal{X}, \mathcal{X}) , \mathcal{X}), \circ, \textnormal{id}_{\mathcal{X}})$ boils down to $
G \mapsto G^{\textnormal{r.a.}} \cong G^{\textnormal{l.a.}}
$. This, however, is precisely the rigid structure as defined in \autoref{rigidleftadjoint}. From \autoref{rigidmonoidalequivalence} it follows that the second tensor product of the $\star$-autonomous structure is isomorphic to the first one: the composition of endofunctors.

\section{Graded vector spaces}

In this Section we will introduce a family of $\star$-autonomous structures on the category of $G$-graded vector spaces and characterise weak ribbon structures on these categories.

\subsection{Quadratic forms}
\begin{definition}[Quadratic form]
\label{classicalquadraticformdef}
	Let $G$ be a finite group. A \textit{quadratic form} on $G$ with values in $\mathbbm{k}^{\times}$ is a map
	\[
	q: \quad G \rightarrow \mathbbm{k}^{\times},
	\]
	such that
	\[
	q(g) = q(g^{-1})
	\]
	and such that the function 
	\begin{align}
	\label{defassocbihom}
	\begin{split}
				 \beta_q: \quad G \times G &\rightarrow \mathbbm{k}^{\times} \\
		(g_1, g_2) &\mapsto q(g_1 g_2)q(g_1)^{-1}q(g_2)^{-1}
	\end{split}
	\end{align}
	is a bihomomorphism. The collection of quadratic forms on $G$ with values in $\mathbbm{k}^{\times}$ forms in a canonical way an abelian group, which we denote by $\QF$.
	We will sometimes use an equivalence relation $\sim$ on $\QF$ defined by
	\[ q_1 \sim q_2\ :\Leftrightarrow\ \exists f \in \textnormal{Aut}(G): q_1 = f^{\star}q_2.  \]
	for $q_1,q_2 \in \QF$.
\end{definition}

In the literature one frequently encounters a definition of quadratic forms containing the axiom $q(g^k) = q(g)^{k^2}$ for all $k \in \mathbb{N}$ (sometimes written in additive notation), instead of $q(g) = q(g^{-1})$. We later obtain the equivalence of both definitions as a consequence of \autoref{weakhochk}.

\begin{definition}[Group cohomology]
	Let $G$ be a finite group. A \textit{k-cochain} on $G$ with values in $\mathbbm{k}^{\times}$ is a function
	\[
	\kappa:\quad G^{k} \equiv G \times ... \times G \rightarrow  \mathbbm{k}^{\times},
	\]
	$\kappa$ is \textit{normalized}, if in the case that one of its arguments is the unit element $1_G \in G$, the value of $\kappa$ is $1 \in \mathbbm{k}^{\times}$. The collection of $k$-cochains on $G$ forms in a canonical way a group, which we denote by $C^{k}(G, \mathbbm{k}^{\times})$. The \textit{coboundary operator}
	\[ d = d_k: \quad C^{k}(G, \mathbbm{k}^{\times}) \rightarrow C^{k+1}(G,  \mathbbm{k}^{\times}) \]
	is defined by
	\begin{align*}
		d \kappa(g_1, ..., g_{k+1}) &= \kappa(g_2, g_3, ..., g_{k+1}) \kappa(g_1 g_2, g_3,...,g_{k+1})^{-1} \kappa(g_1, g_2 g_3, ..., g_{k+1}) \\
		&...\ \kappa(g_1, g_2, ..., g_k g_{k+1})^{(-1)^{k}} \kappa(g_1, g_2, ..., g_k)^{(-1)^{k+1}}.
	\end{align*}
	The elements of the subgroup \[
	 Z^{k}(G,\mathbbm{k}^{\times}) := \textnormal{ker}(d_k) \subseteq C^{k}(G, \mathbbm{k}^{\times})
	\]
	are called $k$\textit{-cocycles} and the elements of the subgroup
	\[
	B^{k+1}(G,\mathbbm{k}^{\times}) := \textnormal{im}(d_k) \subseteq C^{k+1}(G,  \mathbbm{k}^{\times})
	\]
	are called $(k+1)$\textit{-coboundaries}. The $k$\textit{-th cohomology group} $H^k(G,  \mathbbm{k}^{\times})$ of $G$ is defined as the quotient group
	\[
	H^k(G,  \mathbbm{k}^{\times}) :=  Z^{k}(G,\mathbbm{k}^{\times}) /  B^{k}(G,\mathbbm{k}^{\times}).
	\]
\end{definition}

\begin{definition}[Abelian group cohomology]
\label{abeliangroupcohomology}
	Let $G$ be a finite abelian group. An \textit{abelian 2-cochain} is an ordinary $2$-cochain. An \textit{abelian $3$-cochain} is a pair
	\[
		(\psi, \Omega)
	\]
	consisting of an ordinary $3$-cochain $\psi \in C^{3}(G, \mathbbm{k}^{\times})$ and an ordinary $2$-cochain $\Omega \in C^{2}(G, \mathbbm{k}^{\times})$. It is called \textit{normalized}, if both $\psi$ and $\Omega$ are normalized as ordinary cochains.
	An abelian $3$-cochain $(\psi, \Omega)$ is an \textit{abelian 3-cocycle}, if $d\psi = 1$ and 
	\begin{align}
	\label{cocycleequation}
	\begin{split}
		\psi(g_2, g_3, g_1)^{-1} \Omega(g_1, g_2 g_3) \psi(g_1, g_2, g_3)^{-1} &= \Omega(g_1, g_3) \psi(g_2, g_1, g_3)^{-1} \Omega(g_1, g_2), \\
		\psi(g_3, g_1, g_2) \Omega(g_1 g_2, g_3) \psi(g_1, g_2, g_3) &= \Omega(g_1, g_3) \psi(g_1, g_3, g_2) \Omega(g_2, g_3).
	\end{split}
	\end{align}
	The \textit{abelian coboundary} $d_{\textnormal{ab}}(\kappa)$ of an abelian $2$-cochain $\kappa \in C^{2}(G, \mathbbm{k}^{\times})$ is defined as
	\[
	d_{\textnormal{ab}}(\kappa) := (d\kappa, \kappa_{\textnormal{comm}}),
	\]
	where the \textit{commutator cocycle} $\kappa_{\textnormal{comm}}$ of $\kappa$ is given as
	\[
	\kappa_{\textnormal{comm}}(g_1,g_2) := \kappa(g_1, g_2) \kappa(g_2, g_1)^{-1}.
	\]
	The \textit{third abelian cohomology group} $H^{3}_{\textnormal{ab}}(G, \mathbbm{k}^{\times})$ of $G$ is defined as the quotient of the group $Z^3_{\textnormal{ab}}(G, \mathbbm{k} ^{\times})$ of normalized abelian $3$-cocycles by its subgroup $B^3_{\textnormal{ab}}(G, \mathbbm{k} ^{\times})$ consisting of abelian coboundaries of normalized abelian $2$-cochains. 
\end{definition}

We will use an equivalence relation $\sim$ on $H^3_{\textnormal{ab}}(G, \mathbbm{k} ^{\times})$ defined by 	
	\[ \lbrack (\psi_1, \Omega_1)\rbrack \sim \lbrack (\psi_2, \Omega_2)\rbrack  :\Leftrightarrow \exists f \in \textnormal{Aut}(G) :  \lbrack (\psi_1, \Omega_1) \rbrack
 = \lbrack (f^{\star}\psi_2, f^{\star}\Omega_2) \rbrack \]
 for $(\psi_1, \Omega_1),(\psi_2, \Omega_2) \in Z^3_{\textnormal{ab}}(G, \mathbbm{k}^{\times})$. Note that, if $(\psi_1, \Omega_1)$ and $(\psi_2, \Omega_2)$ are equivalent in $H^{3}_{\textnormal{ab}}(G, \mathbbm{k} ^{\times})$, they are also equivalent with respect to $\sim$, witnessed by the identity. 
 
 In \cite{eilenberg1953groups} (see also \cite{joyal1993braided}) it was shown that the map $\lbrack (\psi, \Omega) \rbrack \mapsto q_{\Omega}$ with $\quad q_{\Omega}(g) := \Omega(g,g)$ yields an isomorphism $H^{3}_{\textnormal{ab}}(G, \mathbbm{k}^{\times}) \cong \QF$. Here we give a variant of this statement with respect to the equivalence relations $\sim$.

\begin{lemma}
\label{emiso}
	Let $G$ be a finite abelian group. The map
	\begin{align*}
		\textnormal{EM}: \quad H^{3}_{\textnormal{ab}}(G, \mathbbm{k}^{\times})/\sim &\rightarrow \textnormal{QF}(G, \mathbbm{k}^{\times})/\sim \\
		\lbrack (\psi, \Omega) \rbrack &\mapsto \lbrack q_{\Omega} \rbrack \quad \textnormal{with} \quad q_{\Omega}(g) := \Omega(g,g)
	\end{align*}
	is an isomorphism of abelian groups. Moreover, for the associated bihomomorphism it holds
	\begin{equation}
	\label{assocbihomomega}
		\beta_{q_{\Omega}}(g_1,g_2) = \Omega(g_1,g_2)\Omega(g_2,g_1).
	\end{equation}
\end{lemma}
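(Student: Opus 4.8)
The plan is to bootstrap from the classical Eilenberg–MacLane isomorphism recalled just above the statement and to verify that it is equivariant for the two $\textnormal{Aut}(G)$-actions that define the relations $\sim$; equivariance then makes it descend to a bijection of the quotients, and the bihomomorphism formula is handled separately by a direct computation from the abelian cocycle relations.

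First I would fix notation by writing $\textnormal{EM}_0\colon H^{3}_{\textnormal{ab}}(G, \mathbbm{k}^{\times}) \to \QF$, $[(\psi,\Omega)] \mapsto q_\Omega$ with $q_\Omega(g) := \Omega(g,g)$, for the map already known (by \cite{eilenberg1953groups}, see also \cite{joyal1993braided}) to be a well-defined isomorphism of abelian groups. Next I would record that for every $f \in \textnormal{Aut}(G)$ the pullback $f^\star$ preserves normalized abelian $3$-cocycles, abelian coboundaries, and quadratic forms, since the defining conditions (the equations \eqref{cocycleequation}, $d\psi = 1$, normalization, and the axioms of \autoref{classicalquadraticformdef}) are all natural in $G$; hence $f^\star$ acts on both $H^{3}_{\textnormal{ab}}(G, \mathbbm{k}^{\times})$ and $\QF$ by group automorphisms. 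The key equivariance is then a one-line check: for any $g \in G$,
\[ q_{f^\star\Omega}(g) = (f^\star\Omega)(g,g) = \Omega(f(g),f(g)) = q_\Omega(f(g)) = (f^\star q_\Omega)(g), \]
so $\textnormal{EM}_0 \circ f^\star = f^\star \circ \textnormal{EM}_0$.

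From equivariance the descent is formal. For well-definedness on $\sim$-classes, if $[(\psi_1,\Omega_1)] = [(f^\star\psi_2, f^\star\Omega_2)]$ then, since $\textnormal{EM}_0$ is well-defined, $\textnormal{EM}_0[(\psi_1,\Omega_1)] = q_{f^\star\Omega_2} = f^\star q_{\Omega_2}$, giving $\textnormal{EM}_0[(\psi_1,\Omega_1)] \sim \textnormal{EM}_0[(\psi_2,\Omega_2)]$. Surjectivity of the induced map follows from surjectivity of $\textnormal{EM}_0$, and for injectivity I would run the same chain backwards: if $q_{\Omega_1} = f^\star q_{\Omega_2} = q_{f^\star\Omega_2}$, then injectivity of $\textnormal{EM}_0$ forces $[(\psi_1,\Omega_1)] = [(f^\star\psi_2, f^\star\Omega_2)]$, i.e. $[(\psi_1,\Omega_1)] \sim [(\psi_2,\Omega_2)]$. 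Finally, since $\textnormal{EM}_0$ is a group homomorphism and it intertwines actions by group automorphisms, the induced bijection respects the group structures carried over from $\textnormal{EM}_0$, which is the asserted isomorphism $H^{3}_{\textnormal{ab}}(G, \mathbbm{k}^{\times})/\sim\ \to\ \QF/\sim$.

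It remains to prove the formula \eqref{assocbihomomega}, which is independent of the descent argument. Unwinding the definition of $\beta_q$ in \eqref{defassocbihom} gives
\[ \beta_{q_\Omega}(g_1,g_2) = q_\Omega(g_1 g_2)\,q_\Omega(g_1)^{-1} q_\Omega(g_2)^{-1} = \Omega(g_1 g_2, g_1 g_2)\,\Omega(g_1,g_1)^{-1}\Omega(g_2,g_2)^{-1}, \]
so the task reduces to showing $\Omega(g_1 g_2, g_1 g_2) = \Omega(g_1,g_1)\Omega(g_2,g_2)\Omega(g_1,g_2)\Omega(g_2,g_1)$ up to cancelling $\psi$-factors. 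The plan is to split $\Omega$ in the first argument using the second relation of \eqref{cocycleequation} with arguments $(g_1, g_2, g_1 g_2)$, then split each resulting term in the second argument using the first relation of \eqref{cocycleequation} with arguments $(g_1, g_1, g_2)$ and $(g_2, g_1, g_2)$ respectively. Collecting everything leaves the desired four $\Omega$-factors multiplied by a product of values of $\psi$. The main obstacle — and the only genuinely delicate step — is to check that this accumulated product of $\psi$-terms equals $1$; I expect to do this by matching the accumulated arguments against instances of the $3$-cocycle identity $d\psi = 1$ together with normalization of $\psi$, which forces all of them to cancel and yields \eqref{assocbihomomega}.
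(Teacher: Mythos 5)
Your proposal is correct, and its core — descending the classical Eilenberg--MacLane isomorphism to the $\textnormal{Aut}(G)$-quotients — is exactly the paper's strategy: the paper likewise only checks compatibility with the relations $\sim$ in both directions and then imports surjectivity and injectivity from the classical statement. Your packaging is slightly cleaner (you isolate the equivariance $\textnormal{EM}_0 \circ f^{\star} = f^{\star} \circ \textnormal{EM}_0$ once and let the coboundary part be absorbed by well-definedness of $\textnormal{EM}_0$, and your injectivity argument is a direct two-sided chain rather than the paper's kernel-triviality argument, which tacitly presupposes a group structure on the orbit spaces), but the content is the same. The genuine difference is your treatment of \eqref{assocbihomomega}: the paper does not prove it at all, treating it as part of the cited classical result, whereas you derive it from the abelian cocycle relations. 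For the record, your "delicate step" does close, and with less effort than you anticipated: splitting $\Omega(g_1g_2,g_1g_2)$ by the second relation of \eqref{cocycleequation} at $(g_1,g_2,g_1g_2)$ and then the two resulting factors by the first relation at $(g_1,g_1,g_2)$ and $(g_2,g_1,g_2)$ gives
\begin{equation*}
\beta_{q_{\Omega}}(g_1,g_2) \;=\; \Omega(g_1,g_2)\,\Omega(g_2,g_1)\cdot
\frac{\psi(g_1,g_2,g_1)\,\psi(g_1,g_1g_2,g_2)\,\psi(g_2,g_1,g_2)}{\psi(g_1g_2,g_1,g_2)\,\psi(g_1,g_2,g_1g_2)},
\end{equation*}
and the residual $\psi$-ratio equals $1$ by a single instance of $d\psi = 1$ evaluated at $(g_1,g_2,g_1,g_2)$, using that $G$ is abelian to identify $\psi(g_1,g_2g_1,g_2)$ with $\psi(g_1,g_1g_2,g_2)$; normalization of $\psi$ is not even needed. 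So your proof is complete and in fact establishes a little more than the paper's own argument does.
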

\begin{proof}
	We only need to show the differences to the original statement. 	First of all, if $\lbrack (\psi_1, \Omega_1) \rbrack \sim \lbrack (\psi_2, \Omega_2) \rbrack$, then by definition there exists some $f \in \textnormal{Aut}(G)$ and $\kappa \in C^2(G, \mathbbm{k}^{\times})$, such that in particular $\Omega_1 = f^{\star}(\Omega_2) \cdot \kappa_{\textnormal{comm}}$. Thus it follows
	\[ q_{\Omega_1}(g) = \Omega_1(g,g) = \Omega_2(f(g),f(g))\kappa_{\textnormal{comm}}(g,g) = \Omega_2(f(g),f(g)) = f^{\star}q_{\Omega_2}(g), \]
	which implies $q_{\Omega_1} \sim q_{\Omega_2}$. Conversely, note that if $(\psi, \Omega) \in Z^3_{\textnormal{ab}}(G, \mathbbm{k}^{\times})$, also $(f^{\star}\psi, f^{\star}\Omega) \in Z^3_{\textnormal{ab}}(G, \mathbbm{k}^{\times})$ for all $f \in \textnormal{Aut}(G)$. Let $q, q' \in \QF$, such that $q \sim q'$, witnessed by $f \in \textnormal{Aut}(G)$. Moreover, let $(\psi_q, \Omega_q), (\psi_{q'}, \Omega_{q'})$ be representatives of the inverses, i.e. $q(g) = \Omega_q(g,g)$ and $q'(g) = \Omega_{q'}(g,g)$. Then
	\[
	q_{f^{\star}\Omega_{q'}}(g) = f^{\star}\Omega_{q'}(g, g) = \Omega_{q'}(f(g), f(g)) = q'(f(g)) = q(g) = \Omega_{q}(g,g) = q_{\Omega_{q}}(g),
	\]
	which implies $\lbrack (\psi_q, \Omega_q) \rbrack = \lbrack (f^{\star}\psi_{q'}, f^{\star}\Omega_{q'}) \rbrack$. This shows that $\lbrack (\psi_q, \Omega_q) \rbrack \sim \lbrack (\psi_{q'}, \Omega_{q'}) \rbrack$.
	Since the equivalence relation $\sim$ on $H^{3}_{\textnormal{ab}}(G, \mathbbm{k}^{\times})$ is coarser than the original one on $H^{3}_{\textnormal{ab}}(G, \mathbbm{k}^{\times})$, the surjectivity of $\textnormal{EM}$ is induced from the classical statement. Similarly, if $q_{\Omega} \sim 1$, then $q_{\Omega} = f^{\star}1 = 1$ for some $f \in \textnormal{Aut}(G)$. Thus, using the injectivity provided by the original map, $\lbrack (\psi_q, \Omega_q) \rbrack = \lbrack (1,1) \rbrack = \lbrack (f^{\star}1, f^{\star}1) \rbrack$. This shows $(\psi_q, \Omega_q) \sim (1,1)$, and hence, the injectivity of $\textnormal{EM}$. 
\end{proof}

\subsection{Ribbon structures of graded vector spaces}

In this subsection we will introduce the category $\Gvect$ of finite-dimensional $G$-graded vector spaces and recall that, surprisingly, quadratic forms are related to ribbon structures of $\Gvect$.

\begin{definition}[$G$-graded vector space]
\label{gradedvecdef}
	Let $G$ be a finite group. A \textit{G-graded vector space} $V$ over $\mathbbm{k}$ is a direct sum $V = \oplus_{g \in G} V_g$ of $\mathbbm{k}$-vector spaces $V_g$ indexed by $G$. We say $V$ is finite-dimensional, if $V_g$ is finite-dimensional for all $g \in G$. In the following we will write $V_h$ for the vector space of $V= \oplus_{g \in G} V_g$ that is indexed by an element $h \in G$.
	The direct sum $V \oplus W$ of $G$-graded $\mathbbm{k}$-vector spaces $V,W$ is defined as
\[
(V \oplus W)_g := V_g \oplus W_g,
\]
and the tensor product $V \otimes W$ as 
	\begin{equation}
	\label{tensorproductgraded}
		(V \otimes W)_g := \bigoplus_{hk = g} V_h \otimes W_k.
	\end{equation}
	
 We write $\mathbbm{k}_g$ for the $G$-graded vector space with $(\mathbbm{k}_g)_h = \delta_{gh} \mathbbm{k}$, that is, for the (representatives of isomorphism classes of) simple objects indexed by elements of $G$. In particular, we will abbreviate $\mathbbm{k} := \mathbbm{k}_{1_G}$ for the simple object indexed by the unit element $1_G$ of $G$. 
 
The \textit{category of G-graded} $\mathbbm{k}$-\textit{vector spaces} $\GVect$ has as objects $G$-graded $\mathbbm{k}$-vector spaces and as morphisms $\mathbbm{k}$-linear maps $f: V \rightarrow W$ such that $f(V_g) \subseteq W_g$ for all $g \in G$. The subcategory of finite-dimensional $G$-graded vector spaces is denoted by $\Gvect$.	 
\end{definition}

An important example is given by \textit{super vector spaces}, which are $\mathbb{Z}_2$-graded vector spaces.

The category $\Gvect$ can be equipped with a monoidal structure $(\otimes, \mathbbm{k}, \alpha, \lambda, \varrho)$, with tensor product \eqref{tensorproductgraded}, tensor unit $\mathbbm{k} = \mathbbm{k}_{1_G}$, and from $\vect$ inherited associator $\alpha$, and left and right unitors $\lambda, \varrho$. Moreover, $\Gvectc$ equipped with this monoidal structure is rigid with respect to the duality $(-)^{\star}$ given on $V \in \Gvectc$ at index $g \in G$ by
\begin{equation}
\label{classicalgradedduality}
	(V^{\star})_g := (V_{g^{-1}})^{\star} \in \vectc.
\end{equation}

\begin{definition}[Ribbon monoidal structure]
\label{classicalribbonmonoidalstructures}
Let $G$ be a finite abelian group and let $\RMS$ be the set of ribbon monoidal structures on $G\textnormal{-vect}_{\mathbbm{k}}$, that is, $\RMS$ has as elements triples 
	\[ (\alpha, \gamma, \theta), \] 
	where $\alpha: \otimes \circ (\otimes \times \textnormal{id}_{G\textnormal{-vect}_{\mathbbm{k}}}) \cong \otimes \circ (\textnormal{id}_{G\textnormal{-vect}_{\mathbbm{k}}} \times \otimes)$, $\gamma: \otimes \cong \otimes \circ \tau$ and $\theta: \textnormal{id}_{G\textnormal{-vect}_{\mathbbm{k}}} \cong \textnormal{id}_{G\textnormal{-vect}_{\mathbbm{k}}}$ are natural isomorphisms, such that
$	(\otimes, \mathbbm{k}, \alpha, \lambda, \varrho, \gamma, \theta)
$	defines a ribbon monoidal structure on  $G\textnormal{-vect}_{\mathbbm{k}}$ with respect to the duality \eqref{classicalgradedduality}. One can define an equivalence relation $\sim$ on $\RMS$ by
\[	(\alpha_1, \gamma_1, \theta_1) \sim (\alpha_2, \gamma_2, \theta_2), \] 
if and only if there exists a braided monoidal equivalence 
		\[
	(F, \Phi, \phi): (G\textnormal{-vect}_{\mathbbm{k}}, \otimes, \mathbbm{k}, \alpha_1, \lambda, \varrho, \gamma_1) \cong (G\textnormal{-vect}_{\mathbbm{k}}, \otimes, \mathbbm{k}, \alpha_2, \lambda, \varrho, \gamma_2),
	\]
	such that $F((\theta_1)_{\mathbbm{k}_g}) = (\theta_2)_{F(\mathbbm{k}_g)}$.
\end{definition}


\begin{definition}
Let $G$ be a finite abelian group.
	For $(\psi, \Omega) \in Z^3_{\textnormal{ab}}(G, \mathbbm{k} ^{\times})$ a normalized abelian $3$-cocycle and $q: G \rightarrow \mathbbm{C}^{\times}$ any map, let natural isomorphisms $\alpha_{\psi}: \otimes \circ (\otimes \times \textnormal{id}_{G\textnormal{-vect}_{\mathbbm{k}}}) \cong \otimes \circ (\textnormal{id}_{G\textnormal{-vect}_{\mathbbm{k}}} \times \otimes)$, $\gamma_{\Omega}: \otimes \cong \otimes \circ \tau$, and $\theta_q: \textnormal{id}_{\Gvect} \cong \textnormal{id}_{\Gvect}$ be defined as 
	\begin{alignat*}{2}
		(\alpha_{\psi})_{g_1,g_2,g_3}&: \quad &(\mathbbm{k}_{g_1} \otimes \mathbbm{k}_{g_2}) \otimes \mathbbm{k}_{g_3} &\rightarrow (\mathbbm{k}_{g_1} \otimes \mathbbm{k}_{g_2}) \otimes \mathbbm{k}_{g_3}  \\
		 & &(1_{g_1} \otimes 1_{g_2}) \otimes 1_{g_3} &\mapsto \psi(g_1,g_2,g_3) 1_{g_1} \otimes (1_{g_2} \otimes 1_{g_3}), \\
		(\gamma_{\Omega})_{g_1,g_2} &:  &\mathbbm{k}_{g_1} \otimes \mathbbm{k}_{g_2} &\rightarrow \mathbbm{k}_{g_2} \otimes \mathbbm{k}_{g_1} \\
		& & \quad 1_{g_1} \otimes 1_{g_2} &\mapsto \Omega(g_2, g_1) 1_{g_2} \otimes 1_{g_1}, \\
		(\theta_q)_g &:  &\mathbbm{k}_g &\rightarrow \mathbbm{k}_g \\
		& & \quad 1_g &\mapsto q(g) 1_g.
	\end{alignat*}
\end{definition}

The following \autoref{cohomologybraided} is a known generalisation (from braided monoidal structures to ribbon monoidal structures) of \cite{eilenberg1950cohomology} (see also \cite[Prop. 2.6.1]{etingof2015tensor}). 

To this end, let us define equivalence relations $\sim$ on 
$H^3_{\textnormal{ab}}(G, \mathbbm{k} ^{\times}) \oplus \textnormal{Hom}(G, \lbrace \pm 1 \rbrace)$ and $\QF \oplus \textnormal{Hom}(G, \lbrace \pm 1 \rbrace)$, respectively, by 
\[
(\lbrack (\psi, \Omega)\rbrack, \eta) \sim (\lbrack (\psi', \Omega')\rbrack, \eta')  :\Leftrightarrow \exists f \in \textnormal{Aut}(G) :  \lbrack (\psi, \Omega) \rbrack
 = \lbrack (f^{\star}\psi', f^{\star}\Omega') \rbrack,\ \eta = f^{\star}\eta'
\]
and
\[
(q, \eta) \sim (q', \eta') :\Leftrightarrow \exists f \in \textnormal{Aut}(G) : q\eta=f^{\star}(q'\eta').
\]
\begin{lemma}
\label{cohomologybraided}
	Let $G$ be a finite abelian group. The map
	\begin{align*}
		(H^3_{\textnormal{ab}}(G, \mathbbm{k} ^{\times}) \oplus \textnormal{Hom}(G, \lbrace \pm 1 \rbrace))/\sim\ &\rightarrow \RMS/\sim \\
 \lbrack ((\psi, \Omega), \eta) \rbrack &\mapsto \lbrack (\alpha_{\psi}, \gamma_{\Omega}, \theta_{q_{\Omega}\eta} ) \rbrack
	\end{align*}
is a bijection.
\end{lemma}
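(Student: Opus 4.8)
The plan is to build on the classical Eilenberg--MacLane classification of braided monoidal structures and then to add a separate analysis of the twist. The first observation is that, since $\Gvect$ is semisimple with simple objects $\mathbbm{k}_g$, every natural isomorphism of the identity functor is determined by its scalar components on the $\mathbbm{k}_g$, so that any twist necessarily has the form $\theta_q$ for a unique map $q \colon G \to \mathbbm{k}^\times$; likewise the associator and braiding of any (braided) monoidal structure are determined on simple objects. The classical result of \cite{eilenberg1950cohomology}, together with \autoref{emiso}, then guarantees that every braided monoidal structure on $\Gvect$ compatible with the duality \eqref{classicalgradedduality} is braided monoidally equivalent to a standard one $(\alpha_\psi, \gamma_\Omega)$ with $(\psi,\Omega) \in Z^3_{\textnormal{ab}}(G,\mathbbm{k}^\times)$, and that on equivalence classes this is the bijection $H^3_{\textnormal{ab}}/\sim\ \cong\ \QF/\sim$. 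It therefore suffices to fix a standard braided structure $(\alpha_\psi,\gamma_\Omega)$ and to determine precisely which twists $\theta_q$ extend it to an element of $\RMS$.

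Next I would translate the two defining axioms of a ribbon structure (cf. \autoref{ribbondef}) into equations for $q$ by evaluating on simple objects. On $\mathbbm{k}_{g_1} \otimes \mathbbm{k}_{g_2} = \mathbbm{k}_{g_1 g_2}$ the twist axiom $\theta_{x \otimes y} = (\theta_x \otimes \theta_y) \circ \gamma_{y,x} \circ \gamma_{x,y}$ reads
\begin{equation*}
	q(g_1 g_2) = q(g_1)\, q(g_2)\, \Omega(g_1, g_2)\, \Omega(g_2, g_1),
\end{equation*}
which by \eqref{assocbihomomega} is exactly the condition $\beta_q = \beta_{q_\Omega}$. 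The ribbon axiom $(\theta_x)^\star = \theta_{x^\star}$, using $(\mathbbm{k}_g)^\star = \mathbbm{k}_{g^{-1}}$ from \eqref{classicalgradedduality} and the fact that the dual of a scalar endomorphism is the same scalar, reads $q(g) = q(g^{-1})$.

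I would then solve this pair of conditions. Since $q_\Omega$ is a quadratic form it is itself a solution: $\beta_{q_\Omega} = \beta_{q_\Omega}$ holds trivially, and $q_\Omega(g) = q_\Omega(g^{-1})$ by \autoref{classicalquadraticformdef}. Hence, setting $\eta := q \cdot q_\Omega^{-1}$ and using multiplicativity of $q \mapsto \beta_q$ gives $\beta_\eta = \beta_q\,\beta_{q_\Omega}^{-1} = 1$, so $\eta$ is a character of $G$; moreover $\eta(g) = \eta(g^{-1})$ forces $\eta^2 = 1$, i.e.\ $\eta \in \textnormal{Hom}(G, \lbrace \pm 1 \rbrace)$. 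Conversely any such $\eta$ yields a solution $q = q_\Omega\,\eta$, since the $\eta$-factors cancel in $\beta_{q_\Omega \eta}$ and $\eta(g) = \eta(g^{-1})$. This shows that the twists extending a fixed standard braided structure are exactly the $\theta_{q_\Omega \eta}$, forming a torsor over $\textnormal{Hom}(G, \lbrace \pm 1 \rbrace)$ with canonical basepoint $\theta_{q_\Omega}$, and in particular that the map of the lemma meets every element of $\RMS$ up to equivalence.

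Finally I would verify compatibility with the equivalence relations and conclude bijectivity. Well-definedness is the key point: if $((\psi,\Omega),\eta) \sim ((\psi',\Omega'),\eta')$ via $f \in \textnormal{Aut}(G)$, then $\Omega = f^\star \Omega' \cdot \kappa_{\textnormal{comm}}$ for some $\kappa$, whence $q_\Omega(g) = \Omega(g,g) = f^\star\Omega'(g,g) = f^\star q_{\Omega'}(g)$ because the commutator factor is trivial on the diagonal, and therefore the twist functions satisfy $q_\Omega\,\eta = f^\star(q_{\Omega'}\,\eta')$; the induced functor $F_f$ sending $\mathbbm{k}_g \mapsto \mathbbm{k}_{f(g)}$ is, by the classical theory, a braided monoidal equivalence intertwining the two standard structures and carrying $\theta_{q_{\Omega'}\eta'}$ to $\theta_{q_\Omega\eta}$, so the two ribbon structures are $\sim$-equivalent in $\RMS$. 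Injectivity then follows from the injectivity of the braided part (via \autoref{emiso}) combined with the torsor description of the twist, and surjectivity from the previous paragraph. I expect the main obstacle to be the bookkeeping in this last step --- matching the abstract braided monoidal equivalences furnished by Eilenberg--MacLane theory with the action of $\textnormal{Aut}(G)$ on cochains and on the twist function, and checking that the twist-compatibility clause $F((\theta_1)_{\mathbbm{k}_g}) = (\theta_2)_{F(\mathbbm{k}_g)}$ of \autoref{classicalribbonmonoidalstructures} is respected throughout.
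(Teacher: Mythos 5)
Your proof is correct, but note that the paper never actually proves \autoref{cohomologybraided}: it is cited as a known generalisation of \cite{eilenberg1950cohomology}, and its content is only recovered later (through \autoref{ribbonqf}) as the $g_0 = 1_G$ specialisation of \autoref{theoremweakribbon}, which is where the paper does the real work. Your argument is in substance that specialisation carried out directly, but organised more modularly than the paper's proof of the general theorem: you delegate the braided part wholesale to the Eilenberg--MacLane classification and \autoref{emiso}, and then isolate the twist, showing that the admissible twists over a fixed standard braided structure $(\alpha_\psi, \gamma_\Omega)$ are exactly the $\theta_{q_\Omega \eta}$ with $\eta \in \textnormal{Hom}(G, \lbrace \pm 1 \rbrace)$, i.e.\ a torsor over the sign characters. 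This torsor step (twist axiom $\Leftrightarrow \beta_q = \beta_{q_\Omega}$, ribbon axiom $\Leftrightarrow q(g) = q(g^{-1})$, hence $\eta := q \cdot q_\Omega^{-1}$ is a character with $\eta^2 = 1$) is precisely the $g_0 = g_1 = 1_G$ case of the paper's \autoref{samebeta}. What your route buys is a short, self-contained proof of the stated lemma that needs none of the weak-quadratic-form machinery (\autoref{symmetrycorollary}, \autoref{weakhochk}); what the paper's route buys is the general statement for arbitrary $g_0$, from which this lemma falls out by specialisation. The only compressed point in your sketch is the final matching of equivalence relations: extracting $f \in \textnormal{Aut}(G)$ and the $2$-cochain $\kappa$ from an abstract braided monoidal equivalence, using $\kappa_{\textnormal{comm}}(g,g) = 1$ to get $q_\Omega = f^{\star} q_{\Omega'}$, and then dividing the twist-compatibility clause $q_\Omega \eta = f^{\star}(q_{\Omega'} \eta')$ by this to conclude $\eta = f^{\star}\eta'$ (both components matched by the \emph{same} $f$, as the equivalence relation requires). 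You correctly identify this as bookkeeping, and it is the same bookkeeping the paper performs in the converse direction of the proof of \autoref{theoremweakribbon}.
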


We will obtain the following Corollary later as a special case of \autoref{theoremweakribbon}.

\begin{corollary}
\label{ribbonqf}
	Let $G$ be a finite abelian group. There exists a bijection 
	\[
	(\QF \oplus \textnormal{Hom}(G, \lbrace \pm 1 \rbrace))/\sim\ \cong\ \RMS/\sim
	\]
	between the groupoid of quadratic forms on $G$ with values in $\mathbbm{k}^{\times}$ and the groupoid of braided monoidal structures on $G\textnormal{-vect}_{\mathbbm{k}}$.
\end{corollary}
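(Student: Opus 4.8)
The plan is to obtain the statement by transporting the bijection of \autoref{cohomologybraided} from the abelian-cohomology side to the quadratic-form side. \autoref{cohomologybraided} already furnishes a bijection $(H^3_{\textnormal{ab}}(G,\mathbbm{k}^{\times})\oplus\textnormal{Hom}(G,\{\pm 1\}))/{\sim}\to\RMS/{\sim}$ sending $[((\psi,\Omega),\eta)]$ to $[(\alpha_{\psi},\gamma_{\Omega},\theta_{q_{\Omega}\eta})]$, so it suffices to produce a bijection $(\QF\oplus\textnormal{Hom}(G,\{\pm 1\}))/{\sim}\to(H^3_{\textnormal{ab}}(G,\mathbbm{k}^{\times})\oplus\textnormal{Hom}(G,\{\pm 1\}))/{\sim}$ and compose. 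The engine is the classical Eilenberg--MacLane isomorphism $\textnormal{EM}_0\colon H^3_{\textnormal{ab}}(G,\mathbbm{k}^{\times})\xrightarrow{\ \cong\ }\QF$, $[(\psi,\Omega)]\mapsto q_{\Omega}$ (the unquotiented version recalled just before \autoref{emiso}), which I would tensor with the identity on $\textnormal{Hom}(G,\{\pm 1\})$ to get $\textnormal{EM}_0\oplus\textnormal{id}$. Reading off the composite, the final map then becomes $[(q,\eta)]\mapsto[(\alpha_{\psi_q},\gamma_{\Omega_q},\theta_{q\eta})]$, where $(\psi_q,\Omega_q)$ is a normalized abelian $3$-cocycle with $q_{\Omega_q}=q$.

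The only genuine point to verify is that the two equivalence relations correspond under $\textnormal{EM}_0\oplus\textnormal{id}$. On the cohomology side the relation $\sim$ couples $([(\psi,\Omega)],\eta)$ through a \emph{single} automorphism $f$ applied separately to each factor, i.e. it is exactly the diagonal $\textnormal{Aut}(G)$-action; and $\textnormal{EM}_0$ is $\textnormal{Aut}(G)$-equivariant precisely because $q_{f^{\star}\Omega}=f^{\star}q_{\Omega}$, as established inside the proof of \autoref{emiso}. On the quadratic-form side, however, the relation $\sim$ is phrased through the \emph{product}, namely $q\eta=f^{\star}(q'\eta')$. I would reconcile these via the decomposition of weak quadratic forms: the product $q\eta$ of a quadratic form $q\in\QF$ and a character $\eta\in\textnormal{Hom}(G,\{\pm 1\})$ lies in $\WQF$, and by \autoref{weakquadraticformrepr} its quadratic-form part and its $\{\pm 1\}$-character part are uniquely determined. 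Since $f^{\star}$ is multiplicative and preserves both $\QF$ and $\textnormal{Hom}(G,\{\pm 1\})$, the identity $q\eta=f^{\star}(q'\eta')=(f^{\star}q')(f^{\star}\eta')$ is itself such a decomposition, so uniqueness forces $q=f^{\star}q'$ and $\eta=f^{\star}\eta'$ \emph{for the same} $f$. Hence the product relation $\sim$ on $\QF\oplus\textnormal{Hom}(G,\{\pm 1\})$ is nothing but the diagonal $\textnormal{Aut}(G)$-action, matching $\sim$ on the cohomology side.

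With the relations identified, $\textnormal{EM}_0\oplus\textnormal{id}$ is an $\textnormal{Aut}(G)$-equivariant bijection and therefore descends to a bijection of quotients; composing with the inverse of \autoref{cohomologybraided} yields the asserted bijection $(\QF\oplus\textnormal{Hom}(G,\{\pm 1\}))/{\sim}\cong\RMS/{\sim}$. I expect the main obstacle to be exactly the relation-matching of the second paragraph: one must check that passing from the separate coupling on abelian $3$-cocycles to the product coupling $q\eta$ neither loses nor creates identifications, and this rests entirely on the uniqueness half of the weak-quadratic-form decomposition \autoref{weakquadraticformrepr}. As an independent sanity check, and in line with the paper's remark, the same conclusion should also fall out of \autoref{theoremweakribbon} by specialising the $\star$-autonomous structure to the standard rigid one on $\Gvect$, for which weak ribbon structures coincide with ordinary ribbon structures; I would use that specialisation to confirm the bookkeeping above.
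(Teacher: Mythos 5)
Your overall route---compose \autoref{cohomologybraided} with the Eilenberg--MacLane identification---is exactly the paper's proof, which reads in its entirety ``Combine \autoref{cohomologybraided} and \autoref{emiso}.'' The difference is that you try to supply the one detail this hides, namely that the two equivalence relations correspond under $\textnormal{EM}\oplus\textnormal{id}$, and it is precisely there that your argument breaks down.

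The uniqueness you invoke does not exist. \autoref{weakquadraticformrepr} is an existence statement only, and the decomposition of a weak quadratic form as (quadratic form)$\cdot$(character) is genuinely non-unique: every character $\chi \in \textnormal{Hom}(G,\lbrace \pm 1\rbrace)$ is itself a quadratic form (it satisfies $\chi(g)=\chi(g^{-1})$ and has $\beta_{\chi}\equiv 1$), so for any such $\chi$ one has $q\eta=(q\chi)(\chi\eta)$ with $q\chi\in\QF$ and $\chi\eta\in\textnormal{Hom}(G,\lbrace \pm 1\rbrace)$. Consequently the product relation ($\exists f:\ q\eta=f^{\star}(q'\eta')$) is strictly coarser than the diagonal relation ($\exists f:\ q=f^{\star}q'$ and $\eta=f^{\star}\eta'$) whenever $G$ has even order, and your claim that they coincide is false. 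The failure is not cosmetic: take $G=\mathbb{Z}_2$ with nontrivial $\chi$, and compare $(q,\eta)=(1,1)$ with $(q',\eta')=(\chi,\chi)$. These are product-equivalent (both products equal $1$), yet the first is sent to the trivial braiding with trivial twist and the second to the super-vector-space braiding with trivial twist, which lie in different classes of $\RMS/\sim$ since the underlying braided categories are inequivalent; so the map $[(q,\eta)]\mapsto[(\alpha_{\psi_q},\gamma_{\Omega_q},\theta_{q\eta})]$ is not even well defined on the product quotient (over $\mathbbm{k}=\mathbb{C}$ the count is: eight elements of $\QF\oplus\textnormal{Hom}(G,\lbrace\pm1\rbrace)$, four product classes, but eight diagonal classes and hence eight classes in $\RMS/\sim$ by \autoref{cohomologybraided}). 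What your construction actually proves---since $\textnormal{EM}$ is $\textnormal{Aut}(G)$-equivariant, as checked in \autoref{emiso}---is a bijection of $\RMS/\sim$ with the \emph{diagonal} quotient of $\QF\oplus\textnormal{Hom}(G,\lbrace\pm1\rbrace)$. With the product relation as literally defined before \autoref{cohomologybraided}, the corollary is in conflict with that lemma for even-order $G$; this is a defect of the statement (and of the paper's one-line proof) as much as of your argument, but it cannot be repaired by the uniqueness claim you propose, because that claim is exactly what fails.
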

\begin{proof}
	Combine \autoref{cohomologybraided} and \autoref{emiso}.
\end{proof}

\subsection{Generalised duality of graded vector spaces}
\begin{definition}
Let $G$ be a finite group and let $V,W \in \GVectc$ be $G$-graded vector spaces. A linear map $f: V \rightarrow W$ \textit{is of degree} $g \in G$, if
	\[
	f(V_h) \subseteq W_{gh} \quad \textnormal{for all } h \in G.
	\]
	The collection of morphisms between $V$ and $W$ of degree $g$ is denoted by $\textnormal{Hom}_g(V,W)$.
\end{definition}

\begin{lemma}
\label{internalhomgradedlemma}
	Let $G$ be a finite group and let $(\otimes, \mathbbm{k}, \alpha', \lambda, \rho)$ be a monoidal structure of $\GVect$ as in \eqref{tensorproductgraded}, but with possibly different associator $\alpha'$. Then
	\begin{equation}
	\label{internalhomgraded}
		\underline{\textnormal{Hom}}(V,W)_g := \textnormal{Hom}_{g}(V,W)
	\end{equation}
	defines an internal hom in $(G\textnormal{-vect}_{\mathbbm{k}}, \otimes, \mathbbm{k}, \alpha', \lambda, \varrho)$.
\end{lemma}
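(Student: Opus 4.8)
The plan is to verify the defining adjunction of an internal hom directly, i.e.\ to produce natural isomorphisms
\[
\textnormal{Hom}_{\GVect}(U \otimes V, W) \cong \textnormal{Hom}_{\GVect}(U, \underline{\textnormal{Hom}}(V,W))
\]
as in \eqref{ihomldefeq}, and then to observe that neither the construction nor the bijection refers to the associator, so that the choice of $\alpha'$ is immaterial. First I would check that $\underline{\textnormal{Hom}}(V,W)$ is a well-defined object of $\Gvect$ and that $\underline{\textnormal{Hom}}(-,-)$ is a bifunctor $\GVect^{\textnormal{opp}} \times \GVect \to \GVect$. A degree-$g$ map $f: V \to W$ is the same datum as a family $(f_h: V_h \to W_{gh})_{h \in G}$ of $\mathbbm{k}$-linear maps, so $\textnormal{Hom}_g(V,W) \cong \bigoplus_{h \in G} \textnormal{Hom}_{\mathbbm{k}}(V_h, W_{gh})$, which is finite-dimensional since $G$ is finite and $V,W$ are finite-dimensional; a reindexing ($g' = gh$) moreover identifies $\bigoplus_g \textnormal{Hom}_g(V,W)$ with the full space $\textnormal{Hom}_{\mathbbm{k}}(V,W)$ equipped with the grading by degree. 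Functoriality is given by pre- and post-composition, which visibly preserves degrees.

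The key step is the hom-set bijection. Since a morphism in $\GVect$ is by definition degree-preserving (degree $1_G$), and $(U \otimes V)_g = \bigoplus_{ab = g} U_a \otimes V_b$, a morphism $\phi: U \otimes V \to W$ amounts to a family of $\mathbbm{k}$-linear maps $\phi_{a,b}: U_a \otimes_{\mathbbm{k}} V_b \to W_{ab}$, one for each pair $a,b \in G$. Likewise a morphism $\Psi: U \to \underline{\textnormal{Hom}}(V,W)$, being of degree $1_G$, is a family $\Psi_a: U_a \to \textnormal{Hom}_a(V,W) = \bigoplus_b \textnormal{Hom}_{\mathbbm{k}}(V_b, W_{ab})$. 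I would then apply the ordinary tensor-hom adjunction (currying) of $\vect$ componentwise,
\[
\textnormal{Hom}_{\mathbbm{k}}(U_a \otimes V_b, W_{ab}) \cong \textnormal{Hom}_{\mathbbm{k}}(U_a, \textnormal{Hom}_{\mathbbm{k}}(V_b, W_{ab})),
\]
to match the two families: both sides encode exactly the data $\{\, U_a \otimes V_b \to W_{ab} \,\}_{a,b}$. Naturality in $U$ and $W$, together with compatibility with the functoriality in $V$, then follows from the naturality of ordinary currying.

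The one place where the hypothesis on $\alpha'$ enters — and the point I would emphasise — is that none of the above depends on the associator. The tensor product bifunctor $\otimes$ and its action on objects and morphisms, the degree decomposition of $U \otimes V$, and the morphism spaces of $\GVect$ are all unchanged when $\alpha$ is replaced by any other associator $\alpha'$, since the associator is a natural isomorphism that does not alter the underlying bifunctor. As an internal hom for the left module category $\GVect$ is precisely a right adjoint to $-\otimes V$, and a right adjoint depends only on the bifunctor, the same $\underline{\textnormal{Hom}}(V,W)$ serves for every monoidal structure of the stated form.

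I do not anticipate a serious obstacle: the statement reduces to componentwise currying in $\vect$. The only care needed is the bookkeeping of the two index conventions (the constraint $ab = g$ in the tensor product versus the degree label $a$ of the component $\Psi_a$) and confirming that the resulting bijection is $\mathbbm{k}$-linear and natural, both of which are immediate from the finite-dimensional vector-space case.
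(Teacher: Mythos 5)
Your proposal is correct and follows essentially the same route as the paper's proof: the paper also establishes the bijection by explicit currying, sending $f: U \otimes V \to W$ to $\tilde f(x)(y) := f(x \otimes y)$ and back, which is precisely your componentwise tensor-hom adjunction in $\vect$. Your additional remarks (well-definedness and functoriality of $\underline{\textnormal{Hom}}(-,-)$, and the explicit observation that no step involves the associator $\alpha'$) only make explicit what the paper's proof leaves tacit.
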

\begin{proof}
	We have to show that for any $G$-graded vector spaces $U, V, W$ there exists a natural bijection
	\[
	\textnormal{Hom}(U \otimes V, W) \cong \textnormal{Hom}(U, \underline{\textnormal{Hom}}(V,W)).
	\]
	Let $f: U \otimes V \rightarrow W$ be given. Define $\tilde{f}: U \rightarrow \underline{\textnormal{Hom}}(V,W)$ as follows. For $g,h \in G$ and $x \in U_g, y \in V_h$ let 
	\[
	\tilde{f}(x)(y) := f(x \otimes y) \in W_{gh}.
	\]
	Conversely, let $f: U \rightarrow \underline{\textnormal{Hom}}(V,W)$ be given. Define $\tilde{f}: U \otimes V \rightarrow W$ as follows. For $g,h,k$ such that $hk=g$ and $x \in U_h, y \in V_k$ let 
	\[
	\tilde{f}(x \otimes y) := f(x)(y) \in W_{hk} = W_{g}.
	\]
	It follows immediately from the definitions that both constructions are inverse to each other.
\end{proof}

\begin{definition}[Dual space $V^{g_0}$]
\label{generaldualitygraded}
	Let $G$ be a finite group. For a $G$-graded vector space $V \in \GVectc$ and $g_0 \in G$ define $V^{g_0}$ by
	\[
	V^{g_0} := \underline{\textnormal{Hom}}(V, \mathbbm{k}_{g_0}).
	\]
\end{definition}

\begin{lemma}
\label{gradeddualeq}
	Let $G$ be a finite group. For a $G$-graded vector space $V \in \GVectc $ and $g,g_0,g_1 \in G$ it holds
	\begin{equation}
	\label{dualgeneral}
		(V^{g_0})_g = (V_{g^{-1}g_0})^{\star}.
	\end{equation}
	Thus, if $V$ is finite-dimensional, $V \in \Gvectc$, there exists an isomorphism of vector spaces
	\begin{equation}
	\label{doubledualgeneral}
		((V^{g_0})^{g_1})_g \cong V_{g_1^{-1} g g_0}.
	\end{equation}
\end{lemma}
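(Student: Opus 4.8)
The plan is to prove the two displayed formulas by direct computation from the definitions, using the internal hom description of the dual from the preceding lemma and definition.

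First I would unwind the definition. By Definition~\ref{generaldualitygraded}, $V^{g_0} = \underline{\textnormal{Hom}}(V, \mathbbm{k}_{g_0})$, and by \autoref{internalhomgradedlemma} the degree-$g$ component of this internal hom is $\underline{\textnormal{Hom}}(V,\mathbbm{k}_{g_0})_g = \textnormal{Hom}_g(V, \mathbbm{k}_{g_0})$, the space of degree-$g$ linear maps $V \to \mathbbm{k}_{g_0}$. So the task reduces to identifying $\textnormal{Hom}_g(V, \mathbbm{k}_{g_0})$ with $(V_{g^{-1}g_0})^{\star}$. A linear map $f\colon V \to \mathbbm{k}_{g_0}$ is of degree $g$ exactly when $f(V_h) \subseteq (\mathbbm{k}_{g_0})_{gh}$ for all $h \in G$. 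Since $(\mathbbm{k}_{g_0})_{gh} = \delta_{g_0, gh}\,\mathbbm{k}$, the image $f(V_h)$ must vanish unless $gh = g_0$, i.e.\ unless $h = g^{-1}g_0$. Hence such an $f$ is determined by its restriction to the single graded piece $V_{g^{-1}g_0}$, on which it is an arbitrary $\mathbbm{k}$-linear functional into $\mathbbm{k}$. This gives a natural isomorphism $\textnormal{Hom}_g(V,\mathbbm{k}_{g_0}) \cong (V_{g^{-1}g_0})^{\star}$, which is exactly \eqref{dualgeneral}.

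For the second formula \eqref{doubledualgeneral} I would simply iterate \eqref{dualgeneral}. Applying it once with dualizing index $g_1$ to the graded vector space $V^{g_0}$ yields
\[
((V^{g_0})^{g_1})_g = ((V^{g_0})_{g^{-1}g_1})^{\star}.
\]
Applying \eqref{dualgeneral} again to compute $(V^{g_0})_{g^{-1}g_1}$, with $g$ replaced by $g^{-1}g_1$, gives
\[
(V^{g_0})_{g^{-1}g_1} = \big(V_{(g^{-1}g_1)^{-1}g_0}\big)^{\star} = \big(V_{g_1^{-1}g g_0}\big)^{\star}.
\]
Substituting back, $((V^{g_0})^{g_1})_g = \big((V_{g_1^{-1}g g_0})^{\star}\big)^{\star} = V_{g_1^{-1}g g_0}^{\star\star}$, and since $V$ is finite-dimensional each graded piece $V_{g_1^{-1}gg_0}$ is a finite-dimensional vector space, so the canonical evaluation map furnishes the isomorphism $V_{g_1^{-1}gg_0}^{\star\star} \cong V_{g_1^{-1}gg_0}$, which is \eqref{doubledualgeneral}.

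There is no serious obstacle here; the argument is essentially bookkeeping with group elements in the grading. The only point requiring a little care is the naturality of the isomorphism in \eqref{dualgeneral}, which I would note follows from the naturality of the internal-hom adjunction established in \autoref{internalhomgradedlemma} together with the naturality of the finite-dimensional double-dual isomorphism; and the finite-dimensionality hypothesis is genuinely needed only for \eqref{doubledualgeneral}, since that is where the $(-)^{\star\star}\cong \textnormal{id}$ identification is invoked.
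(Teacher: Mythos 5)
Your proposal is correct and matches the paper's own proof essentially step for step: both unwind $(V^{g_0})_g = \textnormal{Hom}_g(V,\mathbbm{k}_{g_0})$, observe that a degree-$g$ map into $\mathbbm{k}_{g_0}$ can only be nonzero on the component $V_{g^{-1}g_0}$, and then obtain \eqref{doubledualgeneral} by iterating \eqref{dualgeneral} and invoking the finite-dimensional double-dual isomorphism $(-)^{\star\star}\cong \textnormal{id}$. Your added remark that finite-dimensionality is needed only for the second formula is a nice clarification, but the argument itself is the same.
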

\begin{proof}
Writing out the definitions we find
	\begin{align*}
		(V^{g_0})_g 
		&= \underline{\textnormal{Hom}}(V, \mathbbm{k}_{g_0})_g
		= \textnormal{Hom}_{g}(V,\mathbbm{k}_{g_0})
		= \lbrace f: V \rightarrow \mathbbm{k}_{g_0} \mid f(V_h) \subseteq (\mathbbm{k}_{g_0})_{gh} \textnormal{ for all } h \in G \rbrace \\
		&= \lbrace f: V \rightarrow \mathbbm{k}_{g_0} \mid f(V_{g^{-1}g_0}) \subseteq \mathbbm{k} \wedge f(V_h) = 0 \textnormal{ for all } h \in G \setminus \lbrace g^{-1}g_0 \rbrace \rbrace \\
		&= \textnormal{Hom}_{\mathbbm{k}}(V_{g^{-1}g_0}, \mathbbm{k})
		= (V_{g^{-1}g_0})^{\star}.
	\end{align*}
	Thus it follows
	\[
	((V^{g_0})^{g_1})_g = ((V^{g_0})_{g^{-1}g_1})^{\star} = (V_{(g^{-1}g_1)^{-1}g_0})^{\star \star} \cong V_{g_1^{-1} g g_0}.
	\]
\end{proof}

The duality $(-)^{g_0}$ for $g_0 \in G$ can be extended to a contravariant functor in the following way. If $f: V \rightarrow W$ is a morphism of $G$-graded vector spaces, define a morphism $f^{g_0}: W^{g_0} \rightarrow V^{g_0}$ as
	\[
	f^{g_0}(w')(v) := w'(f(v)),
	\]
	where $w' \in (W^{g_0})_g = (W_{g^{-1}g_0})^{\star}$ and $v \in V_{g^{-1}g_0}$.

Assuming that $G$ is not only finite, but also abelian, and $V$ finite-dimensional, yields the following result. 

\begin{corollary}
\label{doubledualgeneral}
Let $G$ be a finite abelian group, $V \in \Gvectc$ a finite-dimensional $G$-graded vector space and $g_0 \in G$, then 
	\[ (V^{g_0})^{g_0} \cong V. \]
\end{corollary}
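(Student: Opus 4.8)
The plan is to reduce the statement to the explicit component-wise formula already established in \autoref{gradeddualeq} and then to exploit the commutativity of $G$. Applying \eqref{dualgeneral} twice—once to $V$ with dualizing index $g_0$, and once to the graded space $V^{g_0}$ again with index $g_0$—yields, for every $g \in G$,
\[
((V^{g_0})^{g_0})_g = \bigl((V^{g_0})_{g^{-1}g_0}\bigr)^{\star} = \bigl((V_{(g^{-1}g_0)^{-1}g_0})^{\star}\bigr)^{\star} = (V_{g_0^{-1} g g_0})^{\star\star}.
\]
Since $G$ is abelian, $g_0^{-1} g g_0 = g$, so this collapses to $((V^{g_0})^{g_0})_g = (V_g)^{\star\star}$ for each $g \in G$.

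First I would make each of these component identifications canonical rather than an abstract dimension count. On every homogeneous component I would use the standard finite-dimensional double-dual isomorphism $V_g \to (V_g)^{\star\star}$, $v \mapsto \mathrm{ev}_v$ with $\mathrm{ev}_v(v') := v'(v)$; this map is basis-independent and hence canonical, and the hypothesis $V \in \Gvectc$ (finite-dimensionality of each $V_g$) is exactly what guarantees it is an isomorphism. Composing with the equalities above gives, for each $g$, a canonical linear isomorphism $V_g \xrightarrow{\sim} ((V^{g_0})^{g_0})_g$.

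It then remains to assemble these into a morphism of $G$-graded vector spaces. Because the morphisms in $\GVectc$ are precisely the degree-preserving ($f(V_g) \subseteq W_g$) linear maps, a graded isomorphism $V \cong (V^{g_0})^{g_0}$ is the same datum as a family of linear isomorphisms $V_g \xrightarrow{\sim} ((V^{g_0})^{g_0})_g$ indexed by $g \in G$; the family constructed in the previous step therefore defines the desired isomorphism directly, with no compatibility condition to check beyond the grading it already respects by construction.

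The argument is essentially routine once \autoref{gradeddualeq} is in hand, so I do not expect a serious obstacle. The only points deserving care are recognising that the hypothesis ``$G$ abelian'' is used precisely to collapse the conjugate $g_0^{-1} g g_0$ back to $g$ (in a non-abelian group one would only obtain $(V^{g_0})^{g_0} \cong V^{\mathrm{conj}}$, a grading-shifted twist of $V$), and that finite-dimensionality is what upgrades the component-wise abstract isomorphism to the genuine, canonical evaluation isomorphism $v \mapsto \mathrm{ev}_v$.
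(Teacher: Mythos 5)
Your proposal is correct and follows essentially the same route as the paper: the statement is exactly the specialization $g_1 = g_0$ of \autoref{gradeddualeq} (whose proof already consists of applying \eqref{dualgeneral} twice and invoking finite-dimensionality for the double dual), with commutativity of $G$ collapsing $g_0^{-1}gg_0$ to $g$. Your additional remarks—that the componentwise evaluation maps $v \mapsto \mathrm{ev}_v$ make the isomorphism canonical and assemble into a graded isomorphism without further compatibility checks—are correct elaborations of what the paper leaves implicit.
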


\begin{definition}[Tensor product $\otimes_{g_0}$]
\label{generaltensordefdef}
	Let $G$ be a finite group and $g_0 \in G$. For $G$-graded vector spaces $V,W \in \Gvectc$
	\begin{equation}
	\label{generaltensordef}
	V \otimes_{g_0} W := (V^{g_0} \otimes W^{g_0})^{g_0}.
	\end{equation}
\end{definition}

\begin{lemma}
\label{generaltensorlemma}
	Let $G$ be a finite abelian group, $g,g_0 \in G$ and $V,W \in \Gvectc$ finite-dimensional $G$-graded vector spaces. Then it holds
	\begin{equation}
	\label{generaltensor}
			(V \otimes_{g_0} W)_g \cong (V \otimes W)_{g g_0}.
	\end{equation}
\end{lemma}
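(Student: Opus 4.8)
The plan is to compute the degree-$g$ component of $V \otimes_{g_0} W$ directly by unfolding the definition \eqref{generaltensordef}, together with the index-wise description of the duality in \eqref{dualgeneral} and of the tensor product in \eqref{tensorproductgraded}, and then to match the result with $(V \otimes W)_{gg_0}$ via a bijective reindexing of the summation that uses commutativity of $G$. Since everything in sight is finite-dimensional, dualization behaves formally like a linear algebra operation, so the whole argument reduces to bookkeeping of group indices.

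First I would apply \autoref{gradeddualeq} to $X := V^{g_0} \otimes W^{g_0}$, which gives $(X^{g_0})_g = (X_{g^{-1}g_0})^{\star}$, so that
\[
(V \otimes_{g_0} W)_g = \bigl((V^{g_0} \otimes W^{g_0})_{g^{-1}g_0}\bigr)^{\star}.
\]
Then I would expand the inner degree-$(g^{-1}g_0)$ component with \eqref{tensorproductgraded}, and again with \eqref{dualgeneral}, to obtain
\[
(V^{g_0} \otimes W^{g_0})_{g^{-1}g_0} = \bigoplus_{hk = g^{-1}g_0} (V_{h^{-1}g_0})^{\star} \otimes (W_{k^{-1}g_0})^{\star}.
\]

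Next, using that $V$ and $W$ are finite-dimensional, I would dualize term by term: the natural isomorphisms $(A \otimes B)^{\star} \cong A^{\star} \otimes B^{\star}$ and $(A^{\star})^{\star} \cong A$, together with the compatibility of $(-)^{\star}$ with finite direct sums, turn the previous display into
\[
(V \otimes_{g_0} W)_g \cong \bigoplus_{hk = g^{-1}g_0} V_{h^{-1}g_0} \otimes W_{k^{-1}g_0}.
\]
Finally, I would reindex by $p := h^{-1}g_0$ and $q := k^{-1}g_0$. Because $G$ is abelian, $pq = (hk)^{-1} g_0^2 = (g^{-1}g_0)^{-1} g_0^2 = gg_0$, and the assignment $(h,k) \mapsto (p,q)$ is a bijection between $\{(h,k) : hk = g^{-1}g_0\}$ and $\{(p,q) : pq = gg_0\}$, with inverse $h = g_0 p^{-1}$, $k = g_0 q^{-1}$. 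Hence the direct sum above equals $\bigoplus_{pq = gg_0} V_p \otimes W_q = (V \otimes W)_{gg_0}$, which is exactly \eqref{generaltensor}.

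The computation is essentially routine; the only place where care is genuinely required — and where the abelian hypothesis is actually used — is the reindexing step, so I expect verifying that the substitution $(h,k) \mapsto (h^{-1}g_0, k^{-1}g_0)$ is a bijection and that it lands precisely in degree $gg_0$ to be the main, if modest, obstacle.
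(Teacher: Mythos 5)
Your proof is correct and follows essentially the same route as the paper's: unfold the definition of $\otimes_{g_0}$, apply \eqref{dualgeneral} twice, pass through the double dual using finite-dimensionality, and reindex the direct sum. The only difference is cosmetic — you make the reindexing bijection $(h,k) \mapsto (h^{-1}g_0, k^{-1}g_0)$ and its use of commutativity explicit, where the paper leaves that step implicit.
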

\begin{proof}
The claim follows from
	\begin{align*}
		&(V \otimes_{g_0} W)_g 
		= ((V^{g_0} \otimes W^{g_0})^{g_0})_g 
		\overset{\eqref{dualgeneral}}{=} ((V^{g_0} \otimes W^{g_0})_{g^{-1}g_0})^{\star}
		= (\bigoplus_{hk = g^{-1}g_0} (V^{g_0})_h \otimes (W^{g_0})_k)^{\star} \\
		&\overset{\eqref{dualgeneral}}{=}  (\bigoplus_{hk = g^{-1}g_0} (V_{h^{-1}g_0})^{\star} \otimes (W_{k^{-1}g_0})^{\star})^{\star}
		\cong \bigoplus_{hk = g^{-1}g_0} V_{h^{-1}g_0} \otimes W_{k^{-1}g_0}
		\cong \bigoplus_{h'k'=gg_0} V_{h'} \otimes W_{k'} \\
		&= (V \otimes W)_{g g_0}.
	\end{align*}
\end{proof}

The following is a standard result, which we will not prove.

\begin{lemma}
\label{generalinduced}
Let $(\mathcal{C}, \otimes_{\mathcal{C}}, \unit, \alpha, \lambda, \varrho)$ be a monoidal category and $\mathcal{D}$ an arbitrary category. Assume there exists an equivalence $F: \mathcal{C} \rightarrow \mathcal{D}$ of the underlying categories with quasi-inverse $F^{-1}$. Then $\mathcal{D}$ can be equipped with a monoidal structure, such that the tensor product is given by 
\[
 x \otimes^F_{\mathcal{D}} y := F(F^{-1}(x)  \otimes_{\mathcal{C}} F^{-1}(y)).
 \]
 Furthermore, the functor $F$ can be equipped with the structure of a strong monoidal functor $(\mathcal{C}, \otimes_{\mathcal{C}}, \unit) \rightarrow (\mathcal{D}, \otimes^F_{\mathcal{D}}, F(\unit))$.
\end{lemma}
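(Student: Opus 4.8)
The plan is to transport the monoidal structure of $\mathcal{C}$ along the equivalence $F$, exploiting that a fully faithful functor reflects commutativity of diagrams. First I would refine the given equivalence to an adjoint equivalence, fixing natural isomorphisms $\eta\colon \textnormal{id}_{\mathcal{C}} \Rightarrow F^{-1}\circ F$ and $\varepsilon\colon F\circ F^{-1}\Rightarrow \textnormal{id}_{\mathcal{D}}$ satisfying the triangle identities. On morphisms $f\colon x\to x'$, $g\colon y\to y'$ in $\mathcal{D}$ the tensor product is defined by $f\otimes^F_{\mathcal{D}} g := F(F^{-1}(f)\otimes_{\mathcal{C}} F^{-1}(g))$, which is functorial because $F$, $F^{-1}$ and $\otimes_{\mathcal{C}}$ are; the unit object is $F(\unit)$.

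Second, I would construct the associator and unitors by conjugating the structure isomorphisms of $\mathcal{C}$ with $\eta$ and $\varepsilon$. Unfolding the definition,
\[ (x\otimes^F_{\mathcal{D}} y)\otimes^F_{\mathcal{D}} z = F\big(F^{-1}F(F^{-1}x\otimes_{\mathcal{C}} F^{-1}y)\otimes_{\mathcal{C}} F^{-1}z\big), \]
so the associator $\alpha^F_{x,y,z}$ is the composite of the three isomorphisms obtained by applying $F$ to $\eta^{-1}\otimes_{\mathcal{C}}\textnormal{id}$, to $\alpha_{F^{-1}x,F^{-1}y,F^{-1}z}$, and to $\textnormal{id}\otimes_{\mathcal{C}}\eta$; being an $F$-image of a composite of natural isomorphisms, it is itself a natural isomorphism $(x\otimes^F_{\mathcal{D}} y)\otimes^F_{\mathcal{D}} z\cong x\otimes^F_{\mathcal{D}}(y\otimes^F_{\mathcal{D}} z)$. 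The unitors $\lambda^F_x$ and $\varrho^F_x$ are built analogously, combining $F(\eta^{-1}_{\unit}\otimes_{\mathcal{C}}\textnormal{id})$, $F(\lambda_{F^{-1}x})$ and the counit $\varepsilon_x$.

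Third comes the verification of the pentagon and triangle axioms, which I expect to be the main obstacle---though only in bookkeeping, not in substance. Since $F$ is fully faithful it reflects commutative diagrams, so it suffices to check the corresponding diagrams after applying $F^{-1}$; there the conjugating copies of $\eta$ and $\varepsilon$ cancel by naturality and the triangle identities, reducing each axiom to the pentagon, respectively triangle, coherence already holding in $(\mathcal{C},\otimes_{\mathcal{C}},\unit,\alpha,\lambda,\varrho)$. The delicate point is to insert and remove the unit and counit isomorphisms in a compatible order so that the relevant naturality squares line up; this is routine but must be done with care.

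Finally, to upgrade $F$ to a strong monoidal functor I would take $\phi := \textnormal{id}_{F(\unit)}\colon \unit_{\mathcal{D}} = F(\unit)\to F(\unit)$ and, since
\[ F(a)\otimes^F_{\mathcal{D}} F(b) = F\big(F^{-1}F(a)\otimes_{\mathcal{C}} F^{-1}F(b)\big), \]
set $\Phi_{a,b} := F(\eta^{-1}_a\otimes_{\mathcal{C}}\eta^{-1}_b)\colon F(a)\otimes^F_{\mathcal{D}} F(b)\to F(a\otimes_{\mathcal{C}} b)$, which is invertible. It remains to check the three monoidal-functor coherence constraints relating $\Phi$ and $\phi$ with the associators and unitors on both sides; by the same fully faithful reflection these reduce to identities following from naturality of $\eta$ and the coherence of $\mathcal{C}$, establishing that $(F,\Phi,\phi)$ is strong monoidal.
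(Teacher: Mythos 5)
Your proposal is correct, but there is nothing in the paper to compare it against: the paper explicitly introduces \autoref{generalinduced} with the words ``The following is a standard result, which we will not prove,'' so your argument supplies a proof that the thesis omits entirely. What you give is the standard transport-of-structure argument, and its essential points are all in place: you correctly identify that the equivalence must first be refined to an \emph{adjoint} equivalence, which is not a cosmetic step --- the counit $\varepsilon_x$ is genuinely needed to define the unitors (to pass from $F(F^{-1}x)$ to $x$), and the triangle identities are genuinely needed when verifying the unitor coherence for $(F,\Phi,\phi)$, where one must reduce $\varepsilon_{F(a)}$ to $F(\eta_a^{-1})$. Your associator, built as the $F$-image of $(\textnormal{id}\otimes_{\mathcal{C}}\eta)\circ\alpha\circ(\eta^{-1}\otimes_{\mathcal{C}}\textnormal{id})$, is the right construction, and the reduction of the pentagon and triangle axioms to coherence in $\mathcal{C}$ via faithfulness of $F^{-1}$ and naturality cancellation is sound (for the pentagon only naturality of $\eta$ is needed, since no $\varepsilon$ appears there). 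The one place where your sketch leaves real work implicit is exactly the one you flag yourself: carrying out the $\eta$/$\varepsilon$ cancellations in a consistent order; this is routine, and your outline would survive being written out in full.
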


\begin{corollary}
	Let $G$ be a finite group. For any $g_0 \in G$ there exists a monoidal structure $(\otimes_{g_0}, \mathbbm{k}_{g_0}, \alpha_{g_0}, \lambda_{g_0}, \varrho_{g_0})$ on $\Gvectc$, such that the tensor product is given by \eqref{generaltensordef}. The category $(\Gvect, \otimes, \mathbbm{k}, \alpha, \lambda, \varrho)$ is $\star$-autonomous with internal hom \eqref{internalhomgraded} and dualizing object $\mathbbm{k}_{g_0}$. The duality functor 
	\[
	(-)^{g_0} = \underline{\textnormal{Hom}}(-,\mathbbm{k}_{g_0}):\quad (\Gvectc ,\otimes, \mathbbm{k}) \rightarrow (\Gvectc,\otimes_{g_0}, \mathbbm{k}_{g_0})^{\textnormal{opp}(0,1)}.
	\] 
	provides a strong monoidal equivalence.
	\end{corollary}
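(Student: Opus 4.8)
The plan is to verify the $\star$-autonomous structure through criterion $(3)$ of \autoref{starautonomequivalent} (the Grothendieck--Verdier description), which is the most economical route here since it only requires the left internal hom already produced in \autoref{internalhomgradedlemma}. Concretely, I would take the dualizing object to be $k:=\mathbbm{k}_{g_0}$ and the duality $D:=\big((-)^{g_0}\big)^{\textnormal{opp}(1)}\colon\Gvectc\to\Gvectc^{\textnormal{opp}(1)}$, so that $D^{\textnormal{opp}(1)}(\overline{y})=y^{g_0}=\underline{\textnormal{Hom}}(y,\mathbbm{k}_{g_0})$. The required natural isomorphism $\textnormal{Hom}(x\otimes y,\mathbbm{k}_{g_0})\cong\textnormal{Hom}(x,y^{g_0})$ is then literally the adjunction of \autoref{internalhomgradedlemma} specialised to $W=\mathbbm{k}_{g_0}$, so the only substantive point left for $\star$-autonomy is that $D$ is an equivalence.

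To see that $(-)^{g_0}$ is an equivalence I would exhibit an explicit quasi-inverse. Writing ${}^{g_0}(-):=\ihomr(-,\mathbbm{k}_{g_0})$ for the right-internal-hom analogue of the dual --- whose graded pieces are $({}^{g_0}X)_g=(X_{g_0g^{-1}})^{\star}$ by the same computation as in \autoref{gradeddualeq} --- I would check on graded components that ${}^{g_0}(V^{g_0})\cong V\cong({}^{g_0}V)^{g_0}$ naturally, both reductions collapsing to the canonical double-dual isomorphism $(V_g^{\star})^{\star}\cong V_g$ of finite-dimensional vector spaces once the group elements cancel. It is worth emphasising that this is exactly the mixed (left-then-right) double dual demanded by \eqref{induceddualizingobject}: the iterated functor $(V^{g_0})^{g_0}$ is instead naturally isomorphic to the conjugation $V_{\bullet}\mapsto V_{g_0^{-1}(\bullet)g_0}$, which is nontrivial when $g_0$ is not central, so it is essential that the dualizing-object condition pairs $\ihoml(-,k)$ with $\ihomr(-,k)$ rather than iterating one of them. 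With the quasi-inverse in hand, $D$ is an equivalence and criterion $(3)$ is satisfied, establishing that $(\Gvectc,\otimes,\mathbbm{k})$ is $\star$-autonomous with internal hom \eqref{internalhomgraded} and dualizing object $\mathbbm{k}_{g_0}$.

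For the monoidal structure $\otimes_{g_0}$ and the strong monoidal equivalence I would invoke \autoref{generalinduced}, applied to the equivalence of underlying categories $(-)^{g_0}\colon(\Gvectc,\otimes,\mathbbm{k})\to\Gvectc^{\textnormal{opp}(1)}$ just produced. This transports the monoidal structure of $(\Gvectc,\otimes,\mathbbm{k})$ to $\Gvectc^{\textnormal{opp}(1)}$ and equips $(-)^{g_0}$ with the structure of a strong monoidal functor landing in it; since $(-)^{g_0}$ sends $\mathbbm{k}=\mathbbm{k}_{1_G}$ to $\mathbbm{k}_{g_0}$ (because $(\mathbbm{k}^{g_0})_g=(\mathbbm{k}_{g^{-1}g_0})^{\star}=\delta_{g,g_0}\mathbbm{k}$), the transported unit is $\mathbbm{k}_{g_0}$. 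Re-reading the transported structure through the identification of bare categories underlying $(\Gvectc,\otimes_{g_0},\mathbbm{k}_{g_0})^{\textnormal{opp}(0,1)}$ --- where $\textnormal{opp}(0)$ reverses the order of the tensor factors and $\textnormal{opp}(1)$ the morphisms --- then yields the asserted strong monoidal equivalence $(-)^{g_0}\colon(\Gvectc,\otimes,\mathbbm{k})\to(\Gvectc,\otimes_{g_0},\mathbbm{k}_{g_0})^{\textnormal{opp}(0,1)}$, with associator and unitors $\alpha_{g_0},\lambda_{g_0},\varrho_{g_0}$ supplied automatically by \autoref{generalinduced}.

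The main obstacle is the final identification: showing that the tensor product produced by transport agrees with the explicit formula \eqref{generaltensordef}, $V\otimes_{g_0}W=(V^{g_0}\otimes W^{g_0})^{g_0}$. I would settle this by computing the graded components of both expressions using \autoref{gradeddualeq} together with the definition \eqref{tensorproductgraded}, reducing everything to $(V_h\otimes W_k)^{\star\star}\cong V_h\otimes W_k$. The delicate bookkeeping is that the transported product naturally presents its two factors dualised by the two different internal homs $\ihoml$ and $\ihomr$ and ordered according to the $\textnormal{opp}(0)$-reversal, so matching it against the uniform expression \eqref{generaltensordef} requires carefully commuting the group indices past one another and invoking the transported coherence isomorphisms; this is precisely the step where the interaction of $g_0$ with the (non-)commutativity of $G$ must be tracked, and it is what makes the general statement genuinely more subtle than the abelian computation of \autoref{generaltensorlemma}. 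Once the graded components are seen to coincide naturally in $V$ and $W$, the coherence axioms for $\otimes_{g_0}$ follow for free from \autoref{generalinduced}, completing the proof.
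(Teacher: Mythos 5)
Your argument is complete and correct for central $g_0$ --- in particular for abelian $G$, which is the setting the surrounding lemmas actually live in --- but the final matching step, which you defer as ``delicate bookkeeping'' for non-central $g_0$, is not a deferred computation: it genuinely fails, and so does the statement itself in that generality. From \eqref{dualgeneral} and \eqref{tensorproductgraded} one computes the graded components of \eqref{generaltensordef} as
\[
\bigl( (V^{g_0}\otimes W^{g_0})^{g_0} \bigr)_g \;\cong\; \bigoplus_{\substack{a,b\in G \\ g_0 b g_0^{-1} a g_0^{-1} = g}} V_a \otimes W_b,
\]
so that setting $V=\mathbbm{k}_{g_0}$ (i.e.\ $a=g_0$) yields $(\mathbbm{k}_{g_0}\otimes_{g_0}W)_g\cong W_{g_0^{-1}gg_0}$, and symmetrically $(W\otimes_{g_0}\mathbbm{k}_{g_0})_g\cong W_{g_0^{-1}gg_0}$: tensoring with $\mathbbm{k}_{g_0}$ is the conjugation functor, which sends the simple $\mathbbm{k}_h$ to $\mathbbm{k}_{g_0hg_0^{-1}}$ and hence is not naturally isomorphic to the identity unless $g_0$ is central. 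So for non-central $g_0$ the formula \eqref{generaltensordef} is not even unital with unit $\mathbbm{k}_{g_0}$. Correspondingly, your transported tensor has components $\bigl(({}^{g_0}x\otimes{}^{g_0}y)^{g_0}\bigr)_g\cong\bigoplus x_a\otimes y_b$ over the condition $bg_0^{-1}a=g$ (and this \emph{is} unital for every finite $G$), while the $\textnormal{opp}(0)$-reversal of \eqref{generaltensordef} imposes $g_0ag_0^{-1}bg_0^{-1}=g$; the two conditions coincide precisely when $g_0$ is central, where both read $ab=gg_0$. No commuting of indices can reconcile them otherwise. The defect is inherited from the paper: its statement says ``finite group,'' but its own proof applies \autoref{generalinduced} with $F=F^{-1}=(-)^{g_0}$, whose self-quasi-inverseness is exactly \autoref{doubledualgeneral}, proved only for abelian $G$. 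The honest fix is to restrict your last two paragraphs (and the statement) to central $g_0$, not to attempt the matching.

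Apart from this, your route differs from the paper's in an instructive way. The paper takes $F=F^{-1}=(-)^{g_0}$ in \autoref{generalinduced}, so the transported tensor is \emph{literally} $(V^{g_0}\otimes W^{g_0})^{g_0}$ with no matching step; it checks $F(\mathbbm{k})\cong\mathbbm{k}_{g_0}$ via $(\mathbbm{k}_g)^h\cong\mathbbm{k}_{hg^{-1}}$, obtains the dualizing property from \eqref{internalhomgraded} together with \autoref{doubledualgeneral}, and cites \autoref{linearlydistributivemonoidalfunctor} for strong monoidality. You instead verify criterion $(3)$ of \autoref{starautonomequivalent} and exhibit the mixed quasi-inverse ${}^{g_0}(-)=\ihomr(-,\mathbbm{k}_{g_0})$, and on one point this is sharper than the paper: the dualizing-object condition \eqref{induceddualizingobject} concerns the mixed double duals $\ihomr(\ihoml(-,k),k)$ and $\ihoml(\ihomr(-,k),k)$, which you correctly show collapse to the identity for \emph{any} finite $G$ and any $g_0$, whereas the paper's citation of \autoref{doubledualgeneral} addresses the iterated left dual $(V^{g_0})^{g_0}$, which is conjugation. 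So the $\star$-autonomy half of your proposal is valid in full generality; only the identification of the second tensor product with \eqref{generaltensordef}, and with it the strong monoidal equivalence into $(\Gvectc,\otimes_{g_0},\mathbbm{k}_{g_0})^{\textnormal{opp}(0,1)}$, requires $g_0$ central, where your component computation closes the argument immediately.
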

\begin{proof}
	For the first statement we use \autoref{generalinduced} with $\mathcal{C} = (\Gvect, \otimes, \mathbbm{k})$, $\mathcal{D} = \Gvect$ and $F = F^{-1} = (-)^{g_0}$. It remains to show that $F(\mathbbm{k}) = \mathbbm{k}_{g_0}$. But for any $g,h \in G$ it holds $(\mathbbm{k}_{g})^h \cong \mathbbm{k}_{hg^{-1}}$. Thus, in particular, $F(\mathbbm{k}) = F(\mathbbm{k}_{1_G}) = (\mathbbm{k}_{1_G})^{g_0} \cong \mathbbm{k}_{g_0}$. Moreover, from the definition of the internal hom $\eqref{internalhomgraded}$ and \autoref{doubledualgeneral} it follows that $\mathbbm{k}_{g_0}$ is a dualizing object. The duality $(-)^{g_0}$ is strong monoidal in the above sense by the general result \autoref{linearlydistributivemonoidalfunctor} and the definition of the tensor product $\otimes_{g_0}$.
	\end{proof}

\subsection{Weak quadratic forms}

In \autoref{ribbonqf} we have seen that quadratic forms, which are symmetric with respect to $1_G$, are related to ribbon structures on $\Gvect$ with respect to the duality $(-)^{1_G}$. The purpose of this Section 4.2.4 and the following Section 4.2.5 is to give a general version of this statement. It remains to define an appropriate notion of ribbon structures with respect to such a relaxed duality -- this will be done in Section $4.2.5.$ On the other hand, it seems natural to generalise the symmetry axiom $q(g)=q(g^{-1}1_G)$ of quadratic forms to $g(g) = q(g^{-1}g_0)$ for arbitrary $g_0 \in G$. In fact, we will show that under certain conditions, we do not need to introduce such an axiom at all, and instead deduce a symmetry.

\begin{definition}[Weak quadratic form]
\label{weakquadraticformdef}
	Let $G$ be a finite group. A \textit{weak quadratic form} on $G$ with values in $\mathbbm{k}^{\times}$ is a map
	\[
	q: \quad G \rightarrow \mathbbm{k}^{\times},
	\]
	such that the function 
	\begin{align}
	\label{assocbihomweak}
	\begin{split}
		\beta_q: \quad G \times G &\rightarrow \mathbbm{k}^{\times} \\
		(g_1, g_2) &\mapsto q(g_1 g_2)q(g_1)^{-1}q(g_2)^{-1}
	\end{split}		
	\end{align}
	is a bihomomorphism. The collection of weak quadratic forms on $G$ with values in $\mathbbm{k}^{\times}$ forms in a canonical way an abelian group, which we denote by $\WQF$. We will sometimes use an equivalence relation $\sim$ on $\WQF$ defined by 
	\[ q_1 \sim q_2\ :\Leftrightarrow\ \exists f \in \textnormal{Aut}(G): q_1 = f^{\star}q_2.  \]
	for $q_1,q_2 \in \WQF$.
\end{definition}

For the proof of \autoref{weakquadraticformrepr} we need the following two technical results.

\begin{lemma}
\label{weakeq}
	Let $q$ be a weak quadratic form on $\mathbb{Z}_n$. Then it holds
	\begin{equation}
		\label{qk}
			q(k) = q(1)^k \beta_q(1,1)^{\binom{k}{2}}
	\end{equation}
	for $k \geq 2$.
\end{lemma}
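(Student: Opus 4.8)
The plan is to establish the formula by induction on $k$, leveraging only the defining property that $\beta_q$ is a bihomomorphism together with the fact that $\mathbb{Z}_n$ is cyclic with generator $1$, so that (in additive notation for $\mathbb{Z}_n$) the element $k$ is literally the sum of $k$ copies of the generator $1$.

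First I would treat the base case $k=2$. Unwinding the definition \eqref{assocbihomweak} gives $\beta_q(1,1)=q(1+1)\,q(1)^{-1}q(1)^{-1}$, hence $q(2)=q(1)^2\beta_q(1,1)$. Since $\binom{2}{2}=1$, this is exactly \eqref{qk} for $k=2$.

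For the inductive step, I would assume \eqref{qk} holds for some $k\geq 2$ and compute $q(k+1)$ by writing $k+1$ as $k+1$ in $\mathbb{Z}_n$ and again applying \eqref{assocbihomweak}, which rearranges to
\[
q(k+1)=\beta_q(k,1)\,q(k)\,q(1).
\]
The one genuine observation is that, because $\beta_q$ is a homomorphism in its first argument and $k=1+\dots+1$ ($k$ summands), iterating additivity yields $\beta_q(k,1)=\beta_q(1,1)^{k}$. Substituting this identity and the inductive hypothesis gives $q(k+1)=q(1)^{k+1}\beta_q(1,1)^{\,k+\binom{k}{2}}$, and Pascal's identity $\binom{k}{2}+k=\binom{k+1}{2}$ rewrites the exponent into the desired shape, completing the induction.

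There is essentially no serious obstacle here; the only point requiring a little care is the reduction $\beta_q(k,1)=\beta_q(1,1)^{k}$, which is precisely where the cyclic structure of $\mathbb{Z}_n$ and the bilinearity of $\beta_q$ enter, and it is the mechanism producing the quadratic exponent $\binom{k}{2}$. I would also note that the statement is to be read for the chosen integer representative $k$, so that no compatibility modulo $n$ is being asserted at this stage; such consistency, if needed, would be a separate matter.
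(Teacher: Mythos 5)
Your proof is correct and follows essentially the same route as the paper's: induction on $k$ with base case $k=2$ from the definition of $\beta_q$, and an inductive step combining the recursion $q(k+1)=\beta_q(k,1)\,q(k)\,q(1)$ with Pascal's identity. The only difference is cosmetic: you spell out the justification $\beta_q(k,1)=\beta_q(1,1)^k$ via bilinearity and the cyclic generator, a step the paper uses silently when it writes $q(k+1)=q(k)q(1)\beta_q(1,1)^k$.
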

\begin{proof}
	We apply a simple induction over $k$ and use additive notation for $\mathbb{Z}_n$. For $k = 2$ the definition \eqref{defassocbihom} yields
	\begin{align*}
		q(2) = q(1+1) = q(1)q(1) \beta_q(1,1) = q(1)^2 \beta(1,1)^{\binom{2}{2}}.
	\end{align*}
	Assume that the statement is true for $k \geq 2$. Again, using $\eqref{defassocbihom}$, we find
	\begin{align*}
		q(k+1) = q(k)q(1)\beta_q(1,1)^k \overset{\eqref{qk}}{=} q(1)^{k+1} \beta_q(1,1)^{\binom{k}{2} + k} = q(1)^{k+1} \beta_q(1,1)^{\binom{k+1}{2}}.
	\end{align*}
	The last equation follows from the general result $\binom{n+1}{k+1} = \binom{n}{k} + \binom{n}{k+1}$.
\end{proof}

\begin{lemma}
\label{weakeq2}
	Let $q$ be a weak quadratic form on $\mathbb{Z}_n$ with values in $\mathbbm{k}^{\times}$. Then it holds
	\begin{equation}
	\label{qn1}
		q(n)=1.
	\end{equation}
\end{lemma}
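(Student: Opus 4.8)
The plan is to reduce the statement to the normalization of $q$ at the neutral element. Throughout I write $\mathbb{Z}_n$ additively, so that $n \equiv 0$ and the symbol $q(n)$ denotes the value $q(0)$ of the weak quadratic form at the identity. Hence it suffices to prove $q(0) = 1$; the formula of \autoref{weakeq} is not actually needed here, and the case $n = 1$ is immediate since $\mathbb{Z}_1$ is trivial.

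First I would exploit that the associated map $\beta_q$ is a bihomomorphism. As a homomorphism in each argument separately, $\beta_q$ must send the neutral element to the unit $1 \in \mathbbm{k}^{\times}$; in particular $\beta_q(0,0) = 1$. On the other hand, substituting $g_1 = g_2 = 0$ into the defining relation \eqref{assocbihomweak} gives
\[
\beta_q(0,0) = q(0+0)\, q(0)^{-1} q(0)^{-1} = q(0)^{-1}.
\]
Comparing the two expressions yields $q(0)^{-1} = 1$, that is $q(0) = 1$, and therefore $q(n) = q(0) = 1$.

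The argument involves no real obstacle: the only point to observe is that on the cyclic group the symbol $q(n)$ is literally the value at the identity, and that the bihomomorphism hypothesis already forces the identity-normalization $q(0) = 1$. One could alternatively feed $k = n$ into \autoref{weakeq} and combine it with $\beta_q(1,1)^n = \beta_q(1,n) = \beta_q(1,0) = 1$, but this route still ultimately appeals to $q(0) = 1$ in order to control the factor $q(1)^n$, so the direct normalization argument above is the cleaner one.
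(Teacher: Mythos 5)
Your proof is correct and follows essentially the same route as the paper's: both identify $n$ with the neutral element of $\mathbb{Z}_n$, use the bihomomorphism property of $\beta_q$ to force the value $1$ when an argument is the identity (your ``homomorphisms preserve the unit'' is exactly the paper's doubling argument $\beta_q(n,k)=\beta_q(n+n,k)=\beta_q(n,k)^2$), and then read off $q(n)^{-1}=1$ from the defining relation \eqref{assocbihomweak}. The only cosmetic difference is that you specialize to $g_1=g_2=0$ while the paper keeps a general second argument $k$; the substance is identical.
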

\begin{proof}
	We will use additive notation for $\mathbb{Z}_n$. Since $\beta_q$ is a bihomomorphismus it holds
	\[
	\beta_q(n,k) = \beta(n+n,k) = \beta(n,k)^2.
	\]
	Thus we find
	\[ \beta_q(n,k) = 1. \] Using in addition $\eqref{defassocbihom}$ one checks that
	\[
	1 = \beta_q(n,k) = q(n+k)q(n)^{-1}q(k)^{-1} = q(k)q(n)^{-1}q(k)^{-1} = q(n)^{-1},
	\]
	which implies the claim $q(n) = 1$.
\end{proof}

\begin{theorem}
	\label{weakquadraticformrepr}
Let $G$ be a finite abelian group and $q \in \WQF$. There exists a character $\eta: G \rightarrow \mathbbm{k}^{\times}$ and a quadratic form $\tilde{q} \in \QF$ on $G$ with values in $\mathbbm{k}^{\times}$, such that
\[
q = \tilde{q} \cdot \eta.
\]  
Moreover, the associated bilinearforms coincide, $\beta_{q} = \beta_{\tilde{q}}$.
\end{theorem}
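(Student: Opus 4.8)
The plan is to isolate the failure of symmetry of $q$ in a single character and then peel off a square root of that character. Since $\beta_{q_1 q_2}=\beta_{q_1}\beta_{q_2}$ and $\beta_\eta\equiv 1$ for every character $\eta\in\charac$, the asserted equality $\beta_q=\beta_{\tilde q}$ is automatic once a decomposition $q=\tilde q\cdot\eta$ with $\eta\in\charac$ has been produced; so all the content lies in constructing $\tilde q$ and $\eta$. To that end I would consider
\[ \chi_q\colon G\to\mathbbm{k}^{\times},\qquad \chi_q(g):=q(g)\,q(g^{-1})^{-1}, \]
and first check that $\chi_q$ is a character. The map $g\mapsto q(g^{-1})$ is again a weak quadratic form, with associated bihomomorphism $\beta_q(g_1^{-1},g_2^{-1})$; since $G$ is abelian and $\beta_q$ is a bihomomorphism this equals $\beta_q(g_1,g_2)$, so $\chi_q$ is a quotient of two weak quadratic forms with the same $\beta$, whence $\beta_{\chi_q}\equiv 1$, i.e. $\chi_q\in\charac$. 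The role of $\chi_q$ is that $\tilde q:=q\cdot\eta^{-1}$ satisfies $\tilde q(g)\,\tilde q(g^{-1})^{-1}=\chi_q(g)\,\eta(g)^{-2}$, so $\tilde q$ is symmetric — and hence a genuine element of $\QF$ — exactly when $\eta$ is a character with $\eta^2=\chi_q$.

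Thus the theorem reduces to the claim that $\chi_q$ admits a square root inside $\charac$. Writing $G=\bigoplus_i \mathbb{Z}_{n_i}$ with generators $e_i$ of order $n_i$, a character is determined by its values on the $e_i$, and $\eta^2=\chi_q$ holds globally as soon as it holds on each generator; so it suffices to find, for each $i$, an element $\eta(e_i)\in\mathbbm{k}^{\times}$ with $\eta(e_i)^{n_i}=1$ and $\eta(e_i)^2=\chi_q(e_i)$. Here the two technical results enter: restricting $q$ to $\langle e_i\rangle\cong\mathbb{Z}_{n_i}$ gives a weak quadratic form on $\mathbb{Z}_{n_i}$, so \autoref{weakeq} yields $q(k)=q(1)^k\beta_q(1,1)^{\binom{k}{2}}$ and \autoref{weakeq2} the normalisation $q(1)^{n_i}\beta_q(1,1)^{\binom{n_i}{2}}=q(n_i)=1$. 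Identifying $e_i$ with the generator $1$, so that $e_i^{-1}$ corresponds to $n_i-1$, one gets $\chi_q(e_i)=q(1)\,q(n_i-1)^{-1}$, and substituting \autoref{weakeq} produces the closed form
\[ \chi_q(e_i)=q(1)^{\,2-n_i}\,\beta_q(1,1)^{-\binom{n_i-1}{2}}. \]

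It remains to extract the square root. If $n_i$ is odd, squaring is an automorphism of the group $\mu_{n_i}$ of $n_i$-th roots of unity, so a root $\eta(e_i)\in\mu_{n_i}\subseteq\mathbbm{k}^{\times}$ exists. If $n_i$ is even, the relevant fact is that $\chi_q(e_i)$ lies in the subgroup of squares of $\mu_{n_i}$, which is precisely the kernel of raising to the power $n_i/2$; a direct computation with the displayed formula, eliminating $q(1)^{n_i}$ by means of $q(1)^{n_i}=\beta_q(1,1)^{-\binom{n_i}{2}}$, shows that the total exponent of $\beta_q(1,1)$ in $\chi_q(e_i)^{n_i/2}$ vanishes, so that $\chi_q(e_i)^{n_i/2}=1$ and a square root $\eta(e_i)\in\mu_{n_i}$ again exists. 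Defining $\eta$ by these values and setting $\tilde q:=q\cdot\eta^{-1}$ then completes the construction.

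The main obstacle is exactly this last extraction of square roots: it is where one must assume $\mathbbm{k}$ is large enough, and it genuinely fails over arbitrary fields — for instance the weak quadratic form on $\mathbb{Z}_4$ with $q(1)=1$ and $\beta_q(1,1)=-1$ has $\chi_q(e)=-1$, which is not a square in $\mathbb{R}^{\times}$. I would therefore run the argument under the standing hypothesis that $\mathbbm{k}$ is algebraically closed of characteristic prime to $|G|$, so that $\mu_{n_i}$ is cyclic of full order $n_i$ and every element of its subgroup of squares really has a square root in $\mathbbm{k}^{\times}$; the vanishing-exponent computation above is then exactly the assertion that $\chi_q(e_i)$ lands in that subgroup. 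The remaining checks — that $\eta$ is a well-defined character, that $\tilde q$ is a weak quadratic form because it is a product of such, and that its symmetry promotes it to an element of $\QF$ — are routine, and $\beta_q=\beta_{\tilde q}$ follows from multiplicativity of $\beta$ as noted at the outset.
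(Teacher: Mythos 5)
Your proof is correct, and it takes a genuinely different route from the thesis. The paper also reduces to cyclic factors and invokes \autoref{weakeq} and \autoref{weakeq2}, but there the decomposition is written down in closed form on each $\mathbb{Z}_n$: with $B:=\beta_q(1,1)$ and $C:=q(1)$ it sets $\eta(k):=(CB^{-\frac{1}{2}})^k$ and $\tilde q(k):=(B^{\frac{1}{2}})^kB^{\binom{k}{2}}$ when $2\mid n$ (replacing $B^{\frac{1}{2}}$ by $B^l$ with $2l=n-1$ when $n$ is odd), verifies well-definedness and symmetry by direct computation, and then patches the cyclic pieces together through the identity $q(k_1,\dots,k_n)=\prod_i q_i(k_i)\prod_{i<j}\beta_q(k_i,k_j)$, absorbing the cross terms $\beta_q(k_i,k_j)$ into $\tilde q$. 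Your reduction through the asymmetry character $\chi_q(g)=q(g)q(g^{-1})^{-1}$ buys two things the paper's argument lacks: the patching step is automatic (two characters of $G$ agreeing on generators agree everywhere, so the symmetry of $\tilde q=q\cdot\eta^{-1}$ requires no cross-term bookkeeping), and the entire content of the theorem is exposed as a single obstruction, namely whether $\chi_q$ is a square in $\charac$. The two constructions in fact produce the same objects: eliminating $C^{n}$ via \autoref{weakeq2} turns your $\chi_q(e_i)=C^{2-n}B^{-\binom{n-1}{2}}$ into $C^2B^{-1}$, which is exactly the square of the paper's $\eta(1)=CB^{-\frac{1}{2}}$; and your vanishing-exponent computation for $\chi_q(e_i)^{n_i/2}$ is the precise statement that the paper's implicit choice of $B^{\frac{1}{2}}$ can be made inside $\mu_{n_i}$.

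Your observation about the ground field is a genuine point that the thesis misses. Section 4.2 never restricts $\mathbbm{k}$ (algebraic closure is assumed only in Section 4.1), yet the paper's formulas presuppose, without justification, a square root $B^{\frac{1}{2}}\in\mathbbm{k}^{\times}$ whenever $n$ is even; so its proof silently carries exactly the hypothesis you state explicitly. Your $\mathbb{Z}_4$-counterexample over $\mathbb{R}$ is correct: $q(k)=(-1)^{\binom{k}{2}}$ is a weak quadratic form with $\beta_q(j,k)=(-1)^{jk}$, every real character of $\mathbb{Z}_4$ takes values in $\{\pm1\}$, and neither $q$ nor $q\cdot(-1)^k$ is symmetric, so no decomposition $q=\tilde q\cdot\eta$ with $\tilde q\in\QF$ exists. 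Hence the theorem is false as literally stated over an arbitrary field, and your standing hypothesis (enough roots of unity, e.g.\ $\mathbbm{k}$ algebraically closed of characteristic prime to $|G|$) is genuinely needed — by both proofs.
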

\begin{proof}
\begin{enumerate}
	\item We begin with $G = \mathbb{Z}_n$. Note that we will use additive notation for $\mathbb{Z}_n$. Abbreviate $B := \beta(1,1)$ and $C := q(1)$. Since $\beta$ is a bihomomorphismus it holds $B^n = 1$. 
Define
\begin{equation*}
	\eta(k) := \begin{cases}
	(CB^{-\frac{1}{2}})^k, & 2 \mid n \\
	(C B^{-l})^k, & 2 \nmid n,\ 2l = n-1
	\end{cases}
\end{equation*}
and
\begin{equation*}
\tilde{q}(k) := \begin{cases}
	(B^{\frac{1}{2}})^k B^{\binom{k}{2}}, & 2 \mid n \\
	(B^{l})^k B^{\binom{k}{2}}, & 2 \nmid n,\ 2l = n-1.	
\end{cases}	
\end{equation*}
From $\eqref{qk}$ it follows that in both cases $q(k) = \tilde{q}(k)\eta(k)$. It remains to show that $\eta$ and $\tilde{q}$ are well defined and $\tilde{q} \in \textnormal{QF}(G, \mathbbm{k}^{\times})$.
	\begin{itemize}
		\item We show that $\eta(k) = (CB^{-\frac{1}{2}})^k$ is well defined for $2 \mid n$. It is clear that $\eta$ is linear in $k$. Note that $1 \overset{\eqref{qn1}}{=} q(n) \overset{\eqref{qk}}{=} C^n B^{\binom{n}{2}}$ implies that $C^n B^{-\frac{n}{2}} = B^{-\frac{n^2}{2}}$. It follows
		\[
		\eta(n) = (CB^{-\frac{1}{2}})^n = C^n B^{-\frac{n}{2}} = B^{-\frac{n^2}{2}} = (B^{n})^{-\frac{n}{2}} = 1^{-\frac{n}{2}} = 1,
		\]
		where we used in the last equation that $\frac{n}{2} \in \mathbb{Z}$. 
		\item We show that $\eta(k) = (C B^{-l})^k$ is well defined for $2 \nmid n$. . It is again clear that $\eta$ is linear in $k$. Furthermore, $1 = C^n B^{\binom{n}{2}}$ implies that $C^{n}B^{-ln} = B^{-\frac{n(n-1)}{2}} B^{-ln}$. Thus it follows
		\[
		\eta(n) = (C B^{-l})^n = C^n B^{-ln} = B^{-\frac{n(n-1)}{2}} B^{-ln} = (B^n)^{-\frac{n-1}{2}} (B^n)^{-l} = 1,
		\]
		where we used in the last equation that $\frac{(n-1)}{2}, l \in \mathbb{Z}$.
		\item We show that $\tilde{q}(k) = (B^{\frac{1}{2}})^k B^{\binom{k}{2}}$ is well defined for $2 \mid n$. For $a,k \in \mathbb{Z}$ we find
		\[
		\tilde{q}(a+kn) = (B^{\frac{1}{2}})^{a+kn} B^{\binom{a+kn}{2}} = (B^{\frac{1}{2}})^a (B^{n})^k B^{\frac{(a+kn)(a+kn-1)}{2}} = (B^{\frac{1}{2}})^a B^{\frac{a^2 -a}{2}} = \tilde{q}(a).
		\]
		Furthermore, $\tilde{q}$ is symmetric. Indeed,
		\begin{align*}
			\tilde{q}(-k) = (B^{\frac{1}{2}})^{-k} B^{\binom{n-k}{2}} = (B^{\frac{1}{2}})^{-k} B^{\frac{k^2 + k}{2}} = (B^{\frac{1}{2}})^{-k} B^{\frac{k^2 -k}{2}} (B^{\frac{1}{2}})^{2k} = \tilde{q}(k).
		\end{align*}
		Since $\eta$ is linear, it follows
		\[
		\beta_{q}(k_1, k_2) = \frac{q(k_1+k_2)}{q(k_1)q(k_2)}
		= \frac{f(k_1+k_2) \tilde{q}(k_1+k_2)}{f(k_1)\tilde{q}(k_1)f(k_2) \tilde{q}(k_2)} = \frac{ \tilde{q}(k_1+k_2)}{\tilde{q}(k_1)\tilde{q}(k_2)} = \beta_{\tilde{q}}(k_1,k_2).
		\]
		Thus we've shown that $\tilde{q} \in \textnormal{QF}(G, \mathbbm{k}^{\times})$.

		\item Analogously to the previous point one shows that for $2 \nmid n$ with $2l = n-1$ the map $\tilde{q}(k) = (B^{l})^k B^{\binom{k}{2}}$ is a well defined quadratic form.
	\end{itemize}
\item We now consider arbitrary finite abelian groups. W.l.o.g we can assume $G = \bigoplus_{i=1}^n \mathbb{Z}_{n_i}$ for some $n \in \mathbb{N}$. Let $q$ be a weak quadratic form on $G$. Define weak quadratic forms $q_i := \left.q\right|_{\mathbb{Z}_{n_i}}$ on $\mathbb{Z}_{n_i}$ as restriction of $q$. 
\begin{itemize}
	\item One checks that
\[
q(k_1,...,k_n) = \prod_{i=1}^n q_i(k_i) \prod_{i < j} \beta_{q}(k_i, k_j).
\]
	\item We have shown that we can decompose the weak quadratic forms $q_i$ as $q_i = \eta_i \cdot \widetilde{q_i}$ for $i = 1,...,n$. Thus
\[
q(k_1,...,k_n) = \prod_{i=1}^n \eta_i(k_i) \prod_{i=1}^n \widetilde{q_i}(k_i)  \prod_{i < j} \beta_{q}(k_i, k_j)
= \eta(k_1,...,k_n) \cdot \tilde{q}(k_1,...,k_n)
\]
with maps 
\[ \eta(k_1,...,k_n) :=  \prod_{i=1}^n \eta_i(k_i) \] 
and
\[ \tilde{q}(k_1,...,k_n) :=  \prod_{i=1}^n \widetilde{q_i}(k_i)  \prod_{i < j} \beta_{q}(k_i, k_j) . \]
\item The map $\eta$ is linear in $\bigoplus_{i=1}^n \mathbb{Z}_{n_i}$ since the maps $\eta_i$ are linear in $\mathbb{Z}_{n_i}$. Furthermore, $\tilde{q}$ is symmetric since all $\tilde{q_i}$ are symmetric and $\beta_q$ is a bilinear.
\item The associated bilinearforms coincide, $\beta_{q} = \beta_{\tilde{q}}$. Indeed, using that $\beta_{q_i} = \beta_{\widetilde{q_i}}$, we find for $x,y \in \bigoplus_{i=1}^n \mathbb{Z}_{n_i}$ that
\begin{align*}
	&\beta_q(x,y) = \frac{q(x+y)}{q(x)q(y)} = \prod_{i=1}^n \frac{q_i((x+y)_i)}{q_i(x_i)q_i(y_i)} \prod_{i < j} \frac{\beta_q((x+y)_i, (x+y)_j)}{\beta_q(x_i, x_j) \beta_q (y_i, y_j)} \\
	&= \prod_{i=1}^n \frac{\widetilde{q_i}((x+y)_i)}{\widetilde{q_i}(x_i)\widetilde{q_i}(y_i)} \prod_{i < j} \frac{\beta_q((x+y)_i, (x+y)_j)}{\beta_q(x_i, x_j) \beta_q (y_i, y_j)} 
 = \frac{\tilde{q}(x+y)}{\tilde{q}(x)\tilde{q}(y)} = \beta_{\tilde{q}}(x,y).
\end{align*}
\end{itemize}
\end{enumerate}
\end{proof}
We define an equivalence relation $\sim$ on $\textnormal{QF}(G, \mathbbm{k}^{\times}) \oplus \textnormal{Hom}(G, \mathbbm{k}^{\times})$ by \begin{equation*}
 \label{equivrelat2}
 	(\widetilde{q}, \eta) \sim (\widetilde{q}', \eta') :\Leftrightarrow \exists f \in \textnormal{Aut}(G): \widetilde{q}\eta = f^{\star}(\widetilde{q}'\eta').
 \end{equation*}

\begin{lemma} 
\label{wqfcharaclemma}
Let $G$ be a finite abelian group.
The decomposition in $\autoref{weakquadraticformrepr}$ induces a well-defined bijection
\begin{align}
\label{wqfcharac}
\begin{split}
F:\quad \WQF/\sim\ &\rightarrow (\QF \oplus \charac)/\sim\\
	\lbrack q \rbrack  &\mapsto \lbrack (\widetilde{q}, \eta) \rbrack.	
\end{split}	
\end{align}
\end{lemma}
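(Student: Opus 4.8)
The plan is to realise $F$ as the inverse of the obvious multiplication map and to exploit the fact that the equivalence relation on $\QF \oplus \charac$ has been set up precisely for this purpose. First I would introduce the map
\[
m \colon \QF \oplus \charac \to \WQF, \qquad (\widetilde q, \eta) \mapsto \widetilde q \cdot \eta,
\]
and check that it indeed lands in $\WQF$: because $\eta$ is a homomorphism, its contribution to the associated function cancels, giving $\beta_{\widetilde q \eta} = \beta_{\widetilde q}$, which is a bihomomorphism since $\widetilde q$ is a genuine quadratic form. I would also record that $m$ is $\textnormal{Aut}(G)$-equivariant, as $f^{\star}(\widetilde q \eta) = (f^{\star}\widetilde q)(f^{\star}\eta)$ for every $f \in \textnormal{Aut}(G)$.

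The key step is the observation that, by the very definition of $\sim$ on $\QF \oplus \charac$, one has $(\widetilde q, \eta) \sim (\widetilde q', \eta')$ if and only if $\widetilde q \eta = f^{\star}(\widetilde q' \eta')$ for some automorphism $f$, that is, if and only if $m(\widetilde q, \eta) \sim m(\widetilde q', \eta')$ in $\WQF$. Consequently $m$ descends to a map $\bar m \colon (\QF \oplus \charac)/\!\sim\ \to\ \WQF/\!\sim$ that is simultaneously well-defined and injective, with no further computation. Surjectivity of $\bar m$ is then exactly the content of \autoref{weakquadraticformrepr}: every class $[q]$ admits a representative of the form $\widetilde q \eta$, so $\bar m([(\widetilde q, \eta)]) = [q]$. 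Hence $\bar m$ is a bijection.

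Finally I would identify $F$ with $\bar m^{-1}$. For a class $[q]$ I pick any decomposition $q = \widetilde q \eta$ furnished by \autoref{weakquadraticformrepr} and set $F([q]) := [(\widetilde q, \eta)]$; the relations $\bar m \circ F = \textnormal{id}$ and $F \circ \bar m = \textnormal{id}$ are then immediate. The only point requiring care — and the main, though mild, obstacle — is the well-definedness of $F$, since the decomposition of \autoref{weakquadraticformrepr} is far from unique and $[q]$ has many representatives. Both ambiguities dissolve at once: if $q = \widetilde q \eta = \widetilde q' \eta'$, or more generally if $q' = f^{\star}q$ with $q' = \widetilde q' \eta'$, then $m(\widetilde q', \eta')$ and $m(\widetilde q, \eta)$ differ by an automorphism, so $(\widetilde q', \eta') \sim (\widetilde q, \eta)$ and the resulting class is unchanged. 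Thus $F$ is a well-defined bijection, as claimed.
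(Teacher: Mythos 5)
Your proof is correct and is essentially the paper's argument, organized in the opposite direction: the paper defines $F$ via the decomposition, checks well-definedness, and then exhibits the multiplication map $[(\widetilde{q},\eta)] \mapsto [\widetilde{q}\eta]$ as a two-sided inverse, whereas you first show that multiplication descends to a bijection on quotients (well-definedness and injectivity both immediate from the fact that $\sim$ on $\QF \oplus \charac$ is defined by comparing products, surjectivity being exactly \autoref{weakquadraticformrepr}) and then define $F$ as its inverse. Your explicit verification that $\widetilde{q}\eta$ lands in $\WQF$ (via $\beta_{\widetilde{q}\eta} = \beta_{\widetilde{q}}$) is a small point the paper leaves implicit, but otherwise the two proofs coincide.
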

\begin{proof}
	If $q \sim q'$ in $\WQF$, there exists by definition some $f \in \textnormal{Aut}(G)$, such that $q= f^{\star}q'$. Thus, if $\widetilde{q}, \widetilde{q}' \in \QF$ and $\eta, \eta' \in \charac$ such that $q = \widetilde{q}\eta$ and $q' = \widetilde{q}'\eta'$, it follows by definition of $\sim$ that $(\widetilde{q}, \eta) \sim (\widetilde{q}, \eta)$ in $(\QF \oplus \charac)$.
	The canonical candidate for an inverse of $F$ is 
\[
	G:\quad (\QF \oplus \charac)/\sim\ \rightarrow \WQF/\sim, \quad 
	\lbrack (\widetilde{q}, \eta)\rbrack \mapsto \lbrack \widetilde{q}\eta \rbrack.
\]
Again, from the definitions of the equivalence relations it follows that $G$ is well-defined. By construction it holds $G \circ F = \textnormal{id}_{\WQF/\sim}$. Conversely we find $F \circ G(\lbrack (\widetilde{q}, \eta) \rbrack) = F(\lbrack (\widetilde{q}\eta) \rbrack ) =: \lbrack (\widetilde{q}', \eta') \rbrack = \lbrack (\widetilde{q}, \eta)\rbrack$, since $\widetilde{q}\eta = \widetilde{q}'\eta'$.
\end{proof}

The following Corollary to \autoref{weakquadraticformrepr} shows that for weak quadratic forms, under certain conditions, a generalised, shifted, version of the symmetry of quadratic forms in the sense of \autoref{classicalquadraticformdef} holds.

\begin{corollary}
\label{symmetrycorollary}
	Let $G$ be a finite abelian group and $q = \tilde{q} \cdot \eta$ a weak quadratic form on $G$ with values in $\mathbbm{k}^{\times}$. If there exists some $g_0 \in G$, such that $\eta(g) = \beta_q(g,g_0)$, it follows
	\[
	q(g) = \frac{\tilde{q}(gg_0)}{\tilde{q}(g_0)}.
	\]
	This implies, in particular, that $q$ is symmetric with respect to $\tilde{g_0} := g_0^{-2}$,
	\[
	q(g) = q(\tilde{g_0}g^{-1}).
	\]
\end{corollary}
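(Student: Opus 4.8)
The plan is to carry out a direct computation, leaning on two facts established earlier: the decomposition $q = \tilde{q} \cdot \eta$ from \autoref{weakquadraticformrepr}, and — crucially — the coincidence of the associated bihomomorphisms $\beta_{q} = \beta_{\tilde{q}}$ proved in the same theorem. The symmetry axiom of the honest quadratic form $\tilde{q}$ (from \autoref{classicalquadraticformdef}) will supply the final step, together with commutativity of $G$.

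First I would establish the formula $q(g) = \tilde{q}(gg_0)/\tilde{q}(g_0)$. Starting from $q(g) = \tilde{q}(g)\eta(g)$ and inserting the hypothesis $\eta(g) = \beta_q(g,g_0)$, I would rewrite $\beta_q(g,g_0) = \beta_{\tilde{q}}(g,g_0) = \tilde{q}(gg_0)\tilde{q}(g)^{-1}\tilde{q}(g_0)^{-1}$, using $\beta_q = \beta_{\tilde{q}}$ and the defining formula \eqref{defassocbihom} of the bihomomorphism of the quadratic form $\tilde{q}$. The factor $\tilde{q}(g)$ then cancels against $\tilde{q}(g)^{-1}$, leaving exactly $\tilde{q}(gg_0)/\tilde{q}(g_0)$.

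For the symmetry statement, I would apply the formula just obtained at the argument $\tilde{g_0}g^{-1} = g_0^{-2}g^{-1}$. Using that $G$ is abelian, the argument of $\tilde{q}$ simplifies: $g_0^{-2}g^{-1}g_0 = g_0^{-1}g^{-1} = (gg_0)^{-1}$, so that $q(\tilde{g_0}g^{-1}) = \tilde{q}((gg_0)^{-1})/\tilde{q}(g_0)$. The last ingredient is the defining symmetry $\tilde{q}(h) = \tilde{q}(h^{-1})$ of an honest quadratic form, which gives $\tilde{q}((gg_0)^{-1}) = \tilde{q}(gg_0)$ and hence $q(\tilde{g_0}g^{-1}) = \tilde{q}(gg_0)/\tilde{q}(g_0) = q(g)$, as desired.

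The computation is essentially routine and I do not anticipate a genuine obstacle. The only points requiring care are conceptual rather than technical: one must invoke $\beta_q = \beta_{\tilde{q}}$ at precisely the right moment so that the character factor $\eta$ is absorbed into the quadratic part, and one must recognise that the resulting symmetry of $q$ originates entirely from the symmetry of $\tilde{q}$ — the weak quadratic form $q$ itself carries no symmetry axiom, which is exactly why the shift by $\tilde{g_0} = g_0^{-2}$ appears rather than the naive inversion $g \mapsto g^{-1}$.
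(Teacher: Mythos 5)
Your proof is correct and follows essentially the same route as the paper's: first derive $q(g)=\tilde{q}(gg_0)/\tilde{q}(g_0)$ by direct computation, then obtain the shifted symmetry by applying this formula at $g_0^{-2}g^{-1}$, using commutativity of $G$ and the symmetry axiom $\tilde{q}(h)=\tilde{q}(h^{-1})$ of the honest quadratic form. The only (harmless) difference is in the first step: you invoke $\beta_q=\beta_{\tilde{q}}$ from \autoref{weakquadraticformrepr} and expand $\beta_{\tilde{q}}$ by its defining formula, whereas the paper expands $\beta_q(g,g_0)$ in terms of $q$ itself, substitutes the decomposition $q=\tilde{q}\eta$ into numerator and denominator, and cancels via bilinearity of $\beta_q$ --- both computations effect the same cancellation.
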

\begin{proof}
The first statement follows from
	\begin{align*}
		q(g) 
		&= \tilde{q}(g) \beta_q(g,g_0)  
		= \tilde{q}(g) \frac{q(gg_0)}{q(g)q(g_0)}  
		= \tilde{q}(g) \frac{\beta_q(gg_0,g_0) \tilde{q}(gg_0)}{\beta_q(g,g_0) \tilde{q}(g) \beta_q(g_0,g_0) \tilde{q}(g_0)} 
		=\frac{\tilde{q}(gg_0)}{\tilde{q}(g_0)}.
	\end{align*}
	The shifted symmetry of $q$ is induced by the symmetry of $\tilde{q}$,\[
	q(\widetilde{g_0}g^{-1}) = q(g_0^{-2}g^{-1}) = \frac{\tilde{q}(g_0^{-2}g^{-1}g_0)}{\tilde{q}(g_0)} = \frac{\tilde{q}(g_0^{-1} g g_0^2)}{\tilde{q}(g_0)} = \frac{\tilde{q}(gg_0)}{\tilde{q}(g_0)} = q(g).
	\]
\end{proof}

In view of \autoref{symmetrycorollary} it is reasonable to give the following definitions.

\begin{definition}[Representable quadratic form]
\label{weakrepresentableqf}
		Let $G$ be a finite abelian group. The \textit{groupoid of weak representable quadratic forms on} $G$ with values in $\mathbbm{k}^{\times}$ is 
		\[ \textnormal{WRQF}(G, \mathbbm{k}^{\times}) := \lbrace (q, \eta, g_0) \in \textnormal{QF}(G, \mathbbm{k}^{\times}) \oplus \textnormal{Hom}(G, \mathbbm{k}^{\times}) \oplus G \mid \eta(g) = \beta_q(g,g_0) \rbrace. \]
		One can define an equivalence relation $\sim$ on $\textnormal{WRQF}(G, \mathbbm{k}^{\times})$ by
			\[
	(q, \eta, g_0) \sim (q', \eta', g_0') :\Leftrightarrow \exists f \in \textnormal{Aut}(G): q \eta = f^{\star}(q'\eta'), f(g_0)=g_0'.
	\]
	\end{definition}

\begin{definition}[Weak symmetric quadratic form]
\label{weaksymmqf}
	Let $G$ be a finite abelian group. The \textit{groupoid of weak symmetric quadratic forms on} $G$ with values in $\mathbbm{k}^{\times}$ is 
	\[
	\WSQF := \lbrace (q, g_0) \in \WQF \oplus G \mid q(g) = q(g^{-1}g_0) \forall g \in G \rbrace.
	\]
	One can define an equivalence relation $\sim$ on $\WSQF$ by
	\[
	(q, g_0) \sim (q', g_0') :\Leftrightarrow \exists f \in \textnormal{Aut}(G): q = f^{\star}(q'),\ f(g_0) = g_0'.
	\]
\end{definition}

The following two technical results are needed, in particular, for the proof of \autoref{wrqfwsqf}. To this end, let $\sumf : \mathbb{N} \rightarrow \mathbb{N}$ be recursively defined as $\sumf(0) = 0$ and $\sumf(n) = \sumf(n-1) + (n-1)$. Explicitly, $\sumf(n)$ is the sum of all strict smaller natural numbers than $n$, $\sumf(n) =  1 + 2 + ... + (n-1)$.

\begin{lemma}
\label{weakhochk}
	Let $G$ be a finite abelian group
	and let $(q, g_0) \in \WSQF$ be a weak quadratic form symmetric with respect to $g_0$, i.e. $q(g) = q(g^{-1}g_0)$ for all $g \in G$. Then
	\begin{align}
	\label{weakhochkeq}
	\begin{split}
		q(g^k) &= q(g)^{k^2- \sumf(k)} q(gg_0)^{\sumf(k)}, \\
		q(g^{-k}) &= q(g)^{\sumf(k)}q(gg_0)^{k^2- \sumf(k)},
	\end{split}	
	\end{align}
	for all $k \in \mathbb{N}$.
\end{lemma}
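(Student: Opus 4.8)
The plan is to reduce both identities to a single computation of $\beta_q(g,g)$ in terms of $q(g)$ and $q(g g_0)$. The natural starting point is \autoref{weakeq}: restricting $q$ to the cyclic subgroup $\langle g \rangle \cong \mathbb{Z}_m$, where $m$ is the order of $g$, yields a weak quadratic form on $\mathbb{Z}_m$, so that lemma gives $q(g^k) = q(g)^k \beta_q(g,g)^{\sumf(k)}$ for all $k \geq 2$, and the cases $k = 0, 1$ are checked by hand (both sides equal $1$ and $q(g)$ respectively, using $\sumf(0) = \sumf(1) = 0$). Thus the first line of \eqref{weakhochkeq} becomes equivalent to the single relation $\beta_q(g,g) = q(g)\, q(g g_0)$, once the bookkeeping identity $k + \sumf(k) = k^2 - \sumf(k) = \binom{k+1}{2}$ is verified.

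First I would record the normalisation $q(1_G) = 1$: since $\beta_q$ is a bihomomorphism, $\beta_q(1_G, g) = \beta_q(1_G \cdot 1_G, g) = \beta_q(1_G, g)^2$ forces $\beta_q(1_G, g) = 1$ for all $g$, and evaluating the definition at $g_1 = g_2 = 1_G$ gives $\beta_q(1_G, 1_G) = q(1_G)^{-1}$, whence $q(1_G) = 1$. Next, the key identity $\beta_q(g,g) = q(g)\, q(g^{-1})$ follows by expanding $1 = q(1_G) = q(g g^{-1}) = q(g)\, q(g^{-1})\, \beta_q(g, g^{-1})$ and using $\beta_q(g, g^{-1}) = \beta_q(g,g)^{-1}$, which is itself a consequence of $\beta_q(g, -)$ being a homomorphism together with $\beta_q(g, 1_G) = 1$. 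Finally, the symmetry hypothesis applied to $g^{-1}$ gives $q(g^{-1}) = q\bigl((g^{-1})^{-1} g_0\bigr) = q(g g_0)$, so $\beta_q(g,g) = q(g)\, q(g g_0)$; substituting this into the formula from \autoref{weakeq} and simplifying exponents completes the first identity.

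For the second identity I would simply apply the now-established first identity to the element $g^{-1}$ in place of $g$, which yields $q(g^{-k}) = q(g^{-1})^{\,k^2 - \sumf(k)}\, q(g^{-1} g_0)^{\sumf(k)}$. Substituting $q(g^{-1}) = q(g g_0)$ (proved above) and $q(g^{-1} g_0) = q(g)$ (this is exactly the symmetry hypothesis for $g$, namely $q(g) = q(g^{-1} g_0)$) gives the claimed $q(g^{-k}) = q(g)^{\sumf(k)}\, q(g g_0)^{\,k^2 - \sumf(k)}$.

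The only genuine obstacle is organisational rather than conceptual: one must justify that \autoref{weakeq}, stated for $\mathbb{Z}_n$, legitimately applies after restriction to the finite cyclic group $\langle g \rangle$, and keep the exponent arithmetic clean, noting that $k + \sumf(k)$ and $k^2 - \sumf(k)$ both equal $\binom{k+1}{2}$. Apart from this, the argument is a short sequence of manipulations of the bihomomorphism property of $\beta_q$ and the symmetry hypothesis, requiring no case distinction on the parity of $n$ (in contrast to \autoref{weakquadraticformrepr}).
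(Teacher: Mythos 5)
Your proof is correct, but it takes a genuinely different route from the paper's. The paper proves both identities of \eqref{weakhochkeq} by a single self-contained induction on $k$: it first derives the relation $q(g^2) = q(g)^3 q(gg_0)$ (equivalently, $\beta_q(g,g) = q(g)\,q(gg_0)$) through a fairly involved chain of manipulations of the symmetry hypothesis and the bilinearity of $\beta_q$, and then runs the inductive step for $q(g^{k+1})$ and $q(g^{-(k+1)})$ separately, each with its own exponent bookkeeping. You instead make the argument modular: you invoke \autoref{weakeq} on the cyclic subgroup $\langle g \rangle \cong \mathbb{Z}_m$ (legitimate, since restricting $q$ to a subgroup preserves the bihomomorphism property of $\beta_q$, so the restriction is again a weak quadratic form), so that all of the induction is already contained in that lemma, and what remains is the single identity $\beta_q(g,g) = q(g)\,q(gg_0)$, which you obtain cleanly from $q(1_G)=1$ and $1 = q(gg^{-1}) = q(g)\,q(g^{-1})\,\beta_q(g,g)^{-1}$ together with $q(g^{-1}) = q(gg_0)$. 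Your treatment of the second identity is also slicker: rather than a second induction, you apply the first identity to $g^{-1}$ and use the symmetry hypothesis twice --- valid because the hypothesis $q(h)=q(h^{-1}g_0)$ in the definition of $\WSQF$ is a statement about all $h \in G$, not only the fixed element $g$. What your approach buys is brevity and a clean separation of the cyclic-group computation (quarantined in \autoref{weakeq}) from the symmetry argument; what the paper's approach buys is self-containedness, with no need to discuss restriction to subgroups or the identification $\langle g \rangle \cong \mathbb{Z}_m$. Both arguments ultimately rest on the same key fact $\beta_q(g,g) = q(g)\,q(gg_0)$, which your derivation exposes rather more directly than the paper's.
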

\begin{proof}
	First of all note that, using $q(g) = q(g^{-1}g_0)$, in particular $q(g_0) = q(1) = 1$, and the bilinearity of $\beta_q$,
	\begin{align*}
		&\frac{q(g^{-1})}{q(g^2)q(g)}
		= \frac{q(g_0g)}{q(g^2)q(g)}
		= \frac{q(g^2g_0g^{-1})}{q(g^2)q(g_0g^{-1})}
		\overset{\eqref{assocbihomweak}}{=} \beta_q(g^2,g_0g^{-1})
		= \beta_q(g,g_0)^2 \beta_q(g,g)^{-2} \\
		&\overset{\eqref{assocbihomweak}}{=} \frac{q(gg_0)^2}{q(g)^2 q(g_0)^2} \frac{q(g)^4}{q(g^2)^2}
		= \frac{q(g^{-1})^2 q(g)^2}{q(g^2)^2}.
	\end{align*}
	This implies 
	\begin{equation}
	\label{qg2}
		q(g^2) = q(g)^3 q(g^{-1}) = q(g)^3 q(g g_0).
	\end{equation}
	We use induction over $k$. For $k=0$, both statements are true, and equal. For $k=1$, the first statement is true, since trivially $q(g) = q(g)$, and the second statement is true, since $q$ is symmetric w.r.t. $g_0$, $q(g^{-1}) = q(gg_0)$. Now let both statements hold for some fixed $k \geq 1$. Then, using the using the bilinearity of $\beta_q$, the recursive definition of $\sumf$, and, as shown above, $q(g^2) = q(g)^3 q(g g_0)$, we compute 
	\begin{align*}
		q(g^{k+1}) &= q(gg^k) \overset{\eqref{assocbihomweak}}{=} \beta_q(g,g^k)q(g)q(g^k) \overset{\eqref{assocbihomweak}}{=} q(g^2)^k q(g)^{-2k} q(g) q(g^k) \\
		&\overset{\eqref{qg2}, \eqref{weakhochkeq}}{=} q(g)^{3k} q(g g_0)^k q(g)^{-2k} q(g) q(g)^{k^2 - \sumf(k)} q(gg_0)^{\sumf(k)} \\
		&= q(g)^k q(g)^{k^2 - \sumf(k) +1} q(gg_0)^{\sumf(k) +k} \\
		&= q(g)^{k^2 +k +1 - \sumf(k)} q(gg_0)^{\sumf(k+1)} \\
		&= q(g)^{k^2 +k +1 - (\sumf(k+1)-k)} q(gg_0)^{\sumf(k+1)} \\
		&= q(g)^{(k+1)^2 - \sumf(k+1)} q(gg_0)^{\sumf(k+1)},
	\end{align*}
	and
	\begin{align*}
		q(g^{-(k+1)}) &= q(g^{-k}g^{-1}) \overset{\eqref{assocbihomweak}}{=} \beta_q(g^{-k},g^{-1})q(g^{-k})q(g^{-1}) = \beta_q(g,g)^k g(g^{-k})q(gg_0) \\
		&\overset{\eqref{assocbihomweak}}{=} q(g^2)^k q(g)^{-2k} g(g^{-k})q(gg_0) \\
		&\overset{\eqref{qg2}, \eqref{weakhochkeq}}{=} q(g)^{3k} q(gg_0)^k q(g)^{-2k}  q(gg_0)^{k^2-\sumf(k)} q(g)^{\sumf(k)}q(gg_0) \\
		&= q(g)^{k + \sumf(k)} q(gg_0)^{k + 1 + k^2 - \sumf(k)} \\
		&= q(g)^{\sumf(k+1)} q(gg_0)^{k+ 1 + k^2 -(\sumf(k+1)-k)} \\
		&= q(g)^{\sumf(k+1)} q(gg_0)^{(k+1)^2 - \sumf(k+1)}.
	\end{align*}
\end{proof}

In particular, for quadratic forms in the sense of \autoref{classicalquadraticformdef}, which are symmetric with respect to $g_0 = 1_G$, this yields $q(g^k) = q(g)^{k^2} = q(g^{-k})$. 
In fact, one often encounters the requirement  $q(g^k) = q(g)^{k^2}$ for all $k \in \mathbb{N}$ instead of the symmetry $q(g) = q(g^{-1})$ in the definition of quadratic forms. Here we have shown, in particular, the equivalence of both definitions. 

\begin{lemma}
\label{samebeta}
	Let $G$ be a finite abelian group and let $(q_0, g_0),(q_1,g_1) \in \WSQF$ be weak quadratic forms symmetric with respect to $g_0, g_1 \in G$, i.e. $q_0(g) = q_0(g^{-1}g_0)$ and $q_1(g) = q_1(g^{-1}g_1)$ for all $g \in G$. Assume $\beta_{q_0} = \beta_{q_1}$, then there exists a character $\eta \in \textnormal{Hom}(G, \mathbbm{k}^{\times})$ such that
	\[
	q_0 = q_1 \cdot \eta.
	\]
	 In particular, if $g_0=g_1=1$ it follows $\eta \in \textnormal{Hom}(G, \lbrace \pm 1 \rbrace)$. Conversely, such a decomposition implies $\beta_{q_0} = \beta_{q_1}$. 
	 \end{lemma}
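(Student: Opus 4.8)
The plan is to exhibit the character explicitly as the pointwise quotient $\eta := q_0 \cdot q_1^{-1}$, i.e. $\eta(g) := q_0(g)\, q_1(g)^{-1} \in \mathbbm{k}^{\times}$, which is well defined since both $q_0$ and $q_1$ take values in $\mathbbm{k}^{\times}$. The crux is to read the defining relation \eqref{assocbihomweak} as a multiplicativity defect: rearranging it gives $q_i(gh) = \beta_{q_i}(g,h)\, q_i(g)\, q_i(h)$ for $i = 0,1$. First I would form the quotient and substitute,
\[
\eta(gh) = \frac{q_0(gh)}{q_1(gh)} = \frac{\beta_{q_0}(g,h)\, q_0(g)\, q_0(h)}{\beta_{q_1}(g,h)\, q_1(g)\, q_1(h)},
\]
and then invoke the hypothesis $\beta_{q_0} = \beta_{q_1}$ to cancel the two bihomomorphism factors. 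What remains is $\eta(gh) = \eta(g)\,\eta(h)$, so $\eta \in \textnormal{Hom}(G,\mathbbm{k}^{\times})$, and $q_0 = q_1 \cdot \eta$ holds by construction. The symmetry data attached to $(q_0,g_0)$ and $(q_1,g_1)$ plays no role in this step, which is a single line of cancellation.

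For the refinement in the case $g_0 = g_1 = 1$, the symmetry hypotheses reduce to $q_i(g) = q_i(g^{-1})$. I would combine this with the multiplicativity just established: on one hand $\eta(g^{-1}) = q_0(g^{-1})/q_1(g^{-1}) = q_0(g)/q_1(g) = \eta(g)$, while on the other hand $\eta(g^{-1}) = \eta(g)^{-1}$ since $\eta$ is now a homomorphism. Comparing the two yields $\eta(g)^2 = 1$ for all $g \in G$, hence $\eta \in \textnormal{Hom}(G,\lbrace \pm 1 \rbrace)$.

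For the converse I would simply reverse the computation: assuming $q_0 = q_1 \cdot \eta$ with $\eta$ a character, substitute into \eqref{assocbihomweak} and use $\eta(gh) = \eta(g)\eta(h)$ to cancel all $\eta$-factors, recovering $\beta_{q_0}(g,h) = \beta_{q_1}(g,h)$ for all $g,h \in G$.

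There is no serious obstacle here; the proof is elementary throughout. The only points requiring care are keeping track of which implication is being proved, and noting that the symmetry hypothesis is genuinely needed only for the $\lbrace \pm 1 \rbrace$ refinement and only usable when the shifts agree at $g_0 = g_1 = 1$ — for distinct shifts the symmetry relates $q_i(g^{-1})$ to $q_i(g\,g_i)$ rather than to $q_i(g)$, so the clean involution argument producing square roots of unity is unavailable, which is exactly why the stronger conclusion is stated only in the unshifted case.
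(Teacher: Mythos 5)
Your proof is correct, and it is leaner than the one in the paper. The paper's argument first invokes \autoref{weakhochk} — specifically the identity $q(g^2)=q(g)^3q(gg_0)$ established there — to compute the diagonal values $\beta_{q_i}(g,g)=q_i(g)\,q_i(gg_i)$; from $\beta_{q_0}=\beta_{q_1}$ this yields $\eta(g):=q_0(g)/q_1(g)=q_1(gg_1)/q_0(gg_0)$, which gives the $\pm 1$ statement at $g_0=g_1=1$, and only afterwards is multiplicativity obtained, by the very cancellation you use (the equality $\beta_{q_1}=\beta_{q_0}=\beta_{q_1\eta}$ forces $\eta(gh)=\eta(g)\eta(h)$). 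You instead prove multiplicativity first and directly from the defining equation \eqref{assocbihomweak}, never touching \autoref{weakhochk}, and you obtain the $\pm 1$ refinement by playing the symmetry ($\eta(g^{-1})=\eta(g)$) against the homomorphism property ($\eta(g^{-1})=\eta(g)^{-1}$). What your route buys is self-containedness and a cleaner logical order: the only inputs are the definition of $\beta_q$ and, for the refinement, symmetry at $1_G$. What the paper's route buys is the auxiliary formula $\eta(g)=q_1(gg_1)/q_0(gg_0)$, which the thesis quietly reuses downstream (in the proofs of \autoref{wrqfwsqf} and \autoref{theoremweakribbon}); note, however, that this formula also drops out of your setup, since symmetry gives $q_i(g^{-1})=q_i(gg_i)$ and hence $\eta(g)=\eta(g^{-1})^{-1}=q_1(g^{-1})/q_0(g^{-1})=q_1(gg_1)/q_0(gg_0)$. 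Your closing remark is also accurate: for distinct shifts the involution argument indeed breaks down, which is exactly why the refinement is only claimed for $g_0=g_1=1$.
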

\begin{proof}
	Using \autoref{weakhochk} we find for $i=0,1$
	\[
	\beta_{q_i}(g,g) = \frac{q_i(g^2)}{q_i(g)^2} = \frac{q_i(g)^3q_i(gg_i)}{q_i(g)^2} = q_i(g)q_i(gg_i).
	\]
	This implies, under the assumption $\beta_{q_0} = \beta_{q_1}$, that $\eta(g) := \frac{q_0(g)}{q_1(g)} = \frac{q_1(gg_1)}{q_0(gg_0)}$. In particular, for $g_0=g_1=1$, it holds $\eta = \eta^{-1}$, i.e. $\eta(g) \in \lbrace \pm 1 \rbrace$. One checks that the equation $\beta_{q_1} = \beta_{q_0} = \beta_{q_1 \eta}$ implies the linearity of $\eta$. Conversely, if $q_0 = q_1 \cdot \eta$, the linearity of $\eta$ shows that $\beta_{q_0} = \beta_{q_1}$.
	\end{proof}

We say a group $G$ \textit{has square roots}, if for all $g \in G$, there exists some element $g^{\frac{1}{2}} \in G$, such that $g^{\frac{1}{2}} g^{\frac{1}{2}} = g$.

\begin{proposition}
\label{wrqfwsqf}
 Let $G$ be a finite abelian group with square roots. The map
	\begin{align}
	\label{wrqfwsqfeq}
	\begin{split}
		F: \quad \WRQF/ \sim &\rightarrow \WSQF/ \sim, \\
		\lbrack (q, \eta, g_0) \rbrack &\mapsto \lbrack (q \eta, g_0^{-2}) \rbrack
	\end{split}
	\end{align}
	is a well-defined bijection.
\end{proposition}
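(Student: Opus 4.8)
The plan is to verify in turn that $F$ lands in $\WSQF$ on representatives, descends to equivalence classes, and is a bijection, leaning throughout on \autoref{symmetrycorollary} and \autoref{weakquadraticformrepr} together with one structural consequence of the square-roots hypothesis.

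First I would check that $F$ sends an element of $\WRQF$ to an element of $\WSQF$. Given $(q,\eta,g_0)\in\WRQF$, the product $q\eta$ is a weak quadratic form because $\eta$ is a homomorphism; in fact $\beta_{q\eta}=\beta_q$, since the $\eta$-factors cancel in the defining expression $\beta_{q\eta}(g_1,g_2)=(q\eta)(g_1g_2)(q\eta)(g_1)^{-1}(q\eta)(g_2)^{-1}$. The defining condition $\eta(g)=\beta_q(g,g_0)=\beta_{q\eta}(g,g_0)$ of $\WRQF$ is then exactly the hypothesis of \autoref{symmetrycorollary}, applied to the weak quadratic form $q\eta$ with its decomposition into the quadratic form $q$ and the character $\eta$; that corollary yields $(q\eta)(g)=(q\eta)(g^{-1}g_0^{-2})$, i.e.\ $(q\eta,g_0^{-2})\in\WSQF$. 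That $F$ respects $\sim$ is then immediate: if $f\in\textnormal{Aut}(G)$ witnesses $(q,\eta,g_0)\sim(q',\eta',g_0')$, so that $q\eta=f^\star(q'\eta')$ and $f(g_0)=g_0'$, then the same $f$ gives $q\eta=f^\star(q'\eta')$ and $f(g_0^{-2})=f(g_0)^{-2}=g_0'^{-2}$, which is the $\WSQF$-equivalence of the images. Hence $F$ is a well-defined map on classes.

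The pivotal observation is that, for a finite abelian group, ``$G$ has square roots'' is equivalent to the squaring endomorphism $g\mapsto g^2$ being bijective, equivalently $|G|$ odd, equivalently $G$ having no nontrivial $2$-torsion; in particular square roots are unique and $\textnormal{Hom}(G,\{\pm 1\})=\{1\}$. Both injectivity and surjectivity reduce to this. For injectivity, suppose $F\lbrack(q,\eta,g_0)\rbrack=F\lbrack(q',\eta',g_0')\rbrack$ via some $f$ with $q\eta=f^\star(q'\eta')$ and $f(g_0)^{-2}=g_0'^{-2}$. Since squaring is injective, $f(g_0)=g_0'$, and then $f$ already witnesses $(q,\eta,g_0)\sim(q',\eta',g_0')$, because the $\WRQF$-relation asks for precisely $q\eta=f^\star(q'\eta')$ and $f(g_0)=g_0'$. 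For surjectivity, take $(Q,h_0)\in\WSQF$; by \autoref{weakquadraticformrepr} I would decompose $Q=\tilde Q\,\eta_Q$ with $\tilde Q\in\QF$, $\eta_Q\in\charac$ and $\beta_Q=\beta_{\tilde Q}$, and use square roots to pick the unique $g_0$ with $g_0^{-2}=h_0$. Substituting the symmetry $Q(g)=Q(g^{-1}h_0)$ into this decomposition — using $Q(h_0)=Q(1)=1$ and bilinearity of $\beta_Q$ — one computes $\eta_Q(g)^2=\beta_Q(g,h_0)^{-1}=\beta_Q(g,g_0)^2$, so $\eta_Q\cdot\beta_Q(-,g_0)^{-1}$ is a character valued in $\{\pm 1\}$; as $|G|$ is odd this character is trivial, whence $\eta_Q(g)=\beta_Q(g,g_0)$ and $(\tilde Q,\eta_Q,g_0)\in\WRQF$ with $F\lbrack(\tilde Q,\eta_Q,g_0)\rbrack=\lbrack(Q,h_0)\rbrack$.

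The main obstacle is exactly the mismatch between the two equivalence relations at the distinguished group element: the $\WSQF$-relation only controls $g_0$ up to its square ($f(g_0)^{-2}=g_0'^{-2}$), whereas the $\WRQF$-relation demands $f(g_0)=g_0'$ outright; dually, in surjectivity the symmetry of $Q$ pins down $\eta_Q$ only up to a $\{\pm 1\}$-valued character. Recognising that the square-roots hypothesis is precisely the absence of $2$-torsion — simultaneously killing the ambiguous square roots and the sign characters — is what closes both gaps and is the conceptual heart of the argument.
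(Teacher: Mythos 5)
Your proof is correct, and its skeleton matches the paper's: well-definedness via \autoref{symmetrycorollary} (after noting $\beta_{q\eta}=\beta_q$), and the reverse direction via the decomposition of \autoref{weakquadraticformrepr}. The differences are worth recording. The paper packages bijectivity as an explicit two-sided inverse $G(\lbrack (q,g_0)\rbrack)=\lbrack(\widetilde q,\eta,g_0^{-\frac12})\rbrack$ and checks $F\circ G=\textnormal{id}$ and $G\circ F=\textnormal{id}$, routing the membership $(\widetilde q,\eta,g_0^{-\frac12})\in\WRQF$ through \autoref{samebeta}; you prove injectivity and surjectivity directly, deriving $\eta_Q(g)^2=\beta_Q(g,h_0)^{-1}$ straight from the shifted symmetry instead of invoking \autoref{samebeta}. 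More substantively, the paper's computation $\beta_q(g,g_0^{-\frac12})=\beta_q(g,g_0)^{-\frac12}=\cdots=\eta(g)$ treats the exponent $-\tfrac12$ on scalars as well-defined, which it is only up to a sign, and its verification of $G\circ F=\textnormal{id}$ tacitly uses $(g_0^{-2})^{-\frac12}=g_0$, i.e.\ uniqueness of square roots; neither point is justified in the text. Your structural observation that, for finite abelian $G$, having square roots is equivalent to the squaring map being bijective, to $|G|$ being odd, and to $\textnormal{Hom}(G,\lbrace\pm1\rbrace)$ being trivial, is precisely what closes both gaps at once: unique square roots give your injectivity argument (and the paper's $G\circ F=\textnormal{id}$), and the vanishing of sign characters forces $\eta_Q=\beta_Q(-,g_0)$ in your surjectivity argument (resolving the paper's sign ambiguity). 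So your write-up is not only a valid proof but makes explicit the role of the square-roots hypothesis, which the paper's own proof uses only implicitly.
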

\begin{proof}
	Let $(q, \eta, g_0) \in \WRQF$, then \autoref{symmetrycorollary} implies that $(q \eta, g_0^{-2})$\\ $\in \WSQF$. Moreover, if $(q, \eta, g_0) \sim (q', \eta', g_0')$ in $\WRQF$, there exists by definition some $f \in \textnormal{Aut}(G)$, such that $q\eta = f^{\star}(q'\eta')$ and $f(g_0) = g_0'$. Thus, using the linearity of $f$, it follows $f(g_0^{-2}) = f(g_0)^{-2} = (g_0')^{-2}$. This shows that $(q \eta, g_0^{-2}) \sim (q' \eta', (g_0')^{-2})$, i.e. $F$ is well-defined. 
	
	Define a candidate for an inverse map as 
	\[
G:\quad  \WSQF/\sim\ \rightarrow \WRQF/\sim,\ \lbrack (q, g_0) \rbrack \mapsto \lbrack (\widetilde{q},\eta, g_0^{-\frac{1}{2}}) \rbrack,
\]
with $\widetilde{q} \in \QF$ and $\eta \in \charac$ provided by \autoref{weakquadraticformrepr}, i.e. $q=\widetilde{q}\eta$. 

Let $(q, g_0) \in \WSQF$, $\widetilde{q} \in \QF$ and $\eta \in \charac$ such that $q = \widetilde{q}{\eta}$. Then \autoref{samebeta} implies that $\beta_{q} = \beta_{\widetilde{q}}$, and, as $\widetilde{q}$ is symmetric with respect to the unit $1_G$ of $G$, that $\eta(g) = \frac{q(g)}{\widetilde{q}(g)} = \frac{\widetilde{q}(g)}{q(gg_0)}$. This implies 
$
q(gg_0) = \frac{\widetilde{q}(g)^2}{q(g)}.
$ Using that, and $q(g_0)=1$, it follows
\begin{align*}
	\beta_{\widetilde{q}}(g,g_0^{-\frac{1}{2}}) &= \beta_{q}(g,g_0^{-\frac{1}{2}}) = \beta_{q}(g,g_0)^{-\frac{1}{2}} = (\frac{q(gg_0)}{q(g)q(g_0)})^{-\frac{1}{2}} = (\frac{q(gg_0)}{q(g)})^{-\frac{1}{2}} \\
	 &= (\frac{\widetilde{q}(g)^2}{q(g)^2})^{-\frac{1}{2}} = \frac{q(g)}{\widetilde{q}(g)} = \eta(g).
\end{align*}
This shows that $(\widetilde{q},\eta, g_0^{-\frac{1}{2}}) \in \WRQF$.

Let $(q, g_0) \sim (q', g_0')$ in $\WSQF$, then by definition there exists some $f \in \textnormal{Aut}(G)$, such that $q = f^{\star}q'$ and $f(g_0) = g_0'$. If $\widetilde{q},  \widetilde{q}' \in \QF$ and $\eta, \eta' \in \charac$, such that $q = \widetilde{q}\eta$ and $q' = \widetilde{q}'\eta'$, it follows by assumption and linearity of $f$ that $(\widetilde{q}, \eta, g_0^{-\frac{1}{2}}) \sim (\widetilde{q}, \eta, g_0^{-\frac{1}{2}})$ in $\WRQF$. Thus, $G$ is well-defined.

It remains to show that $F$ and $G$ are mutually inverse. We find
\[	G\circ F(\lbrack (q, \eta, g_0) \rbrack ) 
	= G(\lbrack (q\eta, g_0^{-2}) \rbrack )
	= \lbrack (q',\eta', g_0) \rbrack 
	= \lbrack (q,\eta, g_0) \rbrack ,
\]
since $q'\eta' = q\eta$; and 
\[
F\circ G(\lbrack q,g_0 \rbrack) = F(\lbrack \widetilde{q}, \eta, g_0^{-\frac{1}{2}} \rbrack) = \lbrack (\widetilde{q}\eta, g_0)\rbrack = \lbrack (q,g_0)\rbrack.
\] 
\end{proof}

The results for a finite abelian group $G$ with square roots are summarised in the following diagram:

\begin{equation*}
		\begin{tikzcd}[row sep= 3em, column sep = 2em]
\WQF/\sim \arrow[leftrightarrow]{rr}{\overset{\eqref{wqfcharac}}{\simeq}} && (\QF \oplus \charac)\sim \\
(\WQF \oplus G)\sim \arrow[leftrightarrow]{rr}{\overset{\eqref{wqfcharac}}{\simeq}} \arrow[twoheadrightarrow]{u} &&(\QF \oplus \charac \oplus G)/\sim \arrow[twoheadrightarrow]{u} \\
\WSQF/\sim \arrow[leftrightarrow]{rr}{\overset{\eqref{wrqfwsqfeq}}{\simeq}} \arrow[hook]{u} &&\WRQF/\sim \arrow[hook]{u} \\
& \QF/\sim \arrow[hook]{lu}[left]{\lbrack q \rbrack \mapsto \lbrack (q,1) \rbrack \quad} \arrow[hook]{ru}[right]{\quad \lbrack q \rbrack \mapsto \lbrack (q,1,1) \rbrack }
		\end{tikzcd}.
\end{equation*}

\subsection{Generalised ribbon structures of graded vector spaces}

The purpose of this subsection is to introduce an appropriate notion of ribbon structures with respect to the generalised duality $(-)^{g_0}$ on $\Gvect$ which we defined in \autoref{generaldualitygraded}, and to relate it to the notion of weak quadratic forms which we defined in \autoref{weakquadraticformdef}, similarly to \autoref{ribbonqf}.

\begin{definition}[Weak ribbon monoidal structure]
\label{weakribbonmonoidal}
	Let $\WRMS$ be the set of \textit{weak ribbon monoidal structures} on $G\textnormal{-vect}_{\mathbbm{k}}$, that is, $\WRMS$ has as elements quadruples 
	\[ (\alpha, \gamma, \theta, g_0)\] 
	where $\alpha: \otimes \circ (\otimes \times \textnormal{id}_{G\textnormal{-vect}_{\mathbbm{k}}}) \cong \otimes \circ (\textnormal{id}_{G\textnormal{-vect}_{\mathbbm{k}}} \times \otimes)$, $\gamma: \otimes \cong \otimes \circ \tau$ and $\theta: \textnormal{id}_{G\textnormal{-vect}_{\mathbbm{k}}} \cong \textnormal{id}_{G\textnormal{-vect}_{\mathbbm{k}}}$ are natural isomorphisms, and $g_0 \in G$, such that
$	(\otimes, \mathbbm{k}, \alpha, \lambda, \varrho, \gamma, \theta)$ (cf. \eqref{generaltensordef})
defines a weak ribbon monoidal structure (cf. \autoref{weakribbondef}) on  $G\textnormal{-vect}_{\mathbbm{k}}$ with respect to the $\star$-autonomous duality $(-)^{g_0}$ defined by
	\[
	(V^{g_0})_g := \underline{\textnormal{Hom}}(V,\mathbbm{k}_{g_0}) = (V_{g^{-1}g_0})^{\star}.
	\]
	  One can define an equivalence relation $\sim$ on $\WRMS$ by defining
\[	(\alpha, \gamma, \theta, g_0) \sim (\alpha', \gamma', \theta', g_0'), \] 
if and only if there exists a braided monoidal equivalence 
		\[
	(F, \Phi, \phi): (G\textnormal{-vect}_{\mathbbm{k}}, \otimes, \mathbbm{k}, \alpha, \lambda, \varrho, \gamma) \cong (G\textnormal{-vect}_{\mathbbm{k}}, \otimes, \mathbbm{k}, \alpha', \lambda, \varrho, \gamma'),
	\]
	such that $F(\theta_{\mathbbm{k}_g}) = \theta'_{F(\mathbbm{k}_g)}$.
\end{definition}



\begin{theorem}
\label{theoremweakribbon}
	Let $G$ be a finite abelian group. The map
	\begin{align*}
		\WRQF/\sim &\rightarrow  \WRMS/\sim \\
 \lbrack (q, \eta, g_0) \rbrack &\mapsto \lbrack (\alpha_{\psi_q}, \gamma_{\Omega_q}, \theta_{q\eta}, g_0^{-2} ) \rbrack,
	\end{align*}
for $(\psi_q, \Omega_q) \in Z^3_{\textnormal{ab}}(G, \mathbbm{k} ^{\times})$, such that $\textnormal{EM}(\lbrack (\psi_q, \Omega_q) \rbrack) = q$, is a bijection.
\end{theorem}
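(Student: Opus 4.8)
The plan is to factor the asserted map through the bijection $\WSQF/\sim\ \cong\ \WRMS/\sim$, which I will establish separately, and to identify the first factor with the bijection of \autoref{wrqfwsqf}. Concretely, I aim to show that the map of the theorem equals the composite
\[
\WRQF/\sim\ \longrightarrow\ \WSQF/\sim\ \longrightarrow\ \WRMS/\sim ,
\]
where the first arrow is $\lbrack (q,\eta,g_0)\rbrack \mapsto \lbrack (q\eta, g_0^{-2})\rbrack$ from \autoref{wrqfwsqf}, and the second sends $\lbrack (p,h)\rbrack$ to $\lbrack (\alpha_{\psi_{\tilde p}},\gamma_{\Omega_{\tilde p}},\theta_p,h)\rbrack$, using the decomposition $p=\tilde p\,\eta$ of \autoref{weakquadraticformrepr} and a normalized abelian $3$-cocycle $(\psi_{\tilde p},\Omega_{\tilde p})$ with $\textnormal{EM}(\lbrack(\psi_{\tilde p},\Omega_{\tilde p})\rbrack)=\tilde p$ supplied by \autoref{emiso}. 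Since both factors are bijections, so is their composite.

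First I would pin down $\WRMS$ concretely on the skeleton whose objects are finite sums of the simples $\mathbbm{k}_g$. By \autoref{cohomologybraided} (the Eilenberg--MacLane classification), every braided monoidal structure on $G\textnormal{-vect}_{\mathbbm{k}}$ is, up to braided monoidal equivalence, of the form $(\alpha_\psi,\gamma_\Omega)$ for some normalized abelian $3$-cocycle. A natural automorphism $\theta$ of the identity is determined on each simple by a scalar, since $\textnormal{Hom}(\mathbbm{k}_g,\mathbbm{k}_h)=0$ for $g\neq h$ and $\textnormal{End}(\mathbbm{k}_g)=\mathbbm{k}$; hence $\theta=\theta_p$ for a unique $p\colon G\to\mathbbm{k}^\times$. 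Evaluating the twist axiom of \autoref{ribbondef} on $\mathbbm{k}_{g_1}\otimes\mathbbm{k}_{g_2}$, together with $\gamma_{\mathbbm{k}_{g_2},\mathbbm{k}_{g_1}}\circ\gamma_{\mathbbm{k}_{g_1},\mathbbm{k}_{g_2}}=\Omega(g_2,g_1)\Omega(g_1,g_2)\,\textnormal{id}=\beta_{q_\Omega}(g_1,g_2)\,\textnormal{id}$ by \eqref{assocbihomomega}, forces $p(g_1g_2)=p(g_1)p(g_2)\beta_{q_\Omega}(g_1,g_2)$, i.e. $p\in\WQF$ with $\beta_p=\beta_{q_\Omega}$. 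Finally, since $(\mathbbm{k}_g)^{g_0}\cong\mathbbm{k}_{g_0 g^{-1}}$ and the dual of the scalar $p(g)$ is again $p(g)$, the ribbon condition $(\theta_x)^{g_0}=\theta_{x^{g_0}}$ of \autoref{weakribbondef} reduces to $p(g)=p(g^{-1}g_0)$, that is $(p,g_0)\in\WSQF$.

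These computations show that the standard weak ribbon monoidal structures are exactly the $(\alpha_\psi,\gamma_\Omega,\theta_p,g_0)$ with $\beta_p=\beta_{q_\Omega}$ and $(p,g_0)\in\WSQF$, so the assignment $(p,g_0)\mapsto(\alpha_{\psi_{\tilde p}},\gamma_{\Omega_{\tilde p}},\theta_p,g_0)$ exhausts $\WRMS$ up to equivalence. Well-definedness on $\sim$-classes holds because the ambiguity in $p=\tilde p\,\eta$ (a sign character, by \autoref{samebeta} with $g_0=1_G$) is absorbed by \autoref{emiso} and \autoref{cohomologybraided}, and bijectivity follows from those two bijections once the braided part is separated off. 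For the composite, observe that $q\eta$ with $q\in\QF$ and $\eta\in\charac$ has quadratic part $\widetilde{q\eta}=q$ and character part $\eta$, because $\beta_{q\eta}=\beta_q$ (characters have trivial associated bihomomorphism); hence $(\psi_{\widetilde{q\eta}},\Omega_{\widetilde{q\eta}})=(\psi_q,\Omega_q)$ and the composite sends $\lbrack(q,\eta,g_0)\rbrack$ to $\lbrack(\alpha_{\psi_q},\gamma_{\Omega_q},\theta_{q\eta},g_0^{-2})\rbrack$, exactly the map of the theorem. The special case $g_0=1_G$, $\eta\in\textnormal{Hom}(G,\lbrace\pm1\rbrace)$ recovers \autoref{ribbonqf}.

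The main obstacle will be matching the two equivalence relations. On $\WRMS$ the relation is witnessed by a braided monoidal equivalence $(F,\Phi,\phi)$ preserving the twist, while on $\WSQF$ it is the pullback $f^\star$ along $f\in\textnormal{Aut}(G)$ together with $f(g_0)=g_0'$. I would argue that any braided monoidal autoequivalence of $G\textnormal{-vect}_{\mathbbm{k}}$ permutes the simples, inducing $f\in\textnormal{Aut}(G)$ with $F(\mathbbm{k}_g)\cong\mathbbm{k}_{f(g)}$, and acts on cocycles and forms precisely by $f^\star$; this is the content underlying \autoref{cohomologybraided}, so the genuinely new point is only to check that $F$ intertwines the dualities $(-)^{g_0}$ and $(-)^{g_0'}$ exactly when $f(g_0)=g_0'$, and that preservation of $\theta_p$ reads as $p=f^\star p'$. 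Some care is also needed with the shift $g_0\mapsto g_0^{-2}$ relating the symmetry element of $\WSQF$ to the duality element of $\WRMS$; this is precisely the bijection of \autoref{wrqfwsqf} and is where the square-root hypothesis on $G$ enters.
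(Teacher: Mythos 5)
Your proposal is, in substance, a repackaging of the paper's own proof: the paper also extracts the scalars $(\psi_{\alpha},\Omega_{\gamma},\Theta_{\theta})$ from a weak ribbon structure, derives $\beta_{\Theta_{\theta}}=\beta_{q_{\Omega_{\gamma}}}$ from the twist axiom and $(\Theta_{\theta},g_0)\in\WSQF$ from the ribbon axiom, and then splits off a character via \autoref{samebeta} — i.e.\ it runs the inverse map of \autoref{wrqfwsqf} inline (this is exactly where $g_0^{-\frac{1}{2}}$, hence the square-root hypothesis, silently enters the paper's proof as well). Your factorization through $\WSQF/\sim$ just makes that reuse explicit, which would be a nice modularization if it worked as stated.

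There is, however, a genuine gap: your second factor $\WSQF/\sim\ \rightarrow \WRMS/\sim$, $\lbrack (p,h)\rbrack \mapsto \lbrack(\alpha_{\psi_{\tilde p}},\gamma_{\Omega_{\tilde p}},\theta_p,h)\rbrack$, is not well defined by the argument you give. The decomposition $p=\tilde p\,\eta$ of \autoref{weakquadraticformrepr} is not unique: by \autoref{samebeta} (with $g_0=g_1=1_G$) any two admissible quadratic parts differ by a sign character $\chi\in\textnormal{Hom}(G,\lbrace\pm 1\rbrace)$. Replacing $\tilde p$ by $\tilde p\chi$ leaves the twist $\theta_p$ unchanged but replaces the braiding $\gamma_{\Omega_{\tilde p}}$ by $\gamma_{\Omega_{\tilde p\chi}}$, and these are in general \emph{inequivalent} braided structures: by \autoref{emiso} a braided monoidal equivalence would force $\tilde p\chi=f^{\star}\tilde p$ for some $f\in\textnormal{Aut}(G)$, which already fails for $G=\mathbb{Z}_2$, $\mathbbm{k}=\mathbb{C}$, $\tilde p(1)=i$, $\chi\neq 1$ (here $\textnormal{Aut}(G)$ is trivial). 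So the ambiguity is \emph{not} ``absorbed by \autoref{emiso} and \autoref{cohomologybraided}''; those results say precisely the opposite, namely that a $\WRMS$-class remembers the quadratic form $\tilde p$ through its braiding and not merely the twist $p$. The same defect invalidates your identification $\widetilde{q\eta}=q$: that is a choice, not a consequence of $\beta_{q\eta}=\beta_q$. (Note the contrast with the theorem's own map: inside $\WRQF$ the representability constraint $\eta=\beta_q(-,g_0)$ rigidifies the decomposition, which is why the map out of $\WRQF/\sim$ is well defined while a map out of $\WSQF/\sim$ a priori is not.) The gap is repairable, but by an observation you never make: the square-root hypothesis that you already need for \autoref{wrqfwsqf} forces $\vert G\vert$ to be odd (an element of order $2$ admits no square root), hence $\textnormal{Hom}(G,\lbrace\pm1\rbrace)$ is trivial and the decomposition $p=\tilde p\,\eta$ is genuinely unique. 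With that inserted, your factorization goes through — and it also makes transparent that \autoref{theoremweakribbon} really does require the square-root hypothesis, which neither your write-up nor the paper's statement records.
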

\begin{proof}
(Sketch) Let $(q, \eta, g_0) \in \WRQF$ and $(\psi_q, \Omega_q) \in Z^3_{\textnormal{ab}}(G, \mathbbm{k} ^{\times})$, such that $\textnormal{EM}(\lbrack (\psi_q, \Omega_q) \rbrack) = q$.
Then $(\otimes, \mathbbm{k}, \alpha_{\psi_q}, \lambda, \varrho, \gamma_{\Omega_q}, \theta_{q\eta})$ defines a weak ribbon monoidal structure on $\Gvectc$ with respect to the duality $(-)^{g_0^{-2}}$. Indeed, it is monoidal since the pentagon axiom follows from $d\psi_q = 1$ and the triangle axiom is true since $\psi_q$ is normalized. It is braided monoidal since the hexagon axioms follow from \eqref{cocycleequation}. Moreover, $\theta_{q\eta}$ is a twist, since the twist condition $(\theta_{q\eta})_{g_1g_2} = (\gamma_{\Omega_q})_{g_2,g_1} \circ (\gamma_{\Omega_q})_{g_1,g_2} \circ ((\theta_{q\eta})_{g_1} \otimes (\theta_{q\eta})_{g_2})
$	corresponds to
	\[
	q(g_1g_2) \eta(g_1g_2) = \Omega_q(g_2,g_1) \Omega_q(g_1,g_2) q(g_1)\eta(g_1) q(g_2)\eta(g_2),
	\]
which is true by \eqref{assocbihomomega} and the linearity of $\eta$. 
Finally, by \autoref{symmetrycorollary} the weak quadratic form $q\eta$ is symmetric with respect to $g_0^{-2}$, which shows that $\theta_{q\eta}$ is ribbon with respect to the duality $(-)^{g_0^{-2}}$.
Thus, we showed that $(\alpha_{\psi_q}, \gamma_{\Omega_q}, \theta_{q\eta}, g_0^{-2}) \in \WRMS$.

If $(q, \eta, g_0) \sim (q', \eta', g_0')$, there exists by definition some group automorphism $f \in \textnormal{Aut}(G)$, such that $q\eta=f^{\star}(q'\eta')$ and $f(g_0) = g_0'$. This implies that $\lbrack (\psi_q, \Omega_q) \rbrack = \lbrack (f^{\star}\psi_{q'}, f^{\star}\Omega_{q'}) \rbrack$. Thus, there exists some normalized abelian $2$-cochain $\kappa \in C^2(G, \mathbbm{k}^{\times})$ such that
	$(\psi_q, \Omega_q) = (f^{\star}(\psi_{q'}) \cdot d\kappa, f^{\star}(\Omega_{q'}) \cdot \kappa_{\textnormal{comm}}).$ This gives rise to a candidate for a braided monoidal functor
	\[ (F, \Phi, \psi): (G\textnormal{-vect}_{\mathbbm{k}}, \otimes, \mathbbm{k}, \alpha_{\psi_q}, \lambda, \varrho, \gamma_{\Omega_q}) \rightarrow (G\textnormal{-vect}_{\mathbbm{k}}, \otimes, \mathbbm{k},  \alpha_{\psi_{q'}}, \lambda, \varrho, \gamma_{\Omega_{q'}}).
	\]
 defined by $F(\mathbbm{k}_{g}) := \mathbbm{k}_{f(g)}$,\ $F(\textnormal{id}_{\mathbbm{k}_{g}}) = \textnormal{id}_{\mathbbm{k}_{f(g)}}$,\ $\Phi_{g_1,g_2} := \kappa(g_1,g_2) \textnormal{id}_{\mathbbm{k}_{f(g_1g_2)}}$ and $\psi = \textnormal{id}_{\mathbbm{k}}$. By definition $F(\mathbbm{k}_{1_G}) = \mathbbm{k}_{f(1_G)} = \mathbbm{k}_{1_G}$. Moreover, 
	using the definitions of $d\kappa, \kappa_{\textnormal{comm}}$ and that $\kappa$ is normalized, one checks that this is indeed
		is a braided monoidal functor. The equality $F((\theta_{q\eta})_{\mathbbm{k}_g}) = (\theta_{q'\eta'})_{F(\mathbbm{k}_g)}$ is witnessed by $q\eta(g) = (q'\eta')(f(g))$.  This shows that $(\alpha_{\psi_q},\gamma_{\Omega_q}, \theta_{q\eta}, g_0^{-2}) \sim  (\alpha_{\psi_{q'}},\gamma_{\Omega_{q'}}, \theta_{q'\eta'}, (g_0')^{-2})$.
	
	Conversely, let $(\alpha, \gamma, \theta, g_0) \in \WRMS$. We then find for any $g,g_1, g_2, g_3 \in G$ scalars $\psi_{\alpha}(g_1, g_2, g_3), \Omega_{\gamma}(g_2,g_1), \Theta_{\theta}(g) \in \mathbbm{k}^{\times}$, such that
	\[ \alpha((1_{g_1} \otimes 1_{g_2}) \otimes 1_{g_3}) = \psi_{\alpha}(g_1, g_2, g_3) 1_{g_1} \otimes (1_{g_2} \otimes 1_{g_3}),\]
	\[ \gamma(1_{g_1} \otimes 1_{g_2}) = \Omega_{\gamma}(g_2, g_1) 1_{g_2} \otimes 1_{g_1}, \]
	\[ \theta(1_g) = \Theta_{\theta}(g) 1_g. \]
	One shows that $(\psi_{\alpha}, \Omega_{\gamma}) \in Z^3_{\textnormal{ab}}(G, \mathbbm{k} ^{\times})$ in the following way. Firstly, that $\psi_{\alpha}, \Omega_{\gamma}$ are normalized follows from the triangle axiom; that $d\psi = 1$ follows from the pentagon axiom; and finally, $\eqref{cocycleequation}$ follows from the hexagon axiom. Moreover, the twist condition of $\theta$ implies that $\beta_{\Theta_{\theta}} = \beta_{q_{\Omega_{\gamma}}}$. Since by \autoref{emiso} it holds $q_{\Omega_{\gamma}} \in \QF$, it follows that $\beta_{q_{\Omega_{\gamma}}}$, and thus $\beta_{\Theta_{\theta}}$, is bilinear. Moreover, the ribbon condition of $\theta$ with respect to the duality $(-)^{g_0}$ implies that $\Theta_{\theta}(g^{-1}g_0) = \Theta_{\theta}(g)$ for all $g \in G$. Combined, this shows that $(\Theta_{\theta},g_0) \in \WSQF$ . As a consequence, \autoref{samebeta} implies the existence of some character $\eta \in \charac$, such that $q_{\Omega_{\gamma}} \eta = \Theta_{\theta}$ and $\frac{\Theta_{\theta}(g)}{q_{\Omega_{\gamma}}(g)} = \eta(g) = \frac{q_{\Omega_{\gamma}}(g)}{\Theta_{\theta}(gg_0)}$.
	Using that $\Theta_{\theta}(g_0) = \Theta_{\theta}(g_0^{-1}g_0) = \Theta_{\theta}(1) = 1$ in the second to last step, it follows
	\begin{align*}
		\beta_{q_{\Omega_{\gamma}}}(g,g_0^{-\frac{1}{2}}) 
		&= \beta_{\Theta_{\theta}}(g,g_0)^{-\frac{1}{2}} 
		= \big( \frac{\Theta(gg_0)}{\Theta_{\theta}(g)\Theta_{\theta}(g_0)} \big)^{-\frac{1}{2}}
		= \big( \frac{q_{\Omega_{\gamma}}(g)^2}{\Theta_{\theta}(g)^2 \Theta_{\theta}(g_0)} \big)^{-\frac{1}{2}} \\
		&= \frac{\Theta_{\theta}(g)\Theta_{\theta}(g_0)^{\frac{1}{2}}}{q_{\Omega_{\gamma}}(g)} 
		= \frac{\Theta_{\theta}(g)}{q_{\Omega_{\gamma}}(g)} = \eta(g).
	\end{align*}
	This shows that $(q_{\Omega_{\gamma}}, \eta, g_0^{-\frac{1}{2}}) \in \WRQF$.

	If $(\alpha, \gamma, \theta, g_0) \sim (\alpha', \gamma', \theta', g_0')$ there exists by definition a braided monoidal equivalence
	\[
	(F, \Phi, \phi): (\Gvectc, \otimes, \mathbbm{k}, \alpha, \lambda, \varrho, \gamma) \cong (\Gvectc, \otimes, \mathbbm{k}, \alpha', \lambda, \varrho, \gamma').
	\]
	such that $F(\theta_{\mathbbm{k}_g}) = \theta'_{F(\mathbbm{k}_g)}$.
	Defining $f: G \rightarrow G$ such that $F(\mathbbm{k}_{g}) \cong \mathbbm{k}_{f(g)}$ yields an automorphism $f \in \textnormal{Aut}(G)$. Indeed, the invertibility of $f$ is induced by the invertibility of $F$, and the linearity of $f$ follows from
	\[ \mathbbm{k}_{f(g_1)f(g_2)} \cong \mathbbm{k}_{f(g_1)} \otimes \mathbbm{k}_{f(g_2)} \cong F(\mathbbm{k}_{g_1}) \otimes F(\mathbbm{k}_{g_2}) \cong F(\mathbbm{k}_{g_1} \otimes \mathbbm{k}_{g_2}) \cong F(\mathbbm{k}_{g_1g_2}) \cong \mathbbm{k}_{f(g_1g_2)}. \]
	The equality $F(\theta_{\mathbbm{k}_g}) = \theta'_{F(\mathbbm{k}_g)}$ yields $\Theta_{\theta}(g) = \Theta_{\theta'}(f(g))$, which translates to $q_{\Omega_{\gamma}}\eta(g) = f^{\star}(\Omega_{\gamma'}\eta')(g)$. Moreover, the isomorphism $F((\mathbbm{k}_g)^{g_0^{-\frac{1}{2}}}) \cong F(\mathbbm{k}_g)^{(g_0')^{-\frac{1}{2}}}$ implies that $f(g_0^{-\frac{1}{2}})$ $= (g_0')^{-\frac{1}{2}}$. Together this shows $(q_{\Omega_{\gamma}}, \eta, g_0^{-\frac{1}{2}}) \sim (q_{\Omega_{\gamma'}}, \eta', (g_0')^{-\frac{1}{2}})$.
	
	After a careful consideration one understands that both construction are mutually inverse. 
\end{proof}

This allows us to deduce \autoref{ribbonqf}, i.e. $(\QF \oplus \textnormal{Hom}(G, \lbrace \pm 1 \rbrace))/\sim\ \cong \RMS/\sim$, from the composition
$(\QF \oplus \textnormal{Hom}(G, \lbrace \pm 1 \rbrace))/\sim\ \rightarrow \WSQF/\sim\ \rightarrow \WRQF/\sim\ \rightarrow \WRMS/\sim\ \rightarrow \RMS/\sim$ given by
\[ \lbrack (q, \eta) \rbrack \mapsto \lbrack (q\eta, 1_G)\rbrack \mapsto \lbrack (q, \eta, 1_G) \rbrack \mapsto \lbrack (\alpha_{\psi_q}, \gamma_{\Omega_q}, \theta_{q\eta}, 1_G) \rbrack \mapsto \lbrack (\alpha_{\psi_q}, \gamma_{\Omega_q}, \theta_{q\eta}) \rbrack . \]

\section{Topological vector spaces}	

This section investigates a duality of topological vector spaces based on work of Barr \cite{barr2000autonomous}. Let $\mathbbm{k}$ be any field. If not otherwise stated, $\mathbbm{k}$-vector spaces are possibly infinitely dimensional.

\subsection{The categories $\chu$ and $\TVS$}

\begin{definition}[Pair]
\label{defpair}
	A \textit{pair} is a triple $(V, W, \langle - , - \rangle)$ of $\mathbbm{k}$-vector spaces $V, W$, together with a linear map $\langle -, - \rangle: V \otimes W \rightarrow \mathbbm{k}$ called \textit{pairing}. A \textit{morphism of pairs} $(V_1, W_1, \langle -,- \rangle_1)$ and $(V_2, W_2, \langle -,- \rangle_2)$ is a tuple $(f,g)$ of linear maps $f: V_1 \rightarrow V_2$ and $g: W_2 \rightarrow W_1$, such that
	\begin{equation}
	\label{pair}
		\langle f(v), w \rangle_2 =\ \langle v, g(w) \rangle_1
	\end{equation}
	for all $v \in V_1$ and $w \in W_2$.
\end{definition}
A canonical pair is given by $\mathbbm{k} := (\mathbbm{k}, \mathbbm{k}, \cdot)$ with the multiplication as pairing.

\begin{definition}[Separated, extensional]
	A pair $(V, W, \langle - , - \rangle)$ is called \textit{separated}, if the induced map $V \rightarrow W^{\star}$, $v \mapsto \langle v , - \rangle$  is injective. A pair $(V, W, \langle - , - \rangle)$ is called \textit{extensional}, if the induced map $W \rightarrow V^{\star}$, $w \mapsto \langle - , w \rangle$ is injective.
\end{definition}

\begin{definition}[Category $\chu$]
\label{chudef}
	The category of separated and extensional pairs, and morphism of pairs is denoted $\chu = \chu(\textnormal{Vect}, \mathbbm{k})$ due to P.H. Chu.
\end{definition}

Let $X$ be a set and $\mathcal{T} \subseteq \mathcal{P}(X)$ a family of subsets of $X$. $\mathcal{T}$ is called \textit{topology} on $X$, if $\emptyset, X \in \mathcal{T}$; any union of elements of $\mathcal{T}$ is an element of $\mathcal{T}$; and the intersection of finitely many elements of $\mathcal{T}$ is an element of $\mathcal{T}$. A \textit{neighbourhood} of $x \in X$ is a subset $U \subseteq X$, such that there exists a set $T \in \mathcal{T}$ with $x \in T \subseteq U$. The collection of neighbourhoods of $x \in X$ is denoted by $\mathcal{U}(x)$. A \textit{neighbourhood basis} of $x \in X$ is a subset $\mathcal{B}(x) \subseteq \mathcal{U}(x)$ of neighbourhoods of $x$, such that for all $U \in \mathcal{U}(x)$ there exists a $B \in \mathcal{B}(x)$ with $B \subseteq U$. A topological space $(X, \mathcal{T})$ is called \textit{Hausdorff}, if for every $x,y \in X$, $x \not = y$, there exist neighbourhoods $U \in \mathcal{U}(x)$ of $x$, and $V \in \mathcal{U}(y)$ of $y$, such that $U \cap V = \emptyset$.

A \textit{topological field} is a field $\mathbbm{k}$ endowed with a topology, such that the addition and the multiplication are continuous as maps $\mathbbm{k} \times \mathbbm{k} \rightarrow \mathbbm{k}$, where $\mathbbm{k} \times \mathbbm{k}$ carries the product topology. A \textit{topological vector space} is a vector space $V$ over a topological field $\mathbbm{k}$, endowed with a topology, such that the vector addition $V \times V \rightarrow V$ and the scalar multiplication $\mathbbm{k} \times V \rightarrow V$ are continuous, where the products carry again the product topology.

\begin{definition}[Convex, balanced, absorbing]
	Let $V$ be a vector space over $\mathbbm{k} \in \lbrace \mathbbm{R}, \mathbbm{C} \rbrace$. A subset $C \subseteq V$ is called
	\begin{enumerate}
		\item \textit{convex}, if for all $x,y \in C$ and $0 \leq t \leq 1$, it follows $tx + (1-t)y \in C$;
		\item \textit{{balanced}}, if for all $x \in C, \lambda \in \mathbbm{k}$, with $\vert \lambda \vert \leq 1$, it follows $\lambda x \in C$;
		\item \textit{absorbing}, if for all $x \in V$ there exists a $r \in \mathbbm{R}$, such that for all $\lambda \in \mathbbm{k}$ with $\vert \lambda \vert \geq r$, it follows $x \in \lambda C$.
	\end{enumerate}
\end{definition}

\begin{definition}[Locally convex]
\label{locallyconvexdef1}
	A topological vector space $V$ is called \textit{locally convex}, if $0 \in V$ has a neighbourhood basis consisting of balanced absorbing convex sets.
\end{definition}

\begin{definition}[$\TVS$]
\label{tvsdef}
	The category of Hausdorff locally convex topological vector spaces and continuous linear maps is denoted by $\TVS$.
\end{definition}

Note that Hausdorff locally convex topological vector spaces are sometimes simply called topological vector spaces, since the preconditions are considered standard across functional analysis.

The categories $\chu$ and $\TVS$ can be related in the following way. Let $V \in \TVS$ be a Hausdorff locally convex topological vector space with topological dual 
\[V' := \lbrace f \in V^{\star} \mid f \textnormal{ continuous} \rbrace . \] We can define a pair 
\[ T(V) := (V, V', \textnormal{eval}) \] with pairing given by the evaluation map. Note that the topology structure of $V$ is encoded in the topological dual $V'$. By construction the induced pairing $V' \rightarrow V^{\star}$ is injective, i.e. $T(V)$ extensional. Moreover, the Hahn-Banach separation theorem states that in a Hausdorff locally convex topological vector space points can be separated, i.e. for all $x,y \in V$ with $x \not = y$ there exists a continuous linear functional $f \in V'$, such that $f(x) \not = f(y)$. In particular for every $x \not = 0$, there exists a linear functional $f \in V'$, such that $f(x) \not = 0$. This shows that $T(V)$ is also separated, and thus, $T(V) \in \chu$. Defining in addition $T(f) = (f, f^{\star}$) on morphisms yields a functor
\begin{equation}
\label{Tfunctordef}
	T: \TVS \rightarrow \chu.
\end{equation}

In the next section we will introduce subcategories of $\TVS$, such that $T$ becomes an equivalence.

\subsection{Weakly and Mackey topologized vector spaces}

\begin{definition}[Dual topology]
\label{dualtop}
	Let $(V,W,\langle - , - \rangle) \in \chu$ be a separated and extensional pair. A \textit{dual topology} on $V$ is a topology $\mathcal{T}$, such that $(V, \mathcal{T}) \in \TVS$ and such that the injective map $W \rightarrow V^{\star}$ induces an isomorphism
	\[
	W \simeq (V, \mathcal{T})'.
	\]
\end{definition} 

For example, if $(V, \mathcal{T}) \in \TVS$ is a Hausdorff locally convex topological vector space, then $\mathcal{T}$ is by definition a dual topology with respect to the pair $T(V) = (V, (V, \mathcal{T})', \textnormal{eval}) \in \chu$. 

In general there could exist more than one dual topology to a pair. In the following we give the two extremal choices. 

Given two topologies $\mathcal{T}_1, \mathcal{T}_2$ on some set $X$, we say $\mathcal{T}_1$ is \textit{weaker} than $\mathcal{T}_2$, and $\mathcal{T}_2$ is \textit{stronger} than $\mathcal{T}_1$, if $\mathcal{T}_1 \subseteq \mathcal{T}_2$. 
 
\begin{definition}[Weak, strong topology]
\label{weakstrongdef}
	Let $(V, W, \langle - , - \rangle) \in \chu$ be a separated and extensional pair. The \textit{weak topology} $\sigma(V, W, \langle -,- \rangle)$ is the weakest dual topology on $V$ with respect to the pair $(V, W, \langle - , - \rangle)$. 
	The \textit{strong topology} $\tau(V, W, \langle -, - \rangle)$ is the strongest dual topology on $V$ with respect to the pair $(V, W, \langle - , - \rangle)$. If the pairing $\langle -,- \rangle$ is clear from the context, we abbreviate $\sigma(V, W) := \sigma(V, W, \langle -,- \rangle)$ and $\tau(V, W) := \tau(V, W, \langle -,- \rangle)$
\end{definition}



\begin{definition}[Weakly, Mackey spaces]
\label{weaklymackeydef}
	Let $V \in \TVS$ be a topological vectorspace and $V' \in \Vect$ its topological dual. The topology $\sigma(V, V')$ is called the \textit{weak topology on} $V$. The topological vector space $V_{\sigma} := (V, \sigma(V, V')) \in \TVS$ is called \textit{weakly topologized}.
  The topology $\sigma(V', V)$ is called the \textit{weak}-$^\star$ \textit{topology} on $V'$ and we abbreviate $(V')_{\sigma} := (V', \sigma(V', V))$. The topology $\tau(V, V')$ is called the \textit{Mackey topology on} $V$. The topological vector space $V_{\tau} := (V, \tau(V, V')) \in \TVS$ is called \textit{Mackey space}.
\end{definition}

The \textit{Mackey-Arens Theorem} (cf. \cite{mackey1946convex}, \cite{bourbaki1977}) ensures that the Mackey topology and the weak topology exist. 

The weak topology $\sigma(V, V')$ on $V \in \TVS$ is an example of an initial topology. In particular, it enjoys the following universal property:
\begin{quote}
A function $f: U \rightarrow V_{\sigma}$ from some topological vector space $U$ to $V$ is continuous if and only if $v' \circ f$ is continuous for all continuous linear functionals $v' \in V'$, 
\begin{equation}
\label{weakuniversal}
	\begin{tikzcd}[row sep= large, column sep = large]
	V_{\sigma} \arrow{r}{v'} &\mathbbm{k} \\
	U \arrow{u}{f} \arrow{ur}[right]{v' \circ f}
	\end{tikzcd}.
\end{equation}
\end{quote}

\begin{definition}[Categories $\TVSw$, $\TVSm$]
\label{mackeyweaklycatdef}
	Let $\TVSw$ denote the full subcategory of $\TVS$ of weakly topologized vector spaces and $\TVSm$ the full subcategory of Mackey spaces.
\end{definition}

The functor $T: \TVS \rightarrow \chu$, defined in \eqref{Tfunctordef}, restricts to functors $T_w: \TVSw \rightarrow \chu$ and $T_m: \TVSm \rightarrow \chu$. Conversely, define functors $L: \chu \rightarrow  \TVS_m$ and $R: \chu \rightarrow \TVS_w$ in the following way. For $(V, W, \langle - , - \rangle) \in \chu$ let
\[
L((V, W, \langle - , - \rangle)) := (V, \tau(V, W)) \quad \textnormal{and} \quad R((V, W, \langle - , - \rangle)) := (V, \sigma(V, W))
\]
and on morphism let $L((f,g)) = R((f,g)) := f$. The following result is due to \cite[Sec. 4.5]{barr2000autonomous}.

\begin{theorem}
\label{chutvsequivalence}
The functors in the diagram
\begin{equation}
\label{adjointequivalencechudiagram}
	\begin{tikzcd}[row sep= large, column sep = large]
 &\chu \arrow[shift left=0.5ex]{dl}{L} \arrow[shift right=0.5ex, swap]{dr}{R} \\
\TVSm \arrow[shift left=0.5ex]{ur}{T_m} && \TVSw  \arrow[shift right=0.5ex, swap]{ul}{T_w}
	\end{tikzcd}
\end{equation}
provide quasi-inverse adjoint equivalences. 
\end{theorem}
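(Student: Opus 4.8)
The plan is to prove \autoref{chutvsequivalence} by showing that $(L, T_m)$ and $(R, T_w)$ are each pairs of quasi-inverse functors whose comparison natural isomorphisms are the identity on underlying vector spaces; the adjoint-equivalence refinement then comes for free from the standard fact that any equivalence can be promoted to an adjoint equivalence. First I would verify that the four functors are well defined on objects. Given $(V,W,\langle-,-\rangle) \in \chu$, the Mackey--Arens theorem guarantees that $\tau(V,W)$ and $\sigma(V,W)$ are dual topologies, so the canonical injection $W \hookrightarrow V^{\star}$ (injective because the pair is extensional) induces $(V,\tau(V,W))' \cong W \cong (V,\sigma(V,W))'$; separatedness makes these topologies Hausdorff and their construction makes them locally convex, so both land in $\TVS$. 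Since $\tau(V,W)$ is by definition the \emph{strongest} dual topology, $L(V,W,\langle-,-\rangle)$ is a Mackey space, and since $\sigma(V,W)$ is the \emph{weakest} dual topology, $R(V,W,\langle-,-\rangle)$ is weakly topologized; that $T_m, T_w$ land in $\chu$ was already established around \eqref{Tfunctordef}.

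Next I would check functoriality on morphisms, which is where the two sides behave differently. A morphism of pairs $(f,g)$ satisfies $\langle f(v),w\rangle_2 = \langle v,g(w)\rangle_1$, so $g$ is the transpose of $f$ and carries $W_2$ into $W_1$. For $R$ this immediately yields continuity via the universal property \eqref{weakuniversal}: for each $w \in W_2 \hookrightarrow V_2^{\star}$ the composite $w \circ f$ equals $g(w) \in W_1$, which is $\sigma(V_1,W_1)$-continuous, so $f \colon (V_1,\sigma(V_1,W_1)) \to (V_2,\sigma(V_2,W_2))$ is continuous. For $L$ the same transpose condition gives Mackey continuity by the standard functional-analytic theorem that a weakly continuous linear map is continuous for the Mackey topologies: the transpose $g$ sends $\sigma(W_2,V_2)$-compact absolutely convex sets to $\sigma(W_1,V_1)$-compact ones, so $f$ pulls back the generating Mackey neighborhoods (polars of such sets) correctly. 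I expect this Mackey-continuity step to be the main obstacle, since, unlike the weak topology, the Mackey topology has no convenient universal property and one must argue through polars of weakly compact sets rather than by a one-line diagram chase.

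Finally I would exhibit the natural isomorphisms. On $\chu$ one computes $T_m L(V,W,\langle-,-\rangle) = (V,(V,\tau(V,W))',\mathrm{eval}) \cong (V,W,\langle-,-\rangle)$ because $\tau(V,W)$ is a dual topology, and identically $T_w R \cong \mathrm{id}_{\chu}$ using $\sigma(V,W)$; in both cases the comparison map is the canonical identification $W \cong (V,\cdot)'$, which is natural in the pair. Conversely $L T_m(V) = (V,\tau(V,V'))$ equals $V$ precisely because a Mackey space is one whose topology is $\tau(V,V')$, and dually $R T_w(V) = (V,\sigma(V,V')) = V$ since a weakly topologized space carries the topology $\sigma(V,V')$. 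Thus $L$ and $T_m$, respectively $R$ and $T_w$, are mutually quasi-inverse equivalences whose unit and counit are identities on underlying spaces; upgrading each equivalence to an adjoint equivalence then yields exactly the quasi-inverse adjoint equivalences asserted in \autoref{chutvsequivalence}.
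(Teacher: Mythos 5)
Your proposal is correct, and it supplies an argument that the paper itself omits: the thesis states \autoref{chutvsequivalence} as a cited result (Barr, Sec.\ 4.5) and gives no proof, so there is nothing in the text to compare against except the surrounding definitions, with which your argument is fully consistent. You correctly isolate the only genuinely non-formal ingredient, namely that a morphism of pairs induces a \emph{Mackey}-continuous map $(V_1,\tau(V_1,W_1)) \to (V_2,\tau(V_2,W_2))$; the polar argument you sketch is the right one (the transpose $g$ is $\sigma(W_2,V_2)$-to-$\sigma(W_1,V_1)$ continuous since $\langle v, g(w)\rangle_1 = \langle f(v), w\rangle_2$, hence it carries absolutely convex weakly compact sets to such sets, and $f^{-1}(K^{\circ}) = (g(K))^{\circ}$ exhibits continuity on the generating Mackey neighbourhoods furnished by the Mackey--Arens theorem). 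The weak-topology side via the universal property \eqref{weakuniversal}, the identifications $(V,\tau(V,W))' \cong W \cong (V,\sigma(V,W))'$ coming from the definition of dual topology, and the observation that $LT_m$ and $RT_w$ are literally the identity on $\TVSm$ and $\TVSw$ (since membership in those subcategories \emph{means} the topology equals $\tau(V,V')$, resp.\ $\sigma(V,V')$) are all as they should be, and the upgrade to an adjoint equivalence is indeed automatic. Two small points to keep in mind if you write this up in full: the naturality of the comparison isomorphism $T_mL \cong \mathrm{id}_{\chu}$ (identity in the first slot, the canonical injection $W \hookrightarrow V^{\star}$ onto the continuous dual in the second) deserves an explicit check against the definition of morphisms of pairs, and the Mackey--Arens machinery you invoke presumes $\mathbbm{k} \in \lbrace \mathbb{R}, \mathbb{C} \rbrace$, a restriction the thesis itself blurs by announcing ``any field'' at the start of Section 4.3.
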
 

\subsection{$\star$-autonomous structure of $\chu$}

For every pair $\mathcal{V}=(V, W, \langle -,- \rangle) \in \chu$ there exists a \textit{dual} pair 
\[ \mathcal{V}^{\star} :=(W, V, \langle -,- \rangle) \in \chu \]
 with pairing induced by the pairing of $\mathcal{V} \in \chu$ using the symmetry of the tensor product. Similarly for a morphism $(f,g): \mathcal{V} \rightarrow \mathcal{W}$ between pairs $\mathcal{V}, \mathcal{W} \in \chu$ one defines a dual morphism
 \[
 (f,g)^{\star} := (g,f).
 \]
 By the very definition one thus has 
 \[ \mathcal{V}^{\star \star} = \mathcal{V} \quad \textnormal{and} \quad (f,g)^{\star \star} = (f,g), \]
 similar to the case of finite dimensional vector spaces. 
  
  Note that the collection of morphisms $\textnormal{Hom}_{\chu}(\mathcal{V}, \mathcal{W})$ between pairs $\mathcal{V}, \mathcal{W} \in \chu$ can be equipped in a canonical way with a vector space structure. 	
  	
\begin{definition}
\label{chuinternalhom}
	Given pairs $\mathcal{V}_1=(V_1, W_1, \langle - , - \rangle_1)$ and $\mathcal{V}_2=(V_2, W_2, \langle - , - \rangle_2)$ in $\chu$, let $\underline{\textnormal{Hom}}(\mathcal{V}_1, \mathcal{V}_2)$ be the pair $(\textnormal{Hom}_{\chu}(\mathcal{V}_1, \mathcal{V}_2), V_1 \otimes W_2, \langle - , - \rangle)$  with pairing 
	\[ \langle (f,g), v \otimes w \rangle := \langle f(v), w \rangle_2 = \langle v, g(w) \rangle_1 \]
	for $(f,g) \in \textnormal{Hom}_{\chu}(\mathcal{V}_1,\mathcal{V}_2)$ and $v \in V_1, w \in W_2$.
\end{definition}

One can show (cf. \cite[Sec. 4.2]{barr2000autonomous}) that the pair $\underline{\textnormal{Hom}}(\mathcal{V}_1, \mathcal{V}_2)$ is separated and extensional, i.e. $\underline{\textnormal{Hom}}(\mathcal{V}_1, \mathcal{V}_2) \in \chu$.

In fact, Barr showed a variety of more properties (cf. \cite[Sec. 4.1]{barr2000autonomous}), as listed in the following result.

\begin{theorem}
For any pairs $\mathcal{U}, \mathcal{V}, \mathcal{W} \in \chu$ it holds
		\begin{equation}
		\label{preautonom2}
 			\textnormal{Hom}_{\chu}(\mathbbm{k}, \underline{\textnormal{Hom}}( \mathcal{V}, \mathcal{W})) = \textnormal{Hom}_{\chu}( \mathcal{V},  \mathcal{W});
		 \end{equation}
		\begin{equation}
		\label{theorem2}
 	\underline{\textnormal{Hom}}(\mathcal{U}, \underline{\textnormal{Hom}}( \mathcal{V},  \mathcal{W})) \cong\ \underline{\textnormal{Hom}}( \mathcal{V}, \underline{\textnormal{Hom}}(\mathcal{U},  \mathcal{W}));
 \end{equation}
		\begin{equation}
 	\label{theorem3}
 \underline{\textnormal{Hom}}(\mathcal{V},  \mathcal{W}) \cong \underline{\textnormal{Hom}}( \mathcal{W}^{\star},  \mathcal{V}^{\star});
 \end{equation}
		\begin{equation} 
		\label{theorem4}  \mathcal{V}^{\star} \cong \underline{\textnormal{Hom}}(\mathcal{V}, \mathbbm{k}).	
 \end{equation}
\end{theorem}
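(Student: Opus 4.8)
The plan is to obtain all four isomorphisms by direct verification from \autoref{chudef}, the definition of the duality $(-)^{\star}$, and the internal hom of \autoref{chuinternalhom}, invoking the separated and extensional hypotheses only to force uniqueness of representing vectors. I fix notation $\mathcal{U} = (U_1, U_2, \langle -,-\rangle_{\mathcal{U}})$, $\mathcal{V} = (V_1, V_2, \langle -,-\rangle_{\mathcal{V}})$, $\mathcal{W} = (W_1, W_2, \langle -,-\rangle_{\mathcal{W}})$. The workhorse observation, which I would record first, is that for any pair $\mathcal{X} = (X_1, X_2, \langle -,-\rangle)$ a morphism $(f,g) \colon \mathbbm{k} \to \mathcal{X}$ is determined by $f(1) \in X_1$, because the compatibility \eqref{pair} forces $g(x) = \langle f(1), x\rangle$; thus evaluation at $1$ gives a linear isomorphism $\textnormal{Hom}_{\chu}(\mathbbm{k}, \mathcal{X}) \cong X_1$. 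Taking $\mathcal{X} = \underline{\textnormal{Hom}}(\mathcal{V}, \mathcal{W})$, whose first component is $\textnormal{Hom}_{\chu}(\mathcal{V}, \mathcal{W})$, immediately yields \eqref{preautonom2}.

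For \eqref{theorem4} I would compute $\underline{\textnormal{Hom}}(\mathcal{V}, \mathbbm{k})$ componentwise: its second component is $V_1 \otimes \mathbbm{k} \cong V_1$, and a morphism $(f,g) \colon \mathcal{V} \to \mathbbm{k}$ satisfies $f(v) = \langle v, g(1)\rangle_{\mathcal{V}}$, so $(f,g) \mapsto g(1)$ identifies its first component $\textnormal{Hom}_{\chu}(\mathcal{V}, \mathbbm{k})$ with $V_2$. Since $\mathcal{V}^{\star} = (V_2, V_1, \langle -,-\rangle_{\mathcal{V}} \circ \tau)$, these are precisely the components of $\mathcal{V}^{\star}$, and a one-line check that the pairing of \autoref{chuinternalhom} transports to the swapped pairing completes the isomorphism of pairs. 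For \eqref{theorem3} the duality supplies the map directly: $(f,g) \mapsto (g,f)$ is a bijection $\textnormal{Hom}_{\chu}(\mathcal{V}, \mathcal{W}) \to \textnormal{Hom}_{\chu}(\mathcal{W}^{\star}, \mathcal{V}^{\star})$ on first components, and the symmetry isomorphism $V_1 \otimes W_2 \cong W_2 \otimes V_1$ handles second components; the defining identity $\langle f(v), w\rangle_{\mathcal{W}} = \langle v, g(w)\rangle_{\mathcal{V}}$ shows the two pairings agree, so this data is the required isomorphism of pairs.

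The principal obstacle is \eqref{theorem2}, because the tensor product of $\chu$ --- which would render it a formal consequence of symmetric monoidal closedness --- has not been introduced, so I would argue by hand. Unwinding \autoref{chuinternalhom} twice, I would identify $\textnormal{Hom}_{\chu}(\mathcal{U}, \underline{\textnormal{Hom}}(\mathcal{V}, \mathcal{W}))$ with the set of trilinear forms $T \colon U_1 \times V_1 \times W_2 \to \mathbbm{k}$ that are representable in each argument: for fixed $u, v$ the functional $T(u, v, -)$ is $\langle a, -\rangle_{\mathcal{W}}$ for some $a \in W_1$; for fixed $u, w$ the functional $T(u, -, w)$ is $\langle -, b\rangle_{\mathcal{V}}$ for some $b \in V_2$; and for fixed $v, w$ the functional $T(-, v, w)$ is $\langle -, c\rangle_{\mathcal{U}}$ for some $c \in U_2$. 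Here separatedness and extensionality make each representing vector unique, so the correspondence is a genuine bijection, and the iterated internal-hom pairing evaluates exactly to $T$. The point is that this description is symmetric in the first two slots: the very same three conditions describe $\textnormal{Hom}_{\chu}(\mathcal{V}, \underline{\textnormal{Hom}}(\mathcal{U}, \mathcal{W}))$, so $T \mapsto T'$ with $T'(v, u, w) := T(u, v, w)$ is the isomorphism on first components, matched on second components by the associativity--symmetry isomorphism $U_1 \otimes (V_1 \otimes W_2) \cong V_1 \otimes (U_1 \otimes W_2)$ of $\otimes_{\mathbbm{k}}$.

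The delicate step I expect to dwell on is verifying that the three existence-of-representative conditions are invariant under this swap --- they are, since interchanging $\mathcal{U}$ and $\mathcal{V}$ merely permutes the requirements ``lands in $U_2$'' and ``lands in $V_2$'' while fixing ``lands in $W_1$'' --- and that both pairings indeed reduce to $T$; naturality in $\mathcal{U}, \mathcal{V}, \mathcal{W}$ is then routine. I would close by noting that \eqref{preautonom2}--\eqref{theorem4} together exhibit $(-)^{\star}$ as the dualizing involution underlying Barr's $\star$-autonomous structure on $\chu$ \cite{barr2000autonomous}.
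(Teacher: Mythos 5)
Your proposal is correct, and it is worth pointing out that it does not have a counterpart in the thesis at all: the paper states this theorem without proof, attributing it to Barr (cf.\ Sec.\ 4.1 of \cite{barr2000autonomous}), so your argument supplies a self-contained verification where the paper only cites. The individual steps check out. The evaluation-at-$1$ identification $\textnormal{Hom}_{\chu}(\mathbbm{k},\mathcal{X})\cong X_1$ is exactly right and immediately gives \eqref{preautonom2}; the componentwise computation of $\underline{\textnormal{Hom}}(\mathcal{V},\mathbbm{k})$ gives \eqref{theorem4}, and the swap $(f,g)\mapsto(g,f)$ together with the symmetry of $\otimes_{\mathbbm{k}}$ gives \eqref{theorem3} --- in each case I confirmed that the pairings transport correctly, so these really are isomorphisms of pairs and not just bijections on first components. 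For \eqref{theorem2}, your characterization of $\textnormal{Hom}_{\chu}(\mathcal{U},\underline{\textnormal{Hom}}(\mathcal{V},\mathcal{W}))$ as trilinear forms $T\colon U_1\times V_1\times W_2\to\mathbbm{k}$ representable in each slot is correct, the three representability conditions are manifestly symmetric under exchanging the first two slots, and the iterated pairing does evaluate to $T$, so the swap is compatible with the pairings.

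Two routine points that your sketch leaves implicit should be spelled out in a final write-up. First, when reconstructing a morphism from a trilinear form $T$, the second-component map $G\colon V_1\otimes_{\mathbbm{k}}W_2\to U_2$ is only defined on pure tensors by your representability condition; you need to observe that $(v,w)\mapsto c_{v,w}$ is bilinear (this follows from the uniqueness of representatives you invoke), hence factors through the tensor product, and that representability on pure tensors suffices because representable functionals form a linear subspace of $U_1^{\star}$. Second, linearity of the reconstructed maps, e.g.\ of $u\mapsto(f_u,g_u)$, likewise follows from uniqueness of representatives and should be stated. With these remarks added your proof is complete; the trade-off against the paper's bare citation is only length, and your trilinear-form description of the double internal hom is precisely the kind of direct computation with pairs that Barr's monograph carries out.
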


One can use these isomorphisms to equip $\chu$ with the structure of a symmetric monoidal closed category, as shown in the following result (cf. \cite[Theorem 2.6]{barr2000autonomous}).

\begin{lemma}
\label{inducedtensor}
	The category $\chu$ is symmetric monoidal closed with tensor product $\otimes$ defined as
	\begin{equation}
	\label{chutensor}
		\mathcal{V} \otimes  \mathcal{W} :=\ \underline{\textnormal{Hom}}(\mathcal{V},  \mathcal{W}^{\star})^{\star}
	\end{equation}
	for pairs $\mathcal{V}, \mathcal{W} \in \chu$ and internal hom $\underline{\textnormal{Hom}}(-,-)$ as defined in \autoref{chuinternalhom}.
\end{lemma}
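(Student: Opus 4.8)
The strategy is to take the internal hom $\underline{\textnormal{Hom}}(-,-)$ from Definition~\ref{chuinternalhom} and the four isomorphisms \eqref{preautonom2}--\eqref{theorem4} as given, and to build the monoidal structure on top of them. I would first record that the tensor product defined by \eqref{chutensor}, namely $\mathcal{V} \otimes \mathcal{W} := \underline{\textnormal{Hom}}(\mathcal{V}, \mathcal{W}^{\star})^{\star}$, lands in $\chu$: this is immediate, since $\underline{\textnormal{Hom}}$ takes values in $\chu$ (already established) and $(-)^{\star}$ preserves $\chu$ (a pair is separated iff its dual is extensional and vice versa, so dualising swaps the two defining conditions and stays inside $\chu$). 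It is worth spelling out the underlying data once: unwinding the definitions, $\mathcal{V} \otimes \mathcal{W}$ has as its first component the space $V \otimes V'$ and as its second component $\textnormal{Hom}_{\chu}(\mathcal{V}, \mathcal{W}^{\star})$, with the evident pairing inherited from $\underline{\textnormal{Hom}}$.

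Next I would establish the closedness adjunction, which is really the heart of the matter and, conveniently, almost free from the stated isomorphisms. The required natural isomorphism is
\[
\textnormal{Hom}_{\chu}(\mathcal{U} \otimes \mathcal{V}, \mathcal{W}) \cong \textnormal{Hom}_{\chu}(\mathcal{U}, \underline{\textnormal{Hom}}(\mathcal{V}, \mathcal{W})).
\]
I would derive the left-hand side by applying \eqref{preautonom2} to rewrite a $\chu$-hom as $\textnormal{Hom}_{\chu}(\mathbbm{k}, \underline{\textnormal{Hom}}(-,-))$, then use the definition \eqref{chutensor} together with \eqref{theorem3} (duality reverses internal hom) and \eqref{theorem4} ($\mathcal{V}^{\star} \cong \underline{\textnormal{Hom}}(\mathcal{V}, \mathbbm{k})$) to move the tensor factor across, and finally invoke the symmetry-of-arguments isomorphism \eqref{theorem2} to land on the right-hand side. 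The associator and unit isomorphisms I would obtain similarly: the unit is the pair $\mathbbm{k} = (\mathbbm{k}, \mathbbm{k}, \cdot)$, and the unit laws follow from \eqref{preautonom2} and \eqref{theorem4}, while associativity of $\otimes$ reduces, after dualising via \eqref{theorem3}, to the double-internal-hom symmetry \eqref{theorem2}.

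The symmetry $\mathcal{V} \otimes \mathcal{W} \cong \mathcal{W} \otimes \mathcal{V}$ I would read off directly from \eqref{theorem3} applied inside the definition \eqref{chutensor}, since $\underline{\textnormal{Hom}}(\mathcal{V}, \mathcal{W}^{\star})^{\star} \cong \underline{\textnormal{Hom}}(\mathcal{W}, \mathcal{V}^{\star})^{\star}$ is exactly the content of reversing the internal hom and using $\mathcal{V}^{\star\star} = \mathcal{V}$. The genuinely laborious part, which I would only indicate rather than carry out, is verifying the coherence conditions: the pentagon axiom for the associator, the triangle axiom relating associator and unitors, and the hexagon for the symmetric braiding. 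These do not follow formally from the four isomorphisms alone; one must check that the isomorphisms \eqref{theorem2}--\eqref{theorem4} are themselves coherent, i.e. that the diagrams commute at the level of the explicit maps on the underlying vector spaces. I expect this coherence bookkeeping to be the main obstacle, and the honest thing to do is to reduce every coherence diagram to a statement about the ordinary symmetric monoidal closed structure of $\Vect$ on the first components and its dual on the second components, where the corresponding diagrams are already known to commute; the pairing conditions \eqref{pair} then force the two halves to agree, so that coherence in $\chu$ is inherited from coherence in $\Vect$.
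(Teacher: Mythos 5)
Your proposal follows essentially the same route as the paper's proof: the closedness adjunction $\textnormal{Hom}_{\chu}(\mathcal{U} \otimes \mathcal{V}, \mathcal{W}) \cong \textnormal{Hom}_{\chu}(\mathcal{U}, \underline{\textnormal{Hom}}(\mathcal{V}, \mathcal{W}))$ is established by the same formal chain of isomorphisms (rewriting via \eqref{preautonom2}, then shuffling with \eqref{theorem3} and \eqref{theorem2}), and symmetry and the unit are read off from $\mathcal{V}^{\star\star} = \mathcal{V}$, \eqref{theorem3}, and the pair $\mathbbm{k}$, exactly as the paper does. Your extra remarks — that $\otimes$ lands in $\chu$, and that the coherence axioms need a separate (e.g.\ componentwise, $\Vect$-based) verification — go beyond what the paper records, which leaves coherence unaddressed; the only blemish is the slip describing the first component of $\mathcal{V} \otimes \mathcal{W}$, which is the tensor product of the two first components, not $V \otimes V'$.
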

\begin{proof}
For pairs $\mathcal{U}, \mathcal{V}, \mathcal{W} \in \chu$ we find
	\begin{align*}
		&\textnormal{Hom}_{\chu}(\mathcal{U} \otimes \mathcal{V}, \mathcal{W}) 
		\overset{\eqref{chutensor}}{=} \textnormal{Hom}_{\chu}(\underline{\textnormal{Hom}}(\mathcal{U}, \mathcal{V}^{\star})^{\star}, \mathcal{W}) 
		\overset{\eqref{preautonom2}}{=} \textnormal{Hom}_{\chu}(\mathbbm{k}, \underline{\textnormal{Hom}}(\underline{\textnormal{Hom}}(\mathcal{U}, \mathcal{V}^{\star})^{\star}, \mathcal{W})) \\
		&\overset{\eqref{theorem3}}{\cong}  \textnormal{Hom}_{\chu}(\mathbbm{k}, \underline{\textnormal{Hom}}(\mathcal{W}^{\star},\underline{\textnormal{Hom}}(\mathcal{U}, \mathcal{V}^{\star})))
		\overset{\eqref{theorem2}}{\cong}  \textnormal{Hom}_{\chu}(\mathbbm{k}, \underline{\textnormal{Hom}}(\mathcal{U},\underline{\textnormal{Hom}}(\mathcal{W}^{\star}, \mathcal{V}^{\star}))) \\
		&\overset{\eqref{theorem3}}{\cong}  \textnormal{Hom}_{\chu}(\mathbbm{k}, \underline{\textnormal{Hom}}(\mathcal{U},\underline{\textnormal{Hom}}(\mathcal{V}, \mathcal{W})))
		\overset{\eqref{preautonom2}}{=} \textnormal{Hom}_{\chu}(\mathcal{U},\underline{\textnormal{Hom}}(\mathcal{V}, \mathcal{W})).
	\end{align*}
	The symmetry follows from $\mathcal{V}^{\star \star} = \mathcal{V}$ and \eqref{theorem3}. The monoidal unit is given by the pair $\mathbbm{k} \in \chu$.
\end{proof}

In particular, \eqref{theorem4} implies that the pair $\mathbbm{k} \in \chu$ is a dualizing object, 

\[
\mathcal{V} = \mathcal{V}^{\star \star} \cong \underline{\textnormal{Hom}}(\underline{\textnormal{Hom}}(\mathcal{V}, \mathbbm{k}), \mathbbm{k}).
\]

\begin{lemma}
	The category $\chu$ can be equipped with the structure of a symmetric $\star$-autonomous category with tensor product \eqref{chutensor}, internal hom as in \autoref{chuinternalhom} and dualizing object $\mathbbm{k} \in \chu$.
\end{lemma}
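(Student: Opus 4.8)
The plan is to assemble the statement almost entirely from results already established, since the difficult analytic content (separatedness, extensionality, and the isomorphisms \eqref{preautonom2}--\eqref{theorem4}) has been dealt with. By \autoref{inducedtensor} the category $\chu$ is symmetric monoidal closed with tensor product \eqref{chutensor} and the internal hom of \autoref{chuinternalhom}; this already supplies the closed symmetric monoidal structure demanded by \autoref{starautonomdef1}, and because the structure is symmetric the left and right internal homs coincide, so $\chu$ is biclosed with a single internal hom $\underline{\textnormal{Hom}}(-,-)$. It therefore only remains to check that $\mathbbm{k}\in\chu$ is a dualizing object, i.e. that the canonical morphism \eqref{inducedmorphismintro} (equivalently \eqref{induceddualizingobject}) $\mathcal{V}\to\underline{\textnormal{Hom}}(\underline{\textnormal{Hom}}(\mathcal{V},\mathbbm{k}),\mathbbm{k})$ is an isomorphism for every $\mathcal{V}\in\chu$.

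First I would produce an abstract isomorphism: applying \eqref{theorem4} twice gives $\underline{\textnormal{Hom}}(\underline{\textnormal{Hom}}(\mathcal{V},\mathbbm{k}),\mathbbm{k})\cong\underline{\textnormal{Hom}}(\mathcal{V}^{\star},\mathbbm{k})\cong\mathcal{V}^{\star\star}$, and by the strict involutivity $\mathcal{V}^{\star\star}=\mathcal{V}$ of the duality on $\chu$ this is isomorphic to $\mathcal{V}$. This is precisely the double-dual isomorphism already recorded after \autoref{inducedtensor}, so the \emph{existence} of an isomorphism is immediate.

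The genuine point --- and the only step requiring care --- is to verify that this isomorphism is the \emph{canonical} morphism of \eqref{inducedmorphismintro}, obtained by identifying $\textnormal{id}_{\underline{\textnormal{Hom}}(\mathcal{V},\mathbbm{k})}$ under the closed-structure adjunction, rather than merely some isomorphism. I would unwind the adjunction explicitly at the level of pairs: writing $\mathcal{V}=(V,W,\langle-,-\rangle)$ and $\mathbbm{k}=(\mathbbm{k},\mathbbm{k},\cdot)$, a morphism $\mathcal{V}\to\mathbbm{k}$ in $\chu$ is determined by a single element of $W$, which identifies $\underline{\textnormal{Hom}}(\mathcal{V},\mathbbm{k})$ with $\mathcal{V}^{\star}=(W,V,\langle-,-\rangle)$ compatibly with \eqref{theorem4}. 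Tracing $v\in V$ through the unit of the adjunction shows that the canonical map sends $v$ to the functional ``pair with $v$'', which under these identifications is the identity on the underlying first component (and dually on the second); separatedness and extensionality then guarantee that this morphism of pairs is invertible in $\chu$. Having matched the canonical morphism with the isomorphism of the previous paragraph, the hypotheses of \autoref{starautonomdef1} are met, so $\chu$ is symmetric $\star$-autonomous with dualizing object $\mathbbm{k}$; alternatively one may phrase the conclusion through structure $(1)$ of \autoref{starautonomequivalent}. The main obstacle is thus not computational difficulty but the bookkeeping needed to confirm the coincidence of the abstract and canonical double-dualization maps.
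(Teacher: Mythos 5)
Your proposal is correct and takes essentially the same route as the paper: the paper likewise cites \autoref{inducedtensor} for the closed symmetric monoidal structure and then observes that \eqref{theorem4} together with $\mathcal{V}^{\star\star}=\mathcal{V}$ exhibits $\mathbbm{k}$ as a dualizing object. Your extra verification that this abstract double-dual isomorphism agrees with the canonical morphism \eqref{inducedmorphismintro} addresses a point the paper silently glosses over, so it only strengthens the argument.
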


\subsection{$\star$-autonomous structure of $\TVSw$ and $\TVSm$}

The equivalence between $\chu$, $\TVSw$ and $\TVSm$, as shown in \autoref{chutvsequivalence}, induces a $\star$-autonomous structure on the categories $\TVSw$ and $\TVSm$. In this section we will describe these two structures in a very concrete way. In the following $\otimes_{\mathbbm{k}}$ will denote the algebraic tensor product of $\mathbbm{k}$-vector spaces.

\begin{definition}
Let $\underline{\textnormal{Hom}}(-,-)$ and $(-)^{\star}$ denote the internal hom and duality of $\chu$, respectively. Moreover, let $R,L, T_m, T_w$ be the functors as in \eqref{adjointequivalencechudiagram}. 
\begin{itemize}
	\item $\underline{\textnormal{Hom}}_{w} := R \circ \underline{\textnormal{Hom}} \circ (T_w^{\textnormal{opp}(1)} \times T_w): \TVSw^{\textnormal{opp}(1)} \times \TVSw \rightarrow \TVSw$,
	\item $\underline{\textnormal{Hom}}_{m} := L \circ \underline{\textnormal{Hom}} \circ (T_m^{\textnormal{opp}(1)} \times T_m): \TVSm^{\textnormal{opp}(1)} \times \TVSm \rightarrow \TVSm$,
	\item $D_{w} := R^{\textnormal{opp}(1)} \circ (-)^{\star} \circ T_w: \TVSw \rightarrow \TVSw ^{\textnormal{opp}(1)}$,
	\item $D_{m} := L^{\textnormal{opp}(1)} \circ (-)^{\star} \circ T_m: \TVSm \rightarrow \TVSm ^{\textnormal{opp}(1)}$,
	\item $V \otimes_w^1 W := D_w(\underline{\textnormal{Hom}}_w(V, D_w(W))), \quad V,W \in \TVSw,$
	\item $V \otimes_m^1 W := D_m(\underline{\textnormal{Hom}}_m(V, D_m(W))), \quad V,W \in \TVSm$.
	\item $
	V \otimes_w^2 W := D_w^{-1}(D_w(W) \otimes_w^1 D_w(V)), \quad V,W \in \TVSw,
	$
	\item $
	V \otimes_m^2 W := D_m^{-1}(D_m(W) \otimes_w^1 D_m(V)), \quad V,W \in \TVSm.$
\end{itemize}
\end{definition}

\begin{corollary}
\label{tvsstarautonom}
		The categories $\TVS_m$ and $\TVS_w$ can be equipped with the structure of $\star$-autonomous categories. More precisely, there exist symmetric closed monoidal structures $(\TVS_m, \otimes_m^1, L(\mathbbm{k}), \underline{\textnormal{Hom}}_{m})$ and $(\TVS_w, \otimes_w^1, R(\mathbbm{k}), \underline{\textnormal{Hom}}_{w})$ with dualizing objects $(D_m \circ L)(\mathbbm{k})$ and $(D_w \circ R)(\mathbbm{k})$, respectively.
\end{corollary}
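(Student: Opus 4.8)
The plan is to transport the symmetric $\star$-autonomous structure of $\chu$ — established in \autoref{inducedtensor} and the subsequent lemma — along the adjoint equivalences of \autoref{chutvsequivalence}. The guiding principle is that a symmetric closed monoidal structure, together with the property of possessing a dualizing object, is a purely categorical datum and is therefore preserved by any equivalence of categories. Indeed, the operations $\underline{\textnormal{Hom}}_w$, $D_w$, $\otimes_w^1$ (and their Mackey analogues $\underline{\textnormal{Hom}}_m$, $D_m$, $\otimes_m^1$) are by construction nothing but the conjugates of $\underline{\textnormal{Hom}}$, $(-)^{\star}$, $\otimes$ by the mutually quasi-inverse functors $T_w$ and $R$ (respectively $T_m$ and $L$).

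First I would record that, by \autoref{chutvsequivalence}, the functors $T_w \colon \TVSw \to \chu$ and $R \colon \chu \to \TVSw$ form an adjoint equivalence, so both are fully faithful with $T_w \circ R \cong \textnormal{id}_{\chu}$ and $R \circ T_w \cong \textnormal{id}_{\TVSw}$. Applying \autoref{generalinduced} to the equivalence $R$ with quasi-inverse $T_w$, the category $\TVSw$ inherits a monoidal structure with unit $R(\mathbbm{k})$, and $R$ becomes a strong monoidal equivalence. A short computation — translating along the equivalence and invoking the isomorphisms \eqref{chutensor} and \eqref{theorem4} — then identifies this inherited tensor product with the concretely defined $\otimes_w^1$ and the inherited internal hom with $\underline{\textnormal{Hom}}_w$.

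Next I would verify closedness and the dualizing object. For $V, W, U \in \TVSw$, full faithfulness of $T_w$ together with the closed structure of $\chu$ (\autoref{inducedtensor}) yields
\begin{align*}
\textnormal{Hom}_{\TVSw}(V \otimes_w^1 W, U)
&\cong \textnormal{Hom}_{\chu}(T_w V \otimes T_w W, T_w U) \\
&\cong \textnormal{Hom}_{\chu}(T_w V, \underline{\textnormal{Hom}}(T_w W, T_w U)) \\
&\cong \textnormal{Hom}_{\TVSw}(V, \underline{\textnormal{Hom}}_w(W, U)),
\end{align*}
naturally in all three arguments, exhibiting $\underline{\textnormal{Hom}}_w$ as the internal hom. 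Since $\mathbbm{k}$ is dualizing in $\chu$, the double-dual isomorphism $\mathcal{V} \cong \underline{\textnormal{Hom}}(\underline{\textnormal{Hom}}(\mathcal{V}, \mathbbm{k}), \mathbbm{k})$ transports across the equivalence to a natural isomorphism $V \cong \underline{\textnormal{Hom}}_w(\underline{\textnormal{Hom}}_w(V, (D_w \circ R)(\mathbbm{k})), (D_w \circ R)(\mathbbm{k}))$, so $(D_w \circ R)(\mathbbm{k})$ is dualizing; symmetry of the monoidal structure transports in the same way. The Mackey case is identical with $(T_m, L)$ in place of $(T_w, R)$ and unit $L(\mathbbm{k})$.

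The coherence axioms (pentagon, triangle, hexagon, and the compatibility of $D_w$ with the internal hom) need not be checked by hand: because $R$ is a strong monoidal equivalence carrying the full $\star$-autonomous structure of $\chu$, every such diagram on $\TVSw$ is the image under $R$ of the corresponding commuting diagram in $\chu$. The hard part will therefore not be any genuine mathematics but rather the bookkeeping — keeping track of the $\textnormal{opp}(1)$ decorations on $D_w$ and $D_m$ and confirming that the abstractly transported operations agree on the nose with the operations written out in the preceding Definition. This is routine but must be carried out attentively, as it is the one place where a mismatch could hide.
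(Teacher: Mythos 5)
Your proposal is correct and is essentially the paper's own argument: the paper states the corollary as an immediate consequence of transporting the symmetric $\star$-autonomous structure of $\chu$ (from \autoref{inducedtensor} and \eqref{theorem4}) along the adjoint equivalences of \autoref{chutvsequivalence}, with the operations $\underline{\textnormal{Hom}}_w$, $D_w$, $\otimes_w^1$ (and their Mackey analogues) defined precisely as the conjugates of $\underline{\textnormal{Hom}}$, $(-)^{\star}$, $\otimes$ by the quasi-inverse pairs $(T_w, R)$ and $(T_m, L)$. Your use of \autoref{generalinduced} plus the full-faithfulness computation identifying the transported tensor and internal hom with $\otimes_w^1$ and $\underline{\textnormal{Hom}}_w$ fills in exactly the bookkeeping the paper leaves implicit.
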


\begin{lemma}
\label{internalhomtvs}
The internal homs $\underline{\textnormal{Hom}}_{m}(-,-): \TVSm^{\textnormal{opp}(1)} \times \TVSm \rightarrow \TVSm$ and  \\$\underline{\textnormal{Hom}}_{w}(-,-): \TVSw^{\textnormal{opp}(1)} \times \TVSw \rightarrow \TVSw$ satisfy
\begin{align*}
	\underline{\textnormal{Hom}}_{w}(V,W) &\cong (L(V,W), \sigma(L(V,W), V \otimes_{\mathbbm{k}} W')), \quad V,W \in \TVSw,\\
	\underline{\textnormal{Hom}}_{m}(V,W) &\cong (L(V,W), \tau(L(V,W), V \otimes_{\mathbbm{k}} W')), \quad V,W \in \TVSm.
\end{align*}
Here $L(V,W)$ denotes the space of continuous linear maps $V \rightarrow W$. 
\end{lemma}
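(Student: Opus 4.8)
The plan is to unwind the definitions of $\underline{\textnormal{Hom}}_w$ and $\underline{\textnormal{Hom}}_m$ directly. Recall that $\underline{\textnormal{Hom}}_w = R \circ \underline{\textnormal{Hom}} \circ (T_w^{\textnormal{opp}(1)} \times T_w)$ and $\underline{\textnormal{Hom}}_m = L \circ \underline{\textnormal{Hom}} \circ (T_m^{\textnormal{opp}(1)} \times T_m)$, where $T_w(V) = T_m(V) = (V, V', \textnormal{eval})$. So for $V, W$ in either category I would first compute the Chu internal hom $\underline{\textnormal{Hom}}(T(V), T(W))$ using \autoref{chuinternalhom}, obtaining the pair $(\textnormal{Hom}_{\chu}(T(V), T(W)),\ V \otimes_{\mathbbm{k}} W',\ \langle -,-\rangle)$ with pairing $\langle (f,g), v \otimes w'\rangle = w'(f(v))$, and then apply $R$ (which takes the weak topology) or $L$ (which takes the Mackey topology) to read off the result.

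The first key step is to identify the vector space $\textnormal{Hom}_{\chu}(T(V), T(W))$ with the space $L(V,W)$ of continuous linear maps $V \rightarrow W$. A morphism $(f,g)\colon (V, V', \textnormal{eval}) \rightarrow (W, W', \textnormal{eval})$ in $\chu$ consists of linear maps $f\colon V \rightarrow W$ and $g\colon W' \rightarrow V'$ with $w'(f(v)) = g(w')(v)$ for all $v \in V$, $w' \in W'$. This equation forces $g(w') = w' \circ f$, so $g$ is the algebraic transpose of $f$ restricted to $W'$ and is uniquely determined by $f$; conversely $(f,g)$ is a well-defined Chu morphism exactly when $w' \circ f \in V'$ for every $w' \in W'$, that is, when $f$ is weakly continuous. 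Hence $(f,g) \mapsto f$ is a linear isomorphism from $\textnormal{Hom}_{\chu}(T(V),T(W))$ onto the space of weakly continuous linear maps $V \rightarrow W$, and under it the internal-hom pairing becomes the canonical evaluation pairing $f \otimes (v \otimes w') \mapsto w'(f(v))$ between $L(V,W)$ and $V \otimes_{\mathbbm{k}} W'$.

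It then remains to show that weak continuity coincides with continuity in each of the two categories, so that the space of weakly continuous maps is precisely $L(V,W)$. In $\TVSw$ this is immediate from the universal property \eqref{weakuniversal}: since $W$ carries $\sigma(W, W')$, a linear map $f\colon V \rightarrow W$ is continuous if and only if $w' \circ f$ is continuous for all $w' \in W'$. In $\TVSm$ I would invoke the standard consequence of the Mackey--Arens theorem that a linear map between locally convex spaces is weakly continuous if and only if its transpose carries $W'$ into $V'$, and that such a map is automatically continuous for the Mackey topologies; since $V, W$ are Mackey spaces this yields continuous $=$ weakly continuous. Finally, applying $R$ to $(L(V,W),\ V \otimes_{\mathbbm{k}} W',\ \langle -,-\rangle)$ equips $L(V,W)$ with $\sigma(L(V,W), V \otimes_{\mathbbm{k}} W')$, while applying $L$ equips it with $\tau(L(V,W), V \otimes_{\mathbbm{k}} W')$, giving the two claimed descriptions.

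The main obstacle is the $\TVSm$ case: unlike the weak case, identifying the Chu-hom space with genuinely (Mackey-)continuous maps is not a formal consequence of a universal property but rests on the nontrivial fact that weakly continuous linear maps between locally convex spaces are continuous for the Mackey topologies. I would isolate this as the one place where the duality theory of locally convex spaces, rather than the Chu formalism, is essential; everything else reduces to bookkeeping with the definitions of $T$, $R$, $L$ and the $\chu$ internal hom.
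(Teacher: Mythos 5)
Your proposal is correct and follows essentially the same route as the paper's proof: unwind $\underline{\textnormal{Hom}}_w = R \circ \underline{\textnormal{Hom}} \circ (T_w^{\textnormal{opp}(1)} \times T_w)$, identify $\textnormal{Hom}_{\chu}(T(V),T(W))$ with $L(V,W)$ via $(f,g) \mapsto f$ (noting that $g$ is forced to be precomposition with $f$), check that the pairing becomes the evaluation pairing against $V \otimes_{\mathbbm{k}} W'$, and read off the topology by applying $R$, respectively $L$. The one place where you go beyond the paper is the Mackey case, and this is a genuine improvement rather than a deviation: the paper disposes of it with ``the second claim follows analogously by replacing $R$ with $L$,'' but the continuity argument there is not literally analogous --- the universal property \eqref{weakuniversal} of the weak topology, which settles the $\TVSw$ case, says nothing when $W$ carries $\tau(W,W')$. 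Your appeal to the duality-theoretic fact that a linear map is weakly continuous iff its transpose carries $W'$ into $V'$, and that such a map is automatically continuous for the Mackey topologies on both sides (a consequence of Mackey--Arens), is exactly the missing ingredient, and you are right to single it out as the only point where locally convex duality theory, rather than formal bookkeeping with $T$, $R$, $L$, is essential.
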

\begin{proof}
Using the definition of the internal hom in $\chu$, we deduce
\begin{align*}
	&(\underline{\textnormal{Hom}} \circ (T_w^{\textnormal{opp}(1)} \times T_w))(V, W) 
	= \underline{\textnormal{Hom}}((V,V', \textnormal{eval}),(W,W',\textnormal{eval})) \\
	&= (\textnormal{Hom}_{\chu}((V,V', \textnormal{eval}),(W,W', \textnormal{eval})), V \otimes_{\mathbbm{k}} W', \langle -,- \rangle).
\end{align*}
The pairing $\langle -,- \rangle$ is defined on $(f,g) \in \textnormal{Hom}_{\chu}((V,V', \textnormal{eval}),(W,W', \textnormal{eval}))$ and $v \otimes w' \in V \otimes_{\mathbbm{k}} W'$ as
	\[
	\langle (f,g), v \otimes w' \rangle = w'(f(v)) = g(w')(v).
	\]
By definition $g = f^*$, the precomposition with $f$. Moreover, $f: V \rightarrow W$ is continuous, since $w' \circ f = g(w') \in V'$ is continuous for all $w' \in W'$ (cf. the universal property \eqref{weakuniversal}). Thus the maps $(f,g) \mapsto f$  and $\textnormal{id}_{V \otimes W'}$ define an isomorphism
\[
(\textnormal{Hom}_{\chu}((V,V', \textnormal{eval}),(W,W', \textnormal{eval})), V \otimes_{\mathbbm{k}} W', \langle -,- \rangle) \cong (L(V,W), V \otimes_{\mathbbm{k}} W', \langle -,- \rangle_L)
\]
of pairs, where $L(V,W)$ denotes the space of continuous functions from $V$ to $W$, and the pairing $\langle f, v \otimes w' \rangle_L$ is given as $w'(f(v))$. It follows
		\begin{align*}
		\underline{\textnormal{Hom}}_{w}(V,W) 
		&= (R \circ \underline{\textnormal{Hom}} \circ (T_w^{\textnormal{opp}(1)} \times T_w))(V, W) 
		\cong R(L(V,W), V \otimes_{\mathbbm{k}} W', \langle -,- \rangle_L) \\
		&= (L(V,W), \sigma(L(V,W), V \otimes_{\mathbbm{k}} W')).
	\end{align*}
This means that $\underline{\textnormal{Hom}}_{w}(V,W) \cong L(V,W)$ is equipped with the weakest topology, such that the induced injection $V \otimes_{\mathbbm{k}} W' \hookrightarrow L(V,W)^{\star}$ defined as  $v \otimes w' \mapsto (f \mapsto w'(f(v)))$ restricts to an isomorphism 
\[ V \otimes_{\mathbbm{k}} W' \cong L(V,W)'. \]
 The second claim follows analogously by replacing $R$ with $L$.
\end{proof}

\begin{lemma}
\label{dualitytvs}
The functors $D_m: \TVS_m \rightarrow \TVS_m^{\textnormal{opp}(1)}$ and $D_w: \TVS_w \rightarrow \TVS_w^{\textnormal{opp}(1)}$ satisfy
	\[
	D_{w}(V) = (V', \sigma(V', V)) = (V')_{\sigma} \quad \textnormal{and} \quad D_{m}(V) = (V', \tau (V', V)) = (V')_{\tau}.
	\] 
	Moreover, it holds $D_w^2 \cong \textnormal{id}_{\TVSw}$ and $D_m^2 \cong \textnormal{id}_{\TVSm}$.
\end{lemma}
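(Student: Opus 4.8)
We must show that the dualities satisfy $D_w(V) = (V', \sigma(V', V)) = (V')_\sigma$ and $D_m(V) = (V', \tau(V', V)) = (V')_\tau$, and that $D_w^2 \cong \mathrm{id}$, $D_m^2 \cong \mathrm{id}$.

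**The approach.** The plan is to unwind the definition $D_w := R^{\textnormal{opp}(1)} \circ (-)^\star \circ T_w$ componentwise, exactly as was done for $\underline{\textnormal{Hom}}_w$ in the proof of the preceding lemma. For $V \in \TVSw$ we have $T_w(V) = (V, V', \textnormal{eval}) \in \chu$. Applying the trivial Chu-duality gives $(V, V', \textnormal{eval})^\star = (V', V, \textnormal{eval} \circ \tau)$, the pair with the two vector spaces swapped. Finally applying $R$, which sends a pair $(A, B, \langle-,-\rangle)$ to the topological vector space $(A, \sigma(A, B))$ carrying the weak topology, yields $R(V', V, \ldots) = (V', \sigma(V', V))$, which by \autoref{weaklymackeydef} is precisely $(V')_\sigma$. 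The $\TVSm$ case is identical, replacing $R$ by $L$ (so the Mackey topology $\tau$ appears) and $T_w$ by $T_m$; since $L$ equips the underlying space with the \emph{strongest} dual topology, one obtains $(V', \tau(V', V)) = (V')_\tau$.

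**The involution $D^2 \cong \mathrm{id}$.** For the second claim I would combine the first part with the fact from \autoref{chutvsequivalence} that $T_w, R$ (respectively $T_m, L$) form quasi-inverse adjoint equivalences, together with the strict involutivity $\mathcal{V}^{\star\star} = \mathcal{V}$ of the Chu duality recorded just before \autoref{chuinternalhom}. Concretely, writing out
\[
D_w^2 = R^{\textnormal{opp}(1)} \circ (-)^\star \circ T_w \circ R^{\textnormal{opp}(1)} \circ (-)^\star \circ T_w,
\]
the inner composite $T_w \circ R$ is naturally isomorphic to the identity on $\chu$ (up to the equivalence, after projecting back through $\TVSw$), so the two copies of $(-)^\star$ compose to $(-)^{\star\star} = \mathrm{id}$, leaving $R^{\textnormal{opp}(1)} \circ T_w \cong \mathrm{id}_{\TVSw}$. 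Alternatively, and perhaps more cleanly, one can argue directly from the first part: $D_w^2(V) = D_w((V')_\sigma)$, and since the topological dual of $(V', \sigma(V', V))$ is again $V$ (this is the defining symmetry of a dual pairing, i.e.\ $\sigma(V', V)$ is a dual topology realizing $V$ as the dual of $V'$), we recover $(V, \sigma(V, V')) \cong V$ in $\TVSw$, as every object of $\TVSw$ already carries its weak topology.

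**The main obstacle.** The only genuinely delicate point is the bookkeeping that $R$ produces the \emph{weak} topology while $L$ produces the \emph{Mackey} topology, and that these are compatible with the swap performed by $(-)^\star$; in particular one must check that the weak topology $\sigma(V', V)$ is indeed a dual topology for the pair $(V', V, \ldots)$, so that $R$ and $L$ are well-defined on the dualized pair and that the asserted equality with $(V')_\sigma$, respectively $(V')_\tau$, holds on the nose rather than merely up to isomorphism. This is guaranteed by the Mackey--Arens theorem cited after \autoref{weaklymackeydef}, which ensures both extremal dual topologies exist for any separated extensional pair; once this is invoked the remaining verifications are the routine componentwise computations sketched above.
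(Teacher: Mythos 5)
Your proposal is correct and follows essentially the same route as the paper: the first part is the same componentwise unwinding of $D_w = R^{\textnormal{opp}(1)} \circ (-)^{\star} \circ T_w$ (resp.\ $D_m$ with $L^{\textnormal{opp}(1)}$), and your ``more cleanly'' direct argument for the involution is precisely the paper's computation $D_w^2(V) = (((V')_{\sigma})')_{\sigma} \cong V_{\sigma} \cong V$, using that $\sigma(V',V)$ is a dual topology so that $((V')_{\sigma})' \cong V$, and that objects of $\TVSw$ already carry the weak topology. Your alternative categorical argument via $T_w \circ R \cong \textnormal{id}_{\chu}$ and $\mathcal{V}^{\star\star} = \mathcal{V}$ is also valid but not needed.
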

\begin{proof}
	Let $V \in \TVSw$, then
	\begin{align*}
		D_{w}(V) &= (R^{\textnormal{opp}(1)} \circ (-)^{\star} \circ T_w)(V) = (R^{\textnormal{opp}(1)} \circ (-)^{\star})(V, V', \textnormal{eval}) \\
		&= R^{\textnormal{opp}(1)}(V', V, \textnormal{eval}) = (V', \sigma(V', V)) = (V')_{\sigma},
	\end{align*}
	i.e. $V'$ equipped with the weak-$^\star$ topology. The second claim follows analogously by replacing $R^{\textnormal{opp}(1)}$ with $L^{\textnormal{opp}(1)}$. 
	From the definition of the weak topology it follows 
\begin{align*}
	&D_w^2(V) = (((V')_{\sigma})')_{\sigma} \cong V_{\sigma} \cong V.
	\end{align*}
	Again, using the strongest topology instead of the weakest topology, one shows the remaining part.
\end{proof}

\begin{lemma}
\label{tensor1tvs}
The tensor products $\otimes^1_w: \TVS_w \times \TVS_w \rightarrow \TVS_w$ and $\otimes^1_m: \TVS_m \times \TVS_m \rightarrow \TVS_m$ satisfy
	\begin{align*}
		V \otimes_w^1 W &\cong (V \otimes_{\mathbbm{k}} W, \sigma(V \otimes_{\mathbbm{k}} W, L(V, (W')_{\sigma}))), \quad V,W \in \TVSw, \\
		V \otimes_m^1 W &\cong (V \otimes_{\mathbbm{k}} W, \tau(V \otimes_{\mathbbm{k}} W, L(V, (W')_{\tau}))), \quad V,W \in \TVSm.
	\end{align*} 
\end{lemma}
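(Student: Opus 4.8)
The plan is to compute the composite defining $\otimes^1_w$ directly, from the inside out, by feeding the already-established descriptions of the internal hom (\autoref{internalhomtvs}) and of the duality (\autoref{dualitytvs}) into one another; the Mackey case will then follow by the verbatim analogous argument with the pair $(R,\sigma)$ replaced by $(L,\tau)$ and $D_w$ replaced by $D_m$. Since the assertion is an isomorphism of objects of $\TVSw$ (respectively $\TVSm$) for fixed $V,W$, it suffices throughout to track the underlying $\mathbbm{k}$-vector space and the particular dual topology placed on it.

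First I would unravel $V \otimes^1_w W = D_w(\underline{\textnormal{Hom}}_w(V, D_w(W)))$. By \autoref{dualitytvs} we have $D_w(W) = (W')_\sigma = (W', \sigma(W', W))$, and from $D_w^2 \cong \textnormal{id}_{\TVSw}$ I would read off that the continuous dual of $D_w(W)$ is $W$ again, i.e. $(D_w(W))' \cong W$. This identification of the dual of $D_w(W)$ with the original space $W$ is the one point I expect to require genuine care, as it rests on the reflexivity built into \autoref{dualitytvs}; the remaining steps are purely formal substitutions.

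Next I would apply \autoref{internalhomtvs} with $D_w(W)$ in the slot of the second argument. Because that lemma expresses the internal hom through the continuous dual of its second argument, the identification $(D_w(W))' \cong W$ gives
\begin{equation*}
\underline{\textnormal{Hom}}_w(V, D_w(W)) \cong \bigl(L(V, (W')_\sigma),\ \sigma(L(V, (W')_\sigma),\ V \otimes_{\mathbbm{k}} W)\bigr),
\end{equation*}
where $L(V, (W')_\sigma)$ is the space of continuous linear maps $V \to (W')_\sigma$. The structural consequence I want is that this object carries exactly the weak topology of the pairing against $V \otimes_{\mathbbm{k}} W$, so that its continuous dual is precisely $V \otimes_{\mathbbm{k}} W$.

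Finally I would apply $D_w$ one more time. By \autoref{dualitytvs}, $D_w$ sends a weakly topologized space $U$ to $(U', \sigma(U', U))$; applied to the object just computed, whose continuous dual is $V \otimes_{\mathbbm{k}} W$, this yields
\begin{equation*}
V \otimes^1_w W \cong \bigl(V \otimes_{\mathbbm{k}} W,\ \sigma(V \otimes_{\mathbbm{k}} W,\ L(V, (W')_\sigma))\bigr),
\end{equation*}
which is the claimed formula. The Mackey statement then follows by repeating these three steps with $T_m, L, \tau$ and $D_m$ in place of $T_w, R, \sigma$ and $D_w$, invoking the Mackey halves of \autoref{internalhomtvs} and \autoref{dualitytvs}; the only item needing attention there is that every topology appearing is indeed a dual topology for the relevant pair, so that continuous duals come out as expected, which is exactly what the Mackey--Arens theorem guarantees.
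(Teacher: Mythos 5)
Your proposal is correct and follows essentially the same route as the paper's proof: both unravel $D_w(\underline{\textnormal{Hom}}_w(V, D_w(W)))$ from the inside out, substituting $D_w(W) = (W')_{\sigma}$ via \autoref{dualitytvs}, applying \autoref{internalhomtvs} together with the identification $((W')_{\sigma})' \cong W$, and then reading off the final application of $D_w$ as the weak-$^\star$ dual, with the Mackey case handled by the analogous substitutions. The one point you flag as delicate --- that the dual of $D_w(W)$ is $W$ again --- is exactly the step the paper also uses (silently, in passing from $V \otimes_{\mathbbm{k}} ((W')_{\sigma})'$ to $V \otimes_{\mathbbm{k}} W$), so no gap remains.
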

\begin{proof}
Using \autoref{internalhomtvs} and \autoref{dualitytvs} we compute
	\begin{align*}
		 V \otimes_w^1 W &= D_w(\underline{\textnormal{Hom}}_w(V, D_w(W))) 
	\cong D_w(\underline{\textnormal{Hom}}_w(V, (W')_{\sigma}) \\
	&\cong D_w \bigg( L(V, (W')_{\sigma}), \sigma \big (L(V, (W')_{\sigma}), V \otimes_{\mathbbm{k}}  ((W')_{\sigma})' \big) \bigg) \\
	&\cong D_w \bigg( L(V, (W')_{\sigma}), \sigma \big (L(V, (W')_{\sigma}), V \otimes_{\mathbbm{k}}  W \big) \bigg) \\
	&\cong \bigg( L(V, (W')_{\sigma}), \sigma \big (L(V, (W')_{\sigma}), V \otimes_{\mathbbm{k}}  W \big) \bigg)'_{\sigma}
	\end{align*}
	This shows that $V \otimes_w^1 W$ is isomorphic to the vector space $V \otimes_{\mathbbm{k}} W$ with the weakest topology such that the injection $L(V, (W')_{\sigma}) \hookrightarrow (V \otimes_{\mathbbm{k}} W)^{\star}$, $f \mapsto (v \otimes w \mapsto f(v)(w))$ restricts to an isomorphism
	\[
	L(V, (W')_{\sigma}) \cong (V \otimes_w^1 W)'.
	\]
	The second claim follows analogously by replacing $R$ with $L$. 
\end{proof}

\begin{lemma}
\label{tensor2tvs}
	The tensor products $\otimes^2_w: \TVS_w \times \TVS_w \rightarrow \TVS_w$ and $\otimes^2_m: \TVS_m \times \TVS_m \rightarrow \TVS_m$ satisfy
	\[
	V \otimes_w^2 W \cong (L(W'_{\sigma}, V), \sigma(L(W'_{\sigma}, V), W' \otimes_{\mathbbm{k}} V')), \quad V,W \in \TVSw;
	\]
	\[
		V \otimes_m^2 W \cong (L(W'_{\tau}, V), \sigma(L(W'_{\tau}, V), W' \otimes_{\mathbbm{k}} V')), \quad V,W \in \TVSm.
	\]
\end{lemma}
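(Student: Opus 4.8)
The plan is to prove the statement by directly unwinding the definition of the second tensor product and substituting the concrete descriptions of the duality functor and of the first tensor product obtained in the two preceding lemmas. I treat the weak case in full detail; the Mackey case is entirely analogous, obtained by running the same chain of substitutions with $D_m$ and $\otimes^1_m$ in place of $D_w$ and $\otimes^1_w$.

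First I would recall the definition $V \otimes_w^2 W := D_w^{-1}(D_w(W) \otimes_w^1 D_w(V))$ and rewrite the two inner dualities using \autoref{dualitytvs}, which gives $D_w(W) = (W')_\sigma$ and $D_w(V) = (V')_\sigma$. Thus the task reduces to first computing the object $(W')_\sigma \otimes_w^1 (V')_\sigma \in \TVSw$ and then applying the outer functor $D_w^{-1}$.

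Next I would apply \autoref{tensor1tvs} to the ordered pair $(W')_\sigma, (V')_\sigma \in \TVSw$. With first argument $(W')_\sigma$ and second argument $(V')_\sigma$, that lemma gives underlying vector space $W' \otimes_{\mathbbm{k}} V'$ and weak topology generated by $L((W')_\sigma, ((V')_\sigma)'_\sigma)$. At this point I would invoke the double-dual collapse $((V')_\sigma)'_\sigma \cong V$, which is precisely the relation $D_w^2 \cong \textnormal{id}_{\TVSw}$ recorded in \autoref{dualitytvs}; hence the generating space of functionals is $L((W')_\sigma, V)$, and $(W')_\sigma \otimes_w^1 (V')_\sigma \cong (W' \otimes_{\mathbbm{k}} V', \sigma(W' \otimes_{\mathbbm{k}} V', L((W')_\sigma, V)))$. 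Finally I would apply the outer duality: since $D_w^2 \cong \textnormal{id}_{\TVSw}$ I may identify $D_w^{-1}$ with $D_w$, and $D_w$ sends a weakly topologized space to its continuous dual carrying the weak-$\star$ topology. The continuous dual of the first tensor product just computed is $L((W')_\sigma, V)$, as read off from the proof of \autoref{tensor1tvs}, and its weak-$\star$ topology is generated by $W' \otimes_{\mathbbm{k}} V'$. Assembling these pieces yields $V \otimes_w^2 W \cong (L(W'_\sigma, V), \sigma(L(W'_\sigma, V), W' \otimes_{\mathbbm{k}} V'))$, as claimed.

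The main obstacle I anticipate is bookkeeping rather than conceptual difficulty. One must correctly extract from \autoref{tensor1tvs} the \emph{topological} dual of the first tensor product --- it is the space of continuous maps $L((W')_\sigma, V)$ and not the underlying algebraic tensor product --- and one must apply the involution $D^2 \cong \textnormal{id}$ twice: once to collapse the inner double dual $((V')_\sigma)'_\sigma \cong V$, and once to replace $D_w^{-1}$ by $D_w$. Throughout I would keep careful track of which argument of each bifunctor is which, since the roles of $V$ and $W$ are interchanged by the inner duality, and verify that every intermediate object genuinely lands in $\TVSw$ (resp.\ $\TVSm$) so that the cited lemmas apply.
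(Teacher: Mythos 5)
Your proposal is correct and follows essentially the same route as the paper's own proof: unfold $V \otimes_w^2 W = (W'_{\sigma} \otimes_w^1 V'_{\sigma})'_{\sigma}$ via \autoref{dualitytvs}, compute the inner product with \autoref{tensor1tvs}, collapse $((V')_{\sigma})'_{\sigma} \cong V$ using $D_w^2 \cong \textnormal{id}$, and read off the outer dual as $L(W'_{\sigma}, V)$ with the weak-$\star$ topology generated by $W' \otimes_{\mathbbm{k}} V'$. The bookkeeping points you flag (argument order, which space is the topological dual) are exactly the content of the paper's chain of isomorphisms, so nothing is missing.
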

\begin{proof}
	Using \autoref{dualitytvs} and \autoref{tensor1tvs} we compute for $V,W \in \TVSw$,
	\begin{align*}
		&V \otimes_w^2 W = (W'_{\sigma} \otimes_w^1 V'_{\sigma})'_{\sigma} 
		\cong (W' \otimes_{\mathbbm{k}} V', \sigma(W' \otimes_{\mathbbm{k}} V', L(W'_{\sigma}, (V'_{\sigma})'_{\sigma})))'_{\sigma} \\
		&\cong (W' \otimes_{\mathbbm{k}} V', \sigma(W' \otimes_{\mathbbm{k}} V', L(W'_{\sigma}, V)))'_{\sigma}
		\cong (L(W'_{\sigma}, V), \sigma(L(W'_{\sigma}, V), W' \otimes_{\mathbbm{k}} V')).
	\end{align*}
	Analogously one validates the claim for the Mackey topology case.
\end{proof}

\begin{lemma}
	The two tensor products on $\TVS_w$ and $\TVS_m$, respectively, are related in the sense that 
	\[
(V \otimes^2_w W)'_{\sigma} \cong (W')_{\sigma} \otimes_w^1 (V')_{\sigma},  \quad V,W \in \TVSw;
\]
\[
(V \otimes^2_m W)'_{\tau} \cong (W')_{\tau} \otimes_m^1 (V')_{\tau},  \quad V,W \in \TVSm.
\]
\end{lemma}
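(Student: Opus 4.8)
The plan is to read this statement as an almost immediate consequence of the \emph{definition} of the second tensor product together with the involutivity of the duality functor recorded in \autoref{dualitytvs}. Recall from \autoref{dualitytvs} that $D_{w}$ is precisely the operation $(-)'_{\sigma}$, that is $D_{w}(X) = (X')_{\sigma}$ for every $X \in \TVSw$, and likewise $D_{m}(X) = (X')_{\tau}$. Under this identification the left-hand side of the first claim is $D_{w}(V \otimes_{w}^{2} W)$ and the right-hand side is $D_{w}(W) \otimes_{w}^{1} D_{w}(V)$, so it suffices to prove
\[
D_{w}(V \otimes_{w}^{2} W) \cong D_{w}(W) \otimes_{w}^{1} D_{w}(V).
\]

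First I would substitute the defining formula $V \otimes_{w}^{2} W := D_{w}^{-1}\big(D_{w}(W) \otimes_{w}^{1} D_{w}(V)\big)$ into the left-hand side and apply $D_{w}$. Since $D_{w}^{-1}$ is a quasi-inverse of $D_{w}$ we have a natural isomorphism $D_{w} \circ D_{w}^{-1} \cong \textnormal{id}$, whence
\[
D_{w}(V \otimes_{w}^{2} W) = D_{w}\Big(D_{w}^{-1}\big(D_{w}(W) \otimes_{w}^{1} D_{w}(V)\big)\Big) \cong D_{w}(W) \otimes_{w}^{1} D_{w}(V).
\]
Re-expressing $D_{w}$ as $(-)'_{\sigma}$ returns the asserted form $(V \otimes_{w}^{2} W)'_{\sigma} \cong (W')_{\sigma} \otimes_{w}^{1} (V')_{\sigma}$. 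The Mackey case is the verbatim same computation with $(D_{w}, \sigma, \otimes_{w}^{1}, \otimes_{w}^{2})$ replaced by $(D_{m}, \tau, \otimes_{m}^{1}, \otimes_{m}^{2})$, using the second halves of the statements in \autoref{dualitytvs}.

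There is no substantial obstacle here, as the argument is purely formal. The only points that deserve care are, first, that the isomorphism $D_{w} \circ D_{w}^{-1} \cong \textnormal{id}$ should be invoked at the level of functors so that the resulting isomorphism is natural in $V$ and $W$; and second, that one must be consistent with the notational convention $(-)'_{\sigma} = D_{w}$ fixed in \autoref{dualitytvs} when passing between the two formulations. It is also worth remarking that the relation $D_{w}^{2} \cong \textnormal{id}_{\TVSw}$ from \autoref{dualitytvs} shows $D_{w}$ to be its own quasi-inverse up to natural isomorphism, which makes the use of $D_{w}^{-1}$ in the definition of $\otimes_{w}^{2}$ unambiguous.
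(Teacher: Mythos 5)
Your proof is correct and matches the paper's approach exactly: the paper's own proof reads ``Follows immediately from the definitions,'' and your argument is precisely that unwinding --- substitute the definition $V \otimes_w^2 W := D_w^{-1}\big(D_w(W) \otimes_w^1 D_w(V)\big)$, apply $D_w$, cancel via $D_w \circ D_w^{-1} \cong \textnormal{id}$, and identify $D_w = (-)'_{\sigma}$ (resp.\ $D_m = (-)'_{\tau}$) using \autoref{dualitytvs}. Your added remarks on naturality and on $D_w^2 \cong \textnormal{id}_{\TVSw}$ are sound and only make explicit what the paper leaves implicit.
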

\begin{proof}
	Follows immediately from the definitions.
\end{proof}

As a last step we would like to relate our two tensor products on $\TVS_w$ and $\TVS_m$, respectively, to the \textit{projective} and \textit{injective} tensor products (cf. \cite{treves1967topological}). Explicitly, the $\pi$-topology, or projective topology, on the projective tensor product $V \otimes_{\mathbbm{k}} W$ for $V,W \in \TVS$ is the strongest locally convex topology, such that the canonical bilinear map $V \times W \rightarrow V \otimes_{\mathbbm{k}} W$ is continuous, where $V \times W$ is given the product topology. Equipped with the $\pi$-topology, the space $V \otimes_{\mathbbm{k}} W$ is denoted by $V \otimes_{\pi} W$. Usefully, the projective topology enjoys the following universal property (cf. \cite[Prop. 43.4]{treves1967topological}):
\begin{quote}
	Let $V, W \in \TVS$. Then $V \otimes_{\pi} W$ equips $V \otimes_{\mathbbm{k}} W$ with the only topology such that for every $U \in \TVS$ the isomorphism of bilinear maps $V \times W \rightarrow U$ and linear maps $V \otimes_{\mathbbm{k}} W \rightarrow U$, provided by the universal property of the tensor product $V \otimes_{\mathbbm{k}} W$, restricts to an isomorphism of (jointly) continuous bilinear maps $V \times W \rightarrow U$ and continuous linear maps $V \otimes_{\pi} W \rightarrow U$.
\end{quote}

Our next result shows that a similar statement holds for the tensor products $\otimes^1_w$ and $\otimes^1_m$, when we restrict us to $U = \mathbbm{k}$ and separately continuous bilinear maps, instead of jointly continuous bilinear maps. Note that jointly continuous implies separately continuous, but not vice versa. Explicitly, the following is \textit{not} the universal property of the projective tensor product.

\begin{lemma}
\label{tvstensoruniversal}
The tensor products $\otimes^1_w: \TVS_w \times \TVS_w \rightarrow \TVS_w$ and $\otimes^1_m: \TVS_m \times \TVS_m \rightarrow \TVS_m$ satisfy the following property:
\begin{quote}
	Let $V, W \in \TVSw$. Then $V \otimes^1_w W$ ($V \otimes^1_m W$) equips $V \otimes_{\mathbbm{k}} W$ with the weakest (strongest) topology such that the isomorphism of bilinear maps $V \times W \rightarrow \mathbbm{k}$ and linear maps $V \otimes_{\mathbbm{k}} W \rightarrow \mathbbm{k}$, provided by the universal property of the tensor product $V \otimes_{\mathbbm{k}} W$, restricts to an isomorphism of separately continuous bilinear maps $V \times W \rightarrow \mathbbm{k}$ and continuous linear maps $V \otimes^1_w W \rightarrow \mathbbm{k}$ ($V \otimes^1_m W \rightarrow \mathbbm{k}$).
\end{quote}
\end{lemma}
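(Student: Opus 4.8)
The plan is to reduce the asserted universal property to an identification of continuous dual spaces, and then to invoke the extremality of the weak and Mackey topologies among dual topologies. Fix $V,W$ in $\TVSw$ (the Mackey case is entirely parallel). A topology $\mathcal{T}$ on $V\otimes_{\mathbbm{k}}W$ makes the algebraic bijection between bilinear forms $V\times W\rightarrow\mathbbm{k}$ and linear functionals $V\otimes_{\mathbbm{k}}W\rightarrow\mathbbm{k}$ restrict to a bijection between \emph{separately continuous} bilinear forms and $\mathcal{T}$-\emph{continuous} functionals if and only if the $\mathcal{T}$-continuous dual of $V\otimes_{\mathbbm{k}}W$ coincides, inside $(V\otimes_{\mathbbm{k}}W)^{\star}$, with the image of the separately continuous bilinear forms. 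So the whole statement becomes: identify that image, and show that $V\otimes^1_w W$ (resp. $V\otimes^1_m W$) is the weakest (resp. strongest) dual topology realising it.

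First I would identify the separately continuous bilinear forms. Currying sends a bilinear form $\beta\colon V\times W\rightarrow\mathbbm{k}$ to $\hat\beta\colon V\rightarrow W^{\star}$, $\hat\beta(v)=\beta(v,-)$. Separate continuity in the second variable says exactly $\hat\beta(v)\in W'$ for all $v$, and separate continuity in the first variable says that $v\mapsto\langle\hat\beta(v),w\rangle=\beta(v,w)$ is continuous for every $w\in W$. Since $W=((W')_{\sigma})'$, the universal property \eqref{weakuniversal} of the weak topology turns the latter condition into continuity of $\hat\beta\colon V\rightarrow (W')_{\sigma}$. Hence currying is a linear isomorphism from separately continuous bilinear forms onto $L(V,(W')_{\sigma})$, and one checks that composing it with the tensor--hom bijection is precisely the embedding $f\mapsto (v\otimes w\mapsto f(v)(w))$ appearing in the proof of \autoref{tensor1tvs}. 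That proof exhibits this embedding as the isomorphism $L(V,(W')_{\sigma})\cong (V\otimes^1_w W)'$, so the $\mathcal{T}$-continuous functionals must equal $L(V,(W')_{\sigma})$. In the Mackey case the same currying argument yields $L(V,(W')_{\sigma})$, which I would then identify with the space $L(V,(W')_{\tau})=(V\otimes^1_m W)'$ of \autoref{tensor1tvs}: because $V$ is a Mackey space, a linear map out of $V$ is weakly continuous iff continuous, so continuity into $(W')_{\sigma}$ and into $(W')_{\tau}$ single out the same maps.

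It then remains to settle the extremality. By \autoref{tensor1tvs} the space $V\otimes^1_w W$ carries the topology $\sigma(V\otimes_{\mathbbm{k}}W,L(V,(W')_{\sigma}))$ and $V\otimes^1_m W$ carries $\tau(V\otimes_{\mathbbm{k}}W,L(V,(W')_{\tau}))$. By \autoref{weakstrongdef} and the Mackey--Arens theorem, the dual topologies on $V\otimes_{\mathbbm{k}}W$ for the pairing against $L(V,(W')_{\sigma})$ are exactly those lying between the weak topology $\sigma$ and the Mackey topology $\tau$; among them $\sigma$ is the weakest and $\tau$ the strongest, and all have the prescribed continuous dual. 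Combined with the previous paragraph this says precisely that $V\otimes^1_w W$ equips $V\otimes_{\mathbbm{k}}W$ with the weakest, and $V\otimes^1_m W$ with the strongest, topology for which the tensor--hom bijection restricts to an isomorphism onto the continuous functionals, as claimed.

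The main obstacle is the bookkeeping in the Mackey case: one must be sure that the codomain produced by currying, namely $(W')_{\sigma}$, gives the same hom-set as the $(W')_{\tau}$ appearing in \autoref{tensor1tvs}. The clean way is to note that both $(W')_{\sigma}$ and $(W')_{\tau}$ realise the same dual pair $\langle W',W\rangle$, so that, $V$ being Mackey, weak and Mackey continuity of the linear maps $V\rightarrow W'$ coincide; this is the only point where one genuinely uses that we are inside $\TVSm$ rather than $\TVSw$. Everything else is a diagram chase feeding the already-established computations of \autoref{tensor1tvs} and \autoref{dualitytvs} into the extremal characterisation of dual topologies.
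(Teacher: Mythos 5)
Your proposal is correct, but it takes a genuinely different route from the paper. The paper's own proof does not pass through \autoref{tensor1tvs} at all: it re-enters the Chu category, computes the pair $T_w(V) \otimes_{\chu} T_w(W) \cong (V \otimes_{\mathbbm{k}} W,\ \textnormal{Hom}_{\chu}((V,V',\textnormal{eval}),(W',W,\textnormal{eval})),\ \langle -,-\rangle)$ explicitly, and observes that a Chu morphism $(f,g)$ with $f\colon V \rightarrow W'$, $g\colon W \rightarrow V'$ and $f(v)(w)=g(w)(v)$ \emph{is} the same datum as a separately continuous bilinear form, so that $\mathcal{B}(V,W)$ appears directly as the second component of the pair; applying $R$ (resp.\ $L$) then yields the weak (resp.\ Mackey) topology relative to the pairing with $\mathcal{B}(V,W)$, and the two cases are word-for-word symmetric. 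You instead black-box \autoref{tensor1tvs} and supply the currying identification $\mathcal{B}(V,W) \cong L(V,(W')_{\sigma})$, which is a clean argument and makes the role of Mackey--Arens extremality explicit; the cost is that in the Mackey case you must reconcile the $L(V,(W')_{\sigma})$ produced by currying with the $L(V,(W')_{\tau})$ appearing in \autoref{tensor1tvs}, and for this you need the standard theorem that a weakly continuous linear map out of a Mackey space is continuous (into the Mackey topology of the target). That theorem is true and classical, but it is nowhere stated in the paper, so your proof imports one extra piece of functional analysis that the paper's route avoids entirely --- in the paper's computation the pairing space $\mathcal{B}(V,W)$ comes out identically in both the weak and Mackey cases, so no comparison of $L(V,(W')_{\sigma})$ with $L(V,(W')_{\tau})$ is ever needed. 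In short: your argument is shorter given \autoref{tensor1tvs} and conceptually transparent about extremality, while the paper's is more self-contained relative to its own toolkit and treats the two cases uniformly.
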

\begin{proof}
We use that $R(R^{-1}(V) \otimes_{\chu} R^{-1}(W)) \cong V \otimes^1_w W$.
	We have $R^{-1}(V) = T_w(V) = (V, V', \textnormal{eval})$ for any $V \in \TVSw$. Thus we find
	\begin{align*}
		R^{-1}(V) \otimes_{\chu} R^{-1}(W) 
	&= (\underline{\textnormal{Hom}}((V, V',\textnormal{eval}), (W, W',\textnormal{eval})^{\star}))^{\star} \\
	&= (\underline{\textnormal{Hom}}( (V, V',\textnormal{eval}), (W', W,\textnormal{eval}) ) )^{\star}\\
	&= (\textnormal{Hom}_{\chu}((V, V',\textnormal{eval}), (W', W,\textnormal{eval})), V \otimes_{\mathbbm{k}} W, \langle -,- \rangle)^{\star}\\
	&= (V \otimes_{\mathbbm{k}} W, \textnormal{Hom}_{\chu}((V, V',\textnormal{eval}), (W', W,\textnormal{eval})),\langle -,- \rangle)
	\end{align*}
	with pairing 
	\[
	\langle v \otimes w, (f, g) \rangle = f(v)(w) = g(w)(v)
	\]
	for $v \in V, w\in W$ and $f: V \rightarrow W', \ g: W \rightarrow V'$. 
	By construction every $(f, g) \in \textnormal{Hom}_{\chu}((V, V',\textnormal{eval}), (W', W,\textnormal{eval}))$ induces a separately continuous bilinearform $T_{(f, g)}: V \times W \rightarrow \mathbbm{k}$ given by
	\[
	T_{(f, g)}(v,w) = f(v)(w) = g(w)(v).
	\]
	In fact $\textnormal{Hom}_{\chu}((V, V',\textnormal{eval}), (W', W,\textnormal{eval}))$ is \textit{isomorphic} to the the space $\mathcal{B}(V, W)$ of separately continuous bilinearforms $V \times W \rightarrow \mathbbm{k}$.
	Let $B(V, W)$ be the space of bilinear maps $V \times W \rightarrow \mathbbm{k}$. The universal property of the tensor product yields a linear map of vector spaces 
	\begin{equation}
	\label{univproptensorproduct}
		\varphi: B(V, W) \rightarrow (V \otimes_{\mathbbm{k}} W)^{\star}.
	\end{equation}
	Let $\iota: \mathcal{B}(V, W) \hookrightarrow B(V, W)$ be the inclusion map. The pairing of the pair 
	\begin{equation}
			\label{Rminus1}
	R^{-1}(V) \otimes_{\chu} R^{-1}(W) \cong (V \otimes_{\mathbbm{k}} W, \mathcal{B}(V, W), \langle -, - \rangle)
	\end{equation}
	is given by $
	\langle v \otimes w, f \rangle = (\varphi \circ \iota)(f)(v \otimes w)$.
	This induces an injection  
	\begin{equation}
	\label{partlyphi}
		\varphi \circ \iota: \mathcal{B}(V, W) \rightarrow (V \otimes_{\mathbbm{k}} W)^{\star}. 
	\end{equation}
	From \eqref{Rminus1} and the definition of $R$ it follows
	\[
	V \otimes^1_w W \cong R(R^{-1}(V) \otimes_{\chu} R^{-1}(W)) \cong (V \otimes_{\mathbbm{k}} W, \sigma(V \otimes_{\mathbbm{k}} W, \mathcal{B}(V, W))).
	\] 	
	By the very definition, $V \otimes^1_w W$ equips $V \otimes_{\mathbbm{k}} W$ with the weakest topology such that \eqref{partlyphi}, and thus $\varphi$, restricts to an isomorphism
	\[
	\mathcal{B}(V,W) \cong (V \otimes^1_w W)'.
	\]
	Analogously one shows the claim for the strongest topology.
\end{proof}

Aside from the jointly/separately continuous aspect, one of main differences between the tensor products $\otimes_w^1, \otimes_m^1$ and the projective tensor product $\otimes_{\pi}$ is that the property in \autoref{tvstensoruniversal} only holds for $\mathbbm{k} \in \TVS$, while it is true for every $U \in \TVS$ in the case of the projective tensor product.

To overcome this problem, it seems appropriate to generalise \autoref{defpair} and \autoref{dualtop} and then proceed in an analogous way as we did in this section. Let us sketch shortly the main idea; it is beyond the scope of this thesis to develop the whole theory.

\begin{definition}
Let $U \in \Vect$ be a $\mathbbm{k}$-vector space. 
An $U$-\textit{pair} is a triple $(V, W, \langle - , - \rangle)$ of $\mathbbm{k}$-vector spaces $V, W$, together with a linear map $\langle -, - \rangle: V \otimes_{\mathbbm{k}} W \rightarrow U$ called \textit{pairing}. A \textit{morphism of} $U$-\textit{pairs} $(V_1, W_1, \langle -,- \rangle_1)$ and $(V_2, W_2, \langle -,- \rangle_2)$ is a tuple $(f,g)$ of linear maps $f: V_1 \rightarrow V_2$ and $g: W_2 \rightarrow W_1$, such that
	\begin{equation}
		\langle f(v), w \rangle_2 =\ \langle v, g(w) \rangle_1 \in U
	\end{equation}
	for all $v \in V_1$ and $w \in W_2$. 
\end{definition}

\begin{definition}
	Let $U \in \TVS$ be a topological vector space.
	Let $(V,W,\langle - , - \rangle)$ be a separated and extensional $U$-pair. A \textit{dual topology} on $V$ w.r.t. to this $U$-pair is a topology $\mathcal{T}$, such that $(V, \mathcal{T}) \in \TVS$ and such that the injective map $W \rightarrow \textnormal{Hom}_{\mathbbm{k}}(V,U)$ induces an isomorphism
	\[
	W \simeq \lbrace f: (V, \mathcal{T}) \rightarrow U\  \textnormal{continuous} \rbrace \subseteq \textnormal{Hom}_{\mathbbm{k}}(V,U).
	\]
\end{definition}

\chapter{Appendix}
\section{Proofs}

\subsection*{Proof of \autoref{dualsuniquebicat}}

\begin{lemma*}
	Let $(\mathcal{C}, \otimes, \unit)$ be a strict monoidal bicategory and $A,B,B' \in \mathcal{C}$. Assume that $A \dashv B$ and $A \dashv B'$ witnessed by $\varepsilon_1, \eta_1$ and $\varepsilon_1', \eta_1'$. Then there exist $1$-morphisms $\alpha: B \rightarrow B'$ and $\beta: B' \rightarrow B$ as well as invertible $2$-morphisms $\beta \circ \alpha \cong \textnormal{id}_{B}$ and $\alpha \circ \beta \cong \textnormal{id}_{B'}$. The $1$-morphism $\alpha$ preserves evaluation and coevaluation morphisms in the sense that there exist invertible $2$-morphisms $\varepsilon_1' \circ (\textnormal{id}_{A} \otimes \alpha) \cong \varepsilon_1$ and $(\alpha \otimes \textnormal{id}_A) \circ \eta_1 \cong \eta_1'$. Furthermore, $\alpha$ is unique in the sense that for every other $1$-morphism $\tilde{\alpha}: B \rightarrow B'$ that preserves evaluation and coevaluation morphisms in this way, there exists an invertible $2$-morphism $\alpha \cong \tilde{\alpha}$. 
\end{lemma*}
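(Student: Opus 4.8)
The plan is to transcribe the classical proof that duals in a monoidal category are unique up to unique isomorphism, lifting every equation of the one-dimensional argument to an invertible $2$-morphism obtained by pasting the zig-zag $2$-cells of the two biexact pairings. First I would write down the canonical comparison $1$-morphisms built from the evaluation and coevaluation $1$-morphisms of $A \dashv B$ and $A \dashv B'$, namely
\[\alpha := (\textnormal{id}_{B'} \otimes \varepsilon_1) \circ (\eta_1' \otimes \textnormal{id}_B) : B \rightarrow B', \qquad \beta := (\textnormal{id}_{B} \otimes \varepsilon_1') \circ (\eta_1 \otimes \textnormal{id}_{B'}) : B' \rightarrow B.\]
In the graphical calculus these are the usual snake composites: $\alpha$ bends a $B$-strand across an $A$-strand into a $B'$-strand, and $\beta$ does the reverse.

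Next I would produce the invertible $2$-morphisms $\beta \circ \alpha \cong \textnormal{id}_B$ and $\alpha \circ \beta \cong \textnormal{id}_{B'}$. Writing $\beta \circ \alpha$ as a composite of four bends and applying the interchange law \eqref{intertwiner} to reorganise the two inner bends, one exposes snake sub-composites to which the zig-zag $2$-cells $\varepsilon_2,\eta_2,\varepsilon_2',\eta_2'$ can be pasted; each bend is then straightened, leaving an invertible $2$-morphism onto $\textnormal{id}_B$. Exchanging the roles of the two pairings gives $\alpha \circ \beta \cong \textnormal{id}_{B'}$. The preservation $2$-cells $\varepsilon_1'\circ(\textnormal{id}_A \otimes \alpha) \cong \varepsilon_1$ and $(\alpha \otimes \textnormal{id}_A)\circ \eta_1 \cong \eta_1'$ arise by the same mechanism: substituting the definition of $\alpha$, sliding the resulting bend past $\varepsilon_1'$ (resp. $\eta_1$) via interchange, and straightening with a single zig-zag $2$-cell.

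For uniqueness, suppose $\tilde\alpha : B \rightarrow B'$ comes equipped with invertible $2$-cells $\varepsilon_1'\circ(\textnormal{id}_A \otimes \tilde\alpha)\cong \varepsilon_1$ and $(\tilde\alpha \otimes \textnormal{id}_A)\circ \eta_1 \cong \eta_1'$. Starting from the definition of $\alpha$, I would replace $\eta_1'$ by $(\tilde\alpha \otimes \textnormal{id}_A)\circ \eta_1$ using the coevaluation-preservation $2$-cell and functoriality of $\otimes$, giving
\[\alpha \cong (\textnormal{id}_{B'}\otimes \varepsilon_1)\circ(\tilde\alpha \otimes \textnormal{id}_{A}\otimes \textnormal{id}_B)\circ(\eta_1 \otimes \textnormal{id}_B).\]
Interchange \eqref{intertwiner} then slides $\tilde\alpha$ to the outside,
\[\alpha \cong \tilde\alpha \circ (\textnormal{id}_{B}\otimes \varepsilon_1)\circ(\eta_1 \otimes \textnormal{id}_B),\]
and the zig-zag $2$-cell $(\textnormal{id}_B \otimes \varepsilon_1)\circ(\eta_1\otimes\textnormal{id}_B)\cong \textnormal{id}_B$ completes the chain to $\alpha \cong \tilde\alpha$. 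The companion statement for left duals is identical after reflecting every diagram.

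The main obstacle is organisational rather than conceptual: in a bicategory each equality of the classical snake argument now carries a specified invertible $2$-morphism, and one must track the pasting carefully so that interchange is always applied to genuinely composable cells and the coherence data of the pairings compose consistently. Taking the monoidal bicategory strict—so that $\otimes$ acts as a strict $2$-functor and interchange of $1$-morphisms is a literal identity—removes the coherence bookkeeping, and the argument then follows essentially verbatim from the string-diagram picture.
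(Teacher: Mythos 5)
Your proposal is correct and takes essentially the same route as the paper's own proof: you define the same snake composites $\alpha = (\textnormal{id}_{B'} \otimes \varepsilon_1) \circ (\eta_1' \otimes \textnormal{id}_B)$ and $\beta = (\textnormal{id}_{B} \otimes \varepsilon_1') \circ (\eta_1 \otimes \textnormal{id}_{B'})$, establish $\beta \circ \alpha \cong \textnormal{id}_B$, $\alpha \circ \beta \cong \textnormal{id}_{B'}$ and the preservation $2$-cells by the same interchange-plus-zig-zag pastings, and your uniqueness argument (substituting $\eta_1' \cong (\tilde\alpha \otimes \textnormal{id}_A) \circ \eta_1$, sliding $\tilde\alpha$ out by interchange, and straightening with $\varepsilon_2$) is exactly one of the two chains the paper draws in string diagrams. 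No gaps.
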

\begin{proof}
	We denote $A,B,B'$ by red, blue and green, respectively. For the pair $A \dashv B$ let be given $1$-morphisms $\varepsilon_1 \equiv \includegraphics[height=0.5cm]{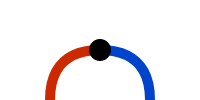}$, $\eta_1 \equiv \includegraphics[height=0.5cm]{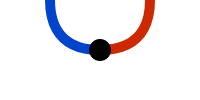}$ with $2$-morphisms $\varepsilon_2, \eta_2$ as in $\eqref{zigzagidentities2}$. Similarly, for the pair $A \dashv B'$ let be given $1$-morphisms $\varepsilon_1' \equiv \includegraphics[height=0.5cm]{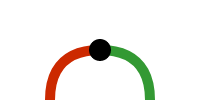}$,$\eta_1' \equiv \includegraphics[height=0.5cm]{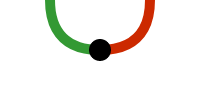}$ as well as $2$-morphisms $\varepsilon_2', \eta_2'$. Define $1$-morphisms $\alpha := \includegraphics[height=1cm]{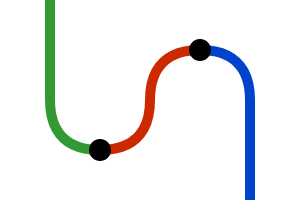}$ and $\beta := \includegraphics[height=1cm]{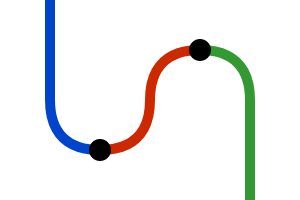}$. The following composition of $2$-morpshisms provides an invertible $2$-morphism $\beta \circ \alpha \cong \textnormal{id}_{B}$:
	\begin{align*}
		 \includegraphics[height=2cm]{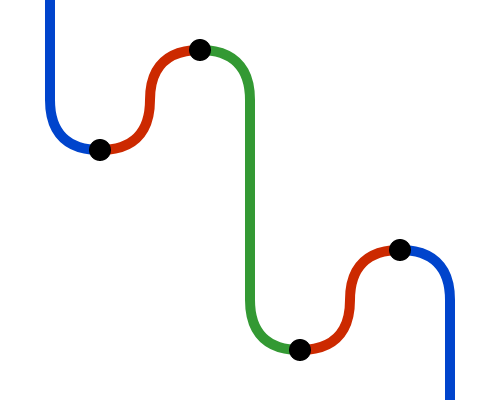} 
		 \overset{\eqref{intertwiner}}{\cong} \includegraphics[height=2cm]{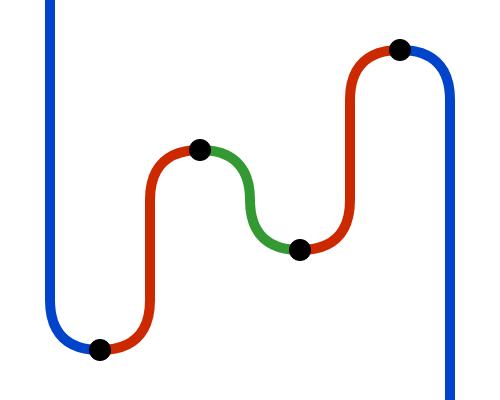} 
		 \overset{(\eta_2')^{-1}}{\cong} \includegraphics[height=1cm]{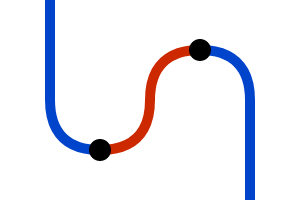} 
		 \overset{\varepsilon_2}{\cong} \includegraphics[height=0.5cm]{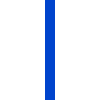}.
	\end{align*}
	Similarly one shows the existence of an invertible $2$-morphism $\alpha \circ \beta \cong \textnormal{id}_{B'}$. Furthermore, the $1$-morphism $\alpha$ preserves units and counits in the sense that there exist invertible compositions of $2$-morphisms
	\begin{align*}
		\varepsilon_1' \circ (\textnormal{id}_{A} \otimes \alpha) \equiv \includegraphics[height=1.5cm]{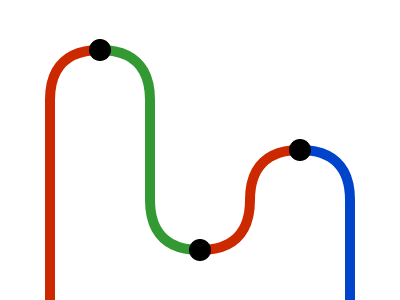} 
		\overset{\eqref{intertwiner}}{\cong} \includegraphics[height=1.5cm]{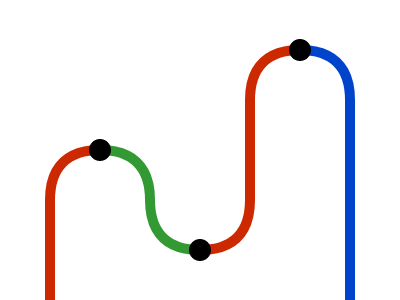}
		\overset{(\eta_2')^{-1}}{\cong} \includegraphics[height=0.5cm]{pics/dual/e1}
		\equiv \varepsilon_1,
		\\ 
		(\alpha \otimes \textnormal{id}_A) \circ \eta_1 \equiv \includegraphics[height=1.5cm]{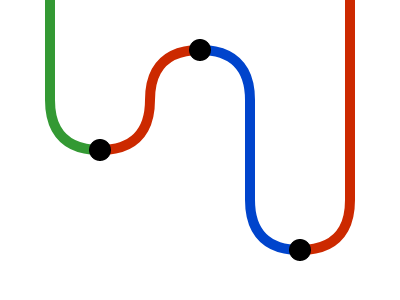} \overset{\eqref{intertwiner}}{\cong} \includegraphics[height=1.5cm]{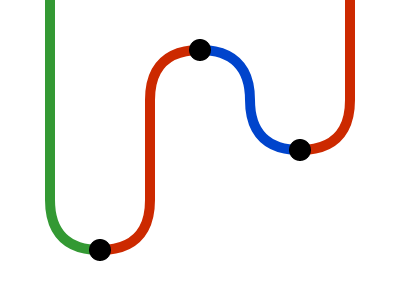} 
		\overset{\eta_2^{-1}}{\cong} \includegraphics[height=0.5cm]{pics/dual/n1b} 
		\equiv \eta_1'.
	\end{align*}
	Assume that there exists another $1$-morphism $\tilde{\alpha} \equiv \includegraphics[height=0.5cm]{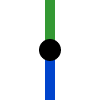} :  B \rightarrow B'$ together with invertible $2$-morphisms
\begin{equation}
\label{tildealphapreserveseps}
	\varepsilon_1' \circ (\textnormal{id}_{A} \otimes \tilde{\alpha}) \equiv \includegraphics[height=1cm]{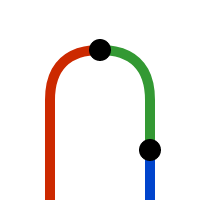} \cong \includegraphics[height=0.5cm]{pics/dual/e1} \equiv \varepsilon_1,
\end{equation}
\begin{equation}
\label{tildealphapreserveseta}
	(\tilde{\alpha} \otimes \textnormal{id}_A) \circ \eta_1 \equiv \includegraphics[height=1cm]{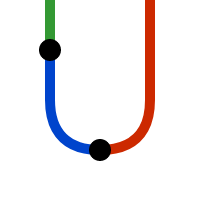}  \cong \includegraphics[height=0.5cm]{pics/dual/n1b} \equiv \eta_1'.
\end{equation}
Then we can define the following invertible $2$-morphisms between $\alpha$ and $\tilde{\alpha}$:
\begin{align*}
	\alpha \equiv \includegraphics[height=1cm]{pics/dual/alphadef} 
	\overset{\eqref{tildealphapreserveseps}}{\cong} \includegraphics[height=1.5cm]{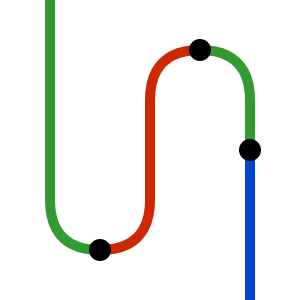}
\overset{\eqref{intertwiner}}{\cong}  \includegraphics[height=1.5cm]{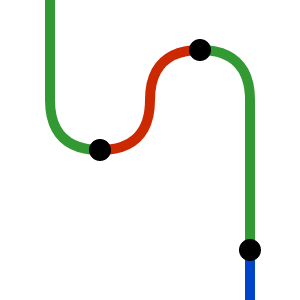}
	\overset{\varepsilon_2'}{\cong} \includegraphics[height=0.5cm]{pics/dual/3/tildealpha}
	\equiv \tilde{\alpha},
\end{align*}
\begin{align*}
	\alpha \equiv \includegraphics[height=1cm]{pics/dual/alphadef}
	\overset{\eqref{tildealphapreserveseta}}{\cong} \includegraphics[height=1.5cm]{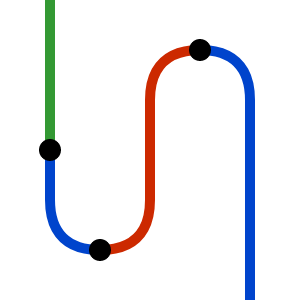}
\overset{\eqref{intertwiner}}{\cong}  \includegraphics[height=1.5cm]{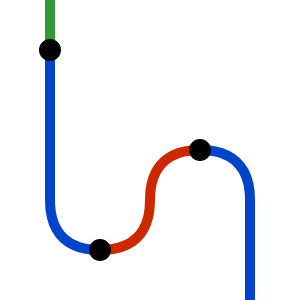}
	\overset{\varepsilon_2}{\cong} \includegraphics[height=0.5cm]{pics/dual/3/tildealpha}
	\equiv \tilde{\alpha}.
\end{align*}
\end{proof}

\subsection*{Proof of \autoref{lemmafrobpseudo}}
\begin{lemma*}
Let $(A,\ \mu \equiv \includegraphics[height=0.5cm]{pics/proof1/mu}: A \otimes A \rightarrow A,\ \eta \equiv \includegraphics[height=0.5cm]{pics/proof1/eta}: \unit \rightarrow A)$ be a pseudomonoid in a monoidal bicategory $(\mathcal{C}, \otimes, \unit)$. The following are equivalent:
	\begin{enumerate}
		\item There exists a morphism $\varepsilon \equiv \includegraphics[height=0.5cm]{pics/proof1/vareps}:A \rightarrow \unit$, such that $\sigma \equiv \includegraphics[height=1cm]{pics/proof1/muvareps}: A \otimes A \rightarrow \unit$ is an evaluation for a biexact pairing $A \dashv A$.
		\item There exists a morphism $\sigma \equiv \includegraphics[height=0.5cm]{pics/proof1/sigma}: A \otimes A \rightarrow \unit$, that is an evaluation for a biexact pairing $A \dashv A$ and an invertible $2$-cell
		\begin{equation*}
			\includegraphics[height=1cm]{pics/proof1/frobenius2} \cong \includegraphics[height=1cm]{pics/proof1/frobenius1}
		\end{equation*}
		\item There exists a pseudocomonoid structure $(A,\ \delta \equiv\includegraphics[height=0.5cm]{pics/proof1/delta}: A \rightarrow A \otimes A,\ \varepsilon \equiv\includegraphics[height=0.5cm]{pics/proof1/vareps}: A \rightarrow \unit)$ on $A$ and two invertible $2$-cells
			\begin{equation*}
				\includegraphics[height=1cm]{pics/proof1/frobzigzag1} \cong \includegraphics[height=1cm]{pics/proof1/deltacircmu} \cong \includegraphics[height=1cm]{pics/proof1/frobzigzag2}
			\end{equation*}
	\end{enumerate}
\end{lemma*}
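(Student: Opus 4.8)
The plan is to prove the three conditions equivalent through a cyclic chain of implications $(1) \Rightarrow (2) \Rightarrow (3) \Rightarrow (1)$, carried out throughout in the graphical calculus of the monoidal bicategory $(\mathcal{C}, \otimes, \unit)$. The elementary moves available are the interchanger \eqref{intertwiner}, the snake $2$-cells \eqref{zigzagidentities2} of a biexact pairing, and the pseudomonoid structure cells \eqref{pseudomonoassoc}, \eqref{pseudomonounit}. Since we only claim an existence statement, it suffices to exhibit the relevant $1$-morphisms and invertible $2$-cells and to check the defining relations, without tracking higher coherence. The implication $(1) \Rightarrow (2)$ is then immediate: with $\sigma := \varepsilon \circ \mu$ already an evaluation by hypothesis, the invariance cell \eqref{form} is obtained by whiskering the associator \eqref{pseudomonoassoc}, i.e. the $2$-cell $\mu \circ (\mu \otimes \textnormal{id}_A) \cong \mu \circ (\textnormal{id}_A \otimes \mu)$, on the outside with $\varepsilon$.

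For $(2) \Rightarrow (3)$ I would exploit the coevaluation. As $\sigma$ is an evaluation for a biexact pairing $A \dashv A$, there is a coevaluation $\widetilde{\sigma}: \unit \rightarrow A \otimes A$ satisfying the snake $2$-cells \eqref{zigzagidentities2}. I would then define
\[ \delta := (\mu \otimes \textnormal{id}_A) \circ (\textnormal{id}_A \otimes \widetilde{\sigma}), \qquad \varepsilon := \sigma \circ (\textnormal{id}_A \otimes \eta), \]
and verify that $(A, \delta, \varepsilon)$ is a pseudocomonoid equipped with the Frobenius cells \eqref{pseudocomonoid2cell}. Coassociativity, the counit laws, and the two zig-zag cells are all assembled by bending strands with the snake cells \eqref{zigzagidentities2}, sliding one multiplication past another with the invariance cell \eqref{form}, and reassociating with \eqref{pseudomonoassoc}; the interchanger \eqref{intertwiner} is used repeatedly to rearrange the horizontal order of the elementary $2$-cells.

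Finally, for $(3) \Rightarrow (1)$ I would retain the counit $\varepsilon$ of the pseudocomonoid and set $\sigma := \varepsilon \circ \mu$, with candidate coevaluation $\widetilde{\sigma} := \delta \circ \eta$. The two snake identities \eqref{zigzagidentities2} witnessing the biexact pairing $A \dashv A$ then follow by pasting the Frobenius cells \eqref{pseudocomonoid2cell} against the unitor cells \eqref{pseudomonounit} of the pseudomonoid and the corresponding unit cells of the pseudocomonoid, once again invoking the interchanger \eqref{intertwiner} to straighten the diagrams into a single strand. This closes the cycle and establishes the claimed equivalence.

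I expect the main obstacle to be the step $(2) \Rightarrow (3)$: producing the \emph{full} pseudocomonoid structure, and in particular verifying both Frobenius zig-zag cells \eqref{pseudocomonoid2cell} together with the comonoid coherence, requires a careful sequence of graphical manipulations in which every equality holds only up to a coherent invertible $2$-cell. Thus the genuine difficulty is the bookkeeping of associators, unitors, and interchangers throughout the pasting diagrams, rather than any single conceptual point; for this reason the detailed verification is best relegated to the Appendix.
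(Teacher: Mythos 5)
Your proposal is correct and follows essentially the same route as the paper: the constructions $\sigma := \varepsilon \circ \mu$, $\varepsilon := \sigma \circ (\textnormal{id}_A \otimes \eta)$, $\delta$ obtained by bending a leg of $\mu$ with the coevaluation, and $\widetilde{\sigma} := \delta \circ \eta$, together with the verification strategy via the interchanger \eqref{intertwiner}, the snake cells \eqref{zigzagidentities2}, and the cells \eqref{pseudomonoassoc}, \eqref{pseudomonounit}, \eqref{form}, \eqref{pseudocomonoid2cell}, are exactly those of the Appendix proof. The only difference is organizational — you close a cycle $(1)\Rightarrow(2)\Rightarrow(3)\Rightarrow(1)$ whereas the paper proves $(1)\Leftrightarrow(2)$ and $(2)\Leftrightarrow(3)$ — and your $(3)\Rightarrow(1)$ is just the evaluation-verifying portion of the paper's $(3)\Rightarrow(2)$, so nothing of substance changes.
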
	
\begin{proof}
			$(1) \Rightarrow (2)$: We follow the arguments of \cite[Prop. 3.2]{street2004frobenius}. Define $\sigma := \includegraphics[height=1cm]{pics/proof1/muvareps}$, then it remains to construct an invertible $2$-cell \eqref{form}. By the definition of a pseudomonoid there exists an invertible $2$-morphism 
			$\alpha: \includegraphics[height=1cm]{pics/proof1/assoc1} \cong \includegraphics[height=1cm]{pics/proof1/assoc2}$ as in \eqref{pseudomonoassoc}. Consider the invertible $2$-cell 
			$\textnormal{id}_{\varepsilon} \alpha:  \includegraphics[height=1.5cm]{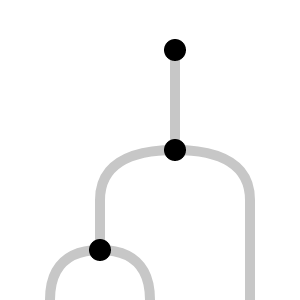} \cong \includegraphics[height=1.5cm]{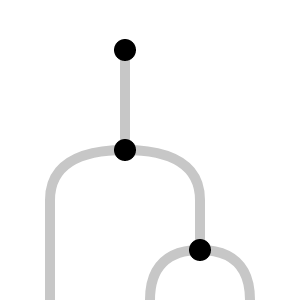}.
			$ Recalling $\sigma \equiv \includegraphics[height=1cm]{pics/proof1/muvareps}$ shows that the definition is appropriate.
			
		$(2) \Rightarrow (1):$ It suffices to construct a morphism $\varepsilon \equiv$ \includegraphics[height=0.5cm]{pics/proof1/vareps}, such that \includegraphics[height=1cm]{pics/proof1/muvareps} $\cong$ \includegraphics[height=0.5cm]{pics/proof1/sigma}. Let $\varepsilon := \includegraphics[height=1cm]{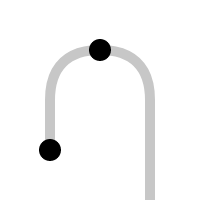}$. Then one computes
		\begin{align*}
			\includegraphics[height=1cm]{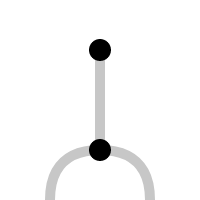} 
			= \includegraphics[height=1.5cm]{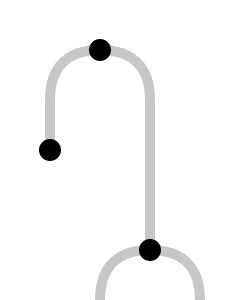} 
			\overset{\eqref{intertwiner}}{\cong} \includegraphics[height=1.5cm]{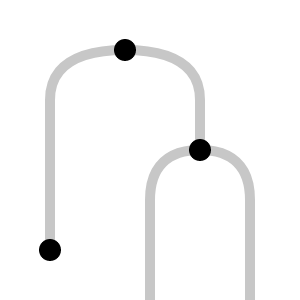} 
			\overset{\eqref{form}}{\cong} \includegraphics[height=1.5cm]{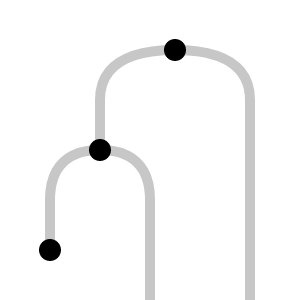} 
			\overset{\eqref{pseudomonounit}}{\cong}\includegraphics[height=0.5cm]{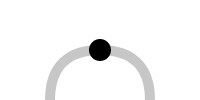}.
		\end{align*}
		\item $(3) \Rightarrow (2):$ This is a straightforward generalisation of the monoidal category case \cite{fuchs2009frobenius}. Define $\sigma :=  \includegraphics[height=0.5cm]{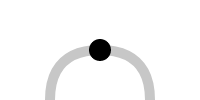} :=\includegraphics[height=1cm]{pics/proof1/muvareps}$. As in $(1) \Rightarrow (2)$ consider the invertible $2$-cell
			$\textnormal{id}_{\varepsilon} \alpha
			$ as a witness of \eqref{form}. The coevaluation for the biexact pairing $A \dashv A$ is given by $\includegraphics[height=0.5cm]{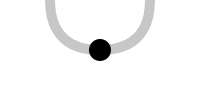} := \includegraphics[height=1cm]{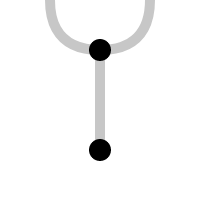}$. Indeed, we find the invertible $2$-cells		
		\begin{equation}
			\varepsilon_2:= \includegraphics[height=1cm]{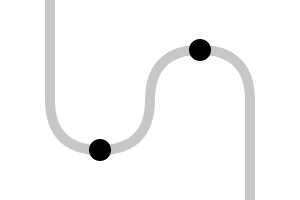} 
		= \includegraphics[height=2cm]{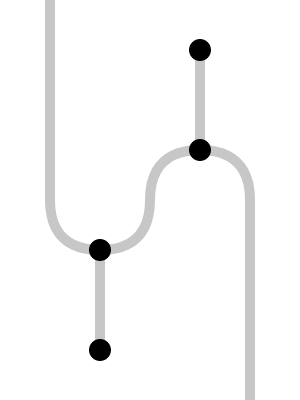} 
		\overset{\eqref{pseudocomonoid2cell}}{\cong} \includegraphics[height=2cm]{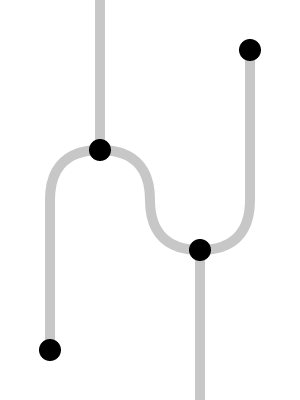} 
		\overset{\eqref{intertwiner}}{\cong} \includegraphics[height=2cm]{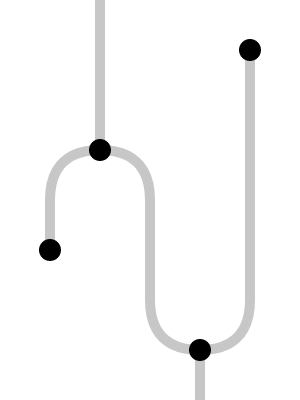} 
		\overset{\eqref{pseudomonounit}}{\cong} \includegraphics[height=1cm]{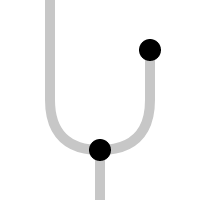}
		\overset{\eqref{pseudomonounit}}{\cong} \includegraphics[height=1cm, width=0.5cm]{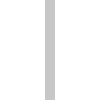},
		\end{equation}
		
		\begin{equation}
	\eta_2:= 	\includegraphics[height=1cm, width=0.5cm]{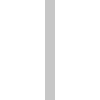}
		\cong \includegraphics[height=1cm]{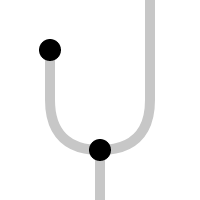}
			\overset{\eqref{pseudomonounit}}{\cong} \includegraphics[height=2cm]{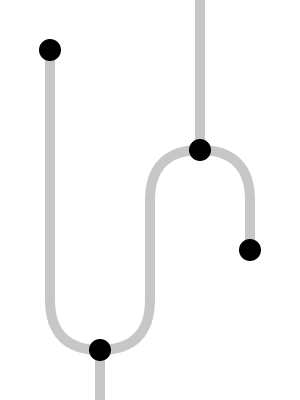}
		\overset{\eqref{intertwiner}}{\cong}  \includegraphics[height=2cm]{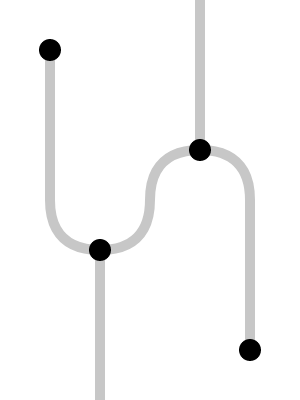} 
			\overset{\eqref{pseudocomonoid2cell}}{\cong} \includegraphics[height=2	cm]{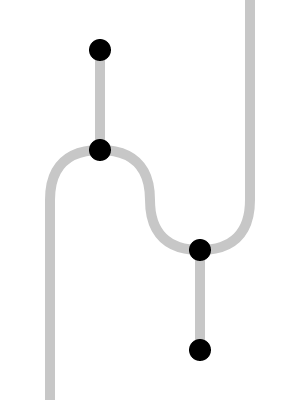} 
	= \includegraphics[height=1cm]{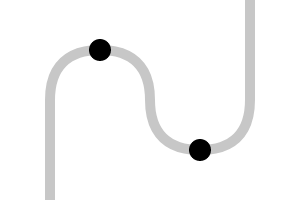} .
	\end{equation}		
	$(2) \Rightarrow (3):$ Denote the coevaluation of the biexact pairing $A \dashv A$ as  \includegraphics[height=0.5cm]{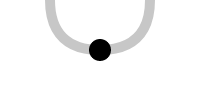}. Let $\varepsilon = \includegraphics[height=0.5cm]{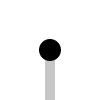}:= \includegraphics[height=1cm]{pics/proof1/varepsdef}$ and $\delta = \includegraphics[height=0.5cm]{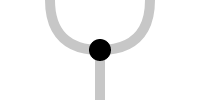} := \includegraphics[height=1cm]{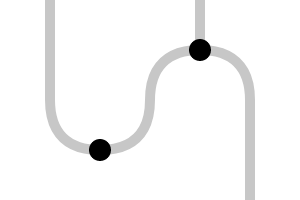} $. To obtain a pseudocomonoid structure $(A, \delta, \varepsilon)$ it remains to define appropriate invertible $2$-cells.
	Define one of the invertible $2$-cells witnessing the counitality as the composition
	\begin{equation*}
		\includegraphics[height=1cm]{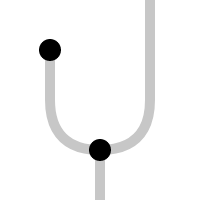}
		= \includegraphics[height=2cm]{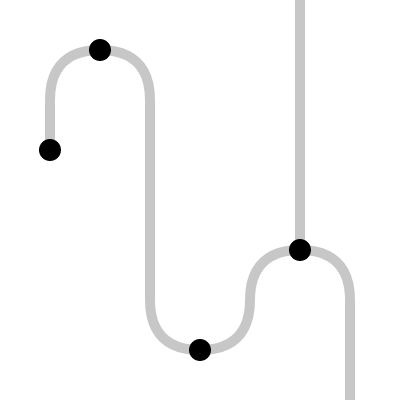}
		\overset{\eqref{intertwiner}}{\cong} \includegraphics[height=2cm]{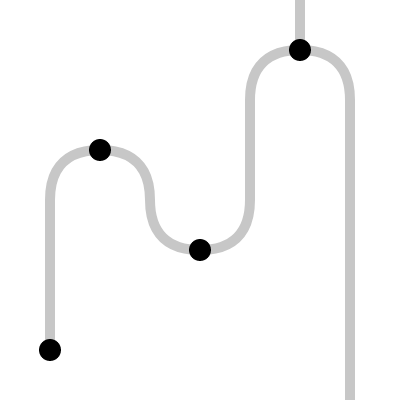}
		\overset{\eqref{zigzagidentities2}}{\cong} \includegraphics[height=1cm]{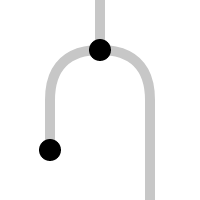}
		\overset{\eqref{pseudomonounit}}{\cong} \includegraphics[height=1cm, width=0.5cm]{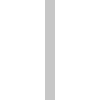},
	\end{equation*}
	and the second one analogously. Regarding the coassociativity, first note that we can find invertible $2$-morphisms 
	\begin{equation}
	\label{multiplrewrite}
		\includegraphics[height=0.5cm]{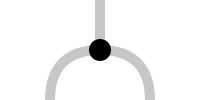}
	\overset{\eqref{zigzagidentities2}}{\cong} \includegraphics[height=1.5cm]{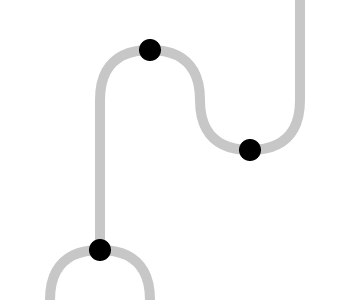}
	\overset{\eqref{intertwiner}}{\cong} \includegraphics[height=1.5cm]{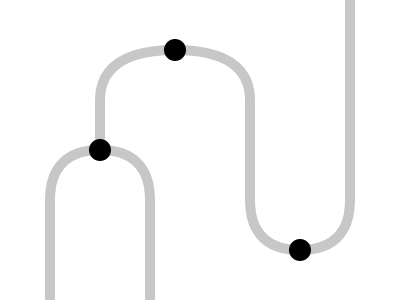}
	\overset{\eqref{form}}{\cong} \includegraphics[height=1.5cm]{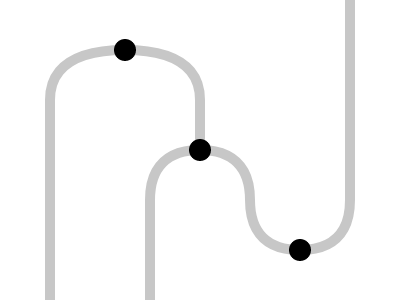}.
	\end{equation}
	Therefore, the comultiplication can be rewritten as
	\begin{align}
	\label{comultrewrite}
	\begin{split}
		\includegraphics[height=0.5cm]{pics/proof3/comult} 
		&= \includegraphics[height=1cm]{pics/proof3/comultdef}
		\overset{\eqref{multiplrewrite}}{\cong} \includegraphics[height=2cm]{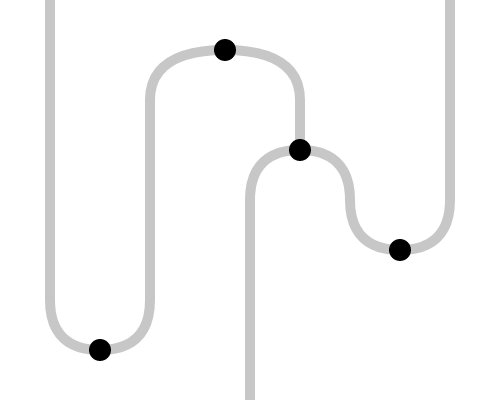}
		\overset{\eqref{intertwiner}}{\cong} \includegraphics[height=2cm]{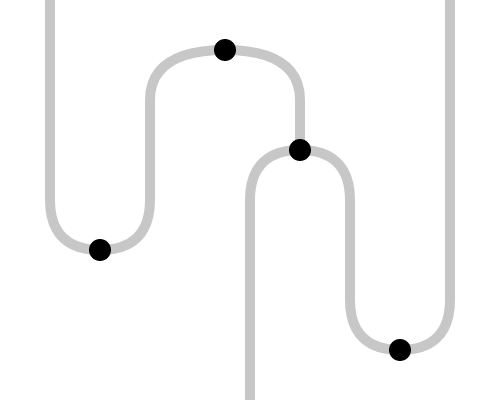}
		\overset{\eqref{intertwiner}}{\cong} \includegraphics[height=2cm]{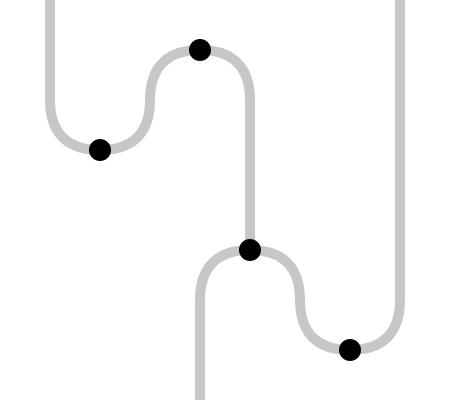}
		\overset{\eqref{zigzagidentities2}}{\cong} \includegraphics[height=1cm]{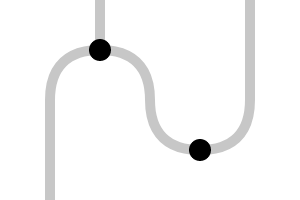}.
	\end{split}	
	\end{align}
	Finally, the coassociativity is witnessed by the composition of invertible $2$-cells
	\begin{align*}
		\includegraphics[height=1cm]{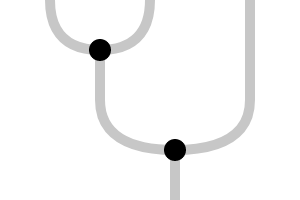}
		&\overset{\eqref{comultrewrite}}{\cong} \includegraphics[height=2cm]{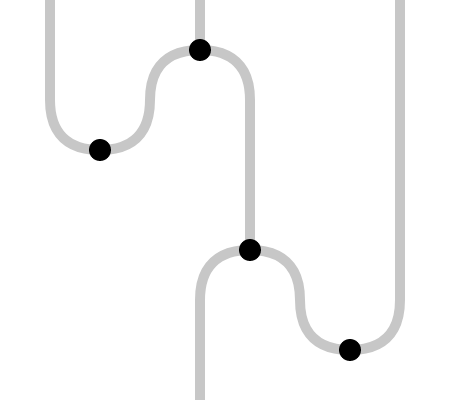}
		\overset{\eqref{intertwiner}}{\cong} \includegraphics[height=2cm]{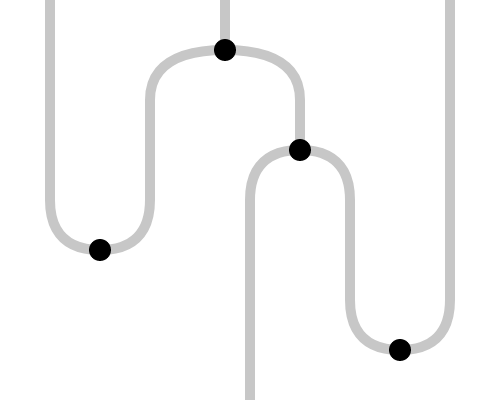}
		\overset{\eqref{pseudomonoassoc}}{\cong} \includegraphics[height=2cm]{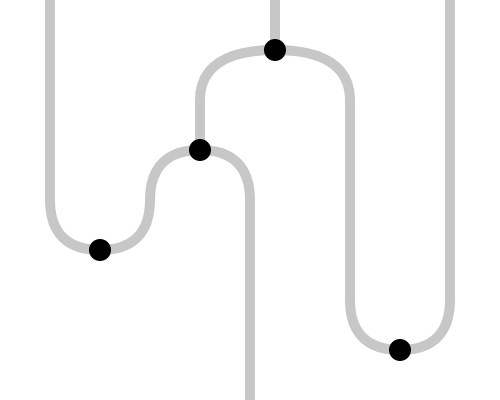} \\
		&= \includegraphics[height=1.5cm]{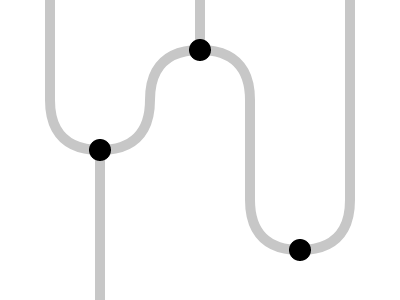}
		\overset{\eqref{intertwiner}}{\cong} \includegraphics[height=1.5cm]{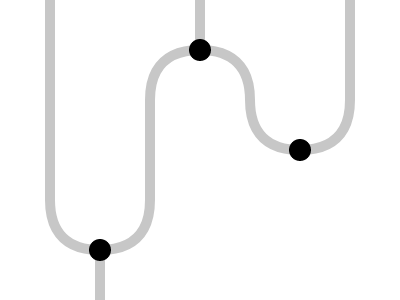}
		\overset{\eqref{comultrewrite}}{\cong} \includegraphics[height=1cm]{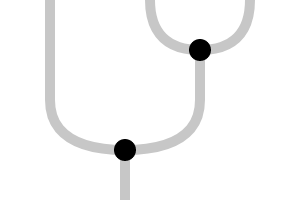}.		
	\end{align*}
The $2$-cells $\eqref{pseudocomonoid2cell}$ are given by the compositions
\begin{equation*}
	\includegraphics[height=1cm]{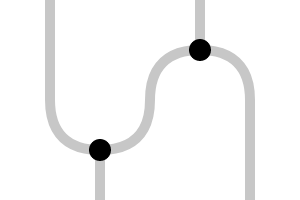}
= 	\includegraphics[height=1.5cm]{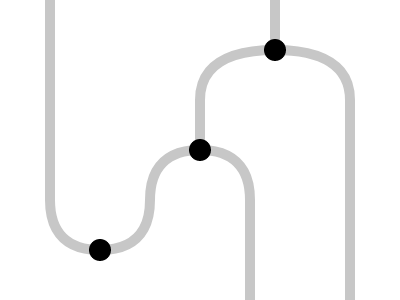}
\overset{\eqref{pseudomonoassoc}}{\cong} 	\includegraphics[height=1.5cm]{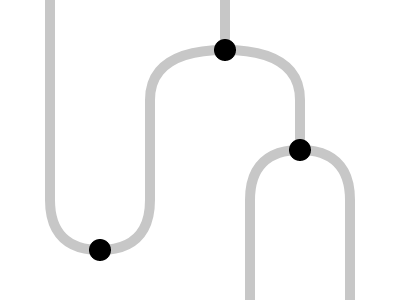}
\overset{\eqref{intertwiner}}{\cong} 	\includegraphics[height=1.5cm]{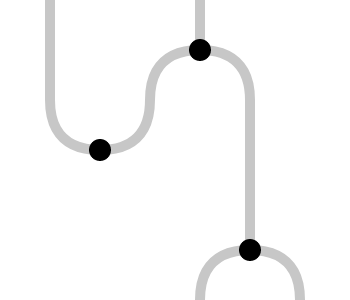}
=	\includegraphics[height=1cm]{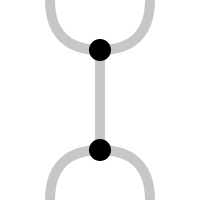},
\end{equation*}	
\begin{equation*}
	\includegraphics[height=1cm]{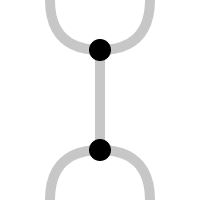}
	\overset{\eqref{comultrewrite}}{\cong} \includegraphics[height=1.5cm]{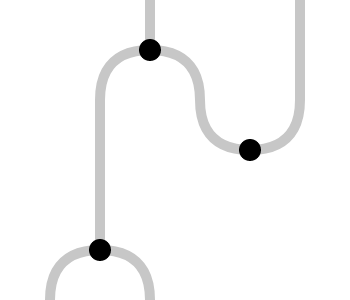}
	\overset{\eqref{intertwiner}}{\cong} \includegraphics[height=1.5cm]{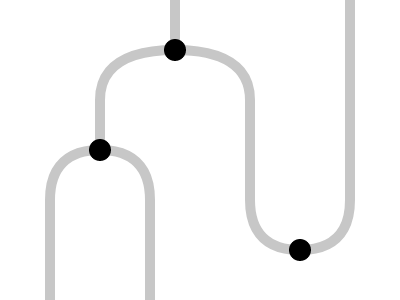}
	\overset{\eqref{pseudomonoassoc}}{\cong} \includegraphics[height=1.5cm]{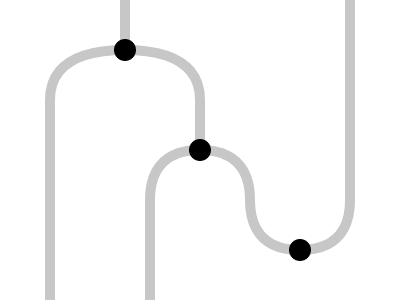}
	\overset{\eqref{comultrewrite}}{\cong} \includegraphics[height=1cm]{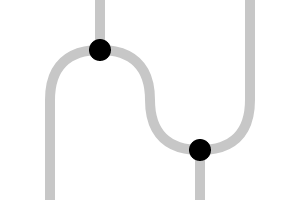}.
\end{equation*}
\end{proof}

\subsection*{Proof of \autoref{gammamonoidal}}

\begin{lemma*}
	Let $\mathcal{X}$ be a finite linear category and $F,G \in \mathcal{L}ex(\mathcal{X}, \mathcal{X})$ be two left exact functors. Then 
	\begin{equation}
		\Gamma^{\textnormal{rl}}(F) \circ \Gamma^{\textnormal{rl}}(G) \cong \Gamma^{\textnormal{rl}}(\Gamma^{\textnormal{rl}}(F) \circ G).
	\end{equation}
	On the other hand, let $F,G \in \mathcal{R}ex(\mathcal{X}, \mathcal{X})$ be two right exact functors. Then 
		\begin{equation}
		\Gamma^{\textnormal{lr}}(F) \circ \Gamma^{\textnormal{lr}}(G) \cong \Gamma^{\textnormal{lr}}(\Gamma^{\textnormal{lr}}(F) \circ G).
	\end{equation}
\end{lemma*}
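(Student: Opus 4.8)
The plan is to work directly with the explicit coend and end presentations of the Eilenberg--Watts functors recorded in \eqref{gammaexplicitend}, which (by the remark preceding \autoref{gammamonoidal}) are valid on \emph{all} endofunctors, and to exploit the fact that $\Gamma^{\textnormal{rl}}$ and $\Gamma^{\textnormal{lr}}$ produce functors carrying a one-sided adjoint. For the first isomorphism I would abbreviate $H := \Gamma^{\textnormal{rl}}(F)$; by \autoref{categoricaleilenberg} we have $H \in \mathcal{R}ex(\mathcal{X},\mathcal{X})$. Using the coend formula, for $x \in \mathcal{X}$ one has
\begin{equation*}
	(\Gamma^{\textnormal{rl}}(F) \circ \Gamma^{\textnormal{rl}}(G))(x) \cong H\left( \int^{a \in \mathcal{X}}\ _{\mathcal{X}}\langle x, a \rangle^{\star} \otimes G(a) \right),
\end{equation*}
whereas the right-hand side of \eqref{gammarlmonoidal}, again by \eqref{gammaexplicitend} together with $\Gamma^{\textnormal{rl}}(F) \circ G = H \circ G$, reads
\begin{equation*}
	\Gamma^{\textnormal{rl}}(\Gamma^{\textnormal{rl}}(F) \circ G)(x) \cong \int^{a \in \mathcal{X}}\ _{\mathcal{X}}\langle x, a \rangle^{\star} \otimes H(G(a)).
\end{equation*}
Thus the whole statement reduces to the single claim that $H$ may be pulled inside the coend and past the $\vect$-action, that is,
\begin{equation*}
	H\left( \int^{a \in \mathcal{X}}\ _{\mathcal{X}}\langle x, a \rangle^{\star} \otimes G(a) \right) \cong \int^{a \in \mathcal{X}}\ _{\mathcal{X}}\langle x, a \rangle^{\star} \otimes H(G(a)).
\end{equation*}

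To justify this interchange I would argue as follows. Since $H$ is right exact, \autoref{eilenbergwattssummarize} guarantees that $H$ admits a right adjoint; being a left adjoint it preserves every colimit that exists, and in particular the coend above, which is a coequalizer of coproducts. Moreover, for a finite-dimensional vector space $v$ the object $v \otimes c$ is a finite biproduct of copies of $c$ (recall that all Hom spaces of $\mathcal{X}$, hence the $_{\mathcal{X}}\langle x,a\rangle^{\star}$, are finite-dimensional), and every additive functor preserves finite biproducts; therefore $H(v \otimes c) \cong v \otimes H(c)$, naturally in $v$ and $c$. Combining these two observations produces the displayed interchange, whence the two expressions above coincide and \eqref{gammarlmonoidal} follows.

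The second isomorphism \eqref{gammalrmonoidal} is entirely dual. Here I would set $H := \Gamma^{\textnormal{lr}}(F) \in \mathcal{L}ex(\mathcal{X},\mathcal{X})$, which by \autoref{eilenbergwattssummarize} admits a \emph{left} adjoint, hence is a right adjoint and preserves all limits, in particular the end in the formula $\Gamma^{\textnormal{lr}}(K)(x) \cong \int_{a \in \mathcal{X}}\ _{\mathcal{X}}\langle a, x \rangle \otimes K(a)$. Using the same additivity argument to commute $H$ with the $\vect$-action gives
\begin{equation*}
	H\left( \int_{a \in \mathcal{X}}\ _{\mathcal{X}}\langle a, x \rangle \otimes G(a) \right) \cong \int_{a \in \mathcal{X}}\ _{\mathcal{X}}\langle a, x \rangle \otimes H(G(a)) \cong \Gamma^{\textnormal{lr}}(H \circ G)(x),
\end{equation*}
which is precisely \eqref{gammalrmonoidal}.

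The main obstacle is exactly the interchange of $H$ with the coend (respectively end): a coend over $\mathcal{X}$ ranges over infinitely many objects, so bare right exactness, i.e.\ preservation of \emph{finite} colimits, is not visibly enough. The decisive point is therefore the upgrade from right exactness to full cocontinuity furnished by \autoref{eilenbergwattssummarize}, which is what makes the naive computation legitimate; one must also check that the isomorphism $H(v \otimes c) \cong v \otimes H(c)$ is natural in the coend variable $a$, so that it assembles into a genuine isomorphism of coends rather than a merely objectwise comparison.
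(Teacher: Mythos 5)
Your proof is correct, but it takes a genuinely different route from the paper's. The paper proves \eqref{gammarlmonoidal} and \eqref{gammalrmonoidal} by a direct coend-calculus computation: it expands both sides via \eqref{gammaexplicitend}, pulls the Hom functor out of the inner coend using \eqref{coendcontineq} (respectively \eqref{endcontineq}), converts the resulting end back into a coend by dualizing the finite-dimensional Hom spaces, splits off the $\vect$-action factor using \eqref{finitemodulecat}, and finally interchanges the two integrals by Fubini before reassembling. You instead isolate a single conceptual fact that does all the work: $H := \Gamma^{\textnormal{rl}}(F)$ lies in $\Rex(\mathcal{X},\mathcal{X})$ by \autoref{categoricaleilenberg}, hence admits a right adjoint by \autoref{eilenbergwattssummarize}, hence preserves every colimit that exists --- in particular the coend defining $\Gamma^{\textnormal{rl}}(G)(x)$ --- while $\mathbbm{k}$-linearity of $H$ handles the interchange with the finite-dimensional $\vect$-action; dually for the end. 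This buys a shorter, more structural argument that avoids Fubini and the Hom-continuity identities altogether, and it also yields the existence of the coend $\int^{a}\ _{\mathcal{X}}\langle x,a\rangle^{\star}\otimes H(G(a))$ as a by-product of preservation. What the paper's computation buys in exchange is that it stays entirely inside the coend calculus already set up in Section 4.1.2 and never needs to invoke adjoint-functor cocontinuity.

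One inaccuracy in your justification should be repaired: in a finite linear category infinite coproducts do not exist, so the coend over $\mathcal{X}$ cannot be presented as a coequalizer of coproducts indexed by all objects and morphisms. This does not damage the strategy --- the coend is still a conical colimit over the twisted arrow category of $\mathcal{X}$, and a left adjoint preserves any existing colimit of any shape, which is precisely the cocontinuity upgrade you correctly identify as the decisive point --- but the phrase ``coequalizer of coproducts'' should be replaced by that formulation.
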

\begin{proof}
	For $y \in \mathcal{X}$ one computes 
	\begin{align*}
		\Gamma^{\textnormal{rl}}(F) \circ \Gamma^{\textnormal{rl}}(G) (y) 
		&\overset{\eqref{gammaexplicitend}}{=} 
		\Gamma^{\textnormal{rl}}(F) \big( \int^{x \in \mathcal{X}}\ _{\mathcal{X}} \langle y, x \rangle ^* \otimes G(x) \big) \\
		&\overset{\eqref{gammaexplicitend}}{=}  \int^{x' \in \mathcal{X}}\ _{\mathcal{X}} \big \langle \int^{x \in \mathcal{X}}\ _{\mathcal{X}} \langle y, x \rangle ^* \otimes G(x), x' \big \rangle^* \otimes F(x') \\
		&\overset{\eqref{coendcontineq}}{\cong} \int^{x' \in \mathcal{X}} \big (\int_{x \in \mathcal{X}}\ _{\mathcal{X}} \big \langle  _{\mathcal{X}} \langle y, x \rangle ^* \otimes G(x), x' \big \rangle \big)^* \otimes F(x') \\
		&\cong \int^{x' \in \mathcal{X}} \int^{x \in \mathcal{X}}\ _{\mathcal{X}} \big \langle  _{\mathcal{X}} \langle y, x \rangle ^* \otimes G(x), x' \big \rangle^*  \otimes F(x') \\
&\overset{\eqref{finitemodulecat}}{\cong} \int^{x' \in \mathcal{X}} \int^{x \in \mathcal{X}}\ _{\mathcal{X}} \langle y, x \rangle ^* \otimes\ _{\mathcal{X}} \langle  G(x), x' \rangle^*  \otimes F(x') \\
&\overset{\textnormal{Fubini}}{\cong} \int^{x \in \mathcal{X}} \int^{x' \in \mathcal{X}}\ _{\mathcal{X}} \langle y, x \rangle ^* \otimes\ _{\mathcal{X}} \langle  G(x), x' \rangle^*  \otimes F(x') \\
&\cong \int^{x \in \mathcal{X}}\ _{\mathcal{X}} \langle y, x \rangle ^* \otimes\ \int^{x' \in \mathcal{X}}\ _{\mathcal{X}} \langle  G(x), x' \rangle^*  \otimes F(x') \\
&\overset{\eqref{gammaexplicitend}}{=}  \int^{x \in \mathcal{X}}\ _{\mathcal{X}} \langle y, x \rangle ^* \otimes\ \Gamma^{\textnormal{rl}}(F)(G(x)) \overset{\eqref{gammaexplicitend}}{=}  \Gamma^{\textnormal{rl}}(\Gamma^{\textnormal{rl}}(F)\circ G)(y)
	\end{align*}
which proves the first claim. For the second claim we compute similarly
		\begin{align*}
		\Gamma^{\textnormal{lr}}(F) \circ \Gamma^{\textnormal{lr}}(G) (y) 
		&\overset{\eqref{gammaexplicitend}}{=} \Gamma^{\textnormal{lr}}(F) \big( \int_{x \in \mathcal{X}}\ _{\mathcal{X}} \langle x, y \rangle \otimes G(x) \big) \\
		&\overset{\eqref{gammaexplicitend}}{=} \int_{x' \in \mathcal{X}}\ _{\mathcal{X}} \big \langle x', \int_{x \in \mathcal{X}}\ _{\mathcal{X}} \langle x,y \rangle \otimes G(x) \big \rangle \otimes F(x') \\
		&\overset{\eqref{endcontineq}}{\cong} \int_{x' \in \mathcal{X}} \int_{x \in \mathcal{X}}\ _{\mathcal{X}} \big \langle x',  _{\mathcal{X}} \langle x,y \rangle \otimes G(x) \big \rangle \otimes F(x') \\
		&\overset{\eqref{finitemodulecat}}{\cong} \int_{x' \in \mathcal{X}} \int_{x \in \mathcal{X}}\ _{\mathcal{X}} \langle x,y \rangle \otimes\ _{\mathcal{X}} \big \langle x', G(x) \big \rangle \otimes F(x') \\
		&\overset{\textnormal{Fubini}}{\cong} \int_{x \in \mathcal{X}} \int_{x' \in \mathcal{X}}\ _{\mathcal{X}} \langle x,y \rangle \otimes\ _{\mathcal{X}} \big \langle x', G(x) \big \rangle \otimes F(x') \\
		&\cong \int_{x \in \mathcal{X}}\ _{\mathcal{X}} \langle x,y \rangle \otimes\ \int_{x' \in \mathcal{X}}\  _{\mathcal{X}} \big \langle x', G(x) \big \rangle \otimes F(x') \\
		&\overset{\eqref{gammaexplicitend}}{=} \int_{x \in \mathcal{X}}\ _{\mathcal{X}} \langle x,y \rangle \otimes\ \Gamma^{\textnormal{lr}}(F)(G(x))\overset{\eqref{gammaexplicitend}}{=} \Gamma^{\textnormal{lr}}(\Gamma^{\textnormal{lr}}(F)\circ G)(y).
	\end{align*}
\end{proof}

\chapter*{Notation}
\newcolumntype{Y}{>{\centering\arraybackslash}X}

\begin{tabularx}{\textwidth}{| Y | Y | Y |}
\hline
$\mathcal{C}^{\textnormal{opp}(1)}$ & Dual category with inverted morphisms & Section 2.1 \\
\hline
$F^{\textnormal{opp}(1)}$ & Dual functor & Section 2.1 \\
\hline
$\mathcal{C}^{\textnormal{opp}(0)}$ & Opposite category with twisted tensor product & \autoref{oppositecategory} \\
\hline
$\mathcal{C}^{\textnormal{opp}(0,1)}$ & Defined as $(\mathcal{C}^{\textnormal{opp}(0)})^{\textnormal{opp}(1)} \cong (\mathcal{C}^{\textnormal{opp}(1)})^{\textnormal{opp}(0)}$ & Section 2.1 \\
\hline
$_{\mathcal{C}} \langle -,- \rangle$ & Defined as $\textnormal{Hom}_{\mathcal{C}}(-,-)$ & / \\
\hline
$\Cat$ & Category of small categories &/ \\
\hline
$\Set$ & Category of sets &/ \\
\hline
$\Vect, \vect$ & Category of (f.d.) $\mathbbm{k}$-vector spaces &/ \\
\hline
$A\Repfd , A\modl$ & Category of f.d. left $A$-modules & Sec. 4.1.1 \\
\hline
$\underline{\textnormal{Hom}}, \underline{\textnormal{Hom}}^r, \underline{\textnormal{Hom}}^l$ & Internal homs & \autoref{internalhomdef} \\
\hline
$\Vcat$ &Bicat. of $\mathcal{V}$-enriched cat. and $\mathcal{V}$-functors & \autoref{vcat} \\
\hline
$\Vmod$ &Bicat. of $\mathcal{V}$-enriched cat. and modules & \autoref{vmod} \\
\hline
$\Prof$ & Bicat. of profunctors & \autoref{vmod} \\
\hline
$\mathcal{L}ex(\mathcal{C}, \mathcal{D}), \mathcal{R}ex(\mathcal{C}, \mathcal{D})$ & Category of left (right) exact functors $\mathcal{C} \rightarrow \mathcal{D}$ & Section 4.1.1 \\
\hline
$\int_{c \in \mathcal{C}} F(\bar{c}, c) \in \mathcal{D}$ & End of a functor $F: \mathcal{C}^{\textnormal{opp}(1)} \times \mathcal{C} \rightarrow \Set$. & \autoref{defend} \\
\hline
$\int^{c \in \mathcal{C}} F(\bar{c}, c) \in \mathcal{D}$ & Coend of a functor $F: \mathcal{C}^{\textnormal{opp}(1)} \times \mathcal{C} \rightarrow \Set$. & \autoref{defend} \\
\hline
$\textnormal{N}^{\textnormal{r}}_A, \textnormal{N}^{\textnormal{l}}_A$ & Classical Nakayama functors & Section 4.1.3 \\
\hline
$\textnormal{N}^{\textnormal{r}}_{\mathcal{X}}, \textnormal{N}^{\textnormal{l}}_{\mathcal{X}}$ &Nakayama functors of finite linear category $\mathcal{X}$ & \autoref{generalnakayamafunctor} \\
\hline
\end{tabularx}
\pagestyle{plain}

\begin{tabularx}{\textwidth}{| Y | Y | Y |}
\hline
$\QF$ & Category of quadratic forms $G \rightarrow \mathbbm{k}^{\times}$ & \autoref{classicalquadraticformdef} \\
\hline
	$H^{3}_{\textnormal{ab}}(G, \mathbbm{k}^{\times})$ & Third abelian cohomology group & \autoref{abeliangroupcohomology} \\
\hline
$\GVect$ & Category of $G$-graded vector spaces over $\mathbbm{k}$ & \autoref{gradedvecdef} \\
\hline
$\Gvect$ & Category of $G$-graded f.d. vector spaces over $\mathbbm{k}$ & \autoref{gradedvecdef} \\
\hline
$\RMS$ & Ribbon monoidal structures on $\Gvectc$ & \autoref{classicalribbonmonoidalstructures} \\
\hline
$(-)^{g_0}$ & Duality functor on $\Gvect$ w.r.t. $g_0 \in G$ & \autoref{generaldualitygraded} \\
\hline
$\otimes_{g_0}$ & Tensor product on $\Gvect$ w.r.t. $g_0 \in G$ & \autoref{generaltensordefdef} \\
\hline
$\WQF$ & Weak quadratic forms $G \rightarrow \mathbbm{k}^{\times}$ &\autoref{weakquadraticformdef} \\
\hline
$\WRQF$ & Weak representable quadratic forms $G \rightarrow \mathbbm{k}^{\times}$ & \autoref{weakrepresentableqf}  \\
\hline
$\WSQF$ & Weak symmetric quadratic forms $G \rightarrow \mathbbm{k}^{\times}$ & \autoref{weaksymmqf} \\
\hline
$\WRMS$ & Weak ribbon monoidal structures on $\Gvectc$ & \autoref{weakribbonmonoidal} \\
\hline
$\chu$ & Category of separated and extensional pairs & \autoref{chudef} \\
\hline
$\TVS$ & Cat. of Hausd. loc. convex topol. vect. spaces &\autoref{tvsdef} \\
\hline
$V'$ & Topol. dual of contin. functionals $V \rightarrow \mathbbm{k}$ & \autoref{weakstrongdef} \\
\hline
$\sigma(V, W, \langle -,- \rangle)$ & Weakest dual top. on $V$ w.r.t $(V, W, \langle - , - \rangle)$ & \autoref{weakstrongdef} \\
\hline
$\tau(V, W, \langle -, - \rangle)$ & Strongest dual top. on $V$ w.r.t. $(V, W, \langle - , - \rangle)$ & \autoref{weakstrongdef} \\
\hline
$\sigma(V, V')$ & Weak topology on $V$ & \autoref{weaklymackeydef} \\
\hline
$\sigma(V', V)$ &Weak$^\star$ topology on $V'$ &\autoref{weaklymackeydef} \\
\hline
$\tau(V, V')$ & Mackey topology on $V$ & \autoref{weaklymackeydef} \\
\hline
$\TVSm$ & Category of Mackey spaces &\autoref{mackeyweaklycatdef} \\
\hline
$\TVSw$ & Category of weakly topologized spaces &\autoref{mackeyweaklycatdef}\\
\hline
\end{tabularx}

\thispagestyle{plain}

\bibliography{verzeichnis1}

\begin{thebibliography}{EGNO15}

\bibitem[Abr96]{abrams1996two}
Lowell Abrams.
\newblock Two-dimensional topological quantum field theories and {F}robenius algebras.
\newblock {\em Journal of Knot theory and its ramifications}, 5(05):569--587, 1996.

\bibitem[AC04]{abramsky2004categorical}
Samson Abramsky and Bob Coecke.
\newblock A categorical semantics of quantum protocols.
\newblock In {\em Logic in computer science, 2004. Proceedings of the 19th Annual IEEE Symposium on}, pages 415--425. IEEE, 2004.

\bibitem[And]{andregeometry}
Joyal Andre.
\newblock The geometry of tensor calculus {II}.

\bibitem[ARS97]{auslander1997representation}
Maurice Auslander, Idun Reiten, and Sverre~O Smalo.
\newblock {\em Representation theory of {A}rtin algebras}, volume~36.
\newblock Cambridge university press, 1997.

\bibitem[ASS06]{assem2006elements}
Ibrahim Assem, Andrzej Skowronski, and Daniel Simson.
\newblock {\em Elements of the {R}epresentation {T}heory of {A}ssociative {A}lgebras: {V}olume 1: {T}echniques of {R}epresentation {T}heory}, volume~65.
\newblock Cambridge University Press, 2006.

\bibitem[Ati88]{atiyah1988topological}
Michael Atiyah.
\newblock Topological quantum field theories.
\newblock {\em Publications Math{\'e}matiques de l'Institut des Hautes {\'E}tudes Scientifiques}, 68(1):175--186, 1988.

\bibitem[Bar79]{barrautonomcat}
Michael Barr.
\newblock $\star$–autonomous categories.
\newblock {\em Lecture Notes in Mathematics}, 572, 1979.

\bibitem[Bar95]{barr1995nonsymmetric}
Michael Barr.
\newblock Nonsymmetric $\star$-autonomous categories.
\newblock {\em Theoretical Computer Science}, 139(1-2):115--130, 1995.

\bibitem[Bar00]{barr2000autonomous}
Michael Barr.
\newblock On $\star$-autonomous categories of topological vector spaces.
\newblock {\em Cahiers de Topologie et G{\'e}om{\'e}trie Diff{\'e}rentielle Cat{\'e}goriques}, 41(4):243--254, 2000.

\bibitem[BD11]{boyarchenko2011duality}
Mitya Boyarchenko and Vladimir Drinfeld.
\newblock A duality formalism in the spirit of {G}rothendieck and {V}erdier.
\newblock {\em arXiv preprint arXiv:1108.6020}, 2011.

\bibitem[BKV16]{bar2016globular}
Krzysztof Bar, Aleks Kissinger, and Jamie Vicary.
\newblock Globular: an online proof assistant for higher-dimensional rewriting.
\newblock {\em arXiv preprint arXiv:1612.01093}, 2016.

\bibitem[BN37]{brauer1937regular}
Richard Brauer and Cecil Nesbitt.
\newblock On the regular representations of algebras.
\newblock {\em Proceedings of the National Academy of Sciences}, 23(4):236--240, 1937.

\bibitem[Bor]{borceux50handbook}
Francis Borceux.
\newblock Handbook of categorical algebra 1: {B}asic category theory, encycl.
\newblock {\em Math. Appl}, 50.

\bibitem[Bou77]{bourbaki1977}
Nicolas Bourbaki.
\newblock {\em Topological vector spaces. {E}lements of mathematics}.
\newblock Addison–Wesley, 1977.

\bibitem[BS00]{borceux2000short}
Francis Borceux and Isar Stubbe.
\newblock Short introduction to enriched categories.
\newblock In {\em Current Research in Operational Quantum Logic}, pages 167--194. Springer, 2000.

\bibitem[CS92]{seely1992weakly}
J.~Robin~B. Cockett and Robert Andrew~George Seely.
\newblock Weakly distributive categories.
\newblock {\em Journal of Pure and Applied Algebra}, pages 45--65, 1992.

\bibitem[CS97]{cockettseely1997}
J.~Robin~B. Cockett and Robert Andrew~George Seely.
\newblock Weakly {D}istributive {C}ategories.
\newblock {\em Journal of Pure and Applied Algebra}, pages 133--173, 1997.

\bibitem[DV16]{dunn2016coherence}
Lawrence Dunn and Jamie Vicary.
\newblock Coherence for {F}robenius pseudomonoids and the geometry of linear proofs.
\newblock {\em arXiv preprint arXiv:1601.05372}, 2016.

\bibitem[EGNO15]{etingof2015tensor}
Pavel~I Etingof, Shlomo Gelaki, Dmitri Nikshych, and Victor Ostrik.
\newblock {\em Tensor categories}, volume 205.
\newblock American Mathematical Society Providence, RI, 2015.

\bibitem[Eil60]{eilenberg1960abstract}
Samuel Eilenberg.
\newblock Abstract description of some basic functors.
\newblock {\em The Journal of the Indian Mathematical Society}, 24(1-2):231--234, 1960.

\bibitem[EL53]{eilenberg1953groups}
Samuel Eilenberg and Saunders~Mac Lane.
\newblock On the groups {H} ($\pi$, n), {I}.
\newblock {\em Annals of Mathematics}, pages 55--106, 1953.

\bibitem[EML50]{eilenberg1950cohomology}
Samuel Eilenberg and Saunders Mac~Lane.
\newblock Cohomology theory of abelian groups and homotopy theory i.
\newblock {\em Proceedings of the National Academy of Sciences}, 36(8):443--447, 1950.

\bibitem[FRS02]{fuchs2002tft}
J{\"u}rgen Fuchs, Ingo Runkel, and Christoph Schweigert.
\newblock {TFT} construction of {RCFT} correlators {I}: {P}artition functions.
\newblock {\em Nuclear Physics B}, 646(3):353--497, 2002.

\bibitem[FS09]{fuchs2009frobenius}
Jürgen Fuchs and Carl Stigner.
\newblock On {F}robenius algebras in rigid monoidal categories.
\newblock {\em arXiv preprint arXiv:0901.4886}, 2009.

\bibitem[FS16]{fuchs2016coends}
J{\"u}rgen Fuchs and Christoph Schweigert.
\newblock Coends in conformal field theory.
\newblock {\em arXiv preprint arXiv:1604.01670}, 2016.

\bibitem[FSS16]{fuchs2016eilenberg}
Jürgen Fuchs, Gregor Schaumann, and Christoph Schweigert.
\newblock Eilenberg-{W}atts calculus for finite categories and a bimodule {R}adford {$S^4$} theorem.
\newblock {\em arXiv preprint arXiv:1612.04561}, 2016.

\bibitem[Gir87]{girard1987linear}
Jean-Yves Girard.
\newblock Linear logic.
\newblock {\em Theoretical computer science}, 50(1):1--101, 1987.

\bibitem[Iva12]{ivanov2012nakayama}
Sergei~Olegovich Ivanov.
\newblock Nakayama functors and {E}ilenberg-{W}atts theorems.
\newblock {\em Journal of Mathematical Sciences}, pages 1--6, 2012.

\bibitem[JS91]{joyal1991geometry}
Andr{\'e} Joyal and Ross Street.
\newblock The geometry of tensor calculus, {I}.
\newblock {\em Advances in mathematics}, 88(1):55--112, 1991.

\bibitem[JS93]{joyal1993braided}
Andr{\'e} Joyal and Ross Street.
\newblock Braided tensor categories.
\newblock {\em Advances in Mathematics}, 102(1):20--78, 1993.

\bibitem[Lam80]{lambek1980lambda}
Joachim Lambek.
\newblock From lambda-calculus to cartesian closed categories.
\newblock {\em To HB Curry: essays on combinatory logic, lambda calculus and formalism}, pages 375--402, 1980.

\bibitem[LS69]{larson1969associative}
Richard~Gustavus Larson and Moss~Eisenberg Sweedler.
\newblock An associative orthogonal bilinear form for {H}opf algebras.
\newblock {\em American Journal of Mathematics}, 91(1):75--94, 1969.

\bibitem[Mac46]{mackey1946convex}
George~W Mackey.
\newblock On convex topological linear spaces.
\newblock {\em Transactions of the American Mathematical Society}, 60(3):519--537, 1946.

\bibitem[Mel09]{mellies2009categorical}
Paul-Andr{\'e} Melli{\`e}s.
\newblock Categorical semantics of linear logic.
\newblock {\em Panoramas et syntheses}, 27:15--215, 2009.

\bibitem[Mel13]{mellies2013dialogue}
Paul-Andr{\'e} Melli{\`e}s.
\newblock Dialogue categories and {F}robenius monoids.
\newblock In {\em Computation, Logic, Games, and Quantum Foundations. The Many Facets of Samson Abramsky}, pages 197--224. Springer, 2013.

\bibitem[Mel16]{mellies2016dialogue}
Paul-Andr{\'e} Melli{\`e}s.
\newblock Dialogue categories and chiralities.
\newblock {\em Publications of RIMS (PRIMS)}, 2016.

\bibitem[Mel17]{mellies2017micrological}
Paul-Andr{\'e} Melli{\`e}s.
\newblock A micrological study of negation.
\newblock {\em Annals of Pure and Applied Logic}, 168(2):321--372, 2017.

\bibitem[ML63]{maclane1963}
Saunders Mac~Lane.
\newblock {\em Natural {A}ssociativitys \& {C}ommutativity}.
\newblock Rice University, 1963.

\bibitem[ML98]{maclane1998}
Saunders Mac~Lane.
\newblock Categories for the working mathematician.
\newblock {\em Graduate Texts in Mathematics. Springer-Verlag,}, 1998.

\bibitem[ML13]{mac2013categories}
Saunders Mac~Lane.
\newblock {\em Categories for the working mathematician}, volume~5.
\newblock Springer Science \& Business Media, 2013.

\bibitem[Nak39]{nakayama1939frobeniusean}
Tadasi Nakayama.
\newblock On {F}robeniusean algebras. {I}.
\newblock {\em Annals of Mathematics}, pages 611--633, 1939.

\bibitem[Nak41]{nakayama1941frobeniusean}
Tadasi Nakayama.
\newblock On {F}robeniusean algebras. {II}.
\newblock {\em Annals of Mathematics}, pages 1--21, 1941.

\bibitem[See87]{seely1987linear}
Robert Andrew~George Seely.
\newblock {\em Linear logic, $\star$-autonomous categories and cofree coalgebras}.
\newblock Ste. Anne de Bellevue, Quebec: CEGEP John Abbott College, 1987.

\bibitem[Str04]{street2004frobenius}
Ross Street.
\newblock Frobenius monads and pseudomonoids.
\newblock {\em Journal of mathematical physics}, 45(10):3930--3948, 2004.

\bibitem[Tr{\`e}67]{treves1967topological}
Fran{\c c}ois Tr{\`e}ves.
\newblock Topological vector spaces, distributions and kernels, acad.
\newblock {\em Press, New York}, 1967, 1967.

\bibitem[Vic12]{vicary2012higher}
Jamie Vicary.
\newblock Higher quantum theory.
\newblock {\em arXiv preprint arXiv:1207.4563}, 2012.

\bibitem[Wat60]{watts1960intrinsic}
Charles~E Watts.
\newblock Intrinsic characterizations of some additive functors.
\newblock {\em Proceedings of the American Mathematical Society}, 11(1):5--8, 1960.

\end{thebibliography}
\bibliographystyle{alpha}

\newpage
\thispagestyle{empty}
\vspace*{4em}
\noindent \Large{\textbf{Erklärung}}
\vspace*{1em}\\
\noindent \normalsize Die vorliegende Arbeit habe ich selbständig verfasst und keine anderen als die angegebenen Hilfsmittel - insbesondere keine im Quellenverzeichnis nicht benannten Internet-Quellen - benutzt. Die Arbeit habe ich vorher nicht in einem anderen Prüfungsverfahren eingereicht. Die eingereichte schriftliche Fassung entspricht genau der auf dem elektronischen Speichermedium.
\vspace*{1em}\\
\noindent Hamburg, den 13. September 2018,
\vspace*{4em}\\
Stefan Zetzsche
\end{document}